\theoremstyle{plain}
\newtheorem{thm}{Theorem}[section]
\newtheorem{lem}[thm]{Lemma}
\newtheorem{prop}[thm]{Proposition}
\newtheorem{cor}[thm]{Corollary}
\theoremstyle{definition}
\newtheorem{defi}[thm]{Definition}
\newtheorem{egs}[thm]{Examples}
\theoremstyle{remark}
\newtheorem{rmk}[thm]{Remark}
\def\Mustata{Mus\-ta\-\c{t}\u{a}\xspace}
\def\N{{\mathbf N}}
\def\Z{{\mathbf Z}}
\def\Q{{\mathbf Q}}
\def\C{{\mathbf C}}
\def\A{{\mathbf A}}
\def\P{{\mathbf P}}
\def\cA{\mathcal{A}}
\def\cB{\mathcal{B}}
\def\cC{\mathcal{C}}
\def\cD{\mathcal{D}}
\def\cE{\mathcal{E}}
\def\cF{\mathcal{F}}
\def\cH{\mathcal{H}}
\def\cI{\mathcal{I}}
\def\cJ{\mathcal{J}}
\def\cK{\mathcal{K}}
\def\cM{\mathcal{M}}
\def\cN{\mathcal{N}}
\def\cO{\mathcal{O}}
\def\cS{\mathcal{S}}
\def\cT{\mathcal{T}}
\def\cU{\mathcal{U}}
\def\bD{{\bf D}}
\def\bR{{\bf R}}
\def\.{\cdot}
\def\^{\widehat}
\def\de{\partial}
\def\({\left(}
\def\){\right)}
\def\ad{\mathrm{a}\mathrm{d}\,}
\def\DR{\mathrm{D}\mathrm{R}}
\def\Sp{\mathrm{S}\mathrm{p}}
\def\gr{\mathrm{gr}}
\def\fsl{\mathfrak{sl}}
\def\ker{\mathrm{ker}\,}
\def\FL{\mathsf{F}\mathsf{L}}
\def\MHM{\mathrm{M}\mathrm{H}\mathrm{M}}
\renewcommand{\and}{ \ \ \text{ and } \ \ }
\DeclareMathOperator{\coker} {coker}
\DeclareMathOperator{\Proj} {Proj}
\DeclareMathOperator{\Hom} {Hom}
\DeclareMathOperator{\pr} {pr}
\begin{document}

\title{On V-filtration, Hodge filtration and Fourier transform}

\author{Qianyu Chen}
\author{Bradley Dirks}

\address{Department of Mathematics, University of Michigan, 530 Church Street, Ann Arbor, MI 48109, USA}
\email{qyc@umich.edu}

\address{Department of Mathematics, University of Michigan, 530 Church Street, Ann Arbor, MI 48109, USA}

\email{bdirks@umich.edu}

\date{\today}
\thanks{BD was partially supported by NSF grant DMS-1840234.}

\subjclass[2010]{14F10, 14B05, 32S25}

\begin{abstract} For $i:Z\hookrightarrow X$ a closed immersion of smooth varieties, we study how the $V$-filtration along $Z$ and the Hodge filtration on a mixed Hodge module $\cM$ on $X$ interact with each other. We also give a formula for the functors $i^*$, $i^!$ in terms of this $V$-filtration. As applications, we obtain results on the Hodge filtration of monodromic mixed Hodge modules and we give a Hodge theoretic proof of Skoda's theorem on multiplier ideals. Finally, we use the results to study the Fourier-Laplace transform of a monodromic mixed Hodge module.
\end{abstract}

\maketitle

\section{Introduction}

\subsection{Motivation} 
For $X$ a smooth complex algebraic variety, Saito's theory of mixed Hodge modules on $X$ ~\cites{Saito-MHP,Saito-MHM} provides a vast generalization of the theory of variations of Hodge structure on $X$ (also see the survey ~\cite{Schnell-MHM}).

The main objects of study are holonomic $\cD_X$-modules equipped with a good filtration, called the Hodge filtration. Very roughly, mixed Hodge modules are defined inductively by forcing their restriction to hypersurfaces to be mixed Hodge modules. This restriction is defined using the $V$-filtration of Kashiwara and Malgrange (see~\cite{Kas83} and~\cite{Mal83} and Section \ref{subsec:VFilt} below). To have a satisfactory theory, then, it is important to require some sort of compatibility between the Hodge filtration $F$ and the $V$-filtration (see ~\cite{Saito-MHP}*{Section 3.2} or ~\cite{Schnell-MHM}*{Section 11}). For $Z \subseteq X$ a smooth subvariety of higher codimension, there is still a notion of $V$-filtration along $Z$ for holonomic $\cD_X$-modules. One of our main theorems proves an analogous compatibility condition between the Hodge filtration of a filtered $\cD_X$-module underlying a mixed Hodge module and this $V$-filtration along $Z$.

If one wants to restrict mixed Hodge modules to a smooth subvariety $Z\subseteq X$ of higher codimension, this is done by writing $Z$ locally as an intersection of smooth hypersurfaces, and then step-by-step restricting from one hypersurface to the next. Another main theorem gives a way to restrict mixed Hodge modules on $X$ to mixed Hodge modules on $Z$ in a single step (see Theorem ~\ref{thm:main}), using the $V$-filtration along $Z$. 

\subsection{Main Results} Let $f=(f_1,\dots,f_r):X\rightarrow \A^r$ be a smooth morphism between smooth algebraic varieties over $\C$, where $\A^r$ is the affine $r$-space with coordinates $(t_1,\dots, t_r)$. Let $Z\subseteq X$ be the fiber over the origin. Assume there exist global vector fields $\partial_1,\partial_2,\dots,\partial_r$ on $X$ dual to the one-forms $df_1,df_2,\dots,df_r$. Let $\cD_X$ be the sheaf of differential operators on $X$. 

When $r=1$, we have a smooth function $t$ and a global vector field $\partial_t$ such that $[\partial_t, t]=1$. We have already mentioned the $V$-filtration along the hypersurface defined by $t$ on a holonomic $\cD$-module $\cM$. Slightly more precisely (see Section \ref{subsec:VFilt} for a more detailed description), this is a decreasing filtration $V^\bullet \cM$, indexed by $\Q$, such that
\begin{enumerate}
    \item $t V^\alpha \cM \subseteq V^{\alpha+1}\cM$, with equality if $\alpha >0$,
    \item $\de_t V^\alpha \cM \subseteq V^{\alpha-1}\cM$,
    \item $t\de_t - \alpha +1$ is nilpotent on $\gr^{\alpha}_V \cM  = V^{\alpha}\cM/V^{>\alpha}\cM$.
\end{enumerate} 

In this case, the graded quotients $\gr_V^\alpha\cM$ are holonomic $\cD_Z$ modules that are used to define nearby cycles and vanishing cycles of $\cM$.  If a filtered $\cD_X$-module $(\cM,F)$ underlies a mixed Hodge module, then it is \emph{quasi-unipotent and regular} along a hypersurface (\cite{Saito-MHP}*{3.2} and see also \cite{Schnell-MHM}*{11.4}). This is a compatibility condition between the Hodge filtration $F$ and the $V$-filtration. By definition, it requires 
\begin{enumerate}
    \item $t: F_p V^{\alpha} \cM \rightarrow F_p V^{\alpha+1}\cM$ is an isomorphism for $\alpha >0$,
    \item $\partial_t: F_p\gr_V^{\alpha+1}\cM\rightarrow F_{p+1}\gr_V^{\alpha}\cM$ is an isomorphism for $\alpha< 0$.
\end{enumerate} 

In fact, all filtered $\cD_X$-modules underlying a mixed Hodge module on $X$ satisfy this property for \emph{any} locally defined function $g$. Also by the theory of Hodge modules, we have two distinguished triangles in the derived category of mixed Hodge modules on $Z$
\begin{equation}\label{eq:dis}
i^*\cM[-1] \to  \gr_V^{1}\cM \xrightarrow[]{\partial_t} \gr_V^0\cM  \to  i^*\cM \quad \text{and} \quad i^!\cM \to  \gr_V^0\cM \xrightarrow[]{t} \gr_V^{1}\cM  \to  i^!\cM[1],
\end{equation}
where $i:Z\to X$ is the closed immersion. This relates the $V$-filtration to the restriction functors; see the nice survey by Schnell~\cite{Schnell-MHM} . 

When $r\geq 2$, $\gr_V^\alpha\cM$ need not even be coherent as a $\cD_Z$-module in general (see Section ~\ref{subsec:nct}). This is a major difference of the theory in higher codimension. In this paper, we generalize the above properties concerning the $V$-filtration along hypersurfaces on mixed Hodge modules to higher codimension. The statement is formulated using certain Koszul-type complexes. For any \emph{left} filtered regular holonomic and quasi-unipotent $\cD_X$-module $\cM$ and rational number $\alpha$, define filtered complexes
\[
A^{\alpha}(\cM) =\left\{ (V^\alpha \cM,F[-r]) \xrightarrow[]{t} \bigoplus_{i=1}^r (V^{\alpha+1}\cM,F[-r])\xrightarrow[]{t} \cdots \xrightarrow[]{t} (V^{\alpha+r}\cM,F[-r])\right\}
\]
placed in degrees $0,1,\dots,r$,
\[
B^\alpha(\cM) =\left\{ (\gr^\alpha_V \cM,F[-r]) \xrightarrow[]{t} \bigoplus_{i=1}^r (\gr^{\alpha +1}_V\cM,F[-r])\xrightarrow[]{t} \cdots \xrightarrow[]{t} (\gr^{\alpha+r}_V\cM,F[-r])\right\}
\] 
as the quotient $A^{\alpha}(\cM)/A^{>\alpha}(\cM)$ and 
\[
C^\alpha(\cM)=\left\{ (\gr^{\alpha+r}_V \cM,F) \xrightarrow[]{\de_t} \bigoplus_{i=1}^r (\gr^{\alpha+r-1}_V \cM,F[-1]) \xrightarrow[]{\de_t} \cdots \xrightarrow[]{\de_t} (\gr^{\alpha}_V\cM,F[-r])\right\}
\] 
in degrees $-r,-r+1,\dots,0$, where $V^\bullet\cM$ is the $V$-filtration along $Z$, and $F[i]_k=F_{k-i}$. 

The first main result of this paper is a generalization to higher codimension of Saito's condition of a $\cD$-module being ``quasi-unipotent and regular" along a hypersurface.

\begin{thm}\label{thm:acyclic}
If the filtered $\cD_X$-module $(\cM,F)$ underlies a mixed Hodge module, then the Koszul-like complexes $A^{\chi}(\cM)$ (resp. $C^\chi(\cM)$) are filtered acyclic for $\chi >0$ (resp. $\chi<0$).
\end{thm}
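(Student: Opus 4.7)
I would prove this by induction on the codimension $r$. For $r=1$, $A^\chi(\cM)$ is the two-term complex $V^\chi \cM \xrightarrow{t_1} V^{\chi+1}\cM$, and its filtered acyclicity for $\chi > 0$ is precisely Saito's condition that $t_1 \colon F_p V^\chi \cM \to F_p V^{\chi+1}\cM$ is an isomorphism; similarly $C^\chi$ being acyclic for $\chi < 0$ is the dual statement for $\de_{t_1}$. Both hold because $(\cM,F)$ underlies a mixed Hodge module. For $r \ge 2$, single out one coordinate, say $t_r$, set $W = \{t_r = 0\}$, and write $V_r^\bullet$ for the one-variable $V$-filtration along $W$. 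The base case applied along $W$ yields strict compatibility of $F$ with $V_r^\bullet$, and shows that each $\gr_{V_r}^\b \cM$ (for $\b$ in a fundamental domain such as $(0,1]$) underlies a mixed Hodge module on $W$.

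\textbf{Reduction to lower codimension.} Since $Z \subset W$ is cut out by $(t_1,\dots,t_{r-1})$, the inductive hypothesis applies to $\gr_{V_r}^\b \cM$ with its $V$-filtration $\widetilde V^\bullet$ along $Z \subset W$, giving filtered acyclicity of the $(r-1)$-variable Koszul complexes $\widetilde A^{\chi'}$ and $\widetilde C^{\chi'}$ in the usual ranges. To transport this to $A^\chi(\cM)$ and $C^\chi(\cM)$, I would prove a compatibility lemma relating the two $V$-filtrations. Decomposing the Euler operator $E = \sum_{i=1}^{r} t_i \de_i$ as $E_W + t_r\de_r$ with $E_W = \sum_{i<r} t_i\de_i$, and noting that $t_r\de_r$ acts as $\b - 1$ plus a nilpotent on $\gr_{V_r}^\b \cM$, the uniqueness of the $V$-filtration forces the image of $V^\a \cM$ in $\gr_{V_r}^\b \cM$ to coincide with $\widetilde V^{\a - \b + 1} \gr_{V_r}^\b \cM$.

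\textbf{Assembling the Koszul complexes and the main obstacle.} With this compatibility in hand, view $A^\chi(\cM)$ as the total complex of a bicomplex, one direction involving $t_r$ and the other the Koszul differential for $(t_1,\dots,t_{r-1})$. Filtering by $V_r^\bullet$ and passing to the associated graded gives on each strip a shifted copy of $\widetilde A^{\chi - \b + 1}(\gr_{V_r}^\b \cM)$, which is filtered acyclic by induction since $\chi - \b + 1 > 0$ whenever $\chi > 0$ and $\b \in (0,1]$. A standard spectral sequence argument then yields filtered acyclicity of $A^\chi(\cM)$, and a parallel argument with $\de_{t_r}$ handles $C^\chi(\cM)$ for $\chi < 0$. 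The main obstacle is the compatibility lemma and the accompanying strictness: one must show that $V^\a \cM$ interacts cleanly with $V_r^\bullet$ so that $\gr_{V_r}^\b V^\a \cM$ computes the expected piece of $\widetilde V^{\a - \b + 1} \gr_{V_r}^\b \cM$, verify the index shift across all of $\Q$, and track the boundary case $\b = 1$ where the one-variable iso $t_r \colon F_p V_r^\b \to F_p V_r^{\b+1}$ just barely applies — any failure of strictness there would break the filtered acyclicity at the level of the bicomplex.
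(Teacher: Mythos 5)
Your approach has genuine gaps; the compatibility lemma you identify as the "main obstacle" is in fact the whole ballgame, and the indexing and range in your inductive step do not work out.

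\textbf{The indexing is off by one, and the range then fails.} With the paper's conventions, $\theta-\alpha+r$ is nilpotent on $\gr_V^\alpha\cM$ (codimension $r$) and $t_r\partial_r-\beta+1$ is nilpotent on $\gr_{V_r}^\beta\cM$ (codimension $1$); for the codimension-$(r-1)$ filtration $\widetilde V$ on $\gr_{V_r}^\beta\cM$ along $Z\subset W$, the operator $\theta_W-\gamma+(r-1)$ should be nilpotent on $\gr_{\widetilde V}^\gamma$. Writing $\theta_W=\theta-t_r\partial_r$, the eigenvalue comparison forces $\gamma=\alpha-\beta$, not $\alpha-\beta+1$ (one can verify this directly in the normal crossing example of Section~\ref{subsec:nct}: the image of $V^k\cM$ in $\gr_{V_y}^\beta\cM$ is $\bigoplus_{\alpha\geq k-\beta}\cM^{\alpha,\beta}=\widetilde V^{k-\beta}$). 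With the corrected index, the strips of your bicomplex carry $\widetilde A^{\chi-\beta}(\gr_{V_r}^\beta\cM)$, and for $\beta\geq\chi$ you land outside the range $\gamma>0$ where the inductive hypothesis says anything. Since the jumps of $V_r^\bullet$ on $V^{\chi+k}\cM$ occur at all $\beta$ in a discrete subset of $\Q$, not just $\beta\in(0,1]$, this is not a boundary phenomenon you can legislate away by choosing a fundamental domain: the spectral sequence genuinely has $E_1$-terms to which the induction gives you no access.

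\textbf{The compatibility lemma is not a consequence of uniqueness, and in filtered form it is essentially the theorem.} You invoke "uniqueness of the $V$-filtration" to identify the image of $V^\alpha\cM$ in $\gr_{V_r}^\beta\cM$ with $\widetilde V^{\alpha-\beta}\gr_{V_r}^\beta\cM$. Uniqueness tells you that \emph{if} the induced filtration satisfies all defining properties of a $V$-filtration, then it is the $V$-filtration. But verifying those properties — coherence over $V^0\cD$, the full set of interaction conditions, discreteness — requires knowing that the two filtrations $V^\bullet$ and $V_r^\bullet$ on $\cM$ are compatible (i.e., strict with respect to each other), and this is a nontrivial theorem, not a formality. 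Worse, what you actually need is the \emph{trifiltered} compatibility of $(F,V^\bullet,V_r^\bullet)$ so that the filtered graded pieces of your bicomplex are correctly computed and the spectral sequence degenerates strictly. That trifiltered compatibility is essentially the statement you are trying to prove. So even setting aside the indexing, the proposal is close to circular: the strictness you flag as "the main obstacle" does not have an independent proof that is simpler than Theorem~\ref{thm:acyclic} itself.

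\textbf{What the paper does instead.} The paper avoids iterated $V$-filtrations entirely. The first proof first reduces the mixed case to the pure case using trifiltered compatibility of $(F,V,W)$ obtained from the deformation to the normal cone (Lemma~\ref{thm:compatible}), then decomposes a pure Hodge module by strict support: the case $\operatorname{supp}\cM\subseteq Z$ is handled directly via Kashiwara's equivalence, and in the complementary case the module is exhibited as a direct summand of $\cH^0\pi_+\widehat\cM$ for the blow-up $\pi:\widehat X\to X$ along $Z$. The key inputs are then local and explicit: after factoring $\pi$ through a graph embedding where one of the Koszul differentials is filtered bijective, the direct image step is controlled by the bistrictness of $(F,V)$ from \cite{BMS} and the Hard Lefschetz decomposition from Saito's direct image theorem (Theorem~\ref{dirimthm}). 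The second proof compactifies the projection $T_ZX\to Z$ to $Z\times\P^r\to Z$, computes with a \v Cech complex on the standard affine cover, and again uses strictness of the direct image. Neither argument needs to compare $V$-filtrations of different codimensions on the same module, which is precisely what your induction cannot do cheaply.
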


Our next theorem is a generalization of the distinguished triangles~\eqref{eq:dis}. To simplify the notation, denote $B(\cM):=B^0(\cM)$ and $C(\cM):=C^0(\cM)$. We give a comparison between $B(\cM)$ (resp. $C(\cM)$) and $i^!\cM$ (resp. $i^*\cM$) if $\cM$ underlies a mixed Hodge module. Here $i: Z \hookrightarrow X$ is the closed embedding.

\begin{thm}\label{thm:main} 
    Let $M=(\cM,F,L,\cK)$ be a mixed Hodge module where $F$ is the Hodge filtration, $L$ is the weight filtration and $\cK$ is the $\Q$-structure of the $\cD_X$-module $\cM$ i.e. $\DR_X\cM\simeq \cK\otimes_\Q \C$. Let $\theta=\sum^r_{i=1}t_i\partial_i$ be the Euler vector field. Then we have:
    \begin{enumerate}
        \item\label{item:a} the complexes $B(\cM)$ and $C(\cM)$ together with the filtrations $W$ induced by the relative monodromy filtration $W=W(\theta-\alpha+r,\gr_V^\alpha L_\bullet\cM)$ on $\gr_V^\alpha\cM$ are mixed Hodge complexes on $Z$;
        \item \label{item:b} the complex $B(\cM)$ (resp. $C(\cM)$) is isomorphic to $(i^!\cM,F)$ (resp. $(i^*\cM,F)$) in the derived category of filtered $\cD$-modules with $\Q$-structure; 
        \item  \label{item:c} moreover, the isomorphisms in $(b)$ are compatible with weight filtration and induce isomorphism on the cohomologies:
        \[
        \gr^W_k\cH^\ell B(\cM)\simeq \gr^W_{k+\ell}\cH^\ell i^!\cM \quad \text{and} \quad \gr^W_{k}\cH^{-\ell} C(\cM)\simeq \gr^W_{k-\ell}\cH^{-\ell} i^*\cM
        \]
        as polarizable Hodge modules for $\ell\geq 0$.
    \end{enumerate}
\end{thm}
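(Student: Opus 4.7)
The plan is to prove parts (a), (b), (c) together, using Theorem \ref{thm:acyclic} as the principal input and induction on the codimension $r$. The base case $r=1$ is essentially classical: the distinguished triangles in \eqref{eq:dis} identify $B^0(\cM)$ with $i^!\cM$ and $C^0(\cM)$ with $i^*\cM$ in the derived category of mixed Hodge modules on $Z$, and the relative monodromy weight filtration on $\gr_V^0 \cM$ and $\gr_V^1 \cM$ supplies the mixed Hodge complex structure. For $r \geq 2$, the induction is driven by the factorization $i = j \circ i'$, where $j : Z_{r-1} = V(f_1,\dots,f_{r-1}) \hookrightarrow X$ has codimension $r-1$ and $i' : Z \hookrightarrow Z_{r-1}$ is the hypersurface cut out by the restriction of $f_r$.

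For part (b), I would construct natural morphisms $C(\cM) \to i^*\cM$ and $B(\cM) \to i^!\cM$ of filtered complexes of $\cD_Z$-modules with $\Q$-structure. The first morphism arises from the presentation $i^*\cM = \cO_Z \otimes^L_{\cO_X} \cM$ as a Koszul complex on $\cM$ via multiplications by $t_1,\dots,t_r$; projecting each term $\cM \twoheadrightarrow \gr_V^\alpha \cM$ for the appropriate $\alpha$ lands one in $C(\cM)$. The second morphism $B(\cM) \to i^!\cM$ is constructed dually from the Koszul resolution of $\cO_Z$. To show these are filtered quasi-isomorphisms, one applies the inductive hypothesis for codimension $r-1$ to compute $j^*\cM$ and $j^!\cM$ as Koszul complexes along $Z_{r-1}$, then applies the $r=1$ case to the hypersurface $i'$ at each term; Theorem \ref{thm:acyclic} enters to ensure that the $V$-filtration along $Z$ splits compatibly with this factorization, guaranteeing the acyclicity of the subcomplexes that appear in comparing the iterated construction to the one-step Koszul complex along $Z$. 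For part (a), the weight filtration $W(\theta-\alpha+r, \gr_V^\alpha L_\bullet \cM)$ makes each $\gr_V^\alpha\cM$ into a mixed Hodge module by Saito's theory of multi-variable nearby cycles, and strictness of the Koszul differentials with respect to $F$ is exactly Theorem \ref{thm:acyclic}, while strictness with respect to $W$ follows from standard $\fsl_2$-representation-theoretic facts about the commuting nilpotent operators $t_i \partial_i$.

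Part (c) then follows by combining (a) and (b): the weight shift $\gr^W_k \cH^\ell B(\cM) \simeq \gr^W_{k+\ell} \cH^\ell i^!\cM$ reflects that each Koszul differential contributes a shift of $-1$ in the weight filtration on the $E_2$-page of the associated spectral sequence, as in the Saito--Steenbrink formalism for iterated nearby and vanishing cycles. The main obstacle I anticipate is the verification in (c) that the \emph{one-step} relative monodromy filtration defined via $\theta = \sum t_i \partial_i$ agrees with the \emph{iterated} monodromy filtration produced by the induction in (b); reconciling these requires the multi-variable analog of the classical statement that the relative monodromy filtration of a sum of commuting nilpotent operators coincides with the iterated one, together with careful bookkeeping of Tate twists at every step of the induction.
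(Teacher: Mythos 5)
Your approach differs substantially from the paper's, and it has a genuine gap at its core. The paper does \emph{not} argue by induction on codimension; it proves (a) by reducing the pure case to a blow-up, graph embedding, and projection (using the decomposition theorem, the bistrictness of Budur--Musta\c{t}\u{a}--Saito, and Hard Lefschetz), then reduces the mixed case to the pure case via a Deligne-type functorial splitting of the relative monodromy filtration. Part (b) is proved with \v{C}ech double complexes comparing $B(\cM)$, $C(\cM)$ to $\gr_V^0$ of \v{C}ech complexes computing $i_+i^!$ and $i_+i^*$. Part (c) is extracted from the weight spectral sequences of those double complexes. Crucially, all reductions to codimension one go through the specialization $\Sp(\cM)$ on the normal bundle, not through a chain of hypersurfaces.

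The gap in your plan is concentrated exactly where you flag it, but you underestimate its severity. You want to reconcile the \emph{one-step} relative monodromy filtration $W(\theta-\alpha+r,\gr_V^\alpha L_\bullet\cM)$ with the \emph{iterated} one obtained by factoring $i=j\circ i'$, and you propose to invoke a "multi-variable analog" of the fact that the monodromy filtration of a sum of commuting nilpotents is the iterated one. That analog is false in general. It holds for modules of normal crossing type (see the paper's Section 2.3, quoting Saito), but a general mixed Hodge module along $Z$ does not split into simultaneous eigenspaces for the individual $t_i\partial_i$, and the iterated $V$-filtration along the chain $Z\subset Z_{r-1}\subset\cdots\subset X$ need not determine the one-step $V$-filtration along $Z$, let alone the one-step relative monodromy filtration. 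Without this compatibility, your inductive step does not close. The paper's use of deformation to the normal cone is precisely what replaces this unavailable compatibility: it turns one-step questions in codimension $r$ into one-step questions in codimension one, rather than into iterated codimension-one questions.

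Two more local problems. First, in part (a) you say the relative monodromy filtration makes $\gr_V^\alpha\cM$ into a mixed Hodge module "by Saito's theory of multi-variable nearby cycles." For $r\geq 2$ the modules $\gr_V^\alpha\cM$ are typically not even $\cD_Z$-coherent (as the paper notes in Section 2.3), so they cannot be mixed Hodge modules; the mixed Hodge complex structure lives on the entire Koszul-like complex, and establishing it — in particular the decomposition $\gr^W_k(B(\cM),F)\simeq\bigoplus_\ell\cH^\ell\gr^W_k(B(\cM),F)[-\ell]$ and the identification of the cohomology objects as polarizable Hodge modules of the right weight — is the heart of the theorem, not a consequence of "standard $\fsl_2$ facts." Second, your proposed morphism $C(\cM)\to i^*\cM$ via "projecting each term $\cM\twoheadrightarrow\gr_V^\alpha\cM$" is not well-defined: there is no projection $\cM\to\gr_V^\alpha\cM$, only $V^\alpha\cM\to\gr_V^\alpha\cM$, and the resulting comparison with the Koszul complex on $\cM$ along $t_1,\dots,t_r$ requires exactly the acyclicity results $A_\alpha$, $B_\alpha$ plus a careful passage through $A_0(\cM)\hookrightarrow A_\infty(\cM)$ and the quotient $A_0(\cM)\to B_0(\cM)$, as in the paper's Theorem 3.6.
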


Part of the proof of (b) is to show that $B(\cM)$ and $C(\cM)$ have $\Q$-structure, and then to relate the $\Q$-structure on these complexes with those defined by Saito's theory on $i^!\cM$ and $i^*\cM$, respectively.

See Section~\ref{sec:mhc} and and ~\cite{MHC} for the definition of mixed Hodge complexes. The existence of the relative monodromy filtration $W=W(\theta-\alpha+r,\gr_V^\alpha L_\bullet\cM)$ on $\gr_V^\alpha\cM$ can be achieved easily using the deformation to the normal cone (see Lemma~\ref{lem:relmonext}). 

Forgetting the weight filtration, the two distinguished triangles~\eqref{eq:dis} are recovered by Theorem~\ref{thm:main}(a). The reason why we do not get the distinguished triangles in the derived category of mixed Hodge modules is that we directly use the monodromy filtrations relative to $\gr^0_V(L\cM)$ on $\gr^0_V\cM$ without the shift in Saito's definition of vanishing cycles (see Section~\ref{subsec:admiss}). For further motivation and discussion, see Section~\ref{subsec:nct}.

Theorem~\ref{thm:main} simplifies the calculation of the restriction functors. For example, the usual approach to computation of $i^!$ uses either the Koszul complex induced by the iterated $V$-filtrations along hypersurfaces or the Koszul complex induced by the localization along hypersurfaces. Theorem~\ref{thm:main} says that we can bypass the localization or the iterated $V$-filtration by a one-step calculation on the $V$-filtration along $Z$. 

As a very special case of Theorem~\ref{thm:acyclic}, we give a Hodge-theoretic proof of Skoda's famous theorem concerning multiplier ideals. For the definition of multiplier ideals and their properties (as well as a proof of Skoda's theorem), see ~[Ch. 9]\cite{Laz}. See \cite{BMS} for the relation between the multiplier ideals and the $V$-filtration.

\begin{cor}[Skoda]
Let $\mathfrak a$ be a coherent ideal of $\cO_X$ generated by $r$ elements and $\cJ(X,\mathfrak a^c)$ be the multiplier ideal of exponent $c$. Then we have 
\[
\cJ(X,\mathfrak a^{c})=\mathfrak a \cJ(X,\mathfrak a^{c-1})
\]
for any $c\geq r$.
\end{cor}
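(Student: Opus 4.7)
The plan is to derive the corollary from Theorem~\ref{thm:acyclic} via the description of multiplier ideals in terms of the $V$-filtration from \cite{BMS}. First I would choose generators $f_1,\dots,f_r$ of $\fa$ and consider the graph embedding
\[
\iota_f\colon X\hookrightarrow Y:=X\times \A^r,\qquad x\mapsto (x,f_1(x),\dots,f_r(x)),
\]
forming the mixed Hodge module $\cB_f:=\iota_{f,+}\cO_X$ on $Y$, with the $V$-filtration on $\cB_f$ taken along $Z:=X\times\{0\}$. As a $\cD_Y$-module one has $\cB_f\cong\bigoplus_{\alpha\in\N^r}\cO_X\cdot\partial^\alpha\delta$, with Hodge filtration $F_p\cB_f=\bigoplus_{|\alpha|\leq p}\cO_X\cdot\partial^\alpha\delta$; in particular $F_0\cB_f=\cO_X\cdot\delta$. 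From the relations $(t_i-f_i)\delta=0$ it follows that the function $t_i$ acts on $F_0\cB_f$ as multiplication by $f_i$.

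The key input from \cite{BMS} is an identification of the form
\[
\cJ(X,\fa^c)\cdot\delta \;=\; F_0 V^{>c}\cB_f
\]
inside $F_0\cB_f$. The inclusion $\fa\cdot\cJ(X,\fa^{c-1})\subseteq\cJ(X,\fa^c)$ is standard, so the remaining task is the reverse inclusion when $c\geq r$. For this it is enough to show
\[
F_0 V^{\beta}\cB_f \;=\; \sum_{i=1}^r t_i\, F_0 V^{\beta-1}\cB_f \quad \text{for every } \beta>c,
\]
because taking unions over $\beta>c$ and invoking the BMS formula together with $t_i|_{F_0}=f_i$ translates this into $\cJ(X,\fa^c)=\fa\cdot\cJ(X,\fa^{c-1})$.

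To prove the displayed equality, set $\chi:=\beta-r$. Since $c\geq r$ and $\beta>c$ we have $\chi>0$, so Theorem~\ref{thm:acyclic} applies to $\cM=\cB_f$ and the Koszul-type complex $A^\chi(\cB_f)$ is filtered acyclic. Accounting for the uniform shift $F[-r]$ in the definition of $A^\chi$, filtered acyclicity in cohomological degree $r$ at Hodge level $0$ says precisely that the final Koszul differential
\[
\bigoplus_{i=1}^r F_0 V^{\chi+r-1}\cB_f \;\xrightarrow[]{\;(t_1,\dots,t_r)\;}\; F_0 V^{\chi+r}\cB_f
\]
is surjective, which is the required equality upon setting $\beta=\chi+r$.

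The only real work here is bookkeeping: matching the shift $F[-r]$ in the definition of $A^\chi$ with the normalization of the $V$-filtration used in the BMS formula, and confirming the elementary computation that $t_i$ acts as $f_i$ on $F_0\cB_f$. With those settled, Skoda's theorem is an immediate filtered-Koszul consequence of Theorem~\ref{thm:acyclic}; the Hodge-theoretic content of Skoda is entirely absorbed into the acyclicity statement that $A^\chi(\cM)$ is filtered acyclic for every $\chi>0$.
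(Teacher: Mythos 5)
Your proof is correct and follows essentially the same route as the paper: choose the graph embedding $\iota$ along $f_1,\dots,f_r$, invoke the Budur--Musta\c{t}\u{a}--Saito identification of $\cJ(X,\fa^c)$ as the bottom Hodge piece of $V^{>c}\iota_+\cO_X$, and read off surjectivity of the last Koszul differential from the filtered acyclicity of $A^\chi(\iota_+\cO_X)$ for $\chi=\beta-r>0$, which holds exactly when $c\geq r$. The only difference from the paper is cosmetic: you index the Hodge filtration so that $F_0\iota_+\cO_X=\cO_X\cdot\delta$, whereas the paper uses $F_r\iota_+\cO_X=\cO_X\cdot\delta$ and so phrases the BMS formula as $\cJ(X,\fa^c)=F_rV^{c+\varepsilon}\iota_+\cO_X$; the shift $F[-r]$ built into the definition of $A^\chi$ reconciles the two conventions, as you note. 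Your argument is a somewhat more explicit write-out of what the paper compresses into one sentence, and the bookkeeping checks out.
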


One of our main tools in this paper is the process of ``specialization", which is described in ~\cite[Section 2.30]{Saito-MHM} and also used in ~\cite{BMS}. From any mixed Hodge module $M$ on $X$, we obtain a \emph{monodromic} mixed Hodge module $\Sp(M)$ on $T_Z X$, the normal bundle of $Z$ inside $X$.

Our main application of Theorem \ref{thm:main} is to the Fourier transform of monodromic mixed Hodge modules. To define a monodromic $\cD$-module, let $E \to X$ be a vector bundle of rank $r$. A $\cD_{E}$-module $\cM$ is \emph{monodromic} if, for any local trivialization $E \cong X \times \A^r$, $\cM$ decomposes into generalized eigenspaces
\begin{equation} \label{deco1} \cM = \bigoplus_{\chi \in \C} \cM^\chi,\end{equation}
where $\theta - \chi +r = \sum_{i=1}^r \de_{z_i}z_i - \chi$ is nilpotent on $\cM^\chi$. We say that a mixed Hodge module $M$ on $E$ is \emph{monodromic} if the underlying $\cD$-module $\cM$ is a monodromic $\cD_E$-module. We denote the abelian category of monodromic mixed Hodge modules by $\MHM_{mon}(E)$. If $\cM$ underlies a mixed Hodge module, then in fact the only non-zero summands in the decomposition \ref{deco1} are for $\chi \in \Q$.

Let $E^\vee$ be the dual bundle. The Fourier-Laplace transform of any $\cD_{E}$-module is a $\cD_{E^\vee}$-module, denoted $\mathsf{FL}(\cM)$ (or, if we want to stress that we are on $E$, it is denoted $\FL_E(\cM)$), where, if $E \cong X\times \A^r$ and $E^\vee \cong X\times \A^r$ are local trivializations, with coordinates $z_1,\dots, z_r$ and $w_1,\dots, w_r$ on the respective $\A^r$ factors, then $\mathsf{FL}(\cM)$ has the same underlying $\cD_X$-module structure as $\cM$, but
\[ w_i m = -\de_{z_i} m, \quad \de_{w_i} m = z_i m.\]

This functor preserves the property of being monodromic, however, it does not have a lift to the category of mixed Hodge modules. In fact, even if $\cM$ has regular singularities, it is possible for $\mathsf{FL}(\cM)$ to have irregular singularities. Thankfully, Brylinski \cite{Bry} showed that if $\cM$ is \emph{monodromic} with regular singularities, then $\mathsf{FL}(\cM)$ also has regular singularities.

In fact, one can express the Fourier transform for monodromic modules as a composition of functors coming from geometry (called the \emph{Fourier-Sato transform} or \emph{Monodromic Fourier transform}). Let $\cE = E\times_X E^\vee$, with the regular function $g: \cE \to \A^1$ given by the natural evaluation map. Locally, with coordinates $z_1,\dots, z_r$ and $w_1,\dots, w_r$ as above, $g = \sum_{i=1}^r z_iw_i$. Consider the projection $p: \cE \to E$ and the zero-section $\sigma: E^\vee \to \cE$. 

Let $\psi_g = \psi_{g,1} \oplus \psi_{g,\neq 1}: \MHM(\cE\times \A^1) \to \MHM(\cE)$ be the total nearby cycles functor, and $\phi_g = \phi_{g,1} \oplus \psi_{g,\neq 1}: \MHM(\cE\times \A^1) \to \MHM(\cE)$ be the total vanishing cycles functor. For their definition, see Section \ref{subsec:conv}.

Our main theorem concerning the Fourier transform is the following:

\begin{restatable}{thm}{fourier} \label{thm:fourier} Let $\cM$ be the filtered $\cD_E$-module underlying a monodromic mixed Hodge module $M$ on $E$. Then the $\cD_{E^\vee}$-module underlying the mixed Hodge module
\[ \cH^0 \sigma^* \phi_g (p^!(M)[-r]) \in \MHM(E^\vee)\]
is isomorphic to $\mathsf{FL}(\cM)$. We denote this composition of functors by $\FL(M)$.
 
Moreover, for $\lambda \in [0,1)$ and $\ell \in \Z$, we have
\[ F_p \mathsf{FL}(\cM)^{r-(\lambda+\ell)} = F_{p-\ell - \lceil \lambda \rceil} \cM^{\lambda+\ell}.\]

Finally, the weight filtration is given by
\[ W_k \FL(\cM)^{\lambda + \Z} = \FL(W_{k+r+\lceil \lambda \rceil} \cM)^{\lambda+ \Z}.\]

Here $\FL(\cM)^{\lambda +\Z} = \bigoplus_{\ell \in \Z} \FL(\cM)^{\lambda + \ell}$ for any $\lambda \in [0,1)$.
\end{restatable}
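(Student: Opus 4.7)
The plan is to establish the three assertions in sequence, reducing the codimension-$r$ restriction $\sigma^*$ to a single step via Theorem~\ref{thm:main}. Working locally with trivializations $E\simeq X\times\A^r_z$, $E^\vee\simeq X\times\A^r_w$, we have $\cE = X\times\A^{2r}$ with $g=\sum z_iw_i$, and $p^!M[-r]$ has underlying $\cD_{\cE}$-module $p^*\cM$ with Hodge filtration shifted by $-r$. The monodromic decomposition $\cM=\bigoplus_\chi\cM^\chi$ pulls back to a decomposition of $p^*\cM$ with respect to the Euler field $\theta_z=\sum z_i\de_{z_i}$, so everything that follows can be handled one eigenspace at a time.

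The first concrete task is to compute the $V$-filtration of $p^*\cM$ along the hypersurface $\{g=0\}$ from the monodromic structure. Using that $\theta_z+\theta_w$ differs from $g\de_g$ by a term of lower $V$-order and that $\theta_z$ has prescribed eigenvalues on $p^*\cM^\chi$, one identifies $\gr_V^\alpha p^*\cM^\chi$ explicitly, and hence describes $\phi_g(p^!M[-r])$ eigenspace-by-eigenspace. To then apply $\sigma^*$, the key input is Theorem~\ref{thm:main}(b): instead of iterating restrictions along $r$ hypersurfaces cutting out $E^\vee\subset\cE$, we compute $\cH^0\sigma^*$ in one step using the Koszul complex $C(-)$ built from the $V$-filtration along $E^\vee$ in $\cE$. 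On each monodromic summand this Koszul complex collapses to a single cohomological degree, and identifying the resulting $\cD_{E^\vee}$-module with $\mathsf{FL}(\cM)$ reduces to checking that the residual $z_i$ and $\de_{w_i}$ actions on the Koszul cohomology realize the Fourier substitutions $w_i\leftrightarrow -\de_{z_i}$, $\de_{w_i}\leftrightarrow z_i$.

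The Hodge filtration formula then follows by combining four shifts: the $[-r]$ in the definition of $p^!M[-r]$, the normalization of smooth pullback in $p^!$, the Hodge shift of $\phi_g$ on each $\lambda$-eigenspace, and the $F[-r]$ appearing in the complex $C$ of Theorem~\ref{thm:main}(b). Carefully tracking these, and noting that extracting the $\lambda+\ell$ eigenspace requires iterating the compatibility $\de_t:F_p\gr_V^{\alpha+1}\xrightarrow{\sim}F_{p+1}\gr_V^\alpha$ of Theorem~\ref{thm:acyclic} exactly $\ell+\lceil\lambda\rceil$ times, produces the stated identity $F_p\mathsf{FL}(\cM)^{r-(\lambda+\ell)}=F_{p-\ell-\lceil\lambda\rceil}\cM^{\lambda+\ell}$. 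The weight filtration formula follows from Theorem~\ref{thm:main}(c), together with the fact that $p^!$ and $\phi_g$ preserve the weight filtration up to the expected shift, by specializing to the $\lambda$-summand and reading off the contribution of the relative monodromy filtration associated with the Euler field on that summand.

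The main obstacle is the middle paragraph: correctly pinning down the $V$-filtration of $p^*\cM$ along $\{g=0\}$ in terms of the monodromic decomposition of $\cM$, and then verifying that the composite $\cH^0\sigma^*\phi_g p^![-r]$ produces the genuine Fourier transform $\mathsf{FL}(\cM)$ rather than a twist or a dual. This requires delicate coordinate-level bookkeeping with the two commuting Euler fields $\theta_z$ and $\theta_w$ and the interaction of the $V$-filtrations along $\{g=0\}$ and along $\{z=0\}$; in particular, one must pass from the natural $\theta_z+\theta_w$ grading that governs $\phi_g$ to the $\theta_z$-grading that governs monodromy and to the $\theta_w$-grading that governs the transform, and verify that the three grading conventions are compatible under the stated shifts.
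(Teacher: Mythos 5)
Your high-level plan does match the paper's: pass to the graph embedding of $g$, apply Theorem~\ref{thm:main}(b) (the $C$-complex) to compute $\cH^0\sigma^*$ in one step, and exploit monodromy along $\theta_z$ and $\theta_w$. But there are two genuine gaps exactly where the paper invests its effort.

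First, the proposal treats the $V$-filtration of $\Gamma_+(\cM[w])$ along $\xi$ as essentially readable off from the relation between $\theta_z+\theta_w$ and $g\de_g$. That relation holds at the level of the \emph{nilpotent part} ($T:=\theta_z+\xi\de+1$ and $S:=\theta_w+\xi\de+1$ commute and their simultaneous eigenspaces $E_{\beta,\ell}$ refine the $V$-grading once the $V$-filtration is known), but it does not by itself determine $V^\lambda$: the known formula for the $V$-filtration of a quasi-homogeneous $g$ applies to $\cO$-coherent modules, not to arbitrary monodromic $\cM$. The paper's Section~6.3 therefore constructs candidate filtrations $U^\bullet$ case-by-case ($\lambda=0$ and $\lambda\in(0,1)$) from explicit $V^0\cD$-generators $F^0_{\lambda,0}$, $F^r_{0,r}$, etc., and then checks exhaustiveness, coherence, nilpotency of $\de\xi-\lambda$, and the relations $\xi U^0\subseteq U^1$, $\de U^1\subseteq U^0$. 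All of these checks rely on acyclicity of the Koszul-like complexes for monodromic modules (so that $z_i$ and $\de_{z_i}$ act surjectively off the critical eigenvalues) and on the explicit kernel computation in Lemma~\ref{kernel} to verify $(\de\xi-\lambda)^a U^\lambda\subseteq U^{>\lambda}$. Your proposal names none of these ingredients, and without them the description of $\phi_g$ eigenspace-by-eigenspace is unsupported. Likewise, identifying $\cH^0\sigma^*$ with $\mathsf{FL}(\cM)$ is not a matter of "the residual actions realize the Fourier substitutions": one must actually produce the map $\varphi_{\beta,\ell}(\sum m_\alpha w^\alpha\otimes\de^{|\alpha|+\ell})=(-1)^\ell\sum\de_z^\alpha(m_\alpha)$, show it is surjective and compute its kernel using both Lemma~\ref{kernel} and the description of $V^{>\lambda}$, and separately handle $\ell<0$ by a Koszul-acyclicity induction. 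The $C$-complex does not "collapse to a single cohomological degree"; you take $\cH^0$, which is a cokernel, and the whole point is to identify that cokernel.

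Second, the weight filtration does not follow from Theorem~\ref{thm:main}(c) alone. The paper first proves that for a monodromic mixed Hodge module the weight filtration is its own relative monodromy filtration for $N=\bigoplus_\chi(\theta-\chi+r)$, then in Lemma~\ref{weight} analyzes the iterated relative monodromy filtrations $W^0\subseteq W^1\subseteq W^2$ arising from the decomposition $\tilde N=N_1+N_2$ (where $N_1=\de\xi-\lambda$ governs $\phi_{g,\lambda}$ and $N_2=\theta_z+r$ governs $\sigma^*$), showing $W^2=W^1$ by functoriality, and thereby obtaining only the containment $W_k\FL(\cM)^{\lambda+\Z}\subseteq\FL(W_{k+r+\lceil\lambda\rceil}\cM)^{\lambda+\Z}$. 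Equality is then deduced by running the same inclusion through the inverse Fourier transform $\overline{\FL}$ together with the Fourier inversion isomorphism $a^*\overline{\FL}_{E^\vee}\FL_E(\cM)\cong\cM$ (compatible with $F$ and $W$). Your proposal omits the entire argument that the two-step relative monodromy filtration stabilizes and the inversion step that upgrades the inclusion to an equality, so as written it would at best establish one containment for the weight filtration.
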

The reason why we need Theorem \ref{thm:main} in the proof of this theorem is that the last functor $\cH^0 \sigma^*$ is most easily understood in the context of that theorem. This allows us to understand both $F_\bullet$ and $W_\bullet$ on $\FL(\cM)$. The last statement was already shown for a special class of monodromic mixed Hodge modules in ~\cite[Prop. 4.12]{GKZ}, where the authors of loc. cit. use the definition of Fourier-Sato transform as in \cite{KS}, see Remark \ref{rmk:comp} below for a comparison.

In \cite{GKZ}, they define the monodromic Fourier transform as a functor from $D^b_{mon}(\MHM(E))$ to $D^b_{mon}(\MHM(E^\vee))$, where $D^b_{mon}(-)$ is the full subcategory of $D^b(-)$ of objects with monodromic cohomology modules. Then, using the fact that this computes $\FL$ (which is an exact functor), they conclude that the functor descends to one $\MHM_{mon}(E) \to \MHM_{mon}(E^\vee)$. In the same way, we can extend the functor $\FL$ which we have defined to the entire mondromic derived category, giving
\[ \FL = \sigma^* \phi_g p^![-r] : D^b_{mon}(\MHM(E)) \to D^b_{mon}(\MHM(E^\vee)).\]

Not only does Theorem \ref{thm:fourier} follow from Theorem \ref{thm:main}, but it also recasts Theorem \ref{thm:acyclic} in a more symmetric way. Indeed, it is not hard to see that there are equalities $B^\chi(\cM) = B^\chi(\Sp(\cM)), C^\chi(\cM) = C^\chi(\Sp(\cM))$. Similarly, it is not hard to see that (at least, ignoring the filtration) $C^\chi(\FL(\Sp(\cM)) = B^{-\chi}(\Sp(\cM))$ and $B^\chi(\FL(\Sp(\cM)) = C^{-\chi}(\Sp(\cM))$. Using the theorem, one can check that, actually, these equalities do hold at the filtered level. Hence, filtered acyclicity of the $B^\bullet$ complex follows from filtered acyclicity of the $C^\bullet$ complex, and conversely.

Note that the Hodge filtration is exactly the same as that which is obtained from ~\cite[Prop. 3.25]{monoMHM}. As remarked in ~\cite[Rem. 3.24]{monoMHM}, the Fourier-Laplace transform can be endowed with many different mixed Hodge module structures. So the utility of this theorem is to make explicit the Hodge and weight filtrations for \emph{some} mixed Hodge module structure on $\FL(\cM)$.

In order to get the information about the Hodge filtration for $\mathsf{FL}(\cM)$, it is important to know that the Hodge filtration for monodromic mixed Hodge modules decomposes along the eigenspace decomposition. For the case $r=1$, this was shown in ~\cite[Thm 2.2]{monoMHM}. We use this as a base case for induction to show
\begin{thm} Let $(\cM,F,W)$ be the bifiltered $\cD$-module underlying a monodromic mixed Hodge module on $E$. Then
\[ F_\bullet \cM = \bigoplus_{\chi \in \Q} F_\bullet \cM^\chi,\]
where $F_\bullet \cM^\chi = \cM^\chi \cap F_\bullet \cM$.

Moreover, the weight filtration $W_\bullet \cM$ is its own relative monodromy filtration along the nilpotent operator $N = \bigoplus_{\chi \in \Q} (\theta - \chi+r)$, where $\theta = \sum_{i=1}^r z_i \de_{z_i}$. In particular, if $\cM$ is pure, then the monodromy filtration is trivial, so for all $\chi \in \Q$, we have
\[ \cM^\chi = \ker(\theta - \chi +r).\]
\end{thm}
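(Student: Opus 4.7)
The plan is an induction on the rank $r$ of the bundle $E$, with base case $r=1$ supplied by \cite[Thm. 2.2]{monoMHM}; the weight-filtration part will follow once the Hodge decomposition is established.

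For the inductive step, I would work locally in a trivialization $E \cong X \times \A^r$ and view $E$ as a rank-$1$ bundle over $E_1 := X \times \A^{r-1}$ in the last coordinate $z_r$, so that $\theta = \theta_1 + z_r\de_{z_r}$ with $\theta_1 := \sum_{i=1}^{r-1} z_i\de_{z_i}$. First one checks that $\cM$ is also monodromic for this rank-$1$ structure: since $z_r\de_{z_r}$ commutes with $\theta$ it preserves every summand $\cM^\chi$, and the regularity and quasi-unipotence of the $V$-filtration along $\{z_r = 0\}$ furnish generalized eigenspaces $\cM = \bigoplus_\mu \cM_{[\mu]}$ on which $z_r\de_{z_r} - \mu + 1$ is nilpotent. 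The base case then yields
\[ F_\bullet \cM = \bigoplus_{\mu \in \Q} F_\bullet \cM_{[\mu]}. \]
Under the identification of generalized eigenspaces with graded pieces of the $V$-filtration along $\{z_r = 0\}$, each $\cM_{[\mu]}$ acquires the structure of a mixed Hodge module on $E_1$ (as a summand of $\psi_{z_r}\cM$ or $\phi_{z_r,1}\cM$, possibly up to translation by $z_r$ or $\de_{z_r}$), and is monodromic of rank $r-1$ with respect to $\theta_1$. Applying the inductive hypothesis to each $\cM_{[\mu]}$ gives a further decomposition of $F_\bullet \cM_{[\mu]}$ along $\theta_1$-eigenspaces, and combined with the above this yields a joint $(\mu, \theta_1)$-eigenspace decomposition of $F_\bullet \cM$. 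Since $\theta = \theta_1 + z_r\de_{z_r}$, this is exactly the $\theta$-decomposition required.

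The main obstacle is that the $z_r\de_{z_r}$-eigenspaces $\cM_{[\mu]}$ are not $\cD_E$-submodules of $\cM$ (as $z_r$ and $\de_{z_r}$ shift the $\mu$-grading), so the inductive hypothesis cannot be applied to them in $\MHM(E)$ directly. The passage through nearby and vanishing cycles on $E_1$ is therefore essential; what makes it work is Theorem \ref{thm:acyclic}, which ensures that the identifications $\cM_{[\mu]} \simeq \psi_{z_r,\lambda}\cM$ and the translations by $z_r, \de_{z_r}$ used to move between different $\mu$'s are strict for the Hodge filtration.

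For the weight-filtration statement, once $F_\bullet$ is known to respect the $\theta$-decomposition, $N := \bigoplus_\chi(\theta - \chi + r)$ is a well-defined nilpotent endomorphism of $(\cM, F, W)$ that lifts to an endomorphism of the underlying mixed Hodge module. By Saito's admissibility for weight filtrations under nilpotent endomorphisms~\cite{Saito-MHM}, the relative monodromy filtration $W(N; W)$ exists, and by uniqueness, together with the construction of $W$ as the weight filtration of a mixed Hodge module admitting $N$ as a morphism of type $(-1,-1)$, it must coincide with $W$. In the pure case $W$ is concentrated in a single weight and its relative monodromy filtration is trivial, which forces $N = 0$ on each summand $\cM^\chi$ and hence $\cM^\chi = \ker(\theta - \chi + r)$.
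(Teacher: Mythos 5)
Your plan hinges on the claim that a $\cD_E$-module $\cM$ monodromic along the rank-$r$ zero section $X\subseteq E$ is automatically monodromic along the coordinate hypersurface $\{z_r=0\}$, so that one gets a genuine eigenspace decomposition $\cM=\bigoplus_\mu\cM_{[\mu]}$ for $z_r\de_{z_r}$. This is where the argument breaks: existence, regularity and quasi-unipotence of the $V$-filtration along $\{z_r=0\}$ give a \emph{filtration}, not a splitting. Monodromicity along $\{z_r=0\}$ is a genuinely stronger condition, and it fails in general. For a concrete counterexample take $X$ a point, $E=\A^2$, and $\cM=\cO_{\A^2}(\ast\{z_1+z_2=0\})$, which underlies the mixed Hodge module $j_*\Q^H_U[2]$ for $j:U=\A^2\setminus\{z_1+z_2=0\}\hookrightarrow\A^2$. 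This $\cM$ is monodromic along the origin ($\theta$ acts by integers on the obvious grading), but $z_1\de_1$ is not locally finite: setting $v_k=(z_1+z_2)^{-k}$ one computes
\[
(z_1\de_1+n)(z_1\de_1+n-1)\cdots(z_1\de_1+1)\,v_1 \;=\; n!\,z_2^{\,n}\,v_{n+1}\neq 0
\]
for every $n\geq 0$. Geometrically, the singular locus $\{z_1+z_2=0\}$ is not stable under the $\mathbf{G}_m$-action scaling only $z_1$, so the module cannot be monodromic for that action. Consequently the first reduction of your induction — the appeal to the $r=1$ case to split $F_\bullet$ along $z_r\de_{z_r}$ — is unavailable for general monodromic mixed Hodge modules.

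The paper sidesteps this by \emph{localizing} rather than decomposing. After restricting to $\{z_i\neq 0\}$ and changing to the coordinates $(z_i^{-1},z_1z_i^{-1},\dots,z_rz_i^{-1})$, one finds $\de_{z_i^{-1}}=-\theta$; thus $j_{i*}j_i^*\cM$ is automatically monodromic in the \emph{one-variable} sense along the coordinate $z_i^{-1}$ precisely because $\cM$ is monodromic along the zero section. The $r=1$ theorem of [monoMHM] is then applied to $j_{i*}j_i^*\cM$, not to $\cM$ itself, and the decomposition of $F_\bullet\cM$ is recovered from the injections $F_pV^{>0}\cM\hookrightarrow\bigoplus_i F_pV^{>0}j_{i*}j_i^*\cM$ (using the bistrictness Corollary) together with descending induction via the filtered acyclicity of the $C$-complexes. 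You should keep the same base case but rethink the inductive/reduction step along these lines. For the weight filtration, note also that asserting ``by uniqueness it must coincide with $W$'' glosses over the real content: one must show that $N$ lowers $W$ by two and that the induced filtration on each $\gr^W$-graded piece is the monodromy filtration of $N$; the paper does this via the specialization $\Sp(\cM)$, the vanishing $N=0$ on pure monodromic Hodge modules (Proposition \ref{puremono}), and Deligne's canonical splitting of the relative monodromy filtration.
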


Let $\overline{\FL}: \MHM(E) \to \MHM(E^\vee)$ be the \emph{inverse Fourier transform}. Up to a Tate twist, it is simply $\FL$. However, due to conventions in the definition of the filtrations for mixed Hodge modules, we have to Tate twist the part which is not unipotent:
\[ \overline{\FL}(M) : = \cH^0\sigma^* \phi_{g,1} p^!(M(-r))[-r] \oplus a^* \cH^0 \sigma^* \psi_{g,\neq 1} p^!(M(1-r))[-r].\]

Here $(\ell)$ denotes a Tate twist by $\ell$, $\psi_{g,\neq 1} = \phi_{g,\neq 1} = \bigoplus_{\lambda \in (0,1)} \psi_{g,\lambda}$, and $a: E \to E$ is the antipodal map.

Concerning this transformation, we have the following \emph{Fourier inversion formula}
\begin{cor} For any monodromic mixed Hodge module $M$ on $E$, we have an isomorphism
\[a^*\overline{\FL_{E^\vee}}(\FL_E(M)) \cong M.\]
\end{cor}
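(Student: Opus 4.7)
The plan is to decompose the desired isomorphism into two pieces: an isomorphism of the underlying $\cD$-modules, to be obtained from classical Fourier inversion, and a matching of the Hodge and weight filtrations, to be obtained from a double application of Theorem~\ref{thm:fourier}.

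First, I would invoke Brylinski's classical Fourier inversion~\cite{Bry}, which gives $\FL_{E^\vee}\FL_E(\cM)\cong a^*\cM$ as $\cD_E$-modules. Applying $a^*$ and using $a^*a^*=\id$ yields the $\cD$-module isomorphism $a^*\FL_{E^\vee}\FL_E(\cM)\cong\cM$. The Tate twists appearing in the definition of $\overline{\FL}$ are pure filtration shifts that do not modify the underlying $\cD$-module, and the extra $a^*$ on the non-unipotent summand of $\overline{\FL}$ cancels with the outer $a^*$ in $a^*\overline{\FL_{E^\vee}}$; hence the $\cD$-module isomorphism extends formally to $a^*\overline{\FL_{E^\vee}}(\FL_E(\cM))\cong\cM$.

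Next, to promote this to an isomorphism in $\MHM_{mon}(E)$, I would apply Theorem~\ref{thm:fourier} twice: first to $\cM$ on $E$ to obtain explicit formulas for $F_\bullet$ and $W_\bullet$ on $\cN:=\FL_E(\cM)$, then to $\cN$ on $E^\vee$ to obtain the corresponding formulas on $\FL_{E^\vee}(\cN)=\FL^2(\cM)$. Composing the two formulas, one shows that $\FL^2$ shifts the Hodge filtration on the $\chi$-eigenspace by $r$ when $\chi\in\Z$ and by $r+1$ when $\chi\notin\Z$, with parallel weight shifts by $2r$ and $2(r+1)$ respectively. The Tate twist $(-r)$ in the unipotent summand of $\overline{\FL}$ is chosen exactly to cancel the unipotent shifts, while the Tate twist $(1-r)$ together with the additional $a^*$ in the non-unipotent summand is designed to produce the required cancellation on the remaining cosets of $\Q/\Z$.

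The hard part will be the non-unipotent bookkeeping: one must track how $\FL$ permutes the cosets of $\Q/\Z$ (sending the coset $\lambda+\Z$ on $\cM$ to the coset $(1-\lambda)+\Z$ on $\FL(\cM)$ for $\lambda\in(0,1)$), how the ceiling and floor functions that enter the shift formulas in Theorem~\ref{thm:fourier} behave when the two applications are composed, and how the antipodal map interacts with each eigenspace and with the Tate twists. Once these compatibilities are verified coset by coset, the filtration identifications combine with the underlying $\cD$-module isomorphism to give the claimed isomorphism in $\MHM_{mon}(E)$.
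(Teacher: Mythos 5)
Your Hodge-filtration computation matches the paper exactly: applying Theorem~\ref{thm:fourier}'s Hodge formula twice, coset by coset, with the Tate twists in $\overline{\FL}$ engineered to cancel the shifts, is precisely how the paper proceeds.

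However, there is a circularity in your treatment of the weight filtration. You propose to use the weight formula of Theorem~\ref{thm:fourier} twice, but in the paper the weight formula is only ever proven as a \emph{containment} (Lemma~\ref{weight}: $W_k\FL(\cM)^{\lambda+\Z}\subseteq\FL(W_{k+r+\lceil\lambda\rceil}\cM)^{\lambda+\Z}$) prior to the corollary; the \emph{equality} claimed in Theorem~\ref{thm:fourier} is deduced as a consequence of the corollary, not used to prove it. So you cannot invoke the weight equality as a black box here. What the paper actually does for the weight filtration is much weaker: it only checks that the map is \emph{compatible} with $W$ via Lemma~\ref{weight}, which gives a morphism of mixed Hodge modules rather than an isomorphism of bifiltered objects.

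The missing ingredient that closes the argument is the $\Q$-structure, which your proposal omits entirely. The paper cites \cite[Theorem 3.7.12(i)]{KS} (and \cite[Prop.~6.13]{Bry}) to show the $\Q$-structure underlying $a^*\overline{\FL}_{E^\vee}\FL_E(M)\to M$ is an isomorphism of perverse sheaves. Since a morphism of mixed Hodge modules whose rational structure is an isomorphism is automatically an isomorphism (by strictness of morphisms with respect to $F$ and $W$ and faithfulness of the rationalization functor), this forces the bifiltered isomorphism — and retroactively upgrades the containment in Lemma~\ref{weight} to an equality. A filtered $\cD$-module isomorphism plus weight compatibility, without the $\Q$-structure check, does not give an isomorphism in $\MHM_{mon}(E)$; this is the gap you would need to fill.
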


The presence of the antipodal map is forced by the $\cD$-module structure, but does not affect the Hodge or weight filtrations.

Finally, this inverse Fourier transform also comes up when we try to understand how duality behaves with respect to $\FL$. Specifically, we have the following:
\begin{thm} \label{FLDuality} We have an isomorphism
\[ \bD_{E^\vee}\circ \FL_E \cong \overline{\FL}_{E^\vee} \circ \bD_{E}: \MHM_{mon}(E) \to \MHM_{mon}(E^\vee).\]
\end{thm}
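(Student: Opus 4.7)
The plan is to apply the six-functor formalism for mixed Hodge modules directly to the geometric definition $\FL_E(M) = \cH^0\sigma^*\phi_g p^!(M)[-r]$ from Theorem~\ref{thm:fourier}. Since $\FL_E$ is exact on $\MHM_{mon}(E)$, the complex $\sigma^*\phi_g p^! M[-r]$ has cohomology concentrated in degree zero, and so does its Verdier dual. Thus
\[
\bD_{E^\vee}\FL_E(M) \cong \cH^0 \bD_{E^\vee}(\sigma^*\phi_g p^! M[-r]),
\]
and it suffices to compute the right-hand side using the standard compatibilities $\bD\sigma^* \cong \sigma^!\bD$ and $\bD p^! \cong p^*\bD$, combined with the duality behavior of the nearby and vanishing cycles functors in Saito's theory.

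Next, I would invoke the duality relations for $\phi_g$ and $\psi_g$ on each $\lambda$-generalized eigenspace of the monodromy: up to specific Tate twists, $\bD\phi_{g,\lambda}\cong \phi_{g,\lambda^{-1}}\bD$ and $\bD\psi_{g,\lambda}\cong \psi_{g,\lambda^{-1}}\bD$. The unipotent part ($\lambda=1$) is fixed by this inversion and, after the Tate twists are absorbed, will be matched with the $\phi_{g,1}$ summand in the definition of $\overline{\FL}$. For the non-unipotent part, the involution $\lambda\leftrightarrow \lambda^{-1}$ on the monodromy spectrum is realized geometrically by the antipodal map $a$ on $E^\vee$, which explains the presence of $a^*$ in the second summand of $\overline{\FL}$. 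After this, I would convert $\sigma^!$ back to $\sigma^*$ and $p^*$ back to $p^!$, using $p^*\cong p^![-2r](-r)$ for the smooth morphism $p$ of relative dimension $r$, and analyzing the relation between $\sigma^!$ and $\sigma^*$ on complexes supported inside $\{g=0\}\supseteq \sigma(E^\vee)$, where $\phi_g p^!M$ lives.

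The main obstacle is the careful bookkeeping of Tate twists and shifts: one must verify that the twists accumulated on the unipotent and non-unipotent parts assemble exactly into the $(-r)$ and $(1-r)$ twists appearing in the definition of $\overline{\FL}_{E^\vee}(\bD_E M)$. Once the isomorphism is obtained through this functorial computation, compatibility with the Hodge and weight filtrations is automatic, since the six-functor formalism respects these filtrations. Alternatively, one can verify filtration compatibility directly using the explicit eigenspace formulas $F_p\FL(\cM)^{r-(\lambda+\ell)}=F_{p-\ell-\lceil\lambda\rceil}\cM^{\lambda+\ell}$ and $W_k\FL(\cM)^{\lambda+\Z}=\FL(W_{k+r+\lceil\lambda\rceil}\cM)^{\lambda+\Z}$ from Theorem~\ref{thm:fourier}, combined with the standard behavior of $\bD$ on the Hodge and weight filtrations of mixed Hodge modules.
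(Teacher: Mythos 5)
Your opening moves agree with the paper: both start from the functorial compatibilities $\bD\cH^0\sigma^* \cong \cH^0\sigma^!\bD$, $\bD\phi_{g,1}\cong \phi_{g,1}\bD$, $\bD\phi_{g,\neq 1}\cong \phi_{g,\neq 1}(1)\bD$, and $\bD p^![-r]\cong p^*[r]\bD$, which yield
$\bD_{E^\vee}\circ \FL_E \cong \cH^0\sigma^!\bigl(\phi_{g,1}\oplus \phi_{g,\neq 1}(1)\bigr)\Gamma_+ p^*[-r]\circ\bD_E$.
The gap is in your next step, ``convert $\sigma^!$ back to $\sigma^*$.'' There is no shift-and-twist relation between $\sigma^!$ and $\sigma^*$ for a closed immersion of codimension $r>0$ applied to a general mixed Hodge module; such a relation holds only for modules that are smooth (a flat connection) along the normal directions of $\sigma(E^\vee)$, and $\phi_g\Gamma_+ p^!M$ is certainly not of that type. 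Being supported on the singular quadric $\{g=0\}$ does not help either. Even the monodromic reformulation $\sigma^*\cN\simeq q_+\cN$, $\sigma^!\cN\simeq q_\dagger\cN$ (Proposition~\ref{prop:cvvc}) merely recasts the problem as comparing $q_+$ with $q_\dagger$, which is again genuinely non-trivial for a non-proper projection.

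What the paper actually does at this point is not a formal conversion but a new explicit computation: it applies Theorem~\ref{thm:main}(b) with $i^!=\cH^0\sigma^!$ computed via the $z_i$-Koszul complex $B(\cdot)$, and proves a separate lemma (the final lemma of Section~\ref{sec:fourier}) identifying the kernel of $E^\lambda_{\lambda,\ell}/E^{>\lambda}_{\lambda,\ell}\to\bigoplus_i E^\lambda_{\lambda-1,\ell}/E^{>\lambda}_{\lambda-1,\ell}$ with $\cM^{\chi}$ via the evaluation map $\varphi$. The surjectivity and injectivity arguments there rely on the explicit description of the $V$-filtration along $\xi$ and Lemma~\ref{kernel}, and cannot be bypassed by a formal six-functor argument. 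This also means your claim that ``compatibility with the Hodge and weight filtrations is automatic'' overstates matters: the identification of $\cH^0\sigma^!$ of the intermediate object with $\FL(\cM)$ as a filtered $\cD$-module is precisely what the lemma establishes, and only after that can one use $p^!\cM[-r]\cong p^*(\cM(-r))[r]$ to match the Tate twists with the definition of $\overline{\FL}$. Your fallback suggestion of checking filtrations against the explicit eigenspace formulas of Theorem~\ref{thm:fourier} is closer to what actually has to happen, but it needs to be preceded by the $\cD$-module-level computation, not invoked after a formal isomorphism that was never established.
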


In this way, we could have defined $\overline{\FL} = \bD_{E^\vee} \circ \FL \circ \bD_E$. Note that the Tate twists which occur on $\phi_{g,\neq 1}$ but do not happen for $\phi_{g,1}$ are completely analogous to the behavior of duality with respect to nearby and vanishing cycles, as explained in ~\cite[Prop. 2.6]{Saito-MHM} and \cite{Saito-Duality}.

We will also use our results in future work with Mircea \Mustata and Sebasti\'{a}n Olano to study local cohomology.

\subsection{Strategy of the proof}
To prove Theorem~\ref{thm:acyclic}, we first treat the case when $(\cM,F_\bullet)$ underlies a polarizable pure Hodge module. Using the fact that pure Hodge modules decompose by strict support, we have to consider two situations: 
\begin{enumerate}[label=(\alph*)]
    \item  the support of $\cM$ is contained in $Z$; 
    \item  there is no $\cD_X$-submodule of $\cM$ whose support is contained in $Z$.
\end{enumerate} 
The case (a) directly follows from the definition. For case (b), we pass to the blow-up and reduce the problem to the codimension one case. Let $\pi:\widehat X\rightarrow X$ be the blow-up of $Z$ and $E$ be the exceptional divisor. Let $(\widehat\cM,F_\bullet\widehat\cM)$ be the minimal extension of $(\cM,F_\bullet\cM)|_{X\setminus Z}$ along $E$, which also underlies a pure Hodge module by the structure theorem of Hodge modules~\cite{Schnell-MHM}. By the direct image theorem of Hodge modules, $(\cM,F_\bullet\cM)$ is a direct summand of $\pi_+(\widehat\cM, F_\bullet\widehat\cM)$. Therefore, it suffices to prove the statement for $\pi_+(\widehat\cM,F_\bullet\widehat\cM)$. Then we factor $\pi:\widehat X\rightarrow X$ into the graph embedding $i_\pi: \widehat X\rightarrow \widehat X\times X$ and the second projection $ p: \widehat X\times X\rightarrow X$ and study the direct images of $(\widehat\cM,F_\bullet\cM)$ under these two morphisms. The graph embedding case has no homological algebra involved and in the case of the projection, we use the bistrictness proved by Budur, Musta\c{t}\u{a} and Saito \cite{BMS} and Hard Lefschetz \cite[2.14]{Saito-MHM} on the direct images.

The strategy of proof for the pure case does not work for mixed Hodge modules because there is no reason that $(\cM,F_\bullet\cM)$ is a direct summand of $\pi_+(\widehat \cM,F_\bullet\widehat \cM)$. Instead, we use deformation to the normal cone to get the compatibility among the Hodge filtration, $V$-filtration and weight filtration. Using the compatibility we reduce the proof to the pure case. 

We also give an alternative proof of Theorem \ref{thm:acyclic}, using the direct image theorem for \emph{mixed} Hodge modules, as well as an explicit computation using a \v{C}ech complex.

As for the proof of Theorem~\ref{thm:main}, we first deal with the case when $(\cM,F)$ underlies a polarizable Hodge module as we did in the proof of the pure case for Theorem~\ref{thm:acyclic}. In this case we heavily use the semisimplicity of polarizable pure Hodge modules. To do the mixed case we need a theorem of Deligne, which roughly states that there exists a unique functorial splitting of the associated graded of the relative monodromy filtration. The proof reduces to the pure case by Deligne's Theorem. 

Finally, for the proof of the results concerning the Fourier transform, the main difficulty lies in computing the $V$-filtration along the graph embedding of the function $g = \sum_{i=1}^r z_i w_i$. As an example, if $\cM = \cO_X[z]$ is the structure sheaf on $E$, then $\cO_X[z,w]$ is the structure sheaf on $\cE$, and the function $g$ is (quasi)-homogeneous. Hence, the computation of the $V$-filtration for such a module is given in ~\cite[Formula 4.2.1]{Saito-HFilt} and ~\cite[Lemma 3.3]{Mingyi}.

\subsection{Outline} We first review some basic facts about $V$-filtration and mixed Hodge modules in Section~\ref{sec:pre}. Some topological properties of $V$-filtrations along subvarieties are derived in Section~\ref{sec:topo}. We give two different proofs of Theorem~\ref{thm:acyclic} in Section~\ref{sec:pacyclic}. Some applications of Theorem~\ref{thm:acyclic} are derived in Section~\ref{sec:applic}. Theorem~\ref{thm:main} is proved in Section~\ref{sec:pmhc}. We also point out a proof of Theorem~\ref{thm:main}$(b)$ which does not rely on Theorem~\ref{thm:main}$(a)$; see Remark~\ref{rmk:bypass}. Finally we study the Fourier transform of monodromic mixed Hodge modules in Section~\ref{sec:fourier}. 

\subsection{Acknowledgements} The authors are extremely grateful to their respective advisers, Christian Schnell and Mircea \Mustata. Without them this project would not have been possible. We would also like to thank Mihnea Popa for several suggestions and questions. QC thanks Guodu Chen and Mads Villadsen for reading a draft of this paper. BD is thankful to Sebasti\'{a}n Olano, James Hotchkiss and Jack Carlisle for many useful conversations. We would like to thank the anonymous referee for various suggestions and corrections.

\section{Preliminaries} \label{sec:pre}

\subsection{Convention and Notation}
Let $X$ be a smooth complex algebraic variety. We recall that there is an equivalence of categories between filtered left and right $\cD_X$-modules. Given a filtered left $\cD_X$-module $(\cM,F)$, we denote by $(\cM^r,F)$ the corresponding filtered right $\cD_X$-module. In fact, $\cM^r=\omega_X\otimes_{\cO_X} \cM$, while the filtration on $\cM^r$ is given by 
\[
F_{p-n}\cM^r=\omega_X\otimes_{\cO_X} F_p\cM \quad \text{for all} \quad p\in\Z,
\]
where $n=\dim X$.

For right $\cD_X$-modules it is customary to use the increasing $V$-filtration. This is related to the $V$-filtration on the corresponding left $\cD_X$-module by 
\[
V_\alpha\cM^r=\omega_X\otimes_{X} V^{-\alpha}\cM.
\]

To match this convention with the lower indices, for right $\cD$-modules, we will denote the complexes $A,B$ and $C$ from Theorem \ref{thm:acyclic} with lower indices, as
\[
A_{\alpha}(\cM) =\left\{ (V_\alpha \cM,F) \xrightarrow[]{t} \bigoplus_{i=1}^r (V_{\alpha-1}\cM,F)\xrightarrow[]{t} \cdots \xrightarrow[]{t} (V_{\alpha-r}\cM,F)\right\}
\]
placed in degrees $0,1,\dots,r$,
\[
B_{\alpha}(\cM) =\left\{ (\gr_\alpha^V \cM,F) \xrightarrow[]{t} \bigoplus_{i=1}^r (\gr_{\alpha -1}^V\cM,F)\xrightarrow[]{t} \cdots \xrightarrow[]{t} (\gr_{\alpha-r}^V\cM,F)\right\}
\] 
as the quotient $A_{\alpha}/A_{>\alpha}$ and 
\[
C_{\alpha}(\cM)=\left\{ (\gr_{\alpha-r}^V \cM,F[r]) \xrightarrow[]{\de_t} \bigoplus_{i=1}^r (\gr_{\alpha-r+1}^V \cM,F[r-1]) \xrightarrow[]{\de_t} \cdots \xrightarrow[]{\de_t} (\gr_{\alpha}^V\cM,F)\right\}
\] 
in degrees $-r,-r+1,\dots,0$. 

Moreover, for $Z\subseteq X$ a smooth subvariety of the smooth variety $X$, we denote by $T_Z X = \mathcal{S}pec_X(\bigoplus_{k \geq 0} \cI^k/\cI^{k+1}) \to Z$ the normal bundle of $Z$ inside $X$.

\subsection{Kashiwara-Malgrange $V$-filtrations}\label{subsec:VFilt}
We begin with a review of the theory of $V$-filtrations introduced by Kashiwara and Malgrange. For more details, see ~\cite[Section 3.1]{Saito-MHP} and~\cite[Section 9]{Schnell-MHM} for the case of a hypersurface and ~\cite[Section 1.1]{BMS} for the case of higher codimension.

Let $(t_1,\dots, t_r): X \to \A^r$ be a smooth regular function, with fiber $Z$ over the origin.  We define a $\Z$-indexed filtration on $\cD_X$ by
\[V^k\cD_X = \{ P \in \cD_X \mid P \cdot \cI_Z^j \subseteq \cI_Z^{j+k} \text { for all }j\}.\]

A $\Q$-indexed filtration $V^\bullet \cM$ is \emph{discrete and left-continuous} if $\bigcap_{\alpha < \beta} V^\alpha = V^\beta$ for all $\beta \in \Q$, and if there exists some $\ell \in \Z_{>0}$ such that the subspace $V^\alpha$ is constant for all $\alpha \in (\frac{m}{\ell}, \frac{m+1}{\ell}]$, for any $m \in \Z$.

Given a coherent left $\cD_X$-module $\cM$, a Kashiwara-Malgrange $V$-filtration on $\cM$ along $Z$ (see \cite{Kas83}, \cite{Mal83}) is an exhaustive, decreasing $\Q$-indexed filtration which is discrete and left-continuous such that, if $\theta : = \sum_{i=1}^r t_i \de_{t_i}$ is any locally defined Euler vector field along $Z$, the filtration must satisfy:
\begin{enumerate} \item $V^k\cD_X V^\chi \cM \subseteq V^{\chi+k}\cM$ for all $k\in \Z,\chi \in \Q$,
\item $V^k\cD_X V^\chi\cM = V^{\chi+k}\cM$ for all $k \in \Z_{\geq 0}$, $\chi \gg 0$,
\item Each $V^\chi \cM$ is coherent over $V^0\cD_X$,
\item The operator $\theta - \chi +r$ is nilpotent on $\gr_V^\chi \cM = V^\chi\cM/V^{>\chi} \cM$.
\end{enumerate} 

It is an easy exercise to see that there can be at most one $V$-filtration on any coherent $\cD_X$-module $\cM$. We say that a module $\cM$ which has a $\Q$-indexed $V$-filtration is $\Q$-specializable. Any morphism between $\Q$-specializable modules is strict with respect to the $V$-filtration. Moreover, if
\[0  \to \cM' \to \cM \to \cM'' \to 0\]
is a short exact sequence of $\cD_X$-modules, and $\cM$ has a $V$-filtration, then the induced filtrations on $\cM'$ and $\cM''$ satisfy the properties of the $V$-filtration.

\begin{egs} \label{VFiltEg} $(a)$ Let $\cE$ be an $\cO_X$-coherent $\cD_X$-module. Then $V^{k}\cE : = \cI_Z^{k-r} \cdot \cE$ satisfies the properties of the $V$-filtration. For example,
\[ \theta t^\alpha m = t^\alpha (|\alpha| + \theta) m,\]
$(b)$ \label{closedHodge} (Kashiwara's equivalence) Assume $\cM$ is supported on $Z$, so by Kashiwara's equivalence (see ~\cite[Section 1.6]{HTT}), there exists a coherent $\cD_Z$-module $\cN$ such that $\cM = \sum_{\alpha \in \N^r} \cN \de_t^\alpha$. Then
\[ V^{-k}\cM = \sum_{|\alpha|\leq k} \cN \de_t^\alpha.\]

For us, it will also be important to understand the case when $(\cM,F) \cong i_+(\cN,F)$ as a filtered $\cD$-module. For left $\cD$-modules, the pushforward of a filtered module has filtration defined as
\[ F_p i_+(\cN,F) = \sum_{\alpha \in \N^r} F_{p-|\alpha|-r}\cN \de_t^\alpha.\]

From this, we see easily that
\[ F_p V^{-k}i_+(\cN,F) = \sum_{|\alpha| \leq k} F_{p-|\alpha|-r}\cN \de_t^\alpha.\]
\end{egs}

This last example leads to an important property of the $V$-filtration.

\begin{lem} Assume $\varphi: \cN \to \cM$ is a morphism between two specializable modules, such that $\varphi|_U: \cN|_U \to \cM|_U$ is an isomorphism, where $U = X - Z$. Then $\varphi: V^{>0}\cN \to V^{>0}\cM$ is an isomorphism.
\end{lem}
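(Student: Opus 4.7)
The plan is to exploit the fact that, since $\varphi|_U$ is an isomorphism, both the kernel $K := \ker \varphi$ and the cokernel $C := \coker \varphi$ of $\varphi$ are supported on $Z$. By Kashiwara's equivalence as recorded in Example (b), any $\cD_X$-module $L$ supported on $Z$ satisfies $V^\beta L = 0$ for every $\beta > 0$, since the formula $V^{-k}L = \sum_{|\alpha| \leq k} L' \de_t^\alpha$ forces these subspaces to vanish when $-k < 0$. The task is then to transfer this vanishing, through the four-term exact sequence relating $K$, $\cN$, $\cM$, $C$, to bijectivity of $\varphi$ on $V^{>0}$.

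Concretely, I would split the sequence into the two short exact sequences
\[ 0 \to K \to \cN \to \im \varphi \to 0, \qquad 0 \to \im \varphi \to \cM \to C \to 0,\]
and invoke two properties stated in the paper just before the examples: induced filtrations on sub- and quotient modules of a specializable module satisfy the axioms of a V-filtration, and morphisms between specializable modules are strict with respect to the V-filtration. Uniqueness of the V-filtration then identifies these induced filtrations with the V-filtrations on $K$, $\im \varphi$, and $C$, while strictness gives the identity $V^\alpha \im \varphi = \im \varphi \cap V^\alpha \cM = \varphi(V^\alpha \cN)$. One checks (essentially formally, using this identity) that applying $V^\alpha(-)$ to the two short exact sequences preserves exactness, so for $\alpha > 0$ the vanishing of the outer terms $V^\alpha K$ and $V^\alpha C$ forces $\varphi : V^\alpha \cN \xrightarrow{\sim} V^\alpha \cM$ to be an isomorphism. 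By discreteness of the V-filtration, $V^\alpha = V^{>0}$ on both $\cN$ and $\cM$ for $\alpha > 0$ sufficiently small, so the claim for $V^{>0}$ follows.

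There is no substantive obstacle here; the argument is a direct application of uniqueness and strictness for V-filtrations on $\Q$-specializable modules, together with Kashiwara's equivalence. The only mildly delicate point is confirming that the V-filtrations on $K$, $\im \varphi$, and $C$ are the expected induced ones, and that taking $V^\alpha$ of the two short exact sequences remains exact — but both are encoded in the properties quoted from the paper.
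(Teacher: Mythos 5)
Your argument is correct and follows exactly the paper's own route: kernel and cokernel are supported on $Z$, hence have vanishing $V^{>0}$, and applying $V^{>0}$ to the four-term exact sequence gives the isomorphism. You simply make explicit the strictness/uniqueness argument that justifies exactness of $V^{>0}(-)$, a step the paper's one-line proof leaves implicit.
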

\begin{proof} Let $K = \ker(\varphi), C = \text{coker}(\varphi)$. The assumption implies these are supported on $Z$, so by the previous example, $V^{>0} K =0$ and $V^{>0}C =0$. Hence, taking $V^{>0}$ of the long exact sequence
\[ 0 \to K \to \cN \to \cM \to C \to 0,\]
we get
\[ 0 = V^{>0}K \to V^{>0}\cN \to V^{>0}\cM \to V^{>0}C = 0,\]
proving the claim.
\end{proof}

One can also speak of a $\C$-indexed Kashiwara-Malgrange filtration. The following theorem of Kashiwara implies that the $\cD_X$-modules we care about always carry such a filtration. It is built into the theory of mixed Hodge modules that the filtered $\cD_X$-modules underlying them must actually be $\Q$-specializable, not just $\C$-specializable.

\begin{thm} \cite{Kas83} Any regular holonomic $\cD_X$-module $\cM$ is $\C$-specializable.
\end{thm}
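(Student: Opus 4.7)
The plan is to establish existence of the $V$-filtration (uniqueness being a standard argument using nilpotency of $\theta - \chi + r$ on each graded piece, together with the fact that distinct generalized eigenspaces of $\theta$ cannot be mixed by a non-trivial element of $V^0 \cD_X$). I would proceed in two stages: first handle the hypersurface case $r=1$, then deduce the higher-codimension case by induction.

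For the hypersurface case, the heart of the matter is the existence of a Bernstein--Sato polynomial. Given any coherent $\cO_X$-submodule $\cG \subset \cM$ that generates $\cM$ over $\cD_X$, form the coherent $V^0\cD_X$-submodule $\cF = V^0\cD_X \cdot \cG$. I would invoke Kashiwara's existence theorem for b-functions: for $\cM$ regular holonomic and $\cF$ as above, there exists a nonzero polynomial $b(s) \in \C[s]$ and an operator $P \in V^1\cD_X$ such that $(b(\theta) - P)\cF = 0$, where $\theta = t\de_t$. The proof of this fact uses holonomicity to get finiteness of a certain bifiltered module over $\gr_V \cD_X[s]$, and regularity to ensure the roots of $b$ lie in $\C$ (rather than in some more exotic set arising from irregularity). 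From this input, one defines $V^\alpha \cM$ as the sum of all coherent $V^0\cD_X$-submodules $\cF' \subseteq \cM$ whose b-function has all roots $\beta$ with $\mathrm{Re}(\beta) \geq \alpha - 1$. One checks the four axioms directly; discreteness and left-continuity follow because, for $\cF$ fixed, only finitely many roots of $b$ appear in any vertical strip.

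For the higher-codimension case, I would induct on $r$. Let $Z_1 = \{t_1 = 0\}$ and apply the hypersurface case to obtain a filtration $V^\bullet_1 \cM$ along $Z_1$. Each graded piece $\gr^{\chi_1}_{V_1}\cM$ is a regular holonomic $\cD_{Z_1}$-module (regularity is preserved by taking graded pieces of the $V$-filtration), so by the inductive hypothesis it admits a $V$-filtration $V^\bullet_2 \gr^{\chi_1}_{V_1}\cM$ along the codimension $r-1$ subvariety $Z \subset Z_1$. For $\chi \in \C$, define $V^\chi \cM$ to be the preimage under $\cM \twoheadrightarrow \gr^{\chi_1}_{V_1} \cM$ of $V^{\chi - \chi_1}_2 \gr^{\chi_1}_{V_1}\cM$, summed appropriately over $\chi_1$. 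The essential check is nilpotency of $\theta - \chi + r$ on $\gr^\chi_V \cM$: this reduces to the separate nilpotency of $t_1\de_{t_1} - \chi_1 + 1$ on $\gr^{\chi_1}_{V_1} \cM$ and of $\sum_{i \geq 2} t_i \de_{t_i} - (\chi - \chi_1) + (r-1)$ on the inner graded piece, which holds by construction.

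The main obstacle is undoubtedly the existence of the b-function in the hypersurface case, together with the statement that its roots lie in $\C$; once this deep input is granted, translating it into a $V$-filtration and running the inductive argument are relatively formal. A subtlety in the induction is verifying that the iterated construction is independent of the order of the coordinates $t_1, \ldots, t_r$; the cleanest way to see this is to observe, a posteriori, that any filtration satisfying the four axioms of a Kashiwara--Malgrange $V$-filtration along $Z$ is unique, so the construction must agree with any alternative one obtained by permuting the coordinates.
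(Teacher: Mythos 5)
The paper does not prove this statement; it is quoted as a theorem of Kashiwara with a citation to \cite{Kas83}, so there is no in-paper argument to compare your proposal against.

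Your outline of the codimension-one case via Bernstein--Sato polynomials is the standard route, but the sentence explaining the role of regularity is off: a nonzero polynomial $b(s)\in\C[s]$ automatically has its roots in $\C$, so regularity cannot be what makes this so. In fact a $\C$-indexed $V$-filtration along a hypersurface exists for \emph{any} holonomic $\cD$-module; regularity is used to show the roots of $b$ are rational, which is what one needs to pass from $\C$-specializability to the $\Q$-specializability that the rest of the theory relies on.

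The genuine gap is the inductive step to higher codimension. The recipe ``take the preimage under $\cM\twoheadrightarrow\gr_{V_1}^{\chi_1}\cM$'' is ill-posed since no such surjection from $\cM$ exists, and neither natural repair gives a $V$-filtration. If you take preimages inside $V_1^{\chi_1}\cM$ and sum over $\chi_1$, the result is all of $\cM$, because each such preimage contains $V_1^{>\chi_1}\cM$ and $V_1$ is exhaustive. If instead you declare $m\in V^\chi\cM$ exactly when its $V_1$-leading term lies in the appropriate piece of $V_2$, you do not even get an additive subgroup: take $\cM=\cO_{\A^2}$ along the origin, so that $V^k\cO=\mathfrak m^{k-2}$. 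The element $t_1+t_2^2$ has $V_1$-order $1$ with leading term $t_2^2$, of $V_2$-order $3$, giving iterated order $4$; likewise $t_2^2$ has iterated order $4$; but their difference $t_1$ has $V_1$-order $2$ and $V_2$-order $1$, giving iterated order $3$. So ``$V^4$'' would contain $t_1+t_2^2$ and $t_2^2$ but not $t_1$. The failure is structural: passing to $\gr_{V_1}$ discards the contributions of strictly higher $t_1$-order, and those are precisely what the codimension-$r$ filtration must also see. The correct reduction to codimension one, which this paper itself uses in Section~\ref{subsec:dfn}, is via the deformation to the normal cone: form $\widetilde X\to\A^1$ and $\widetilde\cM=j_+p^*\cM$, apply the hypersurface result along $\{u=0\}$, and recover the codimension-$r$ $V$-filtration on $\cM$ from the graded pieces of the codimension-one $V$-filtration on $\widetilde\cM$. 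Your a posteriori uniqueness argument is fine as far as it goes, but it can only validate a construction that actually satisfies the four axioms, and the iterated one does not.
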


\subsection{Normal crossing type}\label{subsec:nct}
For the codimension one case, it is essentially immediate from the definition that the maps $t: V^\alpha \cM \to V^{\alpha+1}\cM$ (resp. $\de_t: \gr_V^{\alpha+1} \cM \to \gr_V^{\alpha}\cM$) are isomorphisms for all $\alpha \neq 0$. The following example shows that, for codimension larger than one, the correct generalization of this property should concern Koszul-like complexes in the $t_1,\dots, t_r$ (resp. $\de_{t_1},\dots, \de_{t_r}$).

Let $\cM$ be an algebraic regular holonomic \emph{left} $D_2$-module of normal crossing type along the two axes on $\A^2$, where $D_2$ is the Weyl algebra over $\A^2$. For details on normal crossing type modules, see ~\cite[Section 3]{Saito-MHM}. Let $(x,y)$ be the coordinate system on $\A^2$. Define $\cM^{\alpha,\beta}=\ker (\partial_x x-\alpha)^\infty \cap \ker (\partial_y y-\beta)^\infty$ for $(\alpha,\beta)\in \Q^2$. Because of the assumption that $\cM$ is of normal crossing type, we have the identity
\[
\bigoplus_{\alpha,\beta\in \Q^2} \cM^{\alpha,\beta} = \cM
\] 
and each $\cM^{\alpha,\beta}$ is a finite dimensional vector space over $\C$. Then one can easily check the $V$-filtration along the origin is given by 
\[
V^k \cM = \bigoplus_{\alpha+\beta \geq k} \cM^{\alpha,\beta},
\]
and $\gr_{V_x}^\alpha\gr_{V_y}^\beta \cM= \cM^{\alpha,\beta}$ where $V_x \cM$ is the $V$-filtration along $\{x=0\}$ and $V_y \cM$ is the $V$-filtration along $\{y=0\}$. Then the double complex 
\begin{equation}\label{eq:nceg}
\begin{tikzcd}
    \gr_V^k \cM \arrow{r}{x} \arrow{d}{y} & \gr_V^{k+1} \cM \arrow{d}{y} \\
    \gr_V^{k+1} \cM \arrow{r}{x} & \gr_V^{k+2} \cM
\end{tikzcd}
=\bigoplus_{\alpha+\beta = k} \left(
\begin{tikzcd}
    \cM^{\alpha,\beta} \arrow{r}{x} \arrow{d}{y} &  \cM^{\alpha+1,\beta} \arrow{d}{y} \\
    \cM^{\alpha,\beta+1} \arrow{r}{x} &  \cM^{\alpha+1,\beta+1}
\end{tikzcd}
\right)
\end{equation}
is exact if $k\neq 0$ because one of $x$ and $y$ must be bijective in a summand by the properties of $V$-filtration in codimension one. If $k=0$, the above double complex is quasi-isomorphic to the total complex of
\[
\begin{tikzcd}
 \cM^{0,0} \arrow{r}{x} \arrow{d}{y} &  \cM^{1,0} \arrow{d}{y} \\
 \cM^{0,1} \arrow{r}{x} &  \cM^{1,1}
\end{tikzcd}
\]
which is isomorphic to $i^!_Z\cM$. Since the total complex of the double complex is just the Koszul complex 
\[
\begin{tikzcd}
\gr_V^k\cM \arrow{r}{(x,y)}  &  \left(\gr_V^{k+1} \cM \right)^2 \arrow{r}{ \binom{y}{-x}} & \gr_V^{k+2} \cM,
\end{tikzcd}
\]
we proved a version of generalization of the properties of $V$-filtration in codimension one that the above Koszul complex is isomorphic to $i^!_Z\cM$ when $k=0$ and is exact when $k\neq 0$. The similar statement regarding the complex
\[
\begin{tikzcd}
\gr_V^{k+2}\cM \arrow{r}{(\partial_x,\partial_y)}  &  \left(\gr_V^{k+1} \cM \right)^2 \arrow{r}{ \binom{\partial_y}{-\partial_x}} & \gr^V_{k} \cM
\end{tikzcd}
\]
is left to the readers.

If $(\cM,L)$ underlies a mixed Hodge module of normal crossing type where $L$ is the weight filtration then $\cM^{\alpha,\beta}$ carries a relative mondromy filtration $W=W(\partial_x x+\partial_y y-\alpha-\beta,L\cM^{\alpha,\beta})$. In fact, we have the relation $W=W(\partial_x x-\alpha, W(\partial_y y-\beta,L))$ by~\cite[3]{Saito-MHM} since we assume $\cM$ is of normal crossing type. It follows that, if $k=0$, the result of applying $\gr^W$ to the complex ~\eqref{eq:nceg} is quasi-isomorphic to 
\[
\begin{tikzcd}
 \gr^W \cM^{0,0} \arrow{r}{x} \arrow{d}{y} & \gr^W \cM^{1,0} \arrow{d}{y} \\
 \gr^W \cM^{0,1} \arrow{r}{x} & \gr^W \cM^{1,1}
\end{tikzcd}
\]
but the upper-horizontal and left-vertical morphisms are zero by~\cite[1]{Saito-MHM}. This is the motivation for using mixed Hodge complexes in Theorem~\ref{thm:main}.

\subsection{Deformation to the Normal Cone} \label{subsec:dfn}
This subsection is devoted to studying the specialization construction, which goes through the deformation to the normal cone. See for example, Section 2.30 of \cite{Saito-MHM} and Section 1.3 of \cite{BMS}.

Let $Z\subseteq X$ be defined by the ideal sheaf $\cI_Z \subseteq \cO_X$, and consider the variety
\[ 
\widetilde{X} : = \cS pec_X \left(\bigoplus_{\ell \in \Z} \cI^{\ell}_Z \otimes u^\ell \right),
\]
along with the smooth morphism $u: \widetilde{X} \to \A^1 = \text{Spec}(\C[u])$. The fiber $u^{-1}(0)$ is isomorphic to $T_ZX$, the normal cone of $Z$ in $X$, and so we call this a \emph{deformation to the normal cone}. Over the open subset $\mathbf{G}_m:=\A^1 -\{0\}$, the map is isomorphic to the smooth projection $X \times \mathbf{G}_m \to \mathbf{G}_m $. We will also consider the smooth morphism $p: X \times \mathbf{G}_m  \to X$ of relative dimension 1.  Let $j: X \times \mathbf{G}_m  \hookrightarrow \widetilde{X}$ be the open immersion. It is the complement of the smooth divisor $T_Z X = u^{-1}(0)$.
\[
    \begin{tikzcd}
    X\times \mathbf{G}_m =\tilde X^* \arrow{r}{j} \arrow{d} & \tilde X  \arrow{d} & T_ZX \arrow{l}\arrow{d}  & \tilde X^* \arrow{r}{j}\arrow{dr}{p} & \tilde X \arrow{d}{\rho} & T_ZX \arrow{l}\arrow{d} \\
    \mathbf{G}_m  \arrow{r} &  \A^1 & \{0\} \arrow{l} & & X & Z\arrow{l}
    \end{tikzcd}
\]
For any $M \in \MHM(X)$, define $\Sp(M) : = \psi_u j_* (p^*(M)[-1]) \in \MHM(T_Z X)$. Here the shift by $[-1]$ comes from the relative dimension of the morphism $p$. Let $\widetilde{M} = j_*(p^*(M)[-1])$. As explained in \cite[Formula (1.3.1)]{BMS}, the underlying $\cD$-module of $\widetilde{\cM}$ satisfies
\[ 
\rho_*(\widetilde{\cM}) = \bigoplus_{\ell \in \Z} \cM u^\ell, \quad \rho_*(V^{\lambda}\widetilde{\cM}) = \bigoplus_{\ell \in \Z} V^{r + \lambda - \ell -1}\cM u^\ell,
\]
as $\cO_X$-modules, where $V^\bullet \widetilde{\cM}$ is the $V$-filtration along the smooth hypersurface defined by $u$ and $V^\bullet \cM$ is the $V$-filtration along $Z$. In particular, the $\cD$-module underlying $\Sp(M)$ is
\[ \Sp(\cM) = \bigoplus_{\lambda \in (0,1]} \bigoplus_{\ell \in \Z} \gr_V^{r+\lambda - \ell -1}\cM u^\ell.\]

We are able to identify the Hodge filtration on $\Sp(\cM)$ in terms of that on $\cM$. To do this, we first compute the Hodge filtration on $\widetilde{\cM}$.

\begin{lem} \label{SpHFilt} Let $(\cM,F_\bullet)$ be the filtered $\cD_X$-module underlying a mixed Hodge module $M$ on $X$. Then
\[ F_p V^\gamma \widetilde{\cM} = \bigoplus_{\ell \in \Z}  V^{\gamma - \ell + r -1}\cM \cap \left(\sum_{q=0}^{\lfloor -\gamma \rfloor} (\theta + \ell+1)\cdots (\theta+\ell+q)F_{p+1-q}V^{r-1-q-\ell}\cM\right)u^\ell,\]
with the understanding that the sum is simply equal to the term with $q=0$ if $\lfloor -\gamma \rfloor \leq 0$.

In particular, if $\gamma \geq 0$, 
\[ 
F_p V^\gamma \widetilde{\cM} = \bigoplus_{\ell \in \Z} F_{p+1} V^{\gamma - \ell + r -1}\cM u^\ell
.\]
\end{lem}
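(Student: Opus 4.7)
The plan is to compute $F_pV^\gamma\widetilde\cM$ by combining three ingredients: (i) the Hodge filtration on the generic fiber $p^*M[-1]$ over $\widetilde X^*$, (ii) an explicit formula for the $\partial_u$-action on $\rho_*\widetilde\cM$ in the natural chart of $\widetilde X$, and (iii) the standard compatibilities between the Hodge filtration and the $V$-filtration along the smooth divisor $\{u=0\}$ for an MHM. For (i), since $p\colon\widetilde X^*\to X$ is smooth of relative dimension $1$, the MHM convention for shifted smooth pullback gives $F_p(p^*\cM[-1])=\bigoplus_\ell F_{p+1}\cM\cdot u^\ell$ under the identification $p^*\cM[-1]\simeq\bigoplus_\ell\cM\cdot u^\ell$, which accounts for the ``$+1$'' shift appearing throughout the statement. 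For (ii), changing coordinates between $(t,z,u)$ on $\widetilde X^*$ and $(\sigma,z,u)$ on $\widetilde X$ (with $\sigma_i=t_i/u$) yields
\[ \partial_u(m\cdot u^\ell)=(\theta+\ell)\,m\cdot u^{\ell-1},\qquad \theta=\sum_{i=1}^r t_i\partial_{t_i}, \]
and iterating gives $\partial_u^q(m\cdot u^{\ell+q})=(\theta+\ell+1)\cdots(\theta+\ell+q)\,m\cdot u^\ell$, producing exactly the polynomial prefactors in the statement. For (iii), we will use
\[ u\colon F_pV^\gamma\widetilde\cM\xrightarrow{\sim}F_pV^{\gamma+1}\widetilde\cM\ (\gamma>0),\qquad \partial_u\colon F_p\gr_V^{\gamma+1}\widetilde\cM\xrightarrow{\sim}F_{p+1}\gr_V^\gamma\widetilde\cM\ (\gamma<0). \]

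For $\gamma\geq 0$ I would verify that the proposed formula $F_pV^\gamma\widetilde\cM=\bigoplus_\ell F_{p+1}V^{r+\gamma-\ell-1}\cM\cdot u^\ell$ is the unique filtration on $V^\gamma\widetilde\cM$ that (a) restricts on $\widetilde X^*$ to (i), and (b) is compatible with the $u$-isomorphism of (iii). Since multiplication by $u$ sends the $(\gamma,\ell)$ summand isomorphically to the $(\gamma+1,\ell+1)$ summand while leaving $r+\gamma-\ell-1$ fixed, both sides transform consistently; starting from $\gamma\gg 0$ (where $V^\gamma\widetilde\cM$ is controlled by its restriction to $\widetilde X^*$) and descending via $u$-isomorphisms, then extending to $\gamma=0$ by left-continuity of the $V$-filtration, gives the formula for all $\gamma\geq 0$. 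This also recovers the general formula when $\lfloor-\gamma\rfloor\leq 0$: only the $q=0$ term survives, and its intersection with $V^{r+\gamma-\ell-1}\cM$ equals $F_{p+1}V^{r+\gamma-\ell-1}\cM$ by the strictness of the Hodge filtration with respect to the $V$-filtration.

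For $\gamma<0$ I would proceed by descending induction on $\lfloor-\gamma\rfloor$ using the $\partial_u$-isomorphism. The key observation is that the $q$-th term $(\theta+\ell+1)\cdots(\theta+\ell+q)F_{p+1-q}V^{r-1-q-\ell}\cM\cdot u^\ell$ equals $\partial_u^q$ of an element of $F_{p-q}V^0\widetilde\cM$ at $u$-degree $\ell+q$ (by combining the $\gamma=0$ formula with (ii)); since $\partial_u^q$ raises the Hodge index by $q$ and lowers the $V$-index by $q$, this element lies in $F_pV^{-q}\widetilde\cM\subseteq F_pV^\gamma\widetilde\cM$. This gives the inclusion $\supseteq$. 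For the reverse inclusion, the $\partial_u$-isomorphism guarantees that every class in $F_p\gr_V^\gamma\widetilde\cM$ lifts from a class in $F_{p-1}\gr_V^{\gamma+1}\widetilde\cM$, which by the inductive hypothesis has the stated form with $q$ replaced by $q-1$; combined with $F_pV^{>\gamma}\widetilde\cM$ (already known by induction), this reconstructs $F_pV^\gamma\widetilde\cM$. The main obstacle is verifying the exhaustiveness and the precise identification of the intersection with $V^{r+\gamma-\ell-1}\cM$: this hinges on the strictness (bistrictness) of the Hodge filtration with respect to the $V$-filtration, a standard property for MHM in the spirit of~\cite{BMS}, together with careful bookkeeping to ensure the descending induction produces exactly the polynomials $(\theta+\ell+1)\cdots(\theta+\ell+q)$ with no spurious terms.
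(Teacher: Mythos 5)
Your outline starts from the same three inputs as the paper's proof — Saito's formula $F_p j_+\cN=\sum_{q\geq 0}\de_u^q\bigl(V^0 j_+\cN\cap j_*F_{p-q}\cN\bigr)$ for the extension across $\{u=0\}$, the identity $\de_u(mu^\ell)=(\theta+\ell)mu^{\ell-1}$ in the deformation chart, and the quasi-unipotence isomorphisms along $\{u=0\}$ — so the overall direction is right. But the step where the real work happens is not addressed. After writing $F_p\widetilde\cM=\bigoplus_\ell\bigl(\sum_{q\geq 0}(\theta+\ell+1)\cdots(\theta+\ell+q)F_{p+1-q}V^{r-1-q-\ell}\cM\bigr)u^\ell$, the nontrivial claim is that once you intersect with $V^\gamma\widetilde\cM$ you may truncate the $q$-sum at $q=\lfloor-\gamma\rfloor$ with no loss; equivalently, the ``tail'' $\sum_{q>\lfloor-\gamma\rfloor}$ can be absorbed into the lower terms whenever the total lies in $V^{\gamma-\ell+r-1}\cM$. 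The paper proves this by a coprimality argument: one shows the tail can be written as $(\theta+\ell+1)\cdots(\theta+\ell+\lfloor-\gamma\rfloor+1)m'$ with $m'\in V^{r-1-N-\ell}\cM$ and, since $(\theta+\ell+N+1)$ is nilpotent on $\gr_V^{r-1-N-\ell}\cM$, one uses B\'ezout's identity (the polynomials $(\theta+\ell+1)\cdots(\theta+\ell+\lfloor-\gamma\rfloor+1)$ and $(\theta+\ell+N+1)^d$ are coprime) together with discreteness of $V$ to push $m'$ upward in the $V$-filtration. Your appeal to ``strictness (bistrictness) of the Hodge filtration with respect to the $V$-filtration'' does not substitute for this: the bistrictness statement in this paper (Corollary \ref{bistrict}) is itself deduced from Lemma \ref{SpHFilt}, so invoking it here is circular, while the bistrictness of \cite{BMS} concerns direct images under projective morphisms and does not yield the polynomial-absorption step.

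Two further soft spots. First, for $\gamma\geq 0$ the proposed ``uniqueness'' argument needs as input the equality $F_pV^\gamma\widetilde\cM=V^\gamma\widetilde\cM\cap j_*j^*F_p\widetilde\cM$ for $\gamma\geq 0$; this is the content of Saito's (3.2.3.2) (the regularity half of quasi-unipotent and regular) and is not a formal consequence of the $u$-isomorphism $F_pV^\gamma\xrightarrow{\ \sim\ }F_pV^{\gamma+1}$ ($\gamma>0$) and the restriction to $\widetilde X^*$ — there is no $\gamma\gg 0$ from which to ``descend'' without this input. Second, in the descending induction for $\gamma<0$, passing from the decomposition $F_pV^\gamma\widetilde\cM=F_pV^{>\gamma}\widetilde\cM+\de_u(F_{p-1}V^{\gamma+1}\widetilde\cM)$ to the claimed closed form requires pushing $(\theta+\ell+1)$ through an intersection with $V^{\gamma-\ell+r-1}\cM$, and this again reduces to the very nilpotency/coprimality step you have not supplied. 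In short: the ingredients are the right ones, but the heart of the lemma — the truncation of the $q$-sum — needs the B\'ezout–nilpotency argument, and that argument is absent from the proposal.
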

\begin{proof} To begin, because $j$ is the inclusion of the complement of a Cartier divisor, we have the following formula for the Hodge filtration (see Formula 3.2.3.2 of \cite{Saito-MHP} and Remark \ref{HodgeFiltComplement} below)
\[ F_p \widetilde{\cM} = \sum_{q\geq 0} \de_u^q\left(V^0\widetilde{\cM} \cap j_*\left(F_{p-q}p^*(\cM)\right)[-1]\right).\]
As $p: X \times \mathbf G_m \to X$ is smooth of relative dimension 1, we know $F_\bullet p^*(\cM)[-1] = \bigoplus_{\ell \in \Z} F_{\bullet+ 1}\cM u^\ell$.

Also, we have that $\de_u$ acts on a homogeneous element $m u^\ell$ by $(\theta + \ell)m u^{\ell -1}$, where $\theta = \sum_{i=1}^r t_i \de_{t_i}$. Thus, we get the following description (which was pointed out to the authors by \Mustata)
\[ F_p \widetilde{\cM} = \bigoplus_{\ell \in \Z}\left( \sum_{q\geq0 } (\theta + \ell +1)\cdots (\theta + \ell + q) F_{p+1-q} V^{r-1-q-\ell}\cM\right)u^\ell.\]

Let $mu^\ell \in V^\gamma F_p \widetilde{\cM}$, so we can write $m = \sum_{q=0}^N (\theta + \ell +1)\cdots (\theta + \ell + q) m_q$, where $m_q \in F_{p+1-q} V^{r-1-q-\ell}\cM$. Also, $m \in V^{\gamma - \ell + r -1}\cM$ by assumption.

Now, break up \[m = \sum_{q=0}^{\lfloor -\gamma \rfloor} (\theta + \ell +1)\cdots (\theta + \ell + q) m_q + \sum_{q = \lfloor-\gamma \rfloor +1}^N (\theta + \ell+ 1)\cdots (\theta+\ell+q) m_q.\]

For $q\leq \lfloor -\gamma \rfloor$, we have that $r-1-q-\ell \geq r-1+\gamma-\ell$, and so $V^{r-1-q-\ell}\cM \subseteq V^{r -1+\gamma -\ell}\cM$. As $m\in V^{r - 1 + \gamma -\ell}\cM$ by assumption, this implies that the sum
\[ 
\begin{aligned}
&\sum_{\lfloor -\gamma \rfloor+1}^N (\theta +\ell+1)\cdots (\theta+\ell+q)m_q =\\
& (\theta +\ell+1)\cdots(\theta+\ell+\lfloor -\gamma \rfloor +1) \sum_{q = \lfloor -\gamma \rfloor +1}^N (\theta+\ell +\lfloor -\gamma \rfloor +2)\cdots(\theta + \ell +q) m_q  \in V^{r -1+\gamma-\ell}\cM.
\end{aligned}
\]

Write $m' = \sum_{\lfloor -\gamma \rfloor +1}^N (\theta + \ell + \lfloor -\gamma \rfloor +2)\cdots (\theta + \ell +q) m_q$. As $m_q \in V^{r-1-q-\ell}\cM$, we have that $m' \in V^{r - 1-N-\ell}\cM$. Then $(\theta +\ell +N+1)^d m' \in  V^{> r - 1-N-\ell}\cM$ for some $d \gg 0$. As $N > \lfloor - \gamma \rfloor$ and $(\theta +\ell+1)\cdots (\theta + \ell + \lfloor -\gamma \rfloor +1)m' \in  V^{> r -1-N-\ell}\cM$, B\' ezout's identity implies $m' \in V^{>r - 1-N-\ell}\cM$. By discreteness of the $V$-filtration and repeating this argument, we see that $m' \in V^{r -1 - \lfloor -\gamma \rfloor -\ell}\cM$. As $\theta$ shifts $F$ by one, $m' \in F_{p+1-\lfloor -\gamma \rfloor} V^{r-1-\lfloor -\gamma \rfloor -\ell}\cM$.

Thus, $m = (\theta + \ell +1)\cdots (\theta+\ell + \lfloor -\gamma \rfloor +1)(m_{\lfloor -\gamma \rfloor} + m') + \sum_{q =0}^{\lfloor -\gamma \rfloor -1} (\theta + \ell+1)\cdots (\theta+\ell +q) m_q$, which proves the claim inductively.
\end{proof}

In the next subsection, we compute the weight filtration for $\Sp(\cM)$. Note that $W_\bullet \Sp(\cM)$ is a $\cD_{T_ZX}$-submodule, and so it is monodromic. We show that, on each monodromic graded piece, the weight filtration induces the relative monodromy filtration.

\subsection{Admissiblity}\label{subsec:admiss}
For convenience, we recall the definition of the relative monodromy filtration, see Section 1 of \cite{Saito-MHM} for details. 

Let $L$ be a finite increasing filtration on an object $M \in \cC$, an exact category which we take to be embedded in some abelian category $\cA$. Let $S: \cC \to \cC$ be an additive automorphism of the category, which extends to $\cA$.

Let $N: (M,L) \to S^{-1}(M,L)$ be a filtered morphism such that $N^i =0$ for $i\gg 0$. Here the filtration $L$ on $S^j M$ is defined as $L_k (S^j M) = S^j(L_k M)$ for any $j\in \Z, k \in \Z$. Then there is at most one finite, increasing filtration $W=W(N,L)$ of $(M,L)$, called the \emph{relative monodromy filtration} which satisfies:
\begin{enumerate} 
    \item $N: (M;L,W) \to S^{-1}(M;L,W[2])$ is a filtered morphism,
    \item $N^i: \gr^W_{k+i} \gr^L_k M \to \gr^W_{k-i} \gr^L_k M$ is an isomorphism for all $i>0$.
\end{enumerate}
Here, recall that an increasing filtration is shifted as $W[j]_\bullet = W_{\bullet -j}$. We shall take $\cC$ the category of filtered $\cD$-modules and $S$ the shifting of the filtration. 

In the theory of mixed Hodge modules, the objects are defined to satisfy the \emph{admissible condition}: if $(\cM,W)$ is a mixed Hodge module with its weight filtration and $g\in \cO_X$ is any locally defined regular function, then 
\begin{enumerate}
    \item the relative monodromy filtration for $\psi_g(M,W)$ exists for the nilpotent monodromy operator on this nearby cycle, with $L_i = \psi_g(W_{i+1}M)$. Similarly, one assumes the existence of the relative monodromy filtration on $\phi_{g,1}(M,W)$, with $L_i = \phi_{g,1}(W_i M)$ defined without a shift.
    \item the three filtrations are compatible \[
0\rightarrow F_\ell V_\alpha W_{i-1}\cM\rightarrow F_\ell V_\alpha W_{i}\cM \rightarrow F_\ell V_\alpha\gr^W_{i}\cM\rightarrow 0,
\] 
\end{enumerate}
where $V$ is the $V$-filtration along $g$.

In the setting of higher codimension, say $Z$ is a smooth subvariety defined by $t_1,\dots, t_r$, it is an easy exercise using the specialization construction to see that the $V$-filtration along $Z$ satisfies a similar property. The associated graded modules $\gr^V_{\chi} \cM$ also have nilpotent operators, given by $\theta - \chi = \sum_{i=1}^r t_i \de_{t_i} - \chi$. 

\begin{lem}\label{thm:compatible}
    Suppose that the triple $(\cM,F,W)$ underlies a graded polarizable mixed Hodge module, then the three filtrations $F, V, W$ are compatible, i.e., the following sequence is exact
    \[
    0\rightarrow F_k V_\alpha W_{i-1}\cM\rightarrow F_k V_\alpha W_{i}\cM \rightarrow F_k V_\alpha\gr^W_{i}\cM\rightarrow 0.
    \]
\end{lem}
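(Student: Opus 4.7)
The plan is to reduce this compatibility statement to the codimension-one case via the specialization construction of Section~\ref{subsec:dfn}, where the compatibility of $F$, $V^u$ and $W$ is the admissibility axiom built into the definition of a mixed Hodge module. Left exactness of the sequence is immediate, since $F_k$ and $V_\alpha$ both commute with taking $\cD$-submodules (so $F_k V_\alpha W_{i-1}\cM = F_k V_\alpha W_i \cM \cap W_{i-1}\cM$); the content is surjectivity of the rightmost arrow.

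First, I would replace $M$ by its deformation $\widetilde{M} = j_*(p^*M)[-1]$ on $\widetilde{X}$. Since $p$ is smooth of relative dimension one and $j$ is the complement of the smooth principal divisor $\{u=0\}$, $\widetilde{M}$ is a mixed Hodge module on $\widetilde{X}$ carrying its own Hodge filtration, weight filtration, and $V$-filtration along $\{u=0\}$. Applying the admissibility axiom to $\widetilde{M}$ along the hypersurface $\{u=0\}$ yields the exact sequence
\[
0 \to F_p V^\gamma W_{i-1}\widetilde{\cM} \to F_p V^\gamma W_i \widetilde{\cM} \to F_p V^\gamma \gr^W_i \widetilde{\cM} \to 0
\]
for all $p \in \Z$, $\gamma \in \Q$, $i \in \Z$.

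Next I would transport this back to $\cM$ using the $\Z$-grading
\[
\rho_*\widetilde{\cM} = \bigoplus_{\ell \in \Z} \cM\, u^\ell.
\]
By Lemma~\ref{SpHFilt}, for $\gamma \geq 0$ this grading respects both $F$ and $V^u$: the $\ell$-th summand of $F_p V^\gamma \widetilde{\cM}$ is $F_{p+1}V^{r-1+\gamma-\ell}\cM\,u^\ell$. The analogous compatibility for $W$ --- namely $W_i \widetilde{\cM} = \bigoplus_{\ell \in \Z} W_{i+c}\cM\,u^\ell$ for some shift $c$ independent of $\ell$ --- follows from the $\mathbf{G}_m$-equivariance of the specialization construction (the scaling action on the fibers of $\rho$ produces the grading, and $W$ is functorial). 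Isolating, for example, the $\ell = 0$ summand of the exact sequence above then yields the desired sequence for $\cM$, after translating between the left and right $\cD$-module conventions.

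The main obstacle is the bookkeeping for the weight filtration: whereas the explicit description of $F$ and $V^u$ on $\widetilde{\cM}$ in terms of data on $\cM$ is exactly Lemma~\ref{SpHFilt}, one has to separately verify that $W_\bullet \widetilde{\cM}$ decomposes along the $u$-grading with a uniform shift and identify that shift precisely, so that extracting a single graded component of the hypersurface admissibility sequence translates cleanly into a statement about $W_\bullet \cM$. Once this is done, the compatibility of $F$, $V$, $W$ on $\cM$ is essentially just the $\ell = 0$ component of codimension-one admissibility applied to $\widetilde{M}$.
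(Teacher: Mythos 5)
Your strategy is the same as the paper's: reduce to codimension-one admissibility via the deformation $\widetilde{X}$, then extract graded pieces from the exact sequence for $\widetilde{\cM}$ using the $u$-grading $\rho_*\widetilde{\cM} = \bigoplus_\ell \cM\,u^\ell$. You correctly identify the bookkeeping for $W$ as the main obstacle, but your proposed resolution has a genuine gap. The assertion that $W_i\widetilde{\cM} = \bigoplus_\ell W_{i+c}\cM\,u^\ell$ for a uniform shift $c$ is \emph{false as a global statement}: the weight filtration on $\widetilde{M} = j_*(p^*M)[-1]$ is not simply the pushforward of $W_\bullet M$, since the weight filtration of $j_*$ across the hypersurface $\{u=0\}$ involves the relative monodromy weight filtration along $u$. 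What $\mathbf{G}_m$-equivariance gives you is that $W_i\widetilde{\cM}$ decomposes into graded pieces $(W_i\widetilde{\cM})_\ell$, but it does not identify those pieces with $W_{i+c}\cM$ uniformly; that identification only holds after restriction to $\widetilde{X}^* = \{u\neq 0\}$, i.e. $j^*W_\bullet\widetilde{\cM} = \bigoplus_\ell W_\bullet\cM\,u^\ell$.

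The paper avoids exactly this issue by restricting to $V_\alpha$ with $\alpha < 0$ (in the increasing convention, i.e. $V^\gamma$ with $\gamma > 0$): since $V_{<0}$ is determined by restriction to the complement of $\{u=0\}$, one has $V_\alpha W_i\widetilde{\cM} = V_\alpha j_+p^*(W_i\cM)$ for $\alpha < 0$, and then the graded pieces of $F_kV_\alpha W_i\widetilde{\cM}$, $F_kV_\alpha\gr^W_i\widetilde{\cM}$ can be computed in terms of $\cM$ via the explicit formula for $F$ on $V_{<0}$ of a localization. Your use of $\gamma \geq 0$ (as opposed to $\gamma > 0$) for the same reason runs into a boundary issue at $\gamma = 0$: $V^0$ is \emph{not} determined by the restriction away from $u=0$, so the identification of $V^0W_i\widetilde{\cM}$ with $V^0 j_+p^*(W_i\cM)$ would need a separate argument. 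Sticking to $\gamma > 0$ and varying $\ell$ over $\Z$ (not just $\ell=0$) still covers all rational indices $r-1+\gamma-\ell$, so the restriction costs nothing. In short: replace your appeal to a global $u$-grading of $W_i\widetilde{\cM}$ by the observation that $V^{>0}$ (equivalently $V_{<0}$) of a module is determined by its restriction to $\widetilde{X}^*$, and the proof aligns with the paper's.
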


\begin{proof} 
We first recall the setting in Section~\ref{subsec:dfn}: let $\widetilde X=\mathcal{S}pec_{X} \left( \sum_{\ell \in\Z}\cI^\ell_Z\cdot u^{-\ell}\right)$ be the deformation to the normal cone along $Z$, where $\cI_Z$ is the ideal sheaf of $Z$ and $\cI^{\ell}_Z=0$ for $\ell<0$. Let $\rho: \widetilde X \rightarrow X$, $p\colon \widetilde X^* \rightarrow X$ be the two structure morphisms and $j:\widetilde X^*\rightarrow \widetilde X$ is the open immersion. Abusing the notation, we also denote by $\rho\colon T_ZX\to Z$ as the restriction of $\rho:\widetilde X\to X$.

Let $\widetilde \cM= j_+p^*\cM$. Then by Saito's theory~\cite{Saito-MHM}, there exist filtrations $F_\bullet\widetilde \cM$ and $W_\bullet\widetilde\cM$ on $\widetilde \cM$ such that the triple $(\widetilde \cM, F_\bullet\widetilde\cM, W_\bullet\widetilde\cM)$ underlies a graded polarizable mixed Hodge module and that $j^*F_\bullet\widetilde\cM= \bigoplus_{\ell \in \Z} F_{\bullet+1} \cM u^\ell$ and $j^*W_\bullet \widetilde \cM= \bigoplus_{\ell \in \Z} W_\bullet \cM u^\ell$. It follows from the compatibility for mixed Hodge modules of the codimension-one case that 
\begin{equation}\label{eq:defcom}
0\rightarrow F_k V_\alpha W_{i-1}\widetilde \cM\rightarrow F_k V_\alpha W_{i}\widetilde\cM \rightarrow F_k V_\alpha\gr^W_{i}\widetilde\cM\rightarrow 0,
\end{equation}
where $V_\bullet$ is the $V$-filtration along $T_Z X$. Since $V_{<0}$ only depends on the restriction of a $\cD$-module to $\tilde X^*$,  it follows that $V_\alpha W_i \widetilde\cM =V_\alpha j_+ p^*W_i  \cM$ for $\alpha<0$. On the other hand, the Hodge filtration on $V_{\alpha}$ for $\alpha<0$ can be calculated by
\[
F_k V_\alpha W_i \widetilde \cM = F_k V_\alpha W_i j_+ p^* \widetilde \cM = j_* p^* F_{k+1} W_i \cM \cap V_\alpha j_+ p^* W_i\cM. 
\]
We obtain, for $\alpha<0$,
\[
\rho_* F_k V_\alpha W_{i}\widetilde\cM= \bigoplus_{\ell \in \Z} F_{k+1} V_{\alpha+\ell+1} W_{i} \cM \cdot u^\ell.
\]
Similarly, we have, for $\alpha<0$,
\[
\rho_* F_k V_\alpha \gr^W_{i}\widetilde\cM= \bigoplus_{\ell \in \Z} F_k V_{\alpha+\ell+1} \gr^W_{i}\cM\cdot u^\ell.
\]
Applying $\rho_*$ to the sequence~\eqref{eq:defcom} for $\alpha< 0$ yields an exact sequence on $X$:
\[
0\rightarrow \bigoplus_{\ell \in\Z}F_k V_{\alpha+\ell+1} W_{i-1} \cM \cdot u^\ell \rightarrow \bigoplus_{\ell \in\Z}F_k V_{\alpha+\ell+1} W_{i} \cM \cdot u^\ell \rightarrow \bigoplus_{\ell\in\Z}F_k V_{\alpha+\ell+1} \gr^W_{i} \cM \cdot u^\ell \rightarrow 0 .
\]
Since the morphisms in the above sequence respect the grading, we have 
\[
0\rightarrow F_k V_\alpha W_{i-1} \cM\rightarrow F_k V_\alpha W_{i}\cM \rightarrow F_k V_\alpha\gr^W_{i}\cM\rightarrow 0
\]
for every $\alpha\in\Q$. We conclude the proof.
\end{proof}

\begin{lem}\label{lem:relmonext}
    If $(\cM,F,W)$ is a bifiltered $\cD_X$-module underlying a mixed Hodge module with the weight filtration $W$, then the relative monodromy filtration $W(\theta-\chi,L)$ on $\gr_\chi^V\cM$ exists where $L_\bullet\gr_\chi^V\cM=\gr_\chi^V (W_\bullet\cM)$ is induced by the weight filtration. 
\end{lem}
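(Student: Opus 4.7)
My plan is to use the deformation to the normal cone from Section~\ref{subsec:dfn} to reduce this codimension-$r$ existence statement to the codimension-one case, where admissibility of mixed Hodge modules along a smooth hypersurface directly supplies the relative monodromy filtration.

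Concretely, let $\rho\colon\widetilde X\to X$ be the deformation, $j\colon X\times \mathbf G_m\hookrightarrow \widetilde X$ the open immersion complementary to the smooth hypersurface $T_ZX=\{u=0\}$, and set $\widetilde M := j_*(p^*M)[-1]$, a mixed Hodge module on $\widetilde X$ with underlying bifiltered $\cD$-module $(\widetilde\cM,F,W)$. By the admissibility axiom for mixed Hodge modules along a hypersurface, the relative monodromy filtration $W(N_\lambda,L^{(\lambda)})$ exists on each $\gr_V^\lambda\widetilde\cM$, where $N_\lambda$ is the nilpotent operator induced by $u\partial_u$ along $\{u=0\}$ and $L^{(\lambda)}_\bullet=\gr_V^\lambda W_\bullet\widetilde\cM$. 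Next I would invoke the identification
\[
\rho_*\bigl(\gr_V^\lambda\widetilde\cM\bigr)\;=\;\bigoplus_{\ell\in\Z}\gr_V^{r+\lambda-\ell-1}\cM\cdot u^\ell
\]
recalled in Section~\ref{subsec:dfn}; this is a decomposition of $\cD_Z$-modules compatible with the $u$-grading, and by Lemma~\ref{thm:compatible} the induced filtration $L^{(\lambda)}$ on the $\ell$-summand agrees with $\gr_V^{r+\lambda-\ell-1}W_\bullet\cM$. Moreover, under this identification the operator $N_\lambda$ acts on the $\ell$-summand as $\theta-\chi+r$ (for $\chi=r+\lambda-\ell-1$), which differs from the operator in the lemma's statement only by a scalar and thus shares the same relative monodromy filtration.

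Finally, since the relative monodromy filtration is unique whenever it exists and hence compatible with the direct sum decomposition, its restriction to the $\ell$-th summand is the relative monodromy filtration on $\gr_V^\chi\cM$ relative to $\gr_V^\chi W_\bullet\cM$ with respect to the nilpotent operator $\theta-\chi+r$ (equivalently, $\theta-\chi$ up to a scalar). As every $\chi\in\Q$ arises this way by varying $\lambda\in(0,1]$ and $\ell\in\Z$, this yields the required filtration on every $\gr_V^\chi\cM$. The main obstacle is purely bookkeeping: carefully tracking the scalar shifts relating $u\partial_u$ on $\gr_V^\lambda\widetilde\cM$ to $\theta$ on $\gr_V^\chi\cM$ under the above identification, and verifying that the induced decomposition of the relative monodromy filtration along the $u$-grading still satisfies both defining properties on each summand; both verifications are formal once the geometric setup and Lemma~\ref{thm:compatible} are in place.
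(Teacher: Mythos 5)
Your proof takes essentially the same route as the paper's: pass to the deformation to the normal cone, invoke admissibility of $\widetilde M$ along the smooth hypersurface $\{u=0\}$ to obtain the relative monodromy filtration on $\gr_V^\alpha\widetilde\cM$, and then use the $u$-graded decomposition under $\rho_*$ to descend it to each $\gr_V^\chi\cM$. One small imprecision worth flagging: you say the operator $\theta-\chi+r$ and the $\theta-\chi$ in the lemma's statement ``differ only by a scalar and thus share the same relative monodromy filtration,'' but two operators differing by a nonzero scalar cannot both be nilpotent, so that phrasing, taken literally, is a non-sequitur. What is actually happening is a side-convention mismatch: the lemma is written for right $\cD$-modules (lower index $\gr_\chi^V$), where the nilpotent operator on $\gr_\chi^V\cM$ is $\theta-\chi$, whereas you are computing in the left convention (upper index $\gr_V^\chi$), where the nilpotent operator is $\theta-\chi+r$. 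They are the same operator under the left/right equivalence, so your conclusion is correct, but the justification should invoke that correspondence rather than a scalar shift. The paper also spells out the final descent step a bit more concretely---namely that the isomorphisms $(u\partial_u-\alpha)^k\colon\gr_{k+i}^W\gr_i^L\gr_V^\alpha\widetilde\cM\to\gr_{-k+i}^W\gr_i^L\gr_V^\alpha\widetilde\cM$ commute with the $\mathbf{G}_m$-action and hence restrict to each $u$-degree summand---which you defer as ``formal''; that is fine but is exactly the check that makes the descent work.
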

\begin{proof}
    The relative monodromy filtration $W=W(u\partial_u-\alpha,L)$ exists on $\gr^V_\alpha\widetilde \cM$ for $\alpha\in [-1,0]$ because $\widetilde \cM$ is a mixed Hodge module. Then since $W_k\gr^V_\alpha\widetilde \cM$ is invariant under the $\mathbf G_m$-action $u\de_u$, applying $\rho_*$ gives 
    \[
    \rho_* W_k\gr^V_\alpha\widetilde\cM= \bigoplus_{\ell \in \Z} W_k\gr^V_{\alpha+\ell+1}\cM\cdot u^\ell.
    \] 
    induces a filtration $W$ on each $\gr^V_{\alpha+\ell+1}\cM$. We easily check that $W\gr^V_{\alpha+\ell+1}\cM$ is the relative monodromy filtration $W(\theta-\alpha-\ell-1, L)$ if $\alpha<0$. Indeed, we have seen that, for $\alpha<0$ 
    \[
    \rho_* \gr^W_{k+i}\gr^L_i \gr^V_\alpha \widetilde \cM = \bigoplus_{\ell \in\Z}\gr^W_{k+i}\gr^L_i\gr^V_{\alpha+\ell+1}\cM\cdot u^\ell.
    \]
    The isomorphism $(u\partial_u-\alpha)^k: \gr^W_{k+i}\gr^L_i \gr^V_\alpha \widetilde \cM  \to \gr^W_{-k+i}\gr^L_i \gr^V_\alpha \widetilde \cM$ commutes with the $\mathbf G_m$-action so it induces an isomorphism on each graded piece after we apply $\rho_*$.
\end{proof}

\begin{lem}\label{lem:e2}
Let $(\cM,F)$ be a filtered $\cD$-module underlying a mixed Hodge module on a product of smooth varieties $Y\times X$. Let $\pr_2:Y\times X\rightarrow X$ be the second projection and $V_\bullet\cM$ is the $V$-filtration along $Y\times Z$. Suppose that $\pr_2$ is projective on the support of $\cM$. Then we have: 
\begin{enumerate} 
    \item The spectral sequence associated to the relative monodromy filtration on ${\pr_2}_+(\gr_\alpha^V\cM,F)$ degenerates at the second page $E_2$ in the category of filtered $\cD$-modules. 
    \item If $(\cM,F)$ underlies a polarizable Hodge module, then $E^{p,q}_2$ is a filtered direct summand of $E^{p,q}_1$.
    \item If $(\cM,F)$ underlies a polarizable Hodge module and $W\gr^V_\alpha\cM$ is the monodromy filtration, then the image of $\cH^i {\pr_2}_+W_k \gr_\alpha^V  \cM$ in $\cH^i {\pr_2}_+ \gr_\alpha^V \cM$ is the monodromy filtration of 
    \[\gr_\alpha^V \cH^i {\pr_2}_+\cM=\cH^i {\pr_2}_+ \gr_\alpha^V \cM.\]
    \item We have the decomposition in the filtered derived category of $\cD$-modules
    \[
    {\pr_2}_+ (\gr^W_k\gr_\alpha^V\cM,F)\simeq \bigoplus_i (\cH^i {\pr_2}_+ \gr^W_k \gr_\alpha^V\cM, F)[-i]
    \]
    where $W\gr_\alpha^V\cM$ is the relative monodromy filtration.
\end{enumerate} 
\end{lem}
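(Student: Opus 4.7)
The plan is to reduce each part to the polarizable pure case and invoke Saito's decomposition theorem and relative Hard Lefschetz, handling the four parts in the order (4), (1), (2), (3). The compatibility Lemma~\ref{thm:compatible} together with the description of the relative monodromy filtration in Lemma~\ref{lem:relmonext} provides the link between the filtrations $F$, $V$, $L$, and $W$ that makes these reductions possible. The projectivity hypothesis on $\pr_2$ over the support is what permits Saito's theorems to apply throughout.

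First I would prove (4). The module $\gr_k^W\gr_\alpha^V\cM$ inherits an $L$-filtration from the weight filtration of $\cM$, and by the defining property of the relative monodromy filtration, each graded piece $\gr_j^L\gr_k^W\gr_\alpha^V\cM$ underlies a polarizable pure Hodge module of weight determined by $j$, $k$, and $\alpha$. Apply Saito's decomposition theorem to each such pure piece to get the corresponding splitting for its direct image. Then lift these splittings to one for the extension $\gr_k^W\gr_\alpha^V\cM$ using vanishing of $\Ext^1$ between pure Hodge modules of different weights in the filtered derived category. With (4) in hand, (1) follows by weight tracking: one identifies $E_1^{p,q}=\cH^{p+q}{\pr_2}_+\gr_{-p}^W\gr_\alpha^V\cM$, whose $\gr^L$-pieces are pure with weights determined by $p$, $q$, and the direct-image weight shift, so that $\gr_j^L E_1^{p,q}$ has weight $j+q$ independently of $p$. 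Since $E_r^{p,q}$ is a subquotient of $E_1^{p,q}$, its $\gr^L$-pieces carry the same weights, and for $r\geq 2$ the differential $d_r$, after passing to $\gr^L$, becomes a morphism between pure Hodge modules of distinct weights and therefore vanishes; strictness with respect to $F$ and $L$ then forces $d_r=0$ at the filtered level.

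For (2), in the polarizable pure case the $L$-filtration collapses to a single weight, so each $E_1^{p,q}$ is itself polarizable pure. The differential $d_1:E_1^{p,q}\to E_1^{p+1,q}$ is then a morphism between polarizable pure Hodge modules of the same weight and splits by semisimplicity; Hodge-filtered strictness of such morphisms makes the splitting compatible with $F$, exhibiting $E_2^{p,q}$ as a filtered direct summand of $E_1^{p,q}$. Finally, (3) is proved by combining (1) with Saito's relative Hard Lefschetz: (1) gives $E_\infty=E_2$, so the filtration induced on $\cH^i{\pr_2}_+\gr_\alpha^V\cM$ by the direct image of $W$ has the expected associated graded; the nilpotent operator $\theta-\alpha+r$ commutes with ${\pr_2}_+$ and descends to each cohomology module, and relative Hard Lefschetz for the polarizable case yields the isomorphism $N^k:\gr_k^W\to\gr_{-k}^W$ on the cohomology, characterizing the induced filtration as the monodromy filtration by uniqueness.

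The hardest step is the lifting in (4): the required $\Ext^1$-vanishing has to be verified in the bifiltered derived category of $\cD$-modules, where compatibility of $F$ and $L$ is essential. This is exactly the content of Lemma~\ref{thm:compatible}, and without it the reduction from the mixed to the pure setting would break down; once compatibility is granted, the remaining arguments are standard applications of Saito's direct image formalism.
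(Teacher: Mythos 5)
Your approach is genuinely different from the paper's, but it has a gap at its foundation. The paper's proof never argues directly with $\gr^V_\alpha\cM$ as an object on $Z$; instead it passes through the deformation to the normal cone $\widetilde X$, uses the fact that $\gr^V_\alpha\widetilde\cM$ (along the hypersurface $u=0$) genuinely underlies a mixed Hodge module in the codimension-one theory, and then applies the exact functor $\rho_*$ to transport the resulting spectral-sequence degeneration, the identification of the induced filtration from \cite[5.3.4.2]{Saito-MHP}, and the Hard Lefschetz decomposition, all proved upstream for $\widetilde\cM$, down to $\cM$. Each of the four claims is obtained this way, component-by-component with respect to the $\mathbf G_m$-grading.

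The gap in your argument is the claim that, for higher codimension, ``each graded piece $\gr^L_j\gr^W_k\gr^V_\alpha\cM$ underlies a polarizable pure Hodge module,'' which you treat as a direct consequence of the defining property of the relative monodromy filtration. That property only gives the $N^i$-isomorphisms between the $\gr^W\gr^L$ pieces at the level of filtered $\cD$-modules; it does not by itself produce a Hodge structure, a $\Q$-structure, or a polarization on those pieces. When $\codim Z\geq 2$, the object $\gr^V_\alpha\cM$ is in general not even $\cD_Z$-coherent (see Section 2.3), and none of the Hodge-module axioms are available for it a priori. The paper's whole strategy in this section---Lemmas \ref{thm:compatible} and \ref{lem:relmonext} included---is precisely to establish such compatibilities by lifting to the normal cone, where the hypersurface case applies. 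If you grant yourself that $\gr^W_k\gr^L_j\gr^V_\alpha\cM$ is a polarizable Hodge module, you are essentially assuming what the normal-cone argument is there to prove. A secondary issue: in part (4) the decomposition in the filtered derived category is usually obtained from relative Hard Lefschetz plus Deligne's formal argument \cite{Del}, not from $\Ext^1$-vanishing between pure weights; the paper indeed uses Hard Lefschetz on $\widetilde\cM$. To repair your proof you would have to first prove the Hodge-module structure on the graded pieces, and the natural way to do that is exactly the normal-cone reduction the paper employs.
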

\begin{proof}
Let $\widetilde \pr_2\colon Y\times T_ZX \to T_ZX$ be the induced morphism on the normal bundles. Then we obtain the following commutative diagram by abusing notation.
\[
\begin{tikzcd}
 Y\times T_Z X \arrow{r}{\widetilde \pr_2}\arrow{d}{\rho} & T_ZX \arrow{d}{\rho}  \\
 Y\times Z \arrow{r}{\pr_2} & Z
\end{tikzcd}
\]
Then by the exactness of $\rho_*$, for $\alpha<0$
\[
\begin{aligned}
\rho_*{\widetilde {\pr_2}}_+(\gr^V_\alpha\widetilde\cM,F)=\rho_*\bR\widetilde{\pr_2}_*(\gr^V_\alpha\widetilde\cM\otimes \bigwedge^{-*}\mathscr T_{Y},F)  = \bR{\pr_2}_*\rho_*(\gr^V_\alpha\widetilde\cM\otimes \bigwedge^{-*}\mathscr T_{Y},F)\\
 = \bR{\pr_2}_*\bigoplus_{\ell\in\Z}(\gr^V_{\alpha+\ell+1}\cM\otimes \bigwedge^{-*}\mathscr T_{Y},F)\cdot u^\ell=\bigoplus_{\ell\in Z}{\pr_2}_+(\gr^V_{\alpha+\ell+1}\cM,F)\cdot u^\ell.
\end{aligned}
\]
Here, we identify $\mathscr T_Y$ with $\mathscr T_{Y\times T_ZX/T_ZX}$ and the Hodge filtration $F_\bullet \gr^V_\alpha\widetilde\cM\otimes \bigwedge^{-*}\mathscr T_{Y}$ is given by
\[
F_{\bullet+*}\gr^V_\alpha\widetilde\cM\otimes \bigwedge^{-*} \mathscr T_Y.
\]
In particular, we see that $\rho_*{\widetilde {\pr_2}}_+=\rho_*{{\pr_2}}_+$ in the derived category of filtered $\cD$-modules. Therefore, by functoriality of the spectral sequences,  we obtain a relation between the spectral sequence $E_r^{p,q}\left({\widetilde{\pr_2}}_+(\gr^V_\alpha\widetilde\cM,F)\right)$ induced by the relative monodromy filtration on ${\pr_2}_+\gr^V_\alpha\widetilde\cM$ and the spectral sequence $E_r^{p,q}({\pr_2}_+(\gr^V_{\alpha}\cM,F))$ on ${\pr_2}_+(\gr^V_{\alpha}\cM,F)$:
\begin{equation}\label{eq:spec}
\rho_*E_r^{p,q}\left({\widetilde {\pr_2}}_+(\gr^V_\alpha\widetilde\cM,F)\right)=\bigoplus_{\ell\in \Z}E_r^{p,q}\left({\pr_2}_+(\gr^V_{\alpha+\ell+1}\cM,F)\right)\cdot u^\ell \quad \text{for } \alpha<0.
\end{equation}
Moreover, the differential $d_r$ is compatible with the direct sum decomposition. Due to the fact that  $(\gr^V_\alpha\widetilde\cM,F,W)$ underlies a mixed Hodge module, the  spectral sequence $E_r^{p,q}\left({\widetilde {\pr_2}}_+(\gr^V_\alpha\widetilde\cM,F)\right)$ induced by the relative mondromy weight filtration on $\widetilde{\pr_2}_+(\gr^V_\alpha\widetilde\cM,F)$ degenerates at the second page. Therefore, the spectral sequence $E_r^{p,q}({\pr_2}_+(\gr^V_\alpha\cM,F))$ also degenerates at the second page.

Since polarizable Hodge modules are semisimple~\cite[5.2.13]{Saito-MHP}, $E^{p,q}_2(\gr_\alpha^V\widetilde \cM,F)$ is a summand of $E^{p,q}_1(\gr_\alpha^V\widetilde \cM,F)$. Thanks to~\eqref{eq:spec} again, $E_2^{p,q}({\pr_2}_+\gr^V_\alpha\cM)$ is a summand of $E_1^{p,q}({\pr_2}_+\gr^V_\alpha\cM)$.

It follows from~\cite[5.3.4.2]{Saito-MHP} that the image of $\cH^i \widetilde{\pr_2}_+W_\bullet \gr_\alpha^V\widetilde \cM$ in $\cH^i \widetilde{\pr_2}_+ \gr_\alpha^V\widetilde\cM$ is the monodromy filtration $W_\bullet \gr_\alpha^V \cH^i \widetilde{\pr_2}_+ \widetilde\cM$. Noticing that $\rho_*$ is exact and $\rho_*\cH^i \widetilde{\pr_2}_+ =\cH^i{\pr_2}_+\rho_*$, we get 
\[
\rho_*W_\bullet \gr_\alpha^V \cH^i \widetilde{\pr_2}_+\widetilde\cM=\bigoplus_{\ell \in \Z} W_\bullet \gr_{\alpha+\ell+1}^V \cH^i {\pr_2}_+\cM \cdot u^\ell
\]
for $\alpha<0$. Then $W_\bullet\gr_{\alpha+\ell+1}^V \cH^i {\pr_2}_+\cM$ is the monodromy filtration $W(\theta-\alpha-\ell-1)$ as pointed out in the proof of Lemma~\ref{lem:relmonext}. Then applying $\rho_*$ for $\alpha<0$ to 
\[
\cH^i \widetilde{\pr_2}_+W_\bullet \gr_\alpha^V\widetilde \cM \twoheadrightarrow W_\bullet \gr_\alpha^V \cH^i \widetilde{\pr_2}_+ \widetilde\cM \hookrightarrow \cH^i \widetilde{\pr_2}_+ \gr_\alpha^V\widetilde\cM
\] 
gives:
\[
\bigoplus_{\ell \in \Z}\cH^i {\pr_2}_+W_\bullet \gr_{\alpha+\ell+1}^V \cM\cdot u^\ell \twoheadrightarrow \bigoplus_{\ell \in \Z}W_\bullet \gr_{\alpha+\ell+1}^V \cH^i {\pr_2}_+ \cM \cdot u^\ell \hookrightarrow \bigoplus_{\ell \in \Z}\cH^i {\pr_2}_+ \gr_{\alpha+\ell+1}^V\cM\cdot u^\ell
\] 
respecting the grading. We have proved $(c)$.

For $(d)$, since $\gr^W_k\gr_\alpha^V\widetilde \cM$ is a polarizable Hodge module, by choosing an ample class on $Y$, it follows from the hard Lefschetz theorem that 
\[
 \cH^{-i} \widetilde{\pr_2}_+ (\gr^W_k \gr_\alpha^V\widetilde \cM, F)\cong \cH^{i}\widetilde{\pr_2}_+ (\gr^W_k \gr_\alpha^V\widetilde \cM, F)(i),
\]
where $(i)$ is the Tate twist. This implies for $\alpha<0$ by applying $\rho_*$;
\[
\bigoplus_{\ell\in \Z}\cH^{-i}{\pr_2}_+ (\gr^W_k\gr_{\alpha+\ell+1}^V  \cM,F)\cdot u^\ell \simeq  \bigoplus_{\ell \in\Z}(\cH^i {\pr_2}_+ \gr^W_k \gr_{\alpha+\ell+1}^V \cM, F)(i)\cdot u^\ell,
\]
respecting the grading. Therefore, as a consequence, we have the decomposition as claimed.
\end{proof}

\begin{lem}\label{lem:relmonexact} 
For any short exact sequence of mixed Hodge modules
\[
0\rightarrow \cM' \rightarrow \cM \rightarrow \cM'' \rightarrow 0,
\]
the induced sequence 
\[
0\rightarrow (\gr^V_\alpha \cM', F,W) \rightarrow (\gr^V_\alpha \cM, F,W) \rightarrow (\gr^V_\alpha \cM'', F,W) \rightarrow 0
\]
is bifiltered exact, where $W$ is the relative monodromy filtration.
\end{lem}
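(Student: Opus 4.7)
The plan is to verify three claims in order: the sequence of underlying $\cD$-modules is exact after applying $\gr^V_\alpha$, the Hodge filtration is strict across this sequence, and the relative monodromy filtration $W$ is strict and is preserved as a relative monodromy filtration. The first is immediate: morphisms between $\Q$-specializable $\cD_X$-modules are strict with respect to the $V$-filtration (Section~\ref{subsec:VFilt}), so applying $V^\alpha$, $V^{>\alpha}$, and hence $\gr^V_\alpha$ all preserve short exactness.

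For the filtered statements I would reduce to the hypersurface case via the deformation to the normal cone of Section~\ref{subsec:dfn}. The functor $M \mapsto \widetilde{M} = j_+p^*(M)[-1]$ is exact on mixed Hodge modules, so from the given short exact sequence we obtain a short exact sequence $0 \to \widetilde{\cM'} \to \widetilde{\cM} \to \widetilde{\cM''} \to 0$ of mixed Hodge modules on $\widetilde X$. Along the smooth hypersurface $\{u=0\} = T_ZX$, the functor $\gr^V_\alpha$ sends mixed Hodge modules to mixed Hodge modules whose weight filtration, up to shift, is the relative monodromy filtration of the filtration induced by $\widetilde W$. Since morphisms of mixed Hodge modules are bistrict with respect to $F$ and $W$, the resulting sequence
\[0 \to \gr^V_\alpha \widetilde{\cM'} \to \gr^V_\alpha \widetilde{\cM} \to \gr^V_\alpha \widetilde{\cM''} \to 0\]
is bifiltered exact with respect to $(F, W)$.

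To descend, I push forward along $\rho\colon T_ZX \to Z$. Arguing exactly as in the proof of Lemma~\ref{lem:relmonext}, for $\alpha \in (-1,0]$ we have an identification of $\cO_Z$-modules
\[\rho_* (\gr^V_\alpha \widetilde{\cM}, F, W) \;=\; \bigoplus_{\ell \in \Z} (\gr^V_{\alpha+\ell+1}\cM, F, W) \cdot u^\ell,\]
where $W$ on the right-hand side is the relative monodromy filtration by Lemma~\ref{lem:relmonext}, with analogous formulas for $\cM'$ and $\cM''$ in which the three filtrations on the right are all inherited from the hypersurface story. Since $\rho_*$ is exact and the decomposition respects the grading by powers of $u$, the $(F,W)$-bistrict short exact sequence for $\widetilde{\cM}$ breaks up into a bifiltered exact sequence in each graded component, producing the claim for every $\beta = \alpha+\ell+1 \in \Q$.

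The main obstacle, really the only nontrivial point, is identifying the pushforward of $W\gr^V_\alpha\widetilde{\cM}$ componentwise with the relative monodromy filtration on $\gr^V_\beta \cM$; this is precisely what the $\mathbf G_m$-invariance argument and the uniqueness of the relative monodromy filtration used in the proof of Lemma~\ref{lem:relmonext} supply, so the reduction succeeds without any extra work.
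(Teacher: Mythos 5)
Your proof is correct and follows essentially the same route as the paper: reduce to the codimension-one case by passing to the deformation to the normal cone, invoke exactness of $\gr^V_\alpha$ along the smooth hypersurface $\{u=0\}$ on mixed Hodge modules (Saito's admissibility results), then push forward along $\rho$ and read off the desired statement from the $\mathbf{G}_m$-graded components, exactly as in Lemmas~\ref{thm:compatible} and~\ref{lem:relmonext}. The only very minor point worth flagging is that the step "the resulting sequence on $\widetilde X$ is bifiltered exact" should be attributed to the fact that $\gr^V_\alpha$ along a hypersurface is an exact functor on $\MHM$ with values in $\MHM$ (the relevant reference being Saito's theory as cited in the paper), not merely to bistrictness of morphisms, though your conclusion is correct.
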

\begin{proof}
    By the assumption and~\cite[2.5]{Saito-MHM}, we have
    \[
    0\rightarrow (\gr^V_\alpha \widetilde \cM', F,W) \rightarrow (\gr^V_\alpha \widetilde \cM, F,W) \rightarrow (\gr^V_\alpha \widetilde \cM'', F,W) \rightarrow 0
    \]
    is exact for $\alpha\in [-1,0)$. Then the rest of the proof goes like the proof of the above two Lemmas.
\end{proof}

Recall the definition of a \emph{bistrict} morphism between two bifiltered objects $\phi: (\cM,F,F') \to (\cN,F,F')$. Such a morphism is one which is strict with respect to both $F$ and $F'$ and which satisfies
\[
\begin{aligned}
 (F_p \cN + \mathrm{im} \phi) \cap (F_q' \cN + \mathrm{im} \phi) = F_pF_q'\cN + \mathrm{im} \phi 
 \quad \text{and} \quad (F_pF_q' \cN)\cap \mathrm{im} \phi  = \phi(F_pF_q' \cM).
\end{aligned}
\]
A complex is \emph{bistrict} if all morphisms are bistrict. Bistrict complexes have the property that
\[ 
F_pF_q' \cH^k C^\bullet = \cH^k(F_p F_q' C^\bullet).
\]
As a corollary of Lemma \ref{SpHFilt}, we show that any morphism between mixed Hodge modules is bistrict along $(F,V)$ for $V$ the $V$-filtration along a smooth subvariety $Z$ of codimension $r$.

\begin{cor} \label{bistrict} Let $\varphi: M \to N$ be a morphism of mixed Hodge modules on $X$. Then the corresponding map on filtered $\cD_X$-modules $\varphi: \cM \to \cN$ is bistrict with respect to the Hodge filtration $F_\bullet \cM$ and $V^\bullet\cM$.
\end{cor}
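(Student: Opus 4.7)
My strategy is to reduce bistrictness in codimension $r$ to the (known) codimension-one case by passing to the deformation to the normal cone, and then to extract the desired identities via the explicit formula of Lemma~\ref{SpHFilt}. First, since $\widetilde{(-)} = j_* p^*[-1]$ is a functor of mixed Hodge modules, the morphism $\varphi: M \to N$ induces a morphism of mixed Hodge modules $\widetilde{\varphi}: \widetilde{M} \to \widetilde{N}$ on $\widetilde{X}$. The underlying $\cD$-module morphism is compatible with the Hodge filtration and with the $V$-filtration $V_u^{\bullet}$ along the smooth hypersurface $\{u=0\} \subseteq \widetilde{X}$. Bistrictness of such a morphism with respect to $(F, V_u)$ is the codimension-one case of the statement; it is a standard consequence of Saito's admissibility axioms and is, in effect, precisely the type of compatibility used already in the proof of Lemma~\ref{thm:compatible}.

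Next, I would translate this codimension-one bistrictness to $X$ by pushing forward under $\rho: \widetilde{X} \to X$. Recall from Lemma~\ref{SpHFilt} that for $\gamma \geq 0$ one has
\[
\rho_* F_p V_u^\gamma \widetilde{\cM} = \bigoplus_{\ell \in \Z} F_{p+1} V^{\gamma - \ell + r - 1}\cM \cdot u^\ell,
\]
with the analogous identity for $\widetilde{\cN}$, and $\rho_* \widetilde{\varphi}$ preserves the grading by $\ell$. Hence each bistrictness identity for $\widetilde{\varphi}$ in bidegree $(p, \gamma)$ decomposes as a direct sum over $\ell \in \Z$ of identities on $X$. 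Given any target bidegree $(q, \beta) \in \Z \times \Q$ for bistrictness on $X$, I choose $\gamma \in [0, 1)$ with $\gamma \equiv \beta + 1 - r \pmod{\Z}$, set $\ell = \gamma - \beta + r - 1 \in \Z$, and set $p = q-1$; the $u^\ell$-component of the codimension-one bistrictness identity on $\widetilde{X}$ in bidegree $(p,\gamma)$ is then exactly the bistrictness identity for $\varphi$ in bidegree $(q, \beta)$ on $X$. Running over all such $(q, \beta)$ yields both the intersection and the sum conditions in the definition of bistrictness.

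The main technical difficulty is the bookkeeping between filtration levels on $\widetilde{X}$ and on $X$: the clean formula in Lemma~\ref{SpHFilt} requires $\gamma \geq 0$, while bistrictness on $\widetilde{X}$ is a statement at every $\gamma \in \Q$. However, since we are free to choose $\gamma$ in the representative $[0,1)$ and then shift by $\ell$, this restriction is not a real obstruction; a minor point to verify is that the ``sum'' part of bistrictness likewise decomposes nicely on $u^\ell$-components, which follows because $\rho_* F_p \widetilde{\cN}$, $\rho_* V_u^\gamma \widetilde{\cN}$ and $\rho_*\widetilde{\varphi}(\widetilde{\cM})$ are all stable under the $\mathbf{G}_m$-action on $\widetilde{X}$ and therefore split as direct sums over $\ell$. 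Once the bookkeeping is set up, both bistrictness identities for $\varphi$ follow directly from their codimension-one counterparts for $\widetilde{\varphi}$.
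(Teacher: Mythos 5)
Your high-level strategy --- deform to the normal cone, invoke codimension-one bistrictness of $\widetilde{\varphi}$, and read off the result via $\rho_*$ and Lemma~\ref{SpHFilt} --- is indeed the one the paper follows. However, there is a real gap in the step where you claim that the $u^\ell$-component of the codimension-one bistrictness identity at bidegree $(p,\gamma)$ is exactly the bistrictness identity for $\varphi$ at bidegree $(q,\beta)$ on $X$.

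The problem is that the $u^\ell$-component of $F_p\widetilde{\cN}$ is \emph{not} $F_{p+1}\cN$ in general. The clean formula $\bigl(F_pV_u^\gamma\widetilde{\cN}\bigr)_\ell=F_{p+1}V^{\gamma-\ell+r-1}\cN$ from Lemma~\ref{SpHFilt} requires $\gamma\geq 0$, but the unrestricted filtered piece decomposes as
\[
\bigl(F_p\widetilde{\cN}\bigr)_\ell=\sum_{q\geq 0}(\theta+\ell+1)\cdots(\theta+\ell+q)\,F_{p+1-q}V^{r-1-q-\ell}\cN,
\]
which is a subset of $F_{p+1}\cN$ that is typically \emph{proper} unless $\ell$ is large enough that $V^{r-1-\ell}\cN=\cN$. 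Since you fix $\gamma\in[0,1)$ and take $\ell=\gamma-\beta+r-1$, the value of $\ell$ is pinned down by the target $V$-level $\beta$ and will be small or negative whenever $\beta$ is large; moreover, the $V$-filtration of a holonomic $\cD$-module need not stabilize (consider $\cD_X/\cD_X t_1$), so there is in general no single $\ell$ for which $\bigl(F_p\widetilde{\cN}\bigr)_\ell=F_{p+1}\cN$. As a result, taking the $u^\ell$-component of the first bistrictness identity for $\widetilde{\varphi}$ yields an equality whose left-hand side has the too-small term $\bigl(F_p\widetilde{\cN}\bigr)_\ell$ in place of $F_{p+1}\cN$; this only recovers the trivial containment $F_qV^\beta\cN+\varphi(\cM)\subseteq(F_q\cN+\varphi(\cM))\cap(V^\beta\cN+\varphi(\cM))$, not the containment one actually needs to prove.

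The paper works around exactly this obstruction by arguing element by element. Given $n$ in the intersection, write $n=\widetilde{n}+\varphi(m)$ with $\widetilde{n}\in F_p\cN$; this $\widetilde{n}$ lies in $V^{\beta'}\cN$ for \emph{some} $\beta'$, a priori unrelated to the target $\lambda$, and $\ell$ is then chosen large enough, depending on $\beta'$ and $\lambda$, so that both $\widetilde{n}\,u^\ell$ and $n\,u^\ell$ land in the range $\gamma\geq 0$ where Lemma~\ref{SpHFilt} applies cleanly. Your argument does work essentially as stated for the second bistrictness condition (there the relevant element already lies in $F_pV^\lambda\cN$, so one large $\ell$ suffices uniformly), but the first condition genuinely requires this adaptive, element-wise choice of $\ell$ rather than a fixed $u^\ell$-component.
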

\begin{proof} By the theory of mixed Hodge modules, $\varphi$ is strict with respect to the Hodge filtration. Similarly, by uniqueness of $V$-filtrations, it is strict with respect to $V^\bullet$.

We now prove the remaining two conditions for bistrictness hold.

We know again by the theory of mixed Hodge modules that $\widetilde{\varphi}: j_+ p^*(\cM)[-1] \to j_+ p^*(\cN)[-1]$ is bistrict with respect to the Hodge filtration and the $V$-filtration along $u$.

Let $n \in (F_p \cN + im(\phi))\cap (V^\lambda \cN + im(\phi))$. Write $n = \widetilde{n} + \phi(m)$ for some $\widetilde{n} \in F_p \cN$ and $m \in \cM$. If $\widetilde{n} \in V^\lambda \cN$, then we are done. Otherwise, $\widetilde{n} \in V^{\beta}N$ for some $\beta < \lambda$.

Let us determine a range of $\ell$ such that $\widetilde{n} u^\ell \in F_{p-1} j_+(p^*(\cN)[1])$. Using the formula from \ref{SpHFilt} with $\gamma =0$, since $\widetilde{n} \in F_p V^{\beta}(\cN)$, we can force this to be true if $-\ell + r -1 < \beta$, or $r - \beta - 1 < \ell$. Hence, for all $\ell > r - \beta -1$, we see that $\widetilde{n} u^\ell \in F_{p-1}j_+(p^*(\cN)[1])$.

On the other hand, writing $n = n' + \phi(m')$ for $n' \in V^{\lambda}\cN$, then $n' u^\ell \in V^{\alpha}j_+(p^*(\cN)[1])$ whenever $\alpha - \ell = \lambda - r +1$. To apply \ref{SpHFilt}, it will be useful to have $\alpha \geq 0$. Thus, we need $\ell$ satisfying
\[ 0\leq \alpha = \lambda + \ell -r +1, \ell > r - \beta -1,\]
which are both clearly possible for $\ell \gg 0$. Fix some $\ell$ for which both inequalities are true.

Define $\alpha = \lambda + \ell -r+1$, then $n u^\ell = (\widetilde{n} + \phi(m)) u^\ell \in F_{p-1} j_+(p^*(\cN)[1]) + \text{im}(\widetilde{\phi})$, and $n u^\ell = (n' + \phi(m')) u^\ell \in V^{\alpha} j_+(p^*(\cN)[1]) + \text{im}(\widetilde{\phi})$. Here 
\[ \widetilde{\phi}: j_+(p^*(\cM)[1]) \to j_+(p^*(\cN)[1]),\]
is a morphism of mixed Hodge modules.

In particular, it is bistrict with respect to $F$ and the $V$-filtration along $\{u=0\}$. Thus, we see that $nu^\ell \in (F_{p-1}V^{\alpha}j_+(p^*(\cN)[1]) + \text{im}(\widetilde{\phi})$. As it is homogeneous of degree $\ell$, we get
\[ n = n'' + \phi(m''),\]
where $n'' \in F_p V^\lambda \cN$, where we use the fact that $\alpha \geq 0$ and apply \ref{SpHFilt}. This proves the first condition needed for bistrictness.

For the last condition, let $n \in F_pV^\lambda \cN \cap \text{im}(\phi)$. Find $\gamma \geq 0$ such that $\gamma - \ell +r-1 = \lambda$, by choosing $\ell \gg 0$. Then $n u^\ell \in F_{p-1}V^{\gamma}j_+(p^*(\cN)[1])$, and it lies in $\text{im}(\widetilde{\phi})$, so by the fact that $\widetilde{\phi}$ is bistrict with respect to $V$ and $F$, there exists some $\sum m_i u^i\in F_{p-1}V^{\gamma}j_+(p^*(\cM)[1])$ which maps to $nu^\ell$ under $\widetilde{\phi}$. Taking $m_\ell u^\ell$ and using \ref{SpHFilt}, we see that $m_\ell \in F_p V^{\lambda}\cM$, and $\phi(m_\ell) = n$, proving the second condition for bistrictness.
\end{proof}

\subsection{Saito's Main Theorems about Hodge Modules}
In this section, we state two essential theorems in Saito's theory of mixed Hodge modules.

The first main result is the behavior of mixed Hodge modules with respect to the pushforward functor for a projective morphism $f:Y \to X$. For more details and proofs, see ~\cite[Section 16]{Schnell-MHM} or ~\cite[Section 5.3]{Saito-MHP}.

\begin{defi} We say a morphism $\varphi: (\cM,F) \to (\cN,F)$ is \emph{strict} if $F_p\cN \cap im(\varphi) = \varphi(F_p \cM)$. We say that a filtered complex $(K^\bullet,F)$ is \emph{strict} if all differentials are strict.
\end{defi}

For example, a monomorphism $i: A\hookrightarrow B$ is strict iff the filtration on $A$ is the induced filtration from $B$. The main utility of strictness is that, if $(K^\bullet,F)$ is a filtered complex with strict differentials, then $\cH^k(F_pK^\bullet) \to \cH^k(K^\bullet)$ is injective for all $k\in \Z$. Hence, we can define a filtration $F$ on $\cH^k(K^\bullet)$, and strictness allows us to commute $\cH^k$ with $F_p$.

We begin now with the statement of the direct image theorem in the pure case:
\begin{thm}[{\cite[Thm 5.3.1]{Saito-MHP}}] \label{dirimthm} Let $f:Y \to X$ be a projective morphism of smooth complex varieties, let $M$ be a pure Hodge module on $Y$ of weight $w$. Let $\ell \in H^2(Y,\Z)$ be the class of a relatively ample divisor over $Y$. Then
\begin{enumerate} \item $f_+(\cM,F)$ is strict and $\cH^i f_+(\cM,F)$ underlies a Hodge module on $X$ of weight $w+i$.

\item $\ell^i: \cH^{-i}f_+(\cM,F) \to \cH^i f_+(\cM,F)(i)$ is an isomorphism for all $i\geq 0$.
\end{enumerate}
\end{thm}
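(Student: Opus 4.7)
The plan is to follow Saito's original strategy from \cite[\S5.3]{Saito-MHP}, which proceeds by a simultaneous induction on the dimensions of the support of $M$ and of the base $X$. First, using the decomposition of polarizable pure Hodge modules by strict support, reduce to the case where $M$ has strict support on an irreducible subvariety $V \subseteq Y$. Then factor $f$ through a relative projective embedding $Y \hookrightarrow X \times \P^N$ followed by the smooth projection $p : X \times \P^N \to X$. The closed-immersion step is handled by Kashiwara's equivalence together with the explicit formula for the filtered pushforward (cf. Example \ref{VFiltEg}(b)), so the substantive content of (a) lies in the case of the smooth projection $p$, which further factors into a sequence of $\P^1$-bundle projections.

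For the strictness and weight statement in (a) on a $\P^1$-bundle projection $X \times \P^1 \to X$, the filtered direct image is computed via the relative de Rham complex. On the open locus where $\cM$ corresponds to a polarized variation of Hodge structure, strictness would follow from the classical $E_1$-degeneration of the relative Hodge-de Rham spectral sequence. To extend this across the discriminant, one applies a general Lefschetz pencil on $X$ and uses the compatibility between the Hodge filtration and the $V$-filtration along the pencil (i.e. quasi-unipotence and regularity of $(\cM,F)$) to propagate strictness through the inductive step. The cohomology modules are then identified as Hodge modules of weight $w+i$ by taking the minimal extension of the generically defined variation of Hodge structure, shifted by the relative dimension.

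For (b), the polarization on $M$ induces, up to a Tate twist, a non-degenerate pairing on $\cH^i f_+ M$. The assertion that $\ell^i$ is an isomorphism can be tested at the generic point of the support of $\cH^{\pm i} f_+ M$, where it reduces to the classical Hard Lefschetz theorem for polarized variations of Hodge structure on the smooth projective family obtained by restricting $f$ to an open stratum. Semisimplicity of polarizable pure Hodge modules then propagates the isomorphism globally, since any kernel or cokernel of $\ell^i$ would split off as a pure summand supported on a proper closed subvariety but must vanish generically. The main obstacle throughout is controlling the interaction between $F$ and $V$ across the discriminant of each Lefschetz pencil used in the induction --- exactly the kind of compatibility the present paper generalizes to higher codimension --- and this is where the bulk of Saito's original argument is concentrated.
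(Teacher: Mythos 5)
The paper does not prove this theorem; it is cited directly as \cite[Thm 5.3.1]{Saito-MHP}, with the reader referred to \cite[Section 16]{Schnell-MHM} or \cite[Section 5.3]{Saito-MHP} for proofs. There is therefore no "paper's own proof" to compare your sketch against. That said, your outline is a reasonable high-level summary of Saito's original argument: the reduction to strict support, factoring a projective morphism through a closed immersion into $X\times\P^N$ followed by the smooth projection, further reduction to $\P^1$-bundle steps, the inductive use of Lefschetz pencils, and the propagation of hard Lefschetz by semisimplicity are all genuine ingredients in \cite{Saito-MHP}. Where the sketch is thinnest is precisely where the real work lies — establishing strictness of $f_+(\cM,F)$ across the discriminant, which in Saito's proof requires a rather delicate simultaneous induction on the definition of pure Hodge modules and the compatibility of $F$ with the $V$-filtration along a hyperplane; asserting that quasi-unipotence and regularity "propagate strictness through the inductive step" names the mechanism but does not supply the argument. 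Since the present paper takes this theorem as a black box, this gap is not an issue for the paper, but you should not mistake your sketch for a proof.
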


As an application, if $X$ is a smooth projective variety, $f: X\to *$ is the constant map, then the strictness of $f_+(\cM,F)$ recovers the fact that the Hodge-de Rham spectral sequence degenerates at $E_1$.

Also, as a formal consequence of the second part of the theorem (see ~\cite[Prop. 2.1]{Del}), one recovers the decomposition theorem, i.e., an isomorphism in the derived category
\[ f_+(\cM,F) \cong \bigoplus_{i \in\Z} \cH^i f_+(\cM,F)[-i].\]

\begin{rmk} \label{smoothproj} The strictness of $f_+(\cM,F)$ in part (a) of Theorem \ref{dirimthm} still holds if we assume $\cM$ is a mixed Hodge module. One particular application of Theorem \ref{dirimthm} will be when the map $f: Y = Z \times X \to X$ is a smooth, projective projection from a product and $(\cM,F)$ underlies a mixed Hodge module. In this case, the $\cD$-module pushforward $f_+(\cM)$ is given by applying $Rf_*$ to the relative de Rham complex (see ~\cite[Prop. 1.5.28]{HTT})
\[ 
K^\bullet = \left\{\cM \xrightarrow[]{d} \Omega_Z^1\otimes\cM  \xrightarrow[]{d} \dots \xrightarrow[]{d} \Omega_Z^{\dim Z}\otimes\cM  \right\}
\]
and this complex is filtered, given by
\[ 
F_pK^\bullet = \left\{F_p\cM \to \Omega_Z^1\otimes F_{p+1}\cM   \to \dots \to F_{p+\dim Z} \Omega^{\dim Z}_Z\otimes \cM  \right\}.
\]
Then strictness tells us that the induced map
\[  R^i f_*(F_pK^\bullet) \to   R^i f_*(K^\bullet) = \cH^i f_+(\cM)\]
is injective, and defines the Hodge filtration on this cohomology module.
\end{rmk}

The second main theorem is called the ``structure theorem for polarizable Hodge modules".

Let $Z\subseteq X$ be an irreducible closed subset. A Hodge module $M$ on $X$ has strict support $Z$ if the underlying $\cD$-module has no subquotient $\cD$-modules supported on a proper subset of $Z$. See ~\cite[Exercise 10.2]{Schnell-MHM} for a characterization of this property in terms of the $V$-filtration along a hypersurface. See also our generalization of this property to higher codimension in Corollary \ref{subob} and Corollary \ref{quotob}.

Built into the definition of the category of pure Hodge modules is the property that \emph{every pure Hodge module has a decomposition by strict support}, meaning, for any $M$ pure on $X$, we have
\[ M = \bigoplus_{Z \subseteq X} M_Z,\]
where the direct sum ranges over irreducible closed subsets of $Z$, $M_Z \neq 0$ for only finitely many $Z$, and each $M_Z$ is a pure Hodge module with strict support $Z$. See ~\cite[Theorem 11.7]{Schnell-MHM} for a characterization of this property in terms of the $V$-filtration. See our generalization of this property to higher codimension in Corollary \ref{strictsuppgen}.

The structure theorem gives a description of those pure Hodge modules with strict support $Z$: they are generically given by (polarizable) variations of Hodge structure on $Z$. See ~\cite[Section 15]{Schnell-MHM}.

\begin{thm} \label{decompthm} Let $X$ be a smooth complex algebraic variety, $Z\subseteq X$ an irreducible subset. Then
\begin{enumerate} \item Every polarizable variation of Hodge structure of weight $w- \dim Z$ on a Zariski open subset of $Z$ extends uniquely to a polarizable Hodge module on $X$ of weight $w$ with strict support $Z$.

\item Every Hodge module with strict support $Z$ arises in this way.
\end{enumerate}
\end{thm}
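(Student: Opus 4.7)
The plan is to prove both parts by working with the intermediate (minimal) extension, and to reduce to the case in which $Z$ is smooth by using the direct image theorem of Saito (Theorem~\ref{dirimthm}). First I would address part (a). Let $U \subseteq Z$ be a Zariski-open subset carrying a polarizable variation of Hodge structure $\mathbb{V}$ of weight $w - \dim Z$. Choose a resolution of singularities $\pi : \widetilde{Z} \to Z$ which is an isomorphism over $U$ and such that $\widetilde{Z} \setminus \pi^{-1}(U)$ is a simple normal crossing divisor $D$ in the smooth variety $\widetilde{Z}$. Replacing $X$ by $X \times \widetilde{Z}$ and composing with the projection reduces the situation, via the direct image theorem, to the problem of producing a pure Hodge module on $\widetilde{Z}$ with strict support $\widetilde{Z}$ extending $\mathbb{V}|_{\widetilde{Z} \setminus D}$.

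For the construction on $\widetilde{Z}$, I would invoke the quasi-unipotent monodromy theorem so that, on the local system underlying $\mathbb{V}$, all local monodromies around components of $D$ are quasi-unipotent. Then one forms Deligne's canonical meromorphic extension, equips it with the Hodge filtration determined by the limiting mixed Hodge structures along $D$, and takes the \emph{minimal extension} at each component of $D$. The essential point, going back to Saito, is that this minimal extension $j_{!*}\mathbb{V}$ satisfies the compatibility condition between the Hodge filtration and the $V$-filtration (the ``quasi-unipotent and regular" condition recalled in the introduction), and moreover admits a polarization induced from that of $\mathbb{V}$; this is precisely the axiomatic definition of a polarizable Hodge module with strict support $\widetilde{Z}$. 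Pushing forward by $\pi$ and using Theorem~\ref{dirimthm}(a), one obtains a pure Hodge module $\pi_+ j_{!*}\mathbb{V}$ on $Z$ which decomposes by strict support; the direct summand with strict support $Z$ is the desired extension, and pushing forward further by $i : Z \hookrightarrow X$ via Kashiwara's equivalence gives the Hodge module on $X$.

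For uniqueness in part (a), and simultaneously for part (b), I would show that a polarizable Hodge module $M$ on $X$ with strict support $Z$ is determined by its restriction to any nonempty open subset $U \subseteq Z$. After removing a proper closed subset of $Z$, I may assume $Z \cap X$ is smooth in $X$ and that $M|_U$ is $\cO$-coherent, hence is a polarizable VHS on $U$ by the analysis of the $V$-filtration (Example~\ref{VFiltEg}(a), together with the compatibility of the Hodge filtration and $V$-filtration forced by quasi-unipotency and regularity). The strict support hypothesis rules out subquotients supported on $Z \setminus U$, so two such extensions agreeing on $U$ must be canonically isomorphic; running this argument in reverse recovers $M$ from its generic VHS as the minimal extension constructed above, proving (b).

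I expect the main obstacle to be the step of verifying that the minimal extension $j_{!*}\mathbb{V}$ across a normal crossing divisor is actually a polarizable Hodge module, i.e.~that the Hodge filtration built from the nilpotent orbit theorem interacts correctly with the $V$-filtration along each irreducible component of $D$ and with the induced weight filtration. This is the inductive heart of Saito's construction and requires Schmid's $\mathrm{SL}_2$-orbit theorem, Cattani--Kaplan--Schmid's results on degenerations of polarized VHS, and the careful bookkeeping of relative monodromy filtrations as in Lemma~\ref{lem:relmonext}. Once this input is granted, the rest of the argument is the formal package of minimal extension plus strict support decomposition.
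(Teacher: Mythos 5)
The paper does not prove Theorem~\ref{decompthm}; it is Saito's structure theorem, stated as a black box and cited from \cite{Saito-MHP} and \cite{Schnell-MHM} because the rest of the paper's arguments rest on it. So there is no in-paper proof to compare against.

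As a sketch of Saito's actual argument your outline has the right architecture: resolve $Z$, reduce to a smooth variety with a normal crossing boundary divisor, build the minimal extension $j_{!*}\mathbb{V}$ via Deligne's canonical extension and the CKS degeneration package, then push forward and extract the strict-support-$Z$ summand. But two points deserve flagging. First, there is a latent circularity: in Saito's development the direct image theorem (your Theorem~\ref{dirimthm}) and the structure theorem are \emph{not} independent — they are proved together by a single induction on dimension, and the pure case of the direct image theorem along a projective morphism is exactly what lets one descend from $\widetilde{Z}$ to $Z$, while the structure theorem is what makes the inductive step close. Invoking the direct image theorem as a finished black box to prove the structure theorem is fine inside this paper's framework (where both are assumed), but it is not a genuine proof of the theorem from first principles; it is a re-derivation of one axiom from another that Saito proves at the same time. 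Second, you explicitly defer the hard part — verifying that $j_{!*}\mathbb{V}$ on $\widetilde{Z}$ is quasi-unipotent and regular along each component of $D$, is compatible with the induced weight filtrations, and inherits a polarization — and you are right that this is where essentially all the content of Saito's \cite{Saito-MHP} Sections 3--5 lives (the $\mathrm{SL}_2$-orbit theorem of Schmid and the Cattani--Kaplan--Schmid multivariable degeneration results). Granting that input, the uniqueness and part (b) arguments via strict support are standard and correct, modulo the small slip ``$Z\cap X$'' where you clearly mean ``$Z$'' (shrunken so as to be smooth), and modulo the fact that ``$M$ determined by $M|_U$'' should be justified by the $V$-filtration characterization of strict support (as in the paper's Corollary~\ref{strictsuppgen}), not just asserted.
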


The difficult claim is to extend a polarizable VHS to a Hodge module with strict support on $Z$. This result will be used to identify certain Hodge modules as strict support direct summands of other Hodge modules.

\subsection{Monodromic $\cD$-modules}
For background on monodromic $\cD$-modules on a vector bundle $E$, see \cite{Bry}. For results on monodromic mixed Hodge modules on a line bundle, see \cite{monoMHM}.

Let $E$ be a vector bundle of rank $r$ on a smooth variety $X$. A $\cD_E$-module $\cM$ is \emph{monodromic} if, for every local trivialization $U\times \A^r$ of $E$, every choice of coordinates $z_1,\dots, z_r$ on the $\A^r$-factor, with vector fields $\de_{z_1},\dots, \de_{z_r}$ and every local section $m\in \cM$, there exists a univariate polynomial $b(w) \in \C[w]$ such that $b(\theta) m = 0$, where $\theta = \sum_{i=1}^r z_i \de_{z_i}$.

Such modules satisfy several nice properties:
\begin{enumerate}[label=\textbf{M.\arabic*}] 
\item If $\cM$ is monodromic, then it decomposes into generalized eigenspaces for the $\theta$ action, i.e.,
\[ \cM = \bigoplus_{\chi \in \C} \cM^\chi,\]
where $\cM^\chi = \ker((\theta - \chi+r)^\infty)$.
\item Every subquotient of a monodromic $\cD$-module is monodromic.

\item Every morphism between monodromic $\cD$-modules preserves the eigenspace decomposition.

\item $z_i \cM^\chi \subseteq \cM^{\chi+1}, \de_{z_i} \cM^\chi \subseteq \cM^{\chi-1}$ for all $1\leq i \leq r$.

\item  \label{monoVFilt} The ($\C$-indexed) $V$-filtration of a coherent monodromic $\cD_E$-module along the zero section $X\subseteq E$ is given by
\[ V^\lambda \cM = \bigoplus_{\chi \geq \lambda} \cM^\chi,\]
and in particular,
\[ \gr_V^\lambda \cM \cong \cM^\lambda.\]
\end{enumerate}

A mixed Hodge module $M$ on $E$ is \emph{monodromic} if the underlying $\cD$-module is. As $V$-filtrations are $\Q$-indexed for mixed Hodge modules by definition, we know that if $\cM$ underlies a mixed Hodge module, then $\cM =\bigoplus_{\chi \in \Q} \cM^\chi$ by Property \ref{monoVFilt}.

\subsection{Conventions for Shifting the Hodge Filtration} \label{subsec:conv}
We refer to \cite{Schnell-MHM} for all conventions regarding the Hodge filtration and weight filtration when applying functors to mixed Hodge modules when considering \emph{right} $\cD$-modules. As noted at the beginning of Section \ref{sec:pre}, these conventions may differ if we want to use left $\cD$-modules instead. For convenience, we will list here those conventions for left $\cD$-modules.

\emph{Tate Twist}: Let $(\cM,F)$ be a filtered $\cD_X$-module. Then we define $(\cM,F)(k)$ for any $k\in \Z$, the \emph{Tate twist} of $(\cM,F)$ by $k$, to be $(\cM,F[k])$, where $F[k]_p \cM = F_{p-k}\cM$.

\emph{Smooth pullbacks}: See Remark (4.4.2) and Formula (2.17.3) in \cite{Saito-MHM}. Let $p: X\times Y \to Y$ be a smooth surjective morphism of relative dimension $r = \dim X$ between smooth varieties. Let $\widetilde{\cM} = p^*(\cM)$ as an $\cO$-module (which is also the $\cD$-module pullback, see \cite[Sect. 1.3]{HTT}). If $(\cM,F)$ is a filtered left $\cD_Y$-module, let $F_p \widetilde{\cM} = p^*(F_p \cM)$.

If $M$ is a mixed Hodge module with underlying filtered $\cD_Y$-module $\cM$, then the pullback $p^*(M) \in D^b \MHM(X\times Y)$ has underlying filtered $\cD_{X\times Y}$-module 
\begin{equation} \label{smooth*Hodge} 
(\widetilde{\cM},F_\bullet)
\end{equation} 
lying in cohomological degree $r$, and $p^!(M) \in D^b \MHM(Y)$ has underlying filtered $\cD_{X\times Y}$-module given by
\begin{equation} \label{smooth!Hodge} 
(\widetilde{\cM},F_\bullet[r])
\end{equation}
lying in cohomological degree $-r$. The weight filtration is given by
\[ 
W_\bullet p^*(\cM)[r] = p^*(W_{\bullet - r}\cM) \quad  \text{and} \quad W_\bullet p^!(\cM)[-r] = p^*(W_{\bullet +r}\cM).
\]

\emph{Nearby and Vanishing Cycles}: Let $X = \{t=0\} \subseteq Y$ be a smooth hypersurface defined by the global function $t$. Let $\cM$ be a holonomic $\cD_Y$-module. We define
\[ \psi_{t,\lambda}(\cM) = \gr_V^{\lambda}\cM \quad \text{ for } \lambda \in (0,1],\]
\[ \phi_{t,\lambda}(\cM) = \psi_{t,\lambda}\cM \quad \text{ for } \lambda \in (0,1)\]
and
\[ \phi_{t,1}(\cM) = \gr_V^0\cM,\]
where $V^\bullet \cM$ is the $V$-filtration of $\cM$ along $X$.

If $(\cM,F)$ is a filtered holonomic $\cD_X$-module, then the filtration on nearby and vanishing cycles is defined to be
\begin{equation} \label{nearbyHodge} 
F_p \psi_{t,\lambda}(\cM) = \frac{F_p V^\lambda\cM}{F_p V^{>\lambda}\cM} \quad \text{ for } \lambda \in (0,1]\end{equation}
\begin{equation} \label{vanHodge} 
F_p \phi_{t,1}(\cM) = \frac{F_{p+1} V^0\cM}{F_{p+1}V^{>0}\cM}.
\end{equation}

Just as the Hodge filtration includes a shift based on if $\lambda = 1$ or $\lambda \in (0,1)$, so does the weight filtration (see ~\cite[Sect. 20]{Schnell-MHM}. We make note of it here for later use: the weight filtration $W_\bullet \phi_{t,\lambda}(\cM)$ for $(\cM,W_\bullet)$ a $\cD$-module underlying a mixed Hodge module is defined to be the relative monodromy filtration (as defined in Subsection \ref{subsec:admiss} above) of $L_\bullet \phi_{t,\lambda}(\cM)$ along the nilpotent operator $N = \de_t t - \lambda$. here, $L_\bullet \phi_{t,\lambda}(\cM)$ is defined as
\begin{equation}\label{nearbyWeight} 
L_k \phi_{t,1}(\cM) = \gr_V^0(W_k\cM),
\end{equation}
\begin{equation}\label{vanWeight} 
L_k \phi_{t,\lambda}(\cM) = \gr_V^{\lambda}(W_{k+1}\cM) \quad \text{ for }  \lambda \in (0,1).
\end{equation}

\section{Topological properties of $V$-filtration}  \label{sec:topo}

In this section we first prove some basic properties of $V$-filtrations along a smooth subvariety. The analogous statements for a codimension 1 subvariety appear in ~\cite[Section 3]{Saito-MHP}. Now let us fix the notation. Let $X$ be a smooth variety and $Z$ be a smooth subvariety of codimension $r$ globally defined by regular functions $t_1,t_2,\dots,t_r$. Assume there exist global vector fields $\partial_1,\partial_2,\dots,\partial_r$ dual to the $1$-forms $dt_1,dt_2,\dots,dt_r$. Let $\cM$ be a \emph{right} holonomic $\cD_X$-module along $Z$ and $V_\bullet \cM$ be the $V$-filtration along $Z$. Recall that we have introduced the following notation: for a \emph{right} holonomic $\cD_X$-module $\cM$, we define 
\[
\begin{aligned}
A_{\alpha}(\cM) &= \left\{V_{\alpha}\cM\rightarrow (V_{\alpha-1}\cM)^r \rightarrow \cdots \rightarrow V_{\alpha-r}\cM\right\},  & \text{in degrees } &0, 1,\dots,r;  \\
B_{\alpha}(\cM) &= \left\{\gr^V_{\alpha}\cM\rightarrow (\gr^V_{\alpha-1}\cM)^r \rightarrow \cdots \rightarrow \gr^V_{\alpha-r}\cM\right\},  & \text{in degrees } &0, 1,\dots,r;  \\
C_{\alpha}(\cM) &= \left\{\gr^V_{\alpha-r}\cM\rightarrow (\gr^V_{\alpha-r+1}\cM)^r \rightarrow \cdots \rightarrow \gr^V_{\alpha}\cM\right\},  &  \text{in degrees } &-r, -r+1,\dots,0. 
\end{aligned}
\]

\begin{thm} \label{thm:gr}
The complexes $B_{\alpha}(\cM)$ and $C_{\alpha}(\cM)$ are exact for $\alpha \neq 0$.
\end{thm}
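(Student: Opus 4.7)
The plan is to construct an explicit contracting null-homotopy on each complex, reducing exactness to invertibility of an Euler-type operator on the graded pieces. Translated to the right-module, lower-index convention $V_\alpha\cM^r=\omega_X\otimes V^{-\alpha}\cM$, the $V$-filtration axiom ``$\theta-\chi+r$ is nilpotent on $\gr_V^\chi$'' becomes the statement that the right action of $\theta:=\sum_{i=1}^r t_i\partial_i\in\cD_X$ on $\gr^V_\beta\cM$ equals $\beta\cdot\mathrm{Id}+N_\beta$, where $N_\beta$ is locally nilpotent. This is the only input from the theory of $V$-filtrations I will use.

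Setting $V=\C^r$ with basis $e_1,\dots,e_r$, I would identify
\[ B_\alpha^k=\textstyle\bigwedge^k V\otimes_\C\gr^V_{\alpha-k}\cM,\qquad C_\alpha^{-r+k}=\textstyle\bigwedge^k V\otimes_\C\gr^V_{\alpha-r+k}\cM, \]
with Koszul differentials $d^B(n\otimes m)=\sum_i(e_i\wedge n)\otimes(m\cdot t_i)$ and $d^C(n\otimes m)=\sum_i(e_i\wedge n)\otimes(m\cdot\partial_i)$, and introduce homotopies that exchange the roles of $t_i$ and $\partial_i$:
\[ h^B(n\otimes m)=\sum_j\iota_{e_j^\ast}(n)\otimes(m\cdot\partial_j),\qquad h^C(n\otimes m)=\sum_j\iota_{e_j^\ast}(n)\otimes(m\cdot t_j). \]
The Heisenberg relation $[\partial_j,t_i]=\delta_{ij}$ in $\cD_X$ together with the standard anticommutation $\{e_i\wedge,\iota_{e_j^\ast}\}=\delta_{ij}$ in the exterior algebra forces the off-diagonal ($i\neq j$) contributions in $dh+hd$ to cancel pairwise, leaving a direct calculation $(d^Bh^B+h^Bd^B)|_{B_\alpha^k}=\theta+k$ and $(d^Ch^C+h^Cd^C)|_{C_\alpha^{-r+k}}=\theta+r-k$, acting by the right action on the $\gr^V_\bullet\cM$ factor. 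Combined with the Euler identity of the first paragraph, both operators act as $\alpha\cdot\mathrm{Id}+N$ for some locally nilpotent $N$, \emph{independent of the Koszul degree}.

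For $\alpha\neq 0$ this operator is invertible via the Neumann series $\alpha^{-1}\sum_{\ell\geq 0}(-N/\alpha)^\ell$, which evaluates to a finite sum on each local section once one knows that the nilpotency degree of $N_\beta$ is uniformly bounded on any coherent $V^0\cD_X$-subsheaf of $\gr^V_\beta\cM$. Composing $h^B$ (respectively $h^C$) with this inverse produces a chain homotopy from the identity to zero on $B_\alpha(\cM)$ (respectively $C_\alpha(\cM)$), proving exactness. The main technical point is this uniform local nilpotency, which follows from coherence of each $\gr^V_\beta\cM$ as a $V^0\cD_X/V^{>0}\cD_X$-module; apart from that, the proof is essentially a one-page Koszul computation and recovers, when $r=1$, the classical codimension-one statement that $t\colon\gr^V_\alpha\cM\to\gr^V_{\alpha-1}\cM$ and $\partial_t\colon\gr^V_{\alpha-1}\cM\to\gr^V_\alpha\cM$ are isomorphisms for $\alpha\neq 0$.
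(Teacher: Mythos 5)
Your proof is correct and follows essentially the same route as the paper: the Koszul contraction $h$ built from $\partial_j$ against the $t_i$-differential (and vice versa for $C_\alpha$) yields exactly the identity $dh+hd=\theta+\ell$, and the conclusion hinges on the nilpotency of $\theta-(\alpha-\ell)$ on $\gr^V_{\alpha-\ell}\cM$. The one cosmetic difference is your final step: you invert $\theta+\ell$ via a Neumann series and compose with $h$ to get a contraction of the identity, whereas the paper simply observes that $\theta+\ell$ is null-homotopic, hence zero on cohomology, and equals $\alpha+\text{(nilpotent)}$ there, which forces the cohomology to vanish for $\alpha\neq 0$ without needing any uniformity of the nilpotency degree (your appeal to coherence is true but not actually required, since the inverse you define on sections automatically glues to a sheaf map).
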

\begin{proof}
We shall construct a retraction on the complex $B_{\alpha}(\cM)$, i.e. a series of morphisms 
\[
s_\ell: \left(\gr^V_{\alpha-\ell}\cM\right)^{\binom{r}{\ell}} \rightarrow \left(\gr^V_{\alpha-\ell+1}\cM \right)^{\binom{r}{\ell-1}}
\]
such that $s_{\ell+1}\circ d_\ell+d_{\ell-1}\circ s_\ell=\theta+\ell$ where $d$ is the differential of the complex $B_{\alpha}(\cM)$. Note that the collection $\{\theta+\ell\}$ gives an endomorphism of the complex $B_{\alpha}(\cM)$. Let 
\[
(\gr^V_{\alpha-1}\cM)^r=\bigoplus^r_{i=1} \gr^V_{\alpha-1}\cM e_i
\] 
where $e_1,e_2,\dots,e_r$ is a standard basis such that the Koszul differential works as
\[
d_\ell \left(\eta\, e_{i_1}\wedge e_{i_2}\wedge\cdots\wedge e_{i_\ell} \right)= \sum^r_{i=1} \eta t_i\, e_i\wedge  e_{i_1}\wedge e_{i_2}\wedge\cdots\wedge e_{i_\ell},
\]
where $\eta$ is a local section of $\gr^V_{\alpha-\ell}\cM$. Now we can define the morphism
\[
s_\ell \left(\eta\, e_{i_1}\wedge e_{i_2}\wedge\cdots\wedge e_{i_\ell} \right)= \sum^r_{j=1}\eta\partial_j \, e^*_j( e_{i_1}\wedge e_{i_2}\wedge\cdots\wedge e_{i_\ell}),
\]
where $\{e^*_1,e^*_2,\dots,e^*_r\}$ is the dual basis and 
\[
e^*_j( e_{i_1}\wedge e_{i_2}\wedge\cdots\wedge e_{i_\ell})=\sum_{k=1}^{\ell} (-1)^{k-1} e_{i_1}\wedge e_{i_2}\wedge\cdots \wedge e_j^*(e_{i_k}) \wedge \cdots \wedge e_{i_\ell}.
\]
Without loss of generality, we only do the computation on $\eta e_1\wedge e_2\wedge\cdots\wedge e_\ell$:
\[
\begin{aligned}
 & (s_{\ell+1}\circ d_\ell+d_{\ell-1}\circ s_\ell) (\eta\,  e_1\wedge e_2\wedge\cdots\wedge e_\ell) \\
= & s_{\ell+1} (\sum^r_{i=1}\eta t_i\, e_i\wedge e_1\wedge e_2\wedge\cdots\wedge e_\ell) + d_{\ell-1}\big(\sum^r_{j=1}\eta\partial_j \, e^*_j( e_{1}\wedge e_{2}\wedge\cdots\wedge e_{\ell})\big) \\
=& \sum^r_{k=1}  \sum^r_{i=1}\eta t_i\partial_k \, e^*_k( e_i\wedge e_1\wedge e_2\wedge\cdots\wedge e_\ell) +\sum^r_{a=1} \sum^r_{j=1}\eta\partial_j t_a\, e_a\wedge e^*_j( e_{1}\wedge e_{2}\wedge\cdots\wedge e_{\ell}) \\
=& \eta \left(\sum^r_{i=1} t_i\partial_i+\ell \right) \, e_1\wedge e_2\wedge\cdots\wedge e_\ell  \\
=& \eta (\theta+\ell)\,e_1\wedge e_2\wedge\cdots\wedge e_\ell.
\end{aligned}
\]
Because $\theta+\ell=\left(\theta-\left(\alpha-\ell\right)\right)+\alpha$, the scalar multiplication by $\alpha$ is equal to the nilpotent operator $\theta-\left(\alpha-\ell\right)$ on the $\ell$-th cohomology of $B_{\alpha}(\cM)$. This can happen for $\alpha\neq 0$ if and only if the $\ell$-th cohomology vanishes. We conclude that the complex $B_{\alpha}(\cM)$ is exact for $\alpha\neq 0$.

The proof of the exactness of the complex $C_{\alpha}(\cM)$ is similar and we leave the rest of the proof to the readers. 
\end{proof}

\begin{thm} \label{thm:alphaless0}
The complex $A_{\alpha}(\cM)$ is exact for $\alpha<0$.
\end{thm}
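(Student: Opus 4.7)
The plan is to use Theorem~\ref{thm:gr} (acyclicity of $B_\beta(\cM)$ for $\beta\neq 0$) to propagate exactness of the Koszul-like complex down the $V$-filtration. The key tool is the short exact sequence of complexes
\[
0 \to A_{<\beta}(\cM) \to A_\beta(\cM) \to B_\beta(\cM) \to 0,
\]
where $A_{<\beta}(\cM)$ denotes the natural subcomplex with $V_{<\beta-k}\cM^{\binom{r}{k}}$ in degree $k$; this identification is immediate because $V_{\beta-k}\cM/V_{<\beta-k}\cM = \gr^V_{\beta-k}\cM$ and the Koszul differentials given by multiplication by the $t_i$'s descend to the associated graded.

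Concretely, for fixed $\alpha<0$ I would filter $A_\alpha(\cM)$ by its subcomplexes $A_\beta(\cM)$ for $\beta\le \alpha$, indexed by the discrete sequence of $V$-jump values $\cdots < \beta_{-2} < \beta_{-1} < \beta_0 = \alpha$. The associated graded pieces are $A_{\beta_i}(\cM)/A_{\beta_{i-1}}(\cM) = B_{\beta_i}(\cM)$, and since each $\beta_i \le \alpha < 0$, Theorem~\ref{thm:gr} guarantees that every $B_{\beta_i}(\cM)$ is acyclic. Thus the $E_1$-page of the associated spectral sequence vanishes entirely, and once convergence is established we read off $\cH^\bullet A_\alpha(\cM)=0$, as desired.

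The main obstacle is precisely this biregular convergence. Exhaustiveness is automatic ($A_{\beta_0} = A_\alpha$), so the content is the Hausdorff property $\bigcap_\beta V_\beta\cM = 0$ in each fixed degree. I would check this locally near $Z$ using property~(2) of the $V$-filtration: for some $\alpha_0 \ll 0$ one has $V_{\alpha_0-n}\cM = V_{\alpha_0}\cM \cdot (t_1,\ldots,t_r)^n$ for every $n \ge 0$, so
\[
\bigcap_n V_{\alpha_0-n}\cM = \bigcap_n V_{\alpha_0}\cM \cdot (t_1,\ldots,t_r)^n,
\]
which vanishes at any point of $Z$ by Krull's intersection theorem applied to $V_{\alpha_0}\cM$ viewed locally as a coherent module over the Noetherian ring $V_0\cD_X$. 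Since the complex $A_\alpha(\cM)$ has only finitely many degrees ($0,1,\ldots,r$), biregularity on each term suffices to force the spectral sequence to abut strongly to the cohomology of $A_\alpha(\cM)$, completing the argument.
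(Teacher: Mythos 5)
Your reduction via the short exact sequence $0\to A_{<\beta}(\cM)\to A_\beta(\cM)\to B_\beta(\cM)\to 0$ and Theorem~\ref{thm:gr} is sound and is exactly the first step of the paper's proof, which uses it to reduce to $\alpha\ll 0$. The convergence step, however, contains a genuine gap. The filtration $\{A_\beta(\cM)\}_{\beta\le\alpha}$ is bounded above (exhaustive) and, as you argue via Krull, separated near $Z$, but it is \emph{not} bounded below, so it is not biregular, and biregularity cannot be invoked. More seriously, separatedness of the filtration on the \emph{terms} of a bounded complex, together with acyclicity of the associated graded, does not imply acyclicity of the complex. A two-term counterexample: filter the complex $\C[t]\xrightarrow{\,1-t\,}\C[t]$ by $F^n=t^n\C[t]$ on both terms. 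Each $\gr^n$ is $\C\xrightarrow{\,\mathrm{id}\,}\C$, hence acyclic; the filtration is exhaustive and $\bigcap_n t^n\C[t]=0$; yet $\cH^1=\C[t]/(1-t)\cong\C\ne 0$. The point is that $t$-adic \emph{completeness} is what is missing ($1-t$ is invertible in $\C[[t]]$ but not in $\C[t]$), and the $V$-filtration on a coherent $\cD$-module is not complete. What one actually needs is separatedness of the \emph{induced} filtration on the cohomology modules $\cH^k A_\alpha(\cM)$, and this has to be argued separately. It can in fact be rescued by a coherence argument that you do not make: $\cH^k A_\alpha(\cM)$ is a coherent $V_0\cD_X$-module, the induced filtration on it is comparable to the $\cI_Z$-adic filtration by Artin--Rees, the cohomology vanishes identically on $X\setminus Z$ (where the $V$-filtration is constant and some $t_i$ is invertible), and Krull applied at points of $Z$ then gives the needed separatedness on cohomology. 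As written, however, the appeal to ``biregularity'' is a logical gap.

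The paper's own proof avoids all of this. After the same reduction to $\alpha\ll 0$, it replaces $\cM$ locally by a $(\cD_X,V)$-filtered free resolution and, using the strictness of that resolution, reduces the claim to acyclicity of $A_\alpha(\cD_X)$ for $\alpha\le 0$. For $\cD_X$ one passes to the order filtration $F_\bullet$: this filtration \emph{is} bounded below in each degree, so the corresponding spectral sequence converges unconditionally, and $\gr^F A_\alpha(\cD_X)$ becomes an explicit graded Koszul complex for the regular sequence $t_1,\dots,t_r$ on the polynomial ring $\gr^F\cD_X$, whose acyclicity in the relevant graded degrees is an elementary computation. That route sidesteps both completeness and the Artin--Rees considerations your approach would require.
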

\begin{proof}
It suffices to show that the complex $A_{\alpha}(\cM)$ is exact for $\alpha \ll 0$ by Theorem~\ref{thm:gr}. We will prove the claim for $j \in \Z_{\ll 0}$. The $\Z$-indexed filtration $V_\bullet \cM$ is a good filtration with respect to the $\Z$-indexed $V$-filtration $(\cD_X,V_\bullet)$, and so we can locally take a $(\cD_X,V)$-free filtered resolution
\[
\cdots \to (L^1,V) \to (L^0,V) \to (\cM,V)
\]
of $(\cM, V)$ such that $(L^i, V)=\bigoplus_{j=1}^{m_i} (\cD_X,V[n_j^{(i)}])$. It follows that the total complex $T^\bullet$ of the double complex 
\[
\cdots \to A_\alpha(L^1,V) \to A_\alpha(L^0,V)
\]
is quasi-isomorphic to $A_\alpha(\cM)$. As $A_\alpha(-)$ gives a bounded complex, the terms $T^{-1},T^0,\dots, T^r$ only concern $L^0, L^{1},\dots, L^{r+1}$. We show that for $\alpha < \min_{0\leq i \leq r+1,j} \{n_j^{(i)}\}$ the complexes $A_\alpha(L^i)$ are acyclic for $0 \leq i \leq r+1$, proving the claim.

The claim is thus equivalent to $A_\alpha(\cD_X)$ is exact for $\alpha \leq 0$. We will prove $\gr^F A_\alpha(\cD_X)$ is acyclic, which implies the desired claim.

To see this, using local coordinates we reduce to the case $X = \A^n$ and $Z$ is defined by the vanishing of coordinates $t_1,\dots, t_r$. Then $\gr^F \cD_X = R[t_1,\dots, t_r,\xi_1,\dots, \xi_r]$ where $R = \cO_{\A^{n-r}}[\xi_{r+1},\xi_{r+2},\cdots,\xi_n]$ and $\xi_i$ is the principal symbol of $\de_{t_i}$. We put a grading on this ring where $R$ has degree $0$, $t_i$ has degree $-1$ and $\xi_i$ has degree $+1$ for $i=1,2,\cdots,r$. Then the module $\gr^F V_j\cD_X$, in terms of grading, is given by:
\[ \gr^F(V_j \cD_X) = \bigoplus_{\beta \leq j} (\gr^F\cD_X)_\beta,\]
where $(\gr^F\cD_X)_\beta$ is the $\beta$-graded piece.

The elements $t_1,\dots, t_r$ clearly form a regular sequence on $\gr^F \cD_X=\bigoplus_{\beta \in \Z} (\gr^F\cD_X)_\beta$, and so the corresponding Koszul complex 
\[
\gr^FA_\infty(\cD_X):=\left\{\gr^F \cD_X\xrightarrow{(t_1,t_2,\cdots,t_r)} (\gr^F\cD_X)^r\to \cdots \to \gr^F\cD_X\right\}
\] 
gives a resolution of $R[\xi_1,\dots,\xi_r]$. All maps in the Koszul complex are graded with respect to the grading just defined. In particular,  the resulting degree $\beta$ complex
\[
K_\beta:=\left\{(\gr^F \cD_X)_\beta \xrightarrow{(t_1,t_2,\cdots,t_r)} (\gr^F\cD_X)^r_{\beta-1} \to \cdots \to (\gr^F\cD_X)_{\beta-r}\right\}
\] 
is a resolution, as it is a direct summand of $\gr^FA_\infty(\cD_X)$. The claim is that for $\alpha \leq 0$, the complex
\[ \gr^F A_\alpha(\cD_X) = \bigoplus_{\beta \leq \alpha} K^\bullet_\beta\]
is acyclic. The only possibly non-zero cohomology is the rightmost one whose grading is $\beta - r$ because $K_\beta$ is a resolution. But since $\alpha\leq 0$, by the definition of $V_\bullet\cD_X$, the rightmost cohomology of $\gr^FA_\alpha(\cD_X)$ is automatically zero, proving the claim.
\end{proof}

We give some elementary applications of Theorem~\ref{thm:gr} and Theorem~\ref{thm:alphaless0}. As a consequence we give a criterion for when $\cM$ has strict support decomposition along $Z$.

\begin{cor}\label{subob}  A $\cD_X$-module $\cM$ with a $V$-filtration along $Z$ has no submodules supported on $Z$ if and only if $\gr_0^V \cM \xrightarrow[]{t} \bigoplus_{i=1}^r \gr_{-1}^V \cM$ is injective.
\end{cor}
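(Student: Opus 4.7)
My plan is to prove the two directions of the corollary separately, leaning on the earlier results of the section.

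For the reverse implication $(\Leftarrow)$, I would argue by contradiction. Suppose $\cN \subseteq \cM$ is a nonzero submodule supported on $Z$. Via Kashiwara's equivalence and the right-module form of Example \ref{VFiltEg}(b), the induced $V$-filtration $V_\bullet \cN = \cN \cap V_\bullet \cM$ satisfies $V_{<0}\cN = 0$, while $V_0 \cN$ is identified with the underlying $\cD_Z$-module of $\cN$ and is therefore nonzero. Pick any $0 \neq n \in V_0 \cN$. Then $n t_i \in V_{-1}\cN = 0 \subseteq V_{<-1}\cM$ for every $i$, and $n \notin V_{<0}\cM$ (otherwise $n \in \cN \cap V_{<0}\cM = V_{<0}\cN = 0$). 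Hence $[n]$ is a nonzero element of $\ker(t \colon \gr_0^V \cM \to \bigoplus_i \gr_{-1}^V \cM)$, contradicting injectivity.

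The forward direction $(\Rightarrow)$ is the heart of the proof and uses Theorem \ref{thm:alphaless0}. Suppose $\Gamma_Z\cM = 0$, and let $m \in V_0 \cM$ satisfy $m t_i \in V_{<-1}\cM$ for every $i$. By discreteness of the $V$-filtration, there is some $\epsilon > 0$ with $V_{<0}\cM = V_{-\epsilon}\cM$ and $V_{<-1}\cM = V_{-1-\epsilon}\cM$. The tuple $(m t_1,\dots,m t_r)$ then lies in $(V_{-1-\epsilon}\cM)^r$, the degree-$1$ term of the Koszul-type complex $A_{-\epsilon}(\cM)$, and is a cycle because $m t_i t_j = m t_j t_i$. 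By Theorem \ref{thm:alphaless0} the complex $A_{-\epsilon}(\cM)$ is acyclic, so this cycle is a boundary: there exists $m' \in V_{-\epsilon}\cM = V_{<0}\cM$ with $m' t_i = m t_i$ for every $i$. Consequently $(m - m') \cI_Z = 0$, so $m - m'$ has support in $Z$ and lies in $\Gamma_Z\cM = 0$. Therefore $m = m' \in V_{<0}\cM$, i.e.\ $[m] = 0$.

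The main obstacle is the forward direction: from $[m]t_i = 0$ alone one cannot directly exhibit a submodule of $\cM$ supported on $Z$, because $m$ itself need not be annihilated by $\cI_Z$. The cleanest fix is the correction step above, where the acyclicity of $A_{-\epsilon}(\cM)$ is used to adjust $m$ by an element of $V_{<0}\cM$ so that the difference $m - m'$ is literally killed by the ideal $\cI_Z$; only then does the hypothesis $\Gamma_Z \cM = 0$ bite.
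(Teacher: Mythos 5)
Your proof is correct, and it relies on the same key ingredient as the paper, namely Theorem~\ref{thm:alphaless0} (acyclicity of $A_\alpha(\cM)$ for $\alpha<0$), but the organization is genuinely different. The paper's proof runs in three steps: first, using the acyclicity of $B_\lambda(\cM)$ for $\lambda>0$ (Theorem~\ref{thm:gr}) to show that any $m$ with $mt_i=0$ for all $i$ automatically lies in $V_0\cM$, so that the set of such $m$ equals $\cH^0(A_0(\cM))=\bigcap_i\ker(t_i\colon V_0\cM\to V_{-1}\cM)$; second, observing that this set is nonzero precisely when $\cM$ has a submodule supported on $Z$; and third, applying the snake lemma to $0\to A_{<0}(\cM)\to A_0(\cM)\to B_0(\cM)\to 0$ to identify $\cH^0(A_0(\cM))\cong\cH^0(B_0(\cM))$. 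Your argument splits into two directions and avoids the first step entirely: in the $(\Leftarrow)$ direction you produce a nonzero kernel element directly from a submodule supported on $Z$ via Kashiwara's equivalence, so you never need to know that the abstract kernel sits in $V_0\cM$; and your $(\Rightarrow)$ direction is essentially the snake-lemma argument unpacked into an explicit correction at the level of elements, with the acyclicity of $A_{-\epsilon}(\cM)$ supplying the preimage $m'\in V_{<0}\cM$. This buys a marginally more self-contained and elementary argument (no invocation of $B_\lambda$ acyclicity for $\lambda>0$, no snake lemma), at the cost of not immediately exhibiting the natural isomorphism $\cH^0(A_0(\cM))\cong\cH^0(B_0(\cM))$ which the paper reuses implicitly elsewhere. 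Both are correct; yours is a clean alternative.
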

\begin{proof} If $m\in \cM$ is such that $m t_i= 0$ for all $i$, then $m \in V_0\cM$. Indeed, $m\in V_{\lambda}\cM$ for some $\lambda \in \Q$. If $\lambda \leq 0$, we are done. Otherwise, considering the short exact sequence
\[
0 \to A_{<\lambda}(\cM) \to A_{\lambda}(\cM) \to B_{\lambda}(\cM) \to 0,
\]
by acyclicity of $B_{\lambda}(\cM)$ for $\lambda \neq 0$, the left-most map being injective implies $m\in V_{<\lambda}\cM$. Since the $V$-filtration is discrete, by induction we know that $m\in V_0\cM$. This means that $\cM$ has no submodules supported on $Z$ if and only if $\bigcap^r_{t=1}\ker(t_i:V_0\cM \to V_{-1}\cM)$ vanishes.

Since $A_{<0}(\cM)$ is acyclic, it follows from the short exact sequence and the snake lemma
\[
0 \to A_{<0}(\cM) \to A_0(\cM) \to B_0(\cM) \to 0.
\] 
that $\bigcap^r_{t=1}\ker(t_i:\gr^V_0\cM \to \gr^V_{-1}\cM)=\bigcap^r_{t=1}\ker(t_i:V_0\cM \to V_{-1}\cM)$, which concludes the proof.
\end{proof}

\begin{cor} \label{quotob} Let $\cM'$ be the smallest submodule of $\cM$ such that $\cM'|_U \cong \cM|_U$. Then 
\[
\cM/\cM' \cong i_+\mathrm{coker}\left(\bigoplus_{i=1}^r \gr_{-1}^V\cM \xrightarrow{\de_{t}} \gr_0^V\cM\right)
.\] 
In particular, the morphism $\bigoplus_{i=1}^r \gr_{-1}^V \cM \to \gr_0^V\cM$ is surjective if and only if $\cM$ has no quotients supported on $Z$.
\end{cor}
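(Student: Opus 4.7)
The plan mirrors the strategy of Corollary~\ref{subob}. I would first identify $\cM'$ explicitly as $V_{<0}\cM\cdot\cD_X$: since the $V$-filtration is trivial on $U = X\setminus Z$, the submodule $V_{<0}\cM\cdot\cD_X$ already agrees with $\cM$ over $U$; conversely, for any submodule $\cN\subseteq\cM$ with $\cN|_U=\cM|_U$, the quotient $\cM/\cN$ is supported on $Z$, so Kashiwara's equivalence in the right-module convention gives $V_{<0}(\cM/\cN)=0$, and strictness of the $V$-filtration forces $V_{<0}\cM\subseteq\cN$ and hence $V_{<0}\cM\cdot\cD_X\subseteq\cN$.

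Applying Kashiwara's equivalence to the supported-on-$Z$ module $\cM/\cM'$ then produces a unique $\cD_Z$-module $\cK$ with $\cM/\cM'\cong i_+\cK$, recovered as $\cK=V_0(\cM/\cM')=\gr_0^V(\cM/\cM')$ (both equal since $V_{<0}(\cM/\cM')=0$). Strictness of the surjection $\cM\twoheadrightarrow\cM/\cM'$ with respect to $V$ then gives $\cK=\gr_0^V\cM/\gr_0^V\cM'$, so the corollary reduces to showing
\[
\gr_0^V\cM' \;=\; \mathrm{image}\Bigl(\de_t\colon\bigoplus_{i=1}^r\gr_{-1}^V\cM\longrightarrow\gr_0^V\cM\Bigr).
\]
The inclusion $\supseteq$ is immediate since $m\de_{t_i}\in V_0\cM\cap\cM'=V_0\cM'$ for every $m\in V_{-1}\cM$. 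For the reverse inclusion, I plan to use the PBW decomposition $\cD_X=\bigoplus_\alpha V_0\cD_X\cdot\de_t^\alpha$ together with the symbol-level identity $V_1\cD_X=V_0\cD_X+\sum_i V_0\cD_X\cdot\de_{t_i}$ (verified directly from the PBW basis by noting $V_1\cD_X$ consists of those $\sum t^\gamma\de_t^\beta\de_s^\delta$ with $|\beta|-|\gamma|\leq 1$) to reduce the statement to the Artin--Rees-type saturation $V_0\cM\cap(V_{-1}\cM\cdot V_k\cD_X)=V_{-1}\cM\cdot V_1\cD_X$ for every $k\geq 1$. Once this saturation is known, $V_0\cM'=V_{-1}\cM+\sum_i V_{-1}\cM\cdot\de_{t_i}$, and passing to $\gr_0^V$ yields exactly the image of $\de_t$.

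The main obstacle is establishing the saturation identity for $k\geq 2$, which I would prove by induction on $k$. Given an element of the left-hand side with a representation using $\de_t^\alpha$-monomials of total $\de_t$-order at most $k$, its leading $\gr_{k-1}^V\cM$-symbol must vanish since the element lies in $V_0\cM\subseteq V_{k-2}\cM$. Using the exactness of the Koszul-like complex $C_{k-1}(\cM)$ from Theorem~\ref{thm:gr} (which applies because $k-1\neq 0$), this vanishing allows one to rewrite the top-degree contribution as a Koszul boundary and thereby reduce the $\de_t$-order of the representation by one; iterating produces a representation inside $V_{-1}\cM\cdot V_1\cD_X$ and completes the induction.

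Finally, the ``in particular" statement is immediate from the main isomorphism: the map $\de_t$ is surjective if and only if the cokernel $\cK$ vanishes, if and only if $\cM/\cM'=0$, if and only if $\cM=\cM'$, which is precisely the condition that $\cM$ admits no nonzero quotient supported on $Z$.
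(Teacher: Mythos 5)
Your identification of $\cM'$ as $V_{<0}\cM\cdot\cD_X$, the passage through Kashiwara's equivalence, and the reduction via $V$-strictness to the identity $V_0\cM' = V_{<0}\cM + \sum_i V_{-1}\cM\,\de_{t_i}$ all track the paper's argument. (Your reformulation $V_0\cM' = V_{-1}\cM + \sum_i V_{-1}\cM\,\de_{t_i}$ is a priori smaller, since $V_{-1}\cM\subseteq V_{<0}\cM$ can be strict; the two do coincide, but only because surjectivity of $\bigoplus_i\gr^V_{\mu-1}\cM\xrightarrow{\de_t}\gr^V_\mu\cM$ for each jump $\mu\in(-1,0)$ lets one push $V_{<0}\cM$ down into $V_{-1}\cM\cdot V_1\cD_X$, so this already quietly uses Theorem~\ref{thm:gr} and deserves a sentence.)

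The gap is in the final reduction. Your induction hinges on the claim that if $m=\sum_{|\alpha|\le k}m_\alpha\de_t^\alpha$ with $m_\alpha\in V_{-1}\cM$ and $m\in V_0\cM$, then vanishing of the leading symbol in $\gr^V_{k-1}\cM$ plus exactness of $C_{k-1}(\cM)$ lets you rewrite $m$ with $\de_t$-order at most $k-1$. But $C_{k-1}(\cM)$ is exact as a complex whose rightmost arrow is the \emph{single-step} map $\bigoplus_i\gr^V_{k-2}\cM\xrightarrow{\de_t}\gr^V_{k-1}\cM$, whereas your vanishing is about the composite $k$-step map out of $\bigoplus_{|\alpha|=k}\gr^V_{-1}\cM$. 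To invoke $C_{k-1}$ you must first regroup $\sum_{|\alpha|=k}m_\alpha\de_t^\alpha=\sum_j\eta_j\de_{t_j}$ with $\eta_j\in V_{k-2}\cM$; Koszul exactness then yields a correction $\eta_j'=\eta_j-\sum_i\pm p_{ij}\de_{t_i}$ with $\eta_j'\in V_{<k-2}\cM$. However $\eta_j'$ is still built from $\de_t$-monomials of order $k-1$, so the $\de_t$-order has \emph{not} dropped, and the lift $p_{ij}\in V_{k-3}\cM$ need not lie in $V_{-1}\cM\cdot V_{k-2}\cD_X$, so it is not even clear the rewritten expression stays inside $\cM'$. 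What actually decreases is the $V$-degree of the $\eta_j$, and making that iteration close inside $\cM'$ and terminate in $V_{-1}\cM\cdot V_1\cD_X$ needs appreciably more bookkeeping than your sketch supplies. The paper sidesteps all of this by defining the candidate filtration $U_\lambda\cM'=\sum_i U_{\lambda-1}\cM'\de_{t_i}+U_{<\lambda}\cM'$ (with $U_\lambda\cM'=V_\lambda\cM$ for $\lambda<0$), verifying that $U_\bullet$ satisfies the defining axioms of a $V$-filtration, and invoking \emph{uniqueness} of the $V$-filtration to conclude $U_\bullet=V_\bullet\cM'$, from which $V_0\cM'=U_0\cM'$ falls out with no saturation lemma at all.
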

\begin{proof} Note that $\cM' =  V_{\lambda}\cM \cdot \cD_X$ for any $\lambda <0$. Indeed, we know that $V_\lambda \cM' = V_\lambda \cM$ if $\lambda <0$, as they restrict to the same module on $X -Z$. Thus, $ V_\lambda \cM \cdot  \cD_X=  V_\lambda \cM' \cdot \cD_X  \subseteq \cM'$. For the other inclusion, note that $(V_\lambda \cM \cdot \cD_X)|_U = \cM|_U$, because the $V$-filtration is all of $\cM$ away from $Z$. Hence, by minimality of $\cM'$, we get the desired equality.

Note that $\cM/\cM'$ is supported on $Z$, so by Kashiwara's equivalence $\cM/\cM'=i_+\gr^V_0\left(\cM/\cM' \right)$, where $i: Z \to X$ is the inclusion. We know $\gr^V_0 \cM/\cM' = \gr^V_0 \cM /\gr^V_0 \cM' $ and 
\[ \gr^V_0(\cM') = \frac{V_0\cM \cap \cM'}{V_{<0}\cM},\]
because $V_{<0}\cM = V_{<0}\cM'$ and $V_\bullet\cM \cap \cM'=V_\bullet\cM'$ by the uniqueness of the $V$-filtration. Thus, the claim reduces to proving
\[ V_0\cM \cap \cM' = \sum_{i=1}^r V_{-1}\cM\de_{t_i} + V_{<0}\cM.\]

In fact, we can define inductively a filtration $U_\bullet\cM'$ by $U_\lambda\cM'= \sum_{i=1}^r U_{\lambda-1}\cM\de_{t_i}+U_{<\lambda}\cM$ for $\lambda\geq 0$ and $U_\lambda\cM'=V_\lambda \cM'$ for $\lambda<0$. Note that $V_\lambda\cM'=V_\lambda\cM$ for $\lambda<0$ is discrete so $U_\bullet\cM'$ is well-defined. Since $\cM'=V_{<0}\cM\cdot \cD_X$, the filtration $U_\bullet \cM$ is exhausted. Then it is easy to check that $U_\bullet\cM'$ satisfies all the characterization of $V$-filtration, i.e. $U_\bullet\cM'=V_\bullet\cM'$ which concludes the proof. 
\end{proof}

We prove here an analogue of the fact from the codimension one case that you can test if a module has a strict support decomposition by looking at $\phi_{f,1}$ as $f\in \cO_X$ varies.

\begin{cor} \label{strictsuppgen} Let $\cM$ be a $\cD_X$-module admitting a $V$-filtration along $Z$. Then there exists a decomposition $\cM = \cM' \oplus \cM''$ with $\text{supp}(\cM') \subseteq Z$ and $\cM''$ having no submodules or quotient modules supported on $Z$ if and only if
\[ 
\gr^V_0\cM = \left(\bigcap_{i=1}^r \ker(t_i: \gr^V_0\cM \to \gr^V_{-1}\cM \right) \bigoplus \left(\sum_{i=1}^r \gr^V_{-1}\cM\de_{t_i}\right).
\]
\end{cor}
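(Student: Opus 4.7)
The plan is to reduce both directions of the equivalence to Corollaries~\ref{subob} and \ref{quotob}. Throughout, write $K := \bigcap_i \ker(t_i : \gr^V_0\cM \to \gr^V_{-1}\cM)$ and $I := \sum_i \gr^V_{-1}\cM\,\de_{t_i}$ for the two subspaces of $\gr^V_0\cM$ appearing in the statement.

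For the forward direction, suppose $\cM = \cM' \oplus \cM''$ with $\cM'$ supported on $Z$ and $\cM''$ having no $Z$-supported sub- or quotient modules. Uniqueness of the $V$-filtration forces it to respect the decomposition, so $\gr^V_\alpha\cM = \gr^V_\alpha\cM' \oplus \gr^V_\alpha\cM''$ for every $\alpha$. Kashiwara's equivalence (Example~\ref{VFiltEg}(b)) gives $V_\lambda\cM' = 0$ for $\lambda < 0$, hence $\gr^V_0\cM' \subseteq K$ and $\gr^V_{-1}\cM = \gr^V_{-1}\cM''$. Applying Corollary~\ref{subob} to $\cM''$ yields $K \cap \gr^V_0\cM'' = 0$, which combined with $\gr^V_0\cM' \subseteq K$ forces $\gr^V_0\cM' = K$; applying Corollary~\ref{quotob} to $\cM''$ yields $\gr^V_0\cM'' = \sum_i \gr^V_{-1}\cM''\,\de_{t_i} = I$. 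This gives the required direct sum decomposition.

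For the backward direction, assume $\gr^V_0\cM = K \oplus I$. Set $\cM'' := V_{<0}\cM \cdot \cD_X$, which by the proof of Corollary~\ref{quotob} is the minimal submodule of $\cM$ agreeing with $\cM$ on $U = X \setminus Z$ and satisfies $\cM/\cM'' \cong i_+\bigl(\gr^V_0\cM/I\bigr)$. Let $\cM'$ be the maximal $Z$-supported submodule of $\cM$; by Kashiwara's equivalence $\cM' \cong i_+\cN$, where $\cN$ is the subsheaf of sections annihilated by every $t_i$. Since any such section automatically lies in $V_0\cM$, we have $\cN = \bigcap_i \ker(t_i : V_0\cM \to V_{-1}\cM)$, and the snake lemma applied to $0 \to A_{<0}(\cM) \to A_0(\cM) \to B_0(\cM) \to 0$ (using acyclicity of $A_{<0}(\cM)$ from Theorem~\ref{thm:alphaless0}) identifies $\cN$ with $K$. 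Under the hypothesis, the projection $\gr^V_0\cM \twoheadrightarrow \gr^V_0\cM/I$ restricts to an isomorphism $K \xrightarrow{\sim} \gr^V_0\cM/I$. The core step is to verify that the composition $\cM' \hookrightarrow \cM \twoheadrightarrow \cM/\cM''$ corresponds under Kashiwara's equivalence to the isomorphism $\cN \cong K \cong \gr^V_0\cM/I$; this yields $\cM = \cM' \oplus \cM''$.

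Once the decomposition is in place, the properties of $\cM''$ are immediate: any $Z$-supported submodule of $\cM''$ must lie in the maximal such submodule $\cM'$ of $\cM$, hence in $\cM' \cap \cM'' = 0$; and $\gr^V_0\cM'' = I = \sum_i \gr^V_{-1}\cM''\,\de_{t_i}$ (using $\gr^V_{-1}\cM'' = \gr^V_{-1}\cM$), so Corollary~\ref{quotob} rules out $Z$-supported quotients. I expect the main obstacle to be the compatibility check in the backward direction, namely confirming that the natural inclusion-projection $\cM' \to \cM/\cM''$ genuinely realizes, under Kashiwara's equivalence, the composite of the two identifications of $K$ (as $\cN$ and as $\gr^V_0\cM/I$); this amounts to tracing the snake-lemma isomorphism through the $V$-filtration quotients and is a direct diagram chase.
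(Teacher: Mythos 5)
Your proof is correct and follows essentially the same route as the paper: both directions reduce to Corollaries~\ref{subob} and \ref{quotob}, and the backward direction hinges on $\cM/\cM'' \cong i_+(\gr^V_0\cM/I) \cong i_+K \cong \cM'$ with $\cM' \hookrightarrow \cM$ splitting the quotient. The ``core step'' you flag is indeed routine: under Kashiwara's equivalence the map $\cM' \to \cM/\cM''$ is determined by applying $\gr^V_0$ (equivalently, taking the $\cD_Z$-module of $t$-torsion), and $\gr^V_0$ applied to $\cM' \hookrightarrow \cM \twoheadrightarrow \cM/\cM''$ is exactly $K \hookrightarrow \gr^V_0\cM \twoheadrightarrow \gr^V_0\cM/I$ by strictness of the $V$-filtration on short exact sequences, which is an isomorphism under the hypothesis $\gr^V_0\cM = K \oplus I$. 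You are slightly more careful than the paper in the forward direction (the paper just says ``taking $\gr^V_0$'' where you spell out why $\gr^V_0\cM' = K$ via $K \cap \gr^V_0\cM'' = 0$), but there is no substantive difference in method.
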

\begin{proof} For the ``only if'' part, by the previous lemma we know $\gr^V_0\cM'' = \text{im}(\de_{z_i})$ and $\bigcap_{i=1}^r \ker(t_i: \gr^V_0\cM'' \to \gr^V_{-1}\cM'') = 0$. Also, by Kashiwara's equivalence, we know $\cM'$ satisfies $\gr^V_{-1}\cM'= 0$. By taking $\gr_0^V$ of the equality $\cM = \cM' \oplus \cM''$, we conclude.

For the other implication, note that we must certainly set $\cM' = \cH^0_Z(\cM)$, as this is the maximal submodule of $\cM$ supported on $Z$. Let $\cM'' = V_{<0}\cM \cdot \cD_X$, which we know is the smallest submodule such that $\cM''|_U = \cM|_U$, and satisfies
\[ 
\cM/\cM'' = i_+(\text{coker} \left(\bigoplus_{i=1}^r \gr^V_{-1}\cM \xrightarrow[]{\de_{t_i}} \gr^V_{0}\cM \right)
.\]
By the assumption, this cokernel is isomorphic to $\bigcap_{i=1}^r \ker\left(t_i:\gr^V_0\cM \to \gr^V_{-1}\cM \right)$, and so $\cM/\cM'' \cong \cM'$. But the inclusion $\cM' \to \cM$ splits this quotient map, yielding the direct sum
\[ \cM \cong \cM' \oplus \cM'',\]
which proves the claim.
\end{proof}

For convenience, denote by $B(\cM)=B_0(\cM)$ and $C(\cM)=C_0(\cM)$. To close this section, we give a comparison of the restriction $i_Z^*\cM$ and $i_Z^!\cM$ with $B(\cM)$ and $C(\cM)$ for $i_Z:Z\to X$.

\begin{thm}\label{thm:topop}
With notation as above, the complex $B(\cM)$ (resp. $C(\cM)$) is isomorphic to $i^!_Z \cM$ (resp. $i_Z^*\cM$) in $D^b_{rh}(\cD_Z)$, where $i_Z: Z\rightarrow X$ is the closed embedding.
\end{thm}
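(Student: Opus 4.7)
My plan is to prove both isomorphisms $B(\cM) \simeq i_Z^!\cM$ and $C(\cM) \simeq i_Z^*\cM$ by induction on the codimension $r$ of $Z \subset X$. The base case $r = 1$ is the classical description of $i^!$ and $i^*$ along a smooth hypersurface via the $V$-filtration; it is equivalent to the distinguished triangles \eqref{eq:dis} from the introduction, rewritten in the right $\cD$-module convention with increasing $V$-indexing.

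For the inductive step with $r \geq 2$, I factor $i_Z$ through the smooth hypersurface $W = \{t_r = 0\}$, writing $i_Z = i_W \circ i_{Z,W}$, where $Z \subset W$ has codimension $r-1$ cut out by $t_1,\ldots,t_{r-1}$. By functoriality of the $!$-pullback, $i_Z^! = i_{Z,W}^! \circ i_W^!$. The base case identifies $i_W^!\cM$ with the 2-term complex $[\gr^V_0\cM \xrightarrow{t_r} \gr^V_{-1}\cM]$ of $\cD_W$-modules (where $V$ here is the $V$-filtration along $W$). Applying $i_{Z,W}^!$ to each of these terms and invoking the inductive hypothesis produces a Koszul-type complex of length $r-1$ in the functions $t_1,\ldots,t_{r-1}$, built from the $V$-filtration on $\gr^V_\alpha\cM$ along $Z \subset W$. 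The total complex of the resulting double complex is then to be identified with $B(\cM)$.

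To carry out this identification, I need the compatibility of the iterated $V$-filtration with the direct $V$-filtration along $Z$ at the associated graded level: for each $\chi \in \Q$,
$$\gr^V_\chi \cM \;\cong\; \bigoplus_{\alpha + \beta = \chi} \gr^V_\beta\bigl(\gr^V_\alpha \cM\bigr),$$
where the outer LHS grading is along $Z \subset X$, the outer RHS grading is along $Z \subset W$, and the inner RHS grading is along $W \subset X$. By uniqueness of the $V$-filtration (recalled in Section \ref{subsec:VFilt}), it suffices to verify that the RHS together with its natural $V_0\cD_X$-action satisfies the defining axioms of the $V$-filtration along $Z$. The key point is that on the $(\alpha, \beta)$-summand, the operator $\sum_{i=1}^r t_i \partial_i - \chi$ splits as $(t_r\partial_{t_r} - \alpha) + \bigl(\sum_{i < r} t_i\partial_i - \beta\bigr)$, a sum of two commuting nilpotent operators, hence nilpotent.

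The main obstacle is justifying this compatibility rigorously; checking the coherence and exhaustivity axioms directly should be straightforward, but in case of any technical snag one may reduce to the monodromic case via the specialization construction $\Sp(\cM)$ of Section \ref{subsec:dfn}: by property \ref{monoVFilt}, the $V$-filtration on a monodromic $\cD$-module coincides with its eigenspace decomposition, which makes all such decompositions transparent. The argument for $C(\cM) \simeq i_Z^*\cM$ is entirely parallel, with $\partial_{t_i}$-Koszul maps replacing $t_i$-Koszul maps and $i^*$ replacing $i^!$ throughout; alternatively, one may note that the short exact sequence $0 \to A_{<0}(\cM) \to A_0(\cM) \to B_0(\cM) \to 0$ combined with acyclicity of $A_{<0}(\cM)$ (Theorem \ref{thm:alphaless0} passed to the direct limit) gives $B(\cM) \simeq A_0(\cM)$ in $D^b$, reducing the problem to identifying the ``absolute" Koszul complex $A_0(\cM) = K^\bullet(t_1,\ldots,t_r; V_0\cM)$ with $i_Z^!\cM$.
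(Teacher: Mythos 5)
Your primary plan (induction on codimension through $W = \{t_r = 0\}$) hinges on a compatibility statement that is not a formality and that you do not establish. Uniqueness of the $V$-filtration only says that a filtration satisfying the axioms is unique; to conclude the claimed graded compatibility you would need to exhibit a candidate \emph{filtration} of $\cM$ (not just a graded decomposition) and verify all the axioms, in particular coherence over $V_0\cD_X$ and the defining equality $V_k\cD_X\cdot V_\chi = V_{\chi+k}$ for $k\geq 0$, $\chi\ll 0$; the nilpotency check you perform is the easy part. More seriously, even granting the compatibility, the total complex you build does not match $B(\cM)$ term by term: applying $i_{Z,W}^!$ to the two-term complex $[\gr_0^V\cM\xrightarrow{\ t_r\ }\gr_{-1}^V\cM]$ produces, in each degree, only the summands $\gr_\beta^V(\gr_\alpha^V\cM)$ with $\alpha\in\{0,-1\}$, whereas your compatibility would present $\gr_{-k}^V\cM$ as a sum over \emph{all} $\alpha$. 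The two complexes can at best be quasi-isomorphic, and for that you need the acyclicity of the off-diagonal pieces --- which is exactly Theorem~\ref{thm:gr}, which you never invoke. The normal-crossing example of Section~\ref{subsec:nct} shows this concretely: the small Koszul complex on $\cM^{0,0},\cM^{1,0}\oplus\cM^{0,1},\cM^{1,1}$ is strictly smaller than $B_Z(\cM)$ and is quasi-isomorphic to it only because the remaining bigraded summands are acyclic.

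Your ``alternatively'' paragraph is much closer to the paper's actual argument for $B(\cM)$, but it misidentifies the target. Acyclicity of $A_{<0}(\cM)$ (from Theorem~\ref{thm:alphaless0} plus exactness of direct limits) correctly gives $A_0(\cM)\xrightarrow{\sim}B(\cM)$. However, $A_0(\cM)$ is \emph{not} the absolute Koszul complex: its terms are $V_0\cM, (V_{-1}\cM)^r,\dots,V_{-r}\cM$, all proper submodules of $\cM$. The absolute Koszul complex, whose terms are all $\cM$ and which computes $i_Z^!\cM$ as the derived $\cO$-restriction, is $A_\infty(\cM)$. The crucial missing step is that the inclusion $A_0(\cM)\hookrightarrow A_\infty(\cM)$ is a quasi-isomorphism, deduced from exactness of $B_\alpha(\cM)$ for $\alpha>0$ (Theorem~\ref{thm:gr}) together with discreteness of $V_\bullet$ and exactness of direct limits; you never invoke Theorem~\ref{thm:gr} in this part. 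Finally, for $C(\cM)$ the paper does not run a parallel Koszul argument at all: it passes through the specialization $\Sp(\cM)$, observes it is monodromic on $T_ZX$, and applies Ginzburg's result (Proposition~\ref{prop:cvvc}) identifying $p_+$ of a monodromic module with $i^*$.
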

\begin{proof}
First, we exhibit the quasi-isomorphism $B(\cM) \cong i^! \cM$. Recall that $i^! \cM$ agrees with the derived $\cO$-module pullback of $\cM$, hence, we can use the Koszul complex on $\cM$ along $t_1,\dots, t_r$.

    Since $B_\alpha(\cM)$ is exact by Theorem~\ref{thm:gr} when $\alpha>0$ and the $V$-filtration is discretely indexed, we find that the natural inclusion of complexes $A_0(\cM) \hookrightarrow A_j(\cM)$ is a quasi-isomorphism for any $j\geq 0$. This implies that the natural inclusion of complexes 
    \begin{equation}
        A_0(\cM) \hookrightarrow A_\infty(\cM):=\left\{\cM \to \cM^{\bigoplus r} \to \cdots \to \cM \right\}
    \end{equation} 
    is a quasi-isomorphism, where the right hand side is the Koszul complex of $\cM$ along $t_1,\dots, t_r$. Indeed, $V_\bullet \cM$ is exhaustive, so the right hand side is the direct limit as $j\to \infty$ of $A_j(\cM)$, and the direct limit is exact. By Theorem~\ref{thm:alphaless0}, the quotient map 
    \[
     A_0(\cM) \to B_Z(\cM)
    \]
    is a quasi-isomorphism.

The statement about $C(\cM)$ just follows from applying Proposition~\ref{prop:cvvc} to $T_ZX\rightarrow Z$ and Theorem~\ref{thm:gr}. Indeed, $\Sp(\cM)$ is monodromic on $T_Z X$, and it is not hard to show that $\sigma^*(\Sp(\cM)) = i^*(\cM)$, where $\sigma: Z\to T_Z X$ is the zero section of the normal bundle.
\end{proof}

\begin{prop}[{\cite[Proposition 10.4]{cvvc}}] \label{prop:cvvc} For a monodromic $\cD_E$-module $\cM$, there are quasi-isomorphisms 
\[
p_+\cM \simeq i^*\cM, \quad p_\dagger\cM \simeq i^!\cM
\]
where $p:E\rightarrow Z$ is a vector bundle and $i: Z\rightarrow E$ is the zero section.
\end{prop}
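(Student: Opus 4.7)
The plan is to prove $p_+\cM\simeq i^*\cM$ by a direct local Koszul computation and then deduce $p_\dagger\cM\simeq i^!\cM$ by duality. Both claims are local on $Z$, so I would trivialize $E\cong Z\times \A^r$ with fiber coordinates $z_1,\dots,z_r$. In this trivialization, $p_+\cM$ is computed by the relative de Rham complex of the smooth affine morphism $p$, which reduces to the Koszul-type complex
\[
K_\partial^{\bullet}(\cM)\colon \wedge^r\C^r\otimes\cM \to \cdots \to \wedge^1\C^r\otimes\cM \to \cM
\]
in $\partial_{z_1},\dots,\partial_{z_r}$, placed in degrees $-r,\dots,0$. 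Similarly, $i^*\cM = \cO_Z\otimes^L_{\cO_E}\cM$ is computed by the Koszul complex $K_z^\bullet(\cM)$ in $z_1,\dots,z_r$, obtained from the Koszul resolution of $\cO_Z$. No higher direct image terms complicate the first computation because the fibers of $p$ are affine.

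The monodromic decomposition $\cM=\bigoplus_\chi\cM^\chi$ is the key: since $\partial_{z_j}$ sends $\cM^\chi$ to $\cM^{\chi-1}$ and $z_j$ sends $\cM^\chi$ to $\cM^{\chi+1}$, each Koszul complex decomposes as a direct sum of finite subcomplexes indexed by the generalized eigenvalue $\alpha$ of the degree-$0$ term. On each such summand I would run the chain-homotopy construction from the proof of Theorem~\ref{thm:gr}: contract against the opposite family of operators (against $z_j$'s for $K_\partial^\bullet$, and against $\partial_{z_j}$'s for $K_z^\bullet$), producing a homotopy $s$ satisfying $ds+sd = \theta + \ell$ on the degree-$(-\ell)$ piece, up to sign. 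Since $\theta - (\chi - r)$ acts as a nilpotent on $\cM^\chi$, this homotopy operator is invertible on every summand except one critical value of $\alpha$, forcing all other summands to be acyclic. The surviving summand of each complex then has its only non-zero cohomology concentrated in a single specified cohomological degree.

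The final step is to identify the two surviving summands as the same object in $D^b_{rh}(\cD_Z)$: both reduce to a kernel (resp.\ a cokernel) of the $\partial$-action (resp.\ $z$-action) on a specific generalized eigenspace of $\cM$, and a direct comparison of these explicit formulas produces a canonical isomorphism, yielding $p_+\cM\simeq i^*\cM$. The second statement then follows by applying the first to $\bD_E\cM$ and invoking $p_\dagger=\bD_Z\circ p_+\circ\bD_E$ and $i^!=\bD_Z\circ i^*\circ\bD_E$, together with the fact that monodromicity is preserved under duality (the dual of a monodromic module is again monodromic, with the eigenspace indexing reversed). The main obstacle is the bookkeeping in the identification step: carefully pinning down the critical eigenvalue and the cohomological degree in each Koszul complex, and verifying that the two surviving pieces genuinely coincide rather than merely being abstractly isomorphic or differing by a shift or Tate twist.
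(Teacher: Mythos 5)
The proposal misidentifies what the Koszul complex in $z_1,\dots,z_r$ computes, and this mistake is fatal for the "direct comparison" step. The derived $\cO$-module pullback $\cO_Z\otimes^L_{\cO_E}\cM$ (shifted by codimension) is $i^!\cM$, not $i^*\cM$ — this is stated explicitly in the proof of Theorem~\ref{thm:topop} ("Recall that $i^!\cM$ agrees with the derived $\cO$-module pullback of $\cM$, hence, we can use the Koszul complex on $\cM$ along $t_1,\dots,t_r$"). So your two Koszul complexes compute different objects: $p_+\cM$ decomposes with surviving summand $C(\cM)=[\cM^r\to\cdots\to\cM^0]$ (a $\partial$-complex), while $K_z^\bullet(\cM)$ has surviving summand $B(\cM)=[\cM^0\to\cdots\to\cM^r]$ (a $z$-complex). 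These are $i^*\cM$ and $i^!\cM$ respectively, and they are not quasi-isomorphic in general. A concrete check at $r=1$: take $\cM=\cO_{\A^1}=\C[z]$, which is monodromic with $\cM^\chi=\C z^{\chi-1}$ for integers $\chi\geq 1$ and $\cM^\chi=0$ otherwise. The surviving summand of $p_+\cM$ is $[\cM^1\xrightarrow{\partial}\cM^0]=[\C\to 0]$ with cohomology in degree $-1$, whereas the surviving summand of $K_z^\bullet(\cM)$ is $[\cM^0\xrightarrow{z}\cM^1]=[0\to\C]$ with cohomology in degree $0$. No amount of bookkeeping will reconcile these — they differ as objects of $D^b_{rh}(\cD_Z)$, not merely by a twist or shift.

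The approach the paper relies on (Ginzburg's proof, sketched in Remark~\ref{rmk:cvvcMHM}) is structurally different and sidesteps this difficulty. One uses the exact triangle $j_!j^*\cM\to\cM\to i_*i^*\cM\xrightarrow{+1}$, applies $p_+$, and shows $p_+j_!j^*\cM=0$ when $\cM$ is monodromic. Since $p\circ i=\mathrm{id}_Z$, this yields $p_+\cM\simeq i^*\cM$, and the $i^!$ statement follows by duality exactly as you propose. The monodromicity is used precisely in the vanishing $p_+j_!j^*\cM=0$, not in a Koszul contraction for $i^*$. Your Koszul computation of $p_+\cM$ and the homotopy argument are both fine (and indeed the paper uses them to identify $p_+\Sp(\cM)$ with $C(\cM)$ in Theorem~\ref{thm:topop}); what is missing is a correct description of $i^*\cM$ that you could match it against — and the easiest such description is obtained by duality from $i^!\cM$, at which point you are effectively re-proving the $i^!$ statement and the purported shortcut disappears.
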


\begin{rmk}\label{rmk:cvvcMHM} Proposition \ref{prop:cvvc} also holds true if $M$ is a monodromic mixed Hodge module on $E$. The proof of the first claim uses the exact triangle
\[ j_!j^*\cM \to \cM \to i_* i^*\cM \xrightarrow[]{+1},\]
which holds true in the category of mixed Hodge modules, too. Then, applying $p_*$ to this triangle, it suffices to show that
\[ p_* j_! j^!M = 0,\]
but in the proof, Ginzburg shows the underlying $\cD$-module is 0, so the mixed Hodge module must necessarily be 0, as well. The second claim is related to the first by duality.
\end{rmk}

\begin{rmk}\label{}
Using the previous theorem, we can rephrase the results of Lemma \ref{strictsuppgen} and Lemma \ref{quotob} respectively as $\cH^0 i^! \cM = 0$ if and only if $\text{Hom}(i_+ \cN,\cM) = 0$ for all $\cN$ supported on $Z$, and $\cH^0 i^* \cM = 0$ if and only if $\text{Hom}(\cM,i_+\cN) = 0$ for all $\cN$ supported on $Z$.

We can describe the vanishing of other cohomologies in terms of Ext groups, similar to the characterization of vanishing of local cohomology for $\cO$-modules. Specifically, the result is
\[ \cH^{-j} i^* \cM = 0 \text{ for all } 0 \leq j \leq k \iff \text{Ext}^j(\cM,i_+\cN) = 0 \text{ for all } \cN \text{ supported on }Z, 0\leq j \leq k \]
\[ \cH^j i^! \cM = 0 \text{ for all } 0 \leq j \leq k \iff \text{Ext}^j(i_+\cN,\cM)=0 \text{ for all } \cN \text{ supported on }Z, 0\leq j \leq k.\]
The proofs of these are not hard, and we leave them to the reader. 
\end{rmk}

\section{Filtered Acyclicity of Koszul Complex} \label{sec:pacyclic}

Recall our setting: let $X\rightarrow \A^r$ be a smooth regular map of smooth varietes where $\A^r$ is the  affine space of dimension $r$ and let $Z$ be the fiber over the origin.  Suppose $(t_1, t_2,\dots,t_r)$ is a coordinate system on the $\A^r$ term and assume there exist global vector fields $\partial_1,\partial_2,\dots,\partial_r$ on $X$ dual to the one-forms $dt_1,dt_2,\dots,dt_r$. We give two different methods to prove Theorem~\ref{thm:acyclic}.

\subsection{First proof of the Theorem~\ref{thm:acyclic}}
We restate Theorem~\ref{thm:acyclic} in terms of \emph{right} $\cD$-modules: for any \emph{right} filtered regular holonomic $\cD_X$-module $\cM$ and rational number $\alpha$, define Koszul-type filtered complexes
\[
A_{\alpha}(\cM) =\left\{ (V_\alpha \cM,F) \xrightarrow[]{t} \bigoplus_{i=1}^r (V_{\alpha-1}\cM,F)\xrightarrow[]{t} \cdots \xrightarrow[]{t} (V_{\alpha+r}\cM,F)\right\}
\]
placed in degrees $0,1,\dots,r$,
\[
B_{\alpha}(\cM) =\left\{ (\gr_\alpha^V \cM,F) \xrightarrow[]{t} \bigoplus_{i=1}^r (\gr_{\alpha -1}^V\cM,F)\xrightarrow[]{t} \cdots \xrightarrow[]{t} (\gr_{\alpha-r}^V\cM,F)\right\}
\] 
as the quotient $A_{\alpha}/A_{>\alpha}$ and 
\[
C_{\alpha}(\cM)=\left\{ (\gr_{\alpha-r}^V \cM,F[r]) \xrightarrow[]{\de_t} \bigoplus_{i=1}^r (\gr_{\alpha-r+1}^V \cM,F[r-1]) \xrightarrow[]{\de_t} \cdots \xrightarrow[]{\de_t} (\gr_{\alpha}^V\cM,F)\right\}
\] 
in degrees $-r,-r+1,\dots,0$, where $V_\bullet\cM$ is the $V$-filtration along $Z$ and $F[i]_k=F_{k-i}$.

\begin{thm} \label{thm:acyc}
With the above notation, assume that $(\cM,F_\bullet\cM)$ is a filtered holonomic $\cD_X$-module underlying a mixed Hodge module. Then 
\begin{enumerate}
    \item the complex $F_\ell A_{\alpha}(\cM)$ is exact for $\alpha<0$;
    \item the complex $F_\ell C_{\alpha}(\cM)$ is exact for $\alpha>0 $.
\end{enumerate}
\end{thm}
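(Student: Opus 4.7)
My plan follows the outline sketched in Section 1.4. The first reduction is from the mixed case to the pure case. Given a short exact sequence $0 \to M' \to M \to M'' \to 0$ of mixed Hodge modules, Corollary \ref{bistrict} tells us that the maps are bistrict with respect to $(F,V)$, so the sequence of filtered complexes
\[
0 \to F_\ell A_\alpha(\cM') \to F_\ell A_\alpha(\cM) \to F_\ell A_\alpha(\cM'') \to 0
\]
is exact. The long exact cohomology sequence then shows filtered acyclicity is closed under extensions, and inducting along the weight filtration $W$ via Lemma \ref{thm:compatible} and Lemma \ref{lem:relmonexact} reduces the problem to the case when $(\cM,F)$ underlies a polarizable pure Hodge module. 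The argument for $C_\alpha$ is identical.

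Next, I would use the strict support decomposition for pure Hodge modules. If $\supp(\cM)\subseteq Z$, Kashiwara's equivalence (Example \ref{VFiltEg}(b)) gives an explicit formula for the $V$-filtration, from which one reads off that in the right-module convention $V_\alpha\cM=0$ for $\alpha<0$, so $A_\alpha(\cM)$ is identically zero; the dual computation handles $C_\alpha$ for $\alpha>0$. The remaining case is when $\cM$ has strict support $Z'$ with $Z'\not\subseteq Z$. Let $\pi:\widehat{X}\to X$ be the blow-up of $X$ along $Z$ with smooth exceptional divisor $E$, and let $(\widehat{\cM},F)$ be the minimal (intermediate) extension along $E$ of $(\cM,F)|_{X\setminus Z}$, which underlies a pure Hodge module on $\widehat{X}$ by the structure theorem (Theorem \ref{decompthm}). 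The decomposition theorem realizes $(\cM,F)$ as a filtered direct summand of $\cH^0\pi_+(\widehat{\cM},F)$, and since filtered acyclicity passes to direct summands, it is enough to prove the theorem for $\cH^0\pi_+\widehat{\cM}$.

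The key step is then to factor $\pi=p\circ i_\pi$ as the graph embedding $i_\pi:\widehat{X}\hookrightarrow \widehat{X}\times X$ followed by the second projection $p:\widehat{X}\times X\to X$, and analyze the two pushforwards separately. Under $i_{\pi,+}$, the codimension-$r$ $V$-filtration of $i_{\pi,+}\widehat{\cM}$ along $\widehat{X}\times Z$ is controlled, fibrewise over the normal variable of the graph, by the codimension-one $V$-filtration of $\widehat{\cM}$ along $E$. Saito's codimension-one compatibility, namely $t:F_p V^\alpha\widehat{\cM}\xrightarrow{\sim} F_p V^{\alpha+1}\widehat{\cM}$ for $\alpha>0$ (and the dual statement $\de_t:F_p\gr_V^{\alpha+1}\widehat{\cM}\xrightarrow{\sim}F_{p+1}\gr_V^\alpha\widehat{\cM}$ for $\alpha<0$), then yields filtered acyclicity of $A_\alpha(i_{\pi,+}\widehat{\cM})$ for $\alpha<0$ and $C_\alpha(i_{\pi,+}\widehat{\cM})$ for $\alpha>0$ term by term. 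To descend to $X$, I would apply $p_+$ and invoke the bistrictness of $p_+$ with respect to $(F,V)$ from Budur--Musta\c{t}\u{a}--Saito \cite{BMS}, together with hard Lefschetz and the $E_2$-degeneration from Lemma \ref{lem:e2}, to conclude that filtered acyclicity is preserved on $\cH^0 p_+(i_{\pi,+}\widehat{\cM})=\cH^0\pi_+\widehat{\cM}$.

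The main obstacle I anticipate is the graph-embedding step: explicitly identifying the $V$-filtration of $i_{\pi,+}\widehat{\cM}$ along $\widehat{X}\times Z$ with (a direct sum built from) the codimension-one $V$-filtration of $\widehat{\cM}$ along $E$, at the filtered level. Once this translation is in place, the codimension-one Saito theory applies directly and everything else is standard direct-image bookkeeping. A parallel but dual argument, with $t_i$ replaced by $\de_{t_i}$ and $\alpha<0$ replaced by $\alpha>0$, handles the complex $C_\alpha$; alternatively, one can bypass this by appealing to the symmetry $B^\chi(\cM)=B^\chi(\Sp(\cM))$, $C^\chi(\cM)=C^\chi(\Sp(\cM))$ together with the Fourier-Laplace relation exchanging $B^\chi$ and $C^{-\chi}$.
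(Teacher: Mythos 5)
Your proposal reproduces the paper's argument essentially step for step: reduce to the pure case via compatibility of the $F$, $V$, $W$ filtrations; split off the case $\supp\cM\subseteq Z$ via Kashiwara; otherwise blow up $Z$, take the minimal extension and the decomposition theorem to reduce to $\cH^0\pi_+\widehat\cM$; factor $\pi$ as graph embedding followed by projection; handle the graph embedding by a local change of coordinates reducing to the codimension-one properties of $V$ along a smooth divisor, and the projection by the $(F,V)$-bistrictness of \cite{BMS} plus hard Lefschetz. The only small inaccuracies are that Lemma~\ref{lem:e2} is not actually needed for the direct-image step (the paper's Theorem~\ref{thm:bistrict} uses only bistrictness and hard Lefschetz), and the hypersurface along which the codimension-one $V$-filtration is used in the graph step is $\widehat X\times Z$ inside the intermediate locus $H$ rather than $E$ itself, but this is the mechanism you correctly describe in spirit.
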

\begin{proof}
By Lemma~\ref{thm:compatible}, we only need to prove the case when $(\cM,F)$ underlies a polarizable Hodge module. If the support of $\cM$ is contained in $Z$, then by Kashiwara's equivalence, there exists a Hodge module $(\cN,F_\bullet\cN)$ on $Z$ such that $(\cM,F_\bullet\cM)={i}_+(\cN,F_\bullet\cN)$. One can easily check that (see Example \ref{VFiltEg})
\[
F_\ell V_\alpha \cM =\left\{
    \begin{aligned}
    &  \sum_{i_1+i_2+\cdot +i_r\leq \alpha} F_{\ell-i_1-i_2-\cdots-i_r} \cN \partial^{i_1}_1\partial^{i_2}_2\cdots\partial^{i_r}_r, \quad &\alpha\geq 0; \\
    & 0, \quad &\alpha<0. \\
    \end{aligned}
    \right. 
\]

Thus, $\left(\gr^V_0\cM,F_\bullet\gr^V_0\cM\right)$ recovers the filtered $\cD_Z$-module $(\cN,F_\bullet\cN)$ and $\gr^V_\alpha\cM$ vanishes for $\alpha<0$. The statement $(a)$ is clear now. The statement $(b)$ follows from the fact that $\partial_1,\partial_2,\dots,\partial_r$ form a regular sequence on the polynomial ring $\C[\partial_1,\partial_2,\dots,\partial_r]$.

Now we are in the case that no submodule of $\cM$ is supported in $Z$. Let $\widehat{X}$ denote the blowup of $X$ along $Z$, with exceptional divisor $E$. Let $(\widehat\cM,F_\bullet\widehat\cM)$ be the minimal extension of $(\cM,F_\bullet\cM)|_{X\setminus Z}$ over $E$ on $\widehat X$. By the structure theorem of Hodge modules (see Theorem ~\ref{decompthm}), $(\widehat\cM,F_\bullet\widehat\cM)$ underlies a polarizable Hodge module. Then by the decomposition theorem of polarizable Hodge modules, the filtered holonomic $\cD_X$-module $(\cM,F_\bullet\cM)$ is a direct summand of $\cH^0\pi_+(\widehat\cM,F_\bullet\widehat\cM)$. Thus, it suffices to prove the theorem for $\cH^0\pi_+(\widehat\cM,F_\bullet\widehat\cM)$. Let $\pi:\widehat X \rightarrow X$ be the blow up of $X$ along $Z$ and $E=\pi^{-1}Z$ be the exceptional divisor. Consider the factorization $\pi=i_\pi\circ p$ and the Cartesian diagram
\[
\begin{tikzcd}
E \arrow{r}\arrow{d} & \widehat X\times Z \arrow{r}\arrow{d} & Z \arrow {d} \\
\widehat X \arrow{r}{i_\pi} \arrow[rr, bend right, "\pi"] & \widehat X\times X \arrow{r}{p} & X,
\end{tikzcd}
\]
where $i_\pi: \widehat X\rightarrow \widehat X\times X$ is the graph embedding and $p:\widehat X\times X\rightarrow X$ is the second projection. Denote by $\Gamma_\pi$ the graph of $\pi$. Since the problem is local on $X$, we can assume that $X$ is affine and that $(t_1,t_2,\dots,t_r)$ extends to a coordinate system $(t,s)=(t_1,t_2,\dots,t_r,s_1,s_2,\dots,s_{n-r})$ on $X$. Note that the blow-up is given by 
\[
\widehat X=\Proj_X \bigoplus_{i\geq 0} \cI^i_Z, \quad \text{where } \cI_Z \text{ is generated by } t_1,t_2,\dots,t_r.
\]
Let $u=[u_1:u_2:\dots:u_r]$ be the homogeneous coordinates on $\mathbf{P}^{r-1}$. Then $\widehat X$ is a subvariety of $\mathbf P^{r-1}_X$ defined by $u_i t_j-u_j t_i=0$ for any $1\leq i,j \leq r$. Denote also by $({x,y})=(x_1,x_2,\dots,x_r, y_1,\dots,y_{n-r})$ the parameter $(t,s)$ on $X$ so that 
\[
\pi ({u,t,s})= ({t,s})=({x,y}).
\] 
Define a subvariety 
\[
H=\{ (u,t,s,x,y)\in \widehat X\times X : u_ix_j-u_jx_i=0\text{ for any }1\leq i,j\leq r\}
\]
with codimension $r-1$ in $\widehat{X} \times X$. Since the graph $\Gamma_\pi$ is defined by equations ${t}={x}$ and ${s}={y}$, it is contained in $H$. Therefore, we can further factor the graph embedding $i_\pi=f\circ g$ to get a Cartesian diagram
\[
\begin{tikzcd}
E \arrow{r}\arrow{d} & \widehat X\times Z \arrow[r,equal]\arrow[d] & \widehat X\times Z \arrow{d} \\
\widehat X\arrow{r}{g}\arrow[rr, bend right,"i_\pi"] & H \arrow{r}{f}  & \widehat X\times X
\end{tikzcd}
\]
where $g:\widehat X\rightarrow H$ and $f:H\rightarrow \widehat X\times X$ are the natural embeddings. Note that $\widehat X\times Z$ is a hypersurface in $H$.

The claim is that the Koszul complex
\begin{equation}\label{eq:kosgrav}
F_\ell A_{\alpha}({i_\pi}_+ \widehat \cM) =\left\{F_\ell V_\alpha {i_\pi}_+ \widehat \cM \rightarrow \left(F_\ell V_{\alpha-1} {i_\pi}_+ \widehat \cM \right)^r\rightarrow\cdots\rightarrow F_\ell V_{\alpha-r}{i_\pi}_+\widehat \cM \right\}
\end{equation}
 is exact if $\alpha<0$ where $V_\bullet {i_\pi}_+\widehat \cM$ is the $V$-filtration of $\widehat \cM$ along $\widehat X\times Z $. The exactness of the complex ~\ref{eq:kosgrav} is local so without loss of generality, we restrict everything to the open subset $U\times X$ where $U$ is the open subset of $\widehat X$ defined $u_1\neq 0$. The blow-up over $U$ is given in coordinates by
\[
\pi: (t_1, u_2, u_3,\dots,u_r, s_1, s_2,\dots,s_{n-r}) \mapsto (t_1, t_1u_2,t_1u_2,\dots, t_1u_r, s_1,s_2,\dots,s_{n-r}).
\]
To give a concrete description of ${i_\pi}_+\widehat\cM$, we make the following local coordinate charge: 
\[
\begin{aligned}
w_i &=\left\{
    \begin{aligned}
    &  t_1 \quad \text{for } i=1 \\
    & u_i \quad \text{for } 2\leq i\leq r \\
    \end{aligned}
    \right. ,
    \quad 
&p_i = & s_i \quad \text{for } 1\leq i \leq n-r, \\
z_i &= \left\{
    \begin{aligned}
    &  x_1 \quad \text{for } i=1  \\
    &  x_i-u_i x_1 \quad \text{for } 2\leq i\leq r 
    \end{aligned}
    \right. ,
    \quad
&q_i = & y_i \quad \text{for } 1\leq i\leq n-r
\end{aligned}
\]
so that $z_2,z_3,\dots,z_{r}$ are the local defining equations of $H$. It follows from ${i_\pi}_+\widehat\cM=f_+g_+\widehat\cM$ that 
\[
{i_\pi}_+\widehat\cM=g_+\widehat\cM[\partial_{z_2},\partial_{z_3},\dots,\partial_{z_r}].
\]
In fact, a simple calculation using the the chain rule indicates that 
\[
\partial_{z_2}=\partial_{x_2}=\partial_2, \quad \partial_{z_3}=\partial_{x_3}=\partial_3, \quad \dots, \quad \partial_{z_r}=\partial_{x_r}=\partial_r.
\]  
Then $F_\ell V_\alpha {i_\pi}_+\widehat\cM$ can be written as
\begin{equation}\label{eq:fvgra}
\sum_{k\geq 0}\sum_{a_2+a_3+\cdots a_r=k} F_{\ell-k}V_{\alpha-k} g_+  \widehat\cM  \partial^{a_2}_2\partial^{a_3}_3\cdots \partial^{a_r}_r,
\end{equation}
for every $\alpha$ where $V_\bullet g_+\widehat\cM$ is the $V$-filtration along $\widehat X\times Z$. Notice that the morphism
\[
\begin{tikzcd}
 F_\ell V_\alpha g_+ \widehat\cM \arrow{r}{x_1} & F_\ell V_{\alpha-1} g_+ \widehat \cM
\end{tikzcd}
\]
is bijective when $\alpha<0$ because $V_\bullet g_+\widehat \cM$ is the $V$-filtration along $\widehat X\times Z$ defined by $\{x_1=0\}$ in $H$. We deduce that the morphism 
\[
x_1:  F_{\ell-k}V_{\alpha-k} g_+  \widehat\cM  \partial^{a_2}_2\partial^{a_3}_3\cdots \partial^{a_r}_r \rightarrow F_{\ell-k}V_{\alpha-k-1} g_+  \widehat\cM  \partial^{a_2}_2\partial^{a_3}_3 \cdots \partial^{a_r}_r
\] 
is also bijective for $\alpha<0$ and $k\geq 0$. It follows that the Koszul complex~\eqref{eq:kosgrav} is exact when $\alpha<0$. 

Similarly, the complex 
\begin{equation}\label{eq:kosgrac}
F_\ell C_{\alpha}({i_\pi}_+ \widehat \cM) = \left\{F_{\ell-r} \gr^V_{\alpha-r}{i_\pi}_+ \widehat \cM \rightarrow \left(F_{\ell-r+1}\gr^V_{\alpha-r+1}{i_\pi}_+ \widehat \cM \right)^r\rightarrow \cdots \rightarrow F_{\ell}\gr^V_{\alpha}{i_\pi}_+ \widehat \cM\right\}
\end{equation}
is exact for $\alpha >0$. By the expression~\eqref{eq:fvgra},
\[
F_\ell \gr^V_\alpha {i_\pi}_+\widehat\cM=\sum_{k\geq 0}\sum_{a_2+a_3+\cdots a_r=k} F_{\ell-k}\gr^V_{\alpha-k} g_+  \widehat\cM  \partial^{a_2}_2\partial^{a_3}_3\cdots \partial^{a_r}_r.
\]
Since for each $2\leq i\leq r$ the morphism
\[
\partial_i:  F_{\ell-k}\gr^V_{\alpha-k} g_+  \widehat\cM  \partial^{a_2}_2\partial^{a_3}_3\cdots \partial^{a_r}_r \rightarrow F_{\ell-k}\gr^V_{\alpha-k} g_+  \widehat\cM  \partial^{a_2}_2\partial^{a_3}_3\cdots \partial^{a_i+1}_i \cdots \partial^{a_r}_r
\]
is bijective, the complex~\eqref{eq:kosgrac} is quasi-isomorphic to,
\[
\begin{tikzcd}
 \{F_{\ell-1} \gr^V_{\alpha-1} g_+ \widehat\cM \arrow{r}{\partial_1} & F_\ell \gr^V_{\alpha} g_+ \widehat \cM\}, \quad \text{placed in degrees }r-1, r.
\end{tikzcd}
\]
which is exact for $\alpha>0$ also because again $V_\bullet g_+\widehat \cM$ is the $V$-filtration along the hypersurface $\widehat X\times Z\subset H$.

It remains to prove the exactness of~\eqref{eq:kosgrav} and~\eqref{eq:kosgrac} are invariant under higher direct image of $p$. This is Theorem~\ref{thm:bistrict} below. Applying Theorem~\ref{thm:bistrict} to~\eqref{eq:kosgrav} gives us that the Koszul complex 
\[
\begin{aligned}
F_\ell &A_{\alpha}(\cH^k p_+{i_\pi}_+\widehat \cM) \\
=& \left\{F_\ell V_\alpha \cH^k p_+ {i_\pi}_+ \widehat \cM \rightarrow \left(F_\ell V_{\alpha-1}  \cH^k p_+ {i_\pi}_+ \widehat \cM\right)^r\rightarrow\cdots\rightarrow F_\ell V_{\alpha-r} \cH^k p_+ {i_\pi}_+ \widehat \cM \right\}
\end{aligned}
\]
is exact for $\alpha<0$ and every $k$ where $V_\bullet \cH^k p_+ {i_\pi}_+ \widehat \cM$ is the $V$-filtration along $Z$. Due to 
\[
\cH^k p_+{i_\pi}_+=\cH^k \pi_+,
\] 
we have finished the proof of the first statement in Theorem~\ref{thm:acyclic}. The second statement follows similarly and we leave it to the readers.
\end{proof}

\begin{thm}\label{thm:bistrict}
Let $X$ be a nonsingular quasi-projective variety and $Y$ be an affine space with $Z$ an affine subspace defined by $x_1,x_2,\dots,x_r$.  Let $(\cM,F)$ be a filtered holonomic $\cD_{X\times Y}$-module underlying a polarizable Hodge module. Suppose that the second projection $p:X\times Y\rightarrow Y$ is projective on the support of $\cM$. Let $V_\bullet\cM$ be the $V$-filtration along $p^{-1}(Z)$. Let $V_\bullet \cH^k {p}_+\cM$ be the $V$-filtration along $Z$ for every $k$. 
\begin{enumerate}
    \item If the complex 
    \begin{equation} \label{eq:kosprv}
    F_\ell A_{\alpha}(\cM)=\left\{ F_\ell V_\alpha  \cM \rightarrow (F_\ell V_{\alpha-1} \cM)^r\rightarrow\cdots\rightarrow F_\ell V_{\alpha-r} \cM \right\}
    \end{equation}
     is exact for some $\alpha$, then the complex $ F_\ell A_{\alpha}(\cH^k {p}_+ \cM)$
     is also exact for every $k$.

    \item Similarly, if the Koszul complex
    \begin{equation}\label{eq:kosprc}
     F_\ell C_{\alpha}(\cM)=\left\{F_{\ell-r} \gr^V_{\alpha-r}  \cM \rightarrow \left(F_{\ell-r+1}\gr^V_{\alpha-r+1} \cM\right)^r\rightarrow \cdots \rightarrow F_{\ell}\gr^V_{\alpha} \cM \right\}
    \end{equation}
     is exact for some $\alpha$, then the complex $F_\ell C_{\alpha}( \cH^k p_+\cM)$  is exact for every $k$.
    \end{enumerate}
\end{thm}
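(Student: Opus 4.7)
The strategy rests on two cornerstones of the theory of polarizable Hodge modules already invoked in the excerpt. First, the bistrictness theorem of Budur--Musta\c{t}\u{a}--Saito asserts that, for $(\cM,F)$ underlying a polarizable Hodge module and $p$ projective on its support,
\[ \cH^k p_+ (F_\ell V_\beta \cM) \;=\; F_\ell V_\beta \,\cH^k p_+(\cM) \qquad \text{for all } k,\ell,\beta, \]
so that the filtered pushforward commutes with both filtrations at the level of cohomology. Second, Hard Lefschetz (as used in Lemma~\ref{lem:e2}(d)) yields a filtered splitting $p_+(\cM,F) \cong \bigoplus_k \cH^k p_+(\cM,F)[-k]$ in the filtered derived category of $\cD_Y$-modules.

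The essential observation is that every differential of the Koszul complex $A_\alpha(\cM)$ is multiplication by $x_i \in p^{-1}\cO_Y$, hence $\cO_X$-linear; those of $C_\alpha(\cM)$ are the vector fields $\de_{x_i}$ on $Y$, which again commute with every relative vector field along $X$. In both cases the Koszul differentials commute with the differentials of the relative de Rham complex $\DR_{X\times Y/Y}$ computing $p_+$, so the relative de Rham functor applied termwise to $F_\ell A_\alpha(\cM)$ (resp.\ $F_\ell C_\alpha(\cM)$) produces an honest double complex, one direction Koszul and the other de Rham. Running the column-first spectral sequence of $\bR p_*$ of this double complex and invoking BMS bistrictness, one obtains a natural identification
\[ \cH^k p_+\bigl(F_\ell A_\alpha(\cM)\bigr) \;\cong\; F_\ell A_\alpha\bigl(\cH^k p_+(\cM)\bigr), \]
and similarly for $C_\alpha$. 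Combined with the filtered decomposition, one gets $p_+\bigl(F_\ell A_\alpha(\cM)\bigr) \cong \bigoplus_k F_\ell A_\alpha(\cH^k p_+\cM)[-k]$ in the derived category; since $F_\ell A_\alpha(\cM)$ is acyclic by hypothesis, so is its pushforward, forcing each summand $F_\ell A_\alpha(\cH^k p_+\cM)$ to be acyclic. This proves (1), and (2) follows by the identical argument applied to $C_\alpha$, noting the different Hodge filtration shift built into the definition of the $C_\alpha$ complex.

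The main delicacy, and the step I expect to require the most care, is lifting BMS bistrictness --- originally phrased for a single bifiltered module --- to the bi-complex obtained by fusing the Koszul and relative de Rham directions, while keeping track of the Hodge filtration shift by $j$ in the $j$-th column of $\DR$ and the conventions for right $\cD$-modules. One way to keep this bookkeeping tidy is to combine BMS bistrictness and the filtered Hard Lefschetz decomposition termwise along the Koszul complex and then to invoke functoriality of these identifications with respect to the $\cO_Y$-linear Koszul differentials; the needed compatibility is exactly what BMS and the filtered decomposition theorem supply.
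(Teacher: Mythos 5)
Your proof is correct and essentially follows the paper's own argument: both rely on the BMS bistrictness of $p_+(\cM,V_\bullet,F_\bullet)$ to identify $F_\ell V_\beta\cH^k p_+\cM$ with the $k$-th cohomology of $p_+(F_\ell V_\beta\cM)$, combine this with the filtered Hard Lefschetz decomposition of Theorem~\ref{dirimthm}(b) to obtain $F_\ell p_+ A_\alpha(\cM) \simeq \bigoplus_k F_\ell A_\alpha(\cH^k p_+\cM)[-k]$ in $D^b_{\mathrm{coh}}(Y,\cO_Y)$, and conclude that acyclicity of $F_\ell A_\alpha(\cM)$ forces acyclicity of each summand. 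The one place to tighten the write-up is the line asserting the identification with $\cH^k p_+$: that step should be phrased as the decomposition of each \emph{term} $F_\ell V_{\alpha-j}p_+\cM$ of the Koszul complex being compatible with the $\cO_Y$-linear Koszul differentials (because the Lefschetz operator commutes with multiplication by the $x_i$), rather than as a spectral-sequence identity between a cohomology object and a complex; with that rephrasing your argument matches the paper's.
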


\begin{proof}
Because of the bistrictness proved in~\cite{BMS} on the complex $p_+\left(\cM, V_\bullet, F_\bullet\right)=$
\[
\left(\bR p_* \left(\cM\otimes \bigwedge^{-\star} \mathscr{T}_{X\times Y/Y} \right), \bR p_*  \left(V_\bullet \cM\otimes  \bigwedge^{-\star} \mathscr{T}_{X\times Y/Y} \right), \bR p_* \left( F_{\bullet+\star}\cM\otimes  \bigwedge^{-\star}\mathscr{T}_{X\times Y/Y} \right) \right),
\]
 we know that the $k$-th cohomology of $\cH^k F_\ell V_\alpha p_+ \cM= \bR^k p_* \left( F_{\ell+\star} V_\alpha \cM\otimes \bigwedge^{-\star}\mathscr{T}_{X\times Y/Y}\right)$ is canonically isomorphic to $F_\ell V_\alpha \cH^k p_+\cM$. It follows from the Hard Lefschetz theorem on the direct image of polarizable Hodge modules (see part (b) of Theorem \ref{dirimthm}) that the morphism
\[
\left(2\pi\sqrt{-1}L \right)^k : F_\ell V_\alpha \cH^{-k} p_+\cM \rightarrow F_{\ell-k} V_\alpha \cH^k p_+\cM.
\]
is an isomorphism induced by the Lefschetz operator $L=\omega\wedge$ of a hyperplane class $\omega$ on $X$. Therefore, we have the decomposition 
\[
F_\ell V_\alpha p_+ \cM \simeq \bigoplus_{k\in \Z} F_\ell V_\alpha \cH^k p_+\cM[-k]
\]
in the bounded derived category ${D}^b_{\mathrm{coh}}(Y,\cO_Y)$ of $Y$. If we apply $p_+$ on~\eqref{eq:kosprv}, by the above decomposition, we obtain
\[
F_\ell  p_+  A_{\alpha}(\cM) \simeq   \bigoplus_{k\in\Z } F_\ell A_{\alpha}(\cH^k p_+\cM)[-k]
\]
in $D^b_{\mathrm{coh}}(Y,\cO_Y)$. But by our assumption in $(a)$, the complex $F_\ell p_+ A_{\alpha}(\cM)$ is exact. It follows that each summand 
\[
F_\ell A_{\alpha}(\cH^k p_+\cM)= \left\{  F_\ell V_\alpha \cH^k p_+ \cM \rightarrow (F_\ell V_{\alpha-1}\cH^k p_+ \cM)^r\rightarrow\cdots\rightarrow F_\ell V_{\alpha-r}\cH^k p_+ \cM  \right\}
\]
in the decomposition is exact. We have thus proved $(a)$. 

The proof of $(b)$ is similar. Since we still have the isomorphism from the Hard Lefschetz theorem
\[
\left(2\pi\sqrt{-1}L \right)^k : F_\ell \gr^V_\alpha \cH^{-k} p_+\cM \rightarrow F_{\ell-k} \gr^V_\alpha \cH^k  p_+\cM,
\]
we get a decomposition
\[
\begin{aligned}
 p_+  F_\ell C_{\alpha}(\cM) \simeq   \bigoplus_{k\in\Z } F_\ell C_{\alpha}(\cH^k p_+\cM)[-k]
 \end{aligned}
\]
in $D^b_{\mathrm{coh}}(Y,\cO_Y)$. The remaining goes like in $(a)$ and is left to the readers.
\end{proof}

\begin{rmk}
One can bypass the decomposition theorem in the above proof by the argument in Theorem~\ref{thm:compare} and the double complexes~\eqref{eq:BK} and~\eqref{eq:CK}
\end{rmk}

\subsection{An alternative proof of $C^\alpha$ acyclicity}
In this subsection, we describe an alternative proof for Theorem \ref{thm:acyclic}. The main idea is to use Saito's Direct Image Theorem \ref{dirimthm}, specifically in the setting of Remark \ref{smoothproj}. Throughout, we use \emph{left} $\cD$-modules, and the decreasing $V$-filtration.

Saito's theorem would be useful if the projection $Z\times \A^r \to Z$ were projective, but it is clearly not. Hence, we compactify $j: Z\times \A^r \to Z\times \P^r$, where we have homogeneous coordinates $[t_0: \dots : t_r]$ on $\P^r$ and think of $Z\times \A^r = U_0 = \{t_0 \neq 0\}$. In this way, $z_i = \frac{t_i}{t_0}$. Let $\rho: Z \times \P^r \to Z$ be the projection. Then Saito's theorem applies to the direct image of a mixed Hodge module along $\rho$.

We set up notation for the other standard affine open subsets of $Z\times \P^r$. Let $j_i: U_i = \{t_i \neq 0\} \hookrightarrow Z \times \P^r$ be the inclusion. Let $U_{i0} = U_i \cap U_0$, with inclusion maps $J_{i0}: U_{i0} \hookrightarrow U_0$ and $j_{i0}: U_{i0} \hookrightarrow U_i$.

\subsubsection{Computation of $V$-filtration for extension of localization}
Assume throughout that $X$ (hence the closed subscheme $Z\subseteq X$) is affine, so $\{U_i\}_{i=0}^r$ gives an affine open cover of $Z \times \P^r$.

For any $\cM$ a bifiltered $\cD$-module underlying a mixed Hodge module on $X$, we get $\cM' = \Sp(\cM) = \bigoplus_{\chi \in \Q} \gr^\chi \cM$ a bifiltered $\cD$-module underlying a monodromic mixed Hodge module on $T_Z X = Z \times \A^r$. The motivation behind trying to use Saito's strictness theorem is that, if $\pi: Z \times \A^r \to Z$ is the projection, then $\pi_+(\cM') \cong \bigoplus_{\chi \in \Q} C^\chi(\cM)$ as complexes of $\cD$-modules. Hence, because we know by Theorem \ref{thm:acyclic} that $C^\chi(\cM)$ is acyclic for $\chi \neq 0$, we  use the injectivity from Saito's direct image theorem (see Remark \ref{smoothproj}) to conclude some vanishing for the cohomology of the filtered complex as well.

The point of this subsection is to compute the Hodge filtration on the mixed Hodge module $j_+(\Sp(M))$. For this, we will make use of the affine open cover $\{U_i\}_{i=0}^r$ of $Z\times \P^r$ and the following remark (which we already made use of in computing the Hodge filtration of $\Sp(M)$ in Lemma \ref{SpHFilt}):

\begin{rmk} \label{HodgeFiltComplement} Let $V = \{f \neq 0\} \subseteq W$ be the inclusion of the complement of a smooth hypersurface, which is defined by the nonvanishing of some global function $f\in \cO_W(W)$. If $(\cM,F_\bullet \cM)$ is a filtered $\cD_V$-module underlying a mixed Hodge module $M$ on $V$, then formula (3.2.3.2) of \cite{Saito-MHP} allows us to compute the Hodge filtration on the $\cD_W$-module underlying $j_+(M)$.

First of all, the underlying $\cD_W$-module is simply $j_+(\cM)$, which is the $\cO$-module pushforward of the sheaf $\cM$. Then the filtration is given by
\[ F_p j_+(\cM) = \sum_{q\geq 0} \de_f^q\left(V^0 j_+(\cM) \cap j_*(F_{p-q}\cM)\right),\]
where $\de_f \in \cT_W(W)$ is a globally defined vector field such that $[\de_f ,f ] = 1$ and $V^0 j_+(\cM)$ is the $V$-filtration along the smooth hypersurface $\{f=0\}$.
\end{rmk}

We are not exactly in this situation, however, because there is no global function $f$ for which $U_0 = \{f\neq 0\}$. Indeed, $U_0$ is the non-vanishing locus of a \emph{rational} function $t_0$ on $Z\times \P^r$. This causes no problems for our computation, as we will in any case be using this affine open cover to compute higher direct images.

For $1\leq i \leq r$ fixed, we consider $U_i = Z \times \A^r$, but the coordinates we choose on $\A^r$ are given by $z_i^{-1},z_1z_i^{-1},\dots, \widehat{z_i z_i^{-1}}, \dots, z_rz_i^{-1}$ (in terms of the compactification, these are the standard affine coordinates $\frac{t_0}{t_i}, \frac{t_1}{t_i},\dots, \frac{t_r}{t_i}$). To be clear which coordinate system we consider on each copy of $\A^r$, we will denote this copy by $\A^r_i$.

Note that $J_{i0}: U_{i0} \subseteq U_0$ corresponds to the subset $\{z_i \neq 0\}$. Similarly, the inclusion $J_i: U_{i0} \subseteq U_i$ is the inclusion of the non-vanishing locus of the globally defined function $z_i^{-1}$. Now, we have the isomorphism of mixed Hodge modules $j_i^{-1}(j_+(M')) \cong J_{i+}(J_{i0}^{-1}M')$, and so the Hodge filtration on $j_i^*(j_+(\cM'))$ can be computed using Remark \ref{HodgeFiltComplement}.

As $\cM'$ is a quasi-coherent $\cO_{U_0}$-module, we have the identification $J_{i0}^*(\cM') \cong \cM'_{z_i}$, and the Hodge filtration is given by $F_p J_{0i}^*(\cM') = (F_p \cM')_{z_i}$. Let $V^\bullet J_{i*}(\cM'_{z_i})$ be the $V$-filtration along $z_i^{-1}$, then Remark \ref{HodgeFiltComplement} tells us
\begin{equation} \label{SaitoFiltFormula} F_p J_{i*}(\cM'_{z_i}) = \sum_{q\geq 0} \de_{z_i^{-1}}^q(V^0\cap J_{i*}((F_{p-q}\cM')_{z_i})).\end{equation}

Thus, it will be worthwhile to study this $V$-filtration. For this, note that by the change of coordinates formula, using $(z_1,\dots, z_r)$ and $(z_i^{-1},z_1z_i^{-1},\dots, z_rz_i^{-1})$ on $U_{i0}$, we have
\[ 
\begin{aligned}
\de_{z_j z_i^{-1}} & = z_i \de_{z_j}, \quad \de_{z_i^{-1}} & = - z_i \theta,
\end{aligned}
\]
where $\theta = \sum_{j=1}^r z_j \de_{z_j}$. 

Given any $\frac{m}{z_i^k} \in \cM'_{z_i}$, we see that
\[ \de_{z_i^{-1}}\left(\frac{m}{z_i^k}\right) = \frac{(k-\theta)m}{z_i^{k-1}},\]
so
\[ (z_i^{-1} \de_{z_i^{-1}} - \lambda +1)\left(\frac{m}{z_i^k}\right) = \frac{-(\theta+\lambda - k -1)(m)}{z_i^k}.\]

In this way, we see that the module $\cM'_{z_i}$ (and its pushforward $J_{i*}(\cM'_{z_i})$) is monodromic along the single coordinate $z_i^{-1}$.

The $\lambda$th monodromic piece is given by $\sum_{k\in \Z} \frac{\gr_V^{r+k+1-\lambda}\cM}{z_i^k}$. Hence, by \ref{monoVFilt}, we know
\[ V^{\bullet} J_{i*}(\cM_{z_i}) = \bigoplus_{\lambda \geq \bullet} \left(\sum_{k\in \Z} \frac{\gr_V^{r+k+1-\lambda}\cM}{z_i^k}\right).\]
In particular,
\begin{equation} \label{0VFilt} V^0 J_{i*}(\cM_{z_i}) = \bigoplus_{\lambda \geq 0} \left(\sum_{k \in \Z} \frac{\gr_V^{r+k+1-\lambda}\cM}{z_i^k}\right) \end{equation}
This gives us the following
\begin{equation} \label{HodgeFilt} m \in F_p \gr_V^\chi\cM, \quad \chi \leq r+1 \implies \frac{m}{1} \in F_p J_{i*}(\cM_{z_i}).\end{equation}

Indeed, $\chi \leq r+1$ implies $\chi = r+0+1-\lambda$ for some $\lambda \geq 0$, and so $\frac{m}{1} \in V^0 J_{i*}(\cM'_{z_i})$, then the claim follows from Formula \ref{SaitoFiltFormula}, with $q=0$.

\subsubsection{Proof of the Theorem}
Recall that the strategy is to use Theorem \ref{thm:gr} and Saito's strictness result Theorem \ref{dirimthm} to obtain the desired vanishing. The idea is that the Koszul-like complex naturally arises as the underlying complex of $\cD_Z$-modules when applying $\pi_+$ to a $\cD_{T_ZX}$-module. If it were the case that Saito's strictness theorem told us that
\[ F_p(\DR_{T_ZX/Z}(\Sp(\cM)) =\left\{ F_p \Sp(\cM) \xrightarrow[]{\de_z} F_{p+1} \Sp(\cM) \otimes \Omega_{\A^r}^1\xrightarrow[]{\de_z} \dots \xrightarrow[]{\de_z} F_{p+r} \Sp(\cM) \otimes \Omega_{\A^r}^r \right\}\]
was a \emph{strict} complex, then we would be done. This is because if a complex of filtered modules is exact (which we know to be true for our complex by Theorem \ref{thm:gr}) and strict, then the complex must be filtered acyclic.

This is too good to be true, though, because Theorem \ref{thm:gr} gives us acyclicity for all $\chi \neq 0$, whereas in the case $r=1$, we must restrict to $\chi <0$ for the filtered acyclicity to hold as in the definition of quasi-unipotent and regular.

The reason Saito's result does not apply to the map $\pi$ is that it is not projective. However, using the natural compactification $j: Z\times \A^r \to Z\times \P^r$ introduced in this section, and the map $\rho: Z\times \P^r \to Z$, we can still make use of Saito's result. Indeed, $\pi = \rho \circ j$, so by \cite{Saito-MHM} (4.3.2) we have $\pi_+ = \rho_+ j_+$. In particular, the complex of $\cD$-modules underlying $\rho_+(j_+(\Sp(\cM))$ is $\DR_{T_ZX/Z}(\Sp(\cM))$. To make notation less cumbersome, let $\Sp(\cM) = : \cM'$.

As $j$ is the inclusion of the complement of a divisor, $j_+$ sends mixed Hodge modules on $Z\times \A^r$ to mixed Hodge modules on $Z\times \P^r$. To compute $\rho_+(j_+(\cM'))$, we use the relative de Rham complex of $Z\times \P^r$ over $Z$ for the module $j_+
(\cM')$, see \cite{HTT} Proposition 1.5.28. This is a complex in degrees $-r,\dots, 0$ denoted by
$\DR_{Z\times \P^r/Z}(j_+(\cM'))$, explicitly, it is
\[ j_*(\cM') \otimes_{\cO_\P} q^*(\cO_\P) \to j_*(\cM') \otimes q^*(\Omega_\P^1) \to \dots \to j_*(\cM') \otimes_{\cO} q^*(\omega_{\P^r}),\]
where $q:Z\times \P^r \to \P^r$ is the other projection. This is a filtered complex, with $p$th filtered piece given by
\[ F_p j_+(\cM')\otimes_{\cO} q^*(\cO_{\P^r}) \to F_{p+1} j_+(\cM') \otimes_{\cO} q^*(\Omega_{\P^r}^1) \to \dots \to F_{p+r}j_+(\cM') \otimes_{\cO} q^*(\omega_{\P^r}) .\]

Saito's result on the strictness of the direct image for a projective morphism tells us that we have the following identification
\[ F_\bullet \cH^k \pi_+(\cM') = R^k \rho_*(F_\bullet \DR_{Z\times \P^r/Z}(j_+\cM')) \hookrightarrow R^k \rho_*(\DR_{Z\times \P^r/Z}(j_+(\cM')) = \cH^k \pi_+(\cM').\]

To compute these higher direct image sheaves, we use the standard open cover of $Z\times \P^r$ introduced in this section to construct the \v Cech complex. Namely, fix coordinates $[t_0:\dots :t_r]$ on $\P^r$ and let $j: Z\times \A^r \to Z\times \P^r$ be the inclusion of $\{t_0\neq 0\}$. Also, denote $j_i: \{t_i\neq 0\} \to Z\times \P^r$ the inclusion of the other standard open subsets. We have an isomorphism $U_i : = \{t_i \neq 0\} \cong Z\times \A^r$, with coordinates $(z_i^{-1},z_1z_i^{-1},\dots, z_rz_i^{-1})$ on $\A^r$.

For any $i_0 < \dots < i_q$, let $U_{i_0 i_1\dots i_q} = U_{i_0}\cap U_{i_1}\cap \dots \cap U_{i_q}$. The ordering is important for keeping track of signs in the \v Cech complex.

For any sheaf $\cF$ on $Z\times \P^r$, let $\cC^q(\cF)= \prod_{i_0 < \dots < i_q} \pi_{i_0\dots i_q,*}(\cF\mid_{U_{i_0i_1\dots i_q}})$. This forms a chain complex of sheaves by the differential defined (on local sections) as
\[ \delta: \cC^q(\cF) \to \cC^{q+1}(\cF), \delta(\alpha)_{i_0\dots i_{q+1}} = \sum_{i=0}^{q+1} (-1)^i \alpha_{i_0\dots \widehat{i_i} \dots i_{q+1}}.\]

Denote $\pi_{i_0\dots i_q,*}(\cF \mid_{U_{i_0i_1\dots i_q}})$ by $\cC^q_{i_0,\dots,i_q}(\cF)$.

If we have a bounded below complex of sheaves $(\cF^\bullet,d)$, then we form the double complex $\cC^\bullet(\cF^\bullet)$ and take its associated total complex, which we call the \emph{\v Cech complex} for $\cF^\bullet$. The $n$th term of this complex is 
\[ \prod_{p+q = n} \cC^q(\cF^p),\]
and the differential is $\delta + (-1)^{q} d$. Now, because $U_{i_0\dots i_q} \to Z$ is affine for all $i_0 < \dots < i_q$, we know that, if each $\cF^k$ is quasi-coherent on $Z\times \P^r$, the $i$th cohomology of the \v Cech complex computes $R^i \rho_*(\cF^\bullet)$. By abuse of notation, we will drop the $\pi_*$ with the understanding that all sheaves are on $Z$.

Let $T^\bullet$ be the \v Cech complex associated to $\DR_{Z\times \P^r/Z}(j_+(\cM'))$, and let $T_p^\bullet$ be the \v Cech complex associated to $F_p \DR_{Z\times \P^r/Z}(j_+(\cM'))$. Using our notation,
\[ \cC^0_0(F_p \DR_{Z\times \P^r/Z}(j_+(\cM')) = F_p \DR_{U_0/Z}(\cM'),\]
and for $0 <i\leq r$,
\[ 
\begin{aligned}
    &\cC^0_i(F_p \DR_{Z\times \P^r/Z}(j_+(\cM')) = j_i^{-1}(F_p \DR_{Z\times \P^r/Z}(j_+(\cM')) \\
    &= \left\{ F_p J_{i*}(\cM_{z_i}) \otimes_{\cO} \cO_{\A^r_i} \to F_{p+1} J_{i*}(\cM_{z_i}) \otimes_{\cO} \Omega_{\A^r_i}^1 \to \dots \to F_{p+r} J_{i*}(\cM_{z_i}) \otimes_{\cO} \omega_{\A^r_i} \right\}.
\end{aligned}
\]

Saito's strictness result Theorem \ref{dirimthm} tells us that the map $\cH^k(T_p^\bullet) \to \cH^k(T^\bullet)$ is injective for all $k\in \Z$. Also, by the functoriality of pushforward of mixed Hodge modules, $\pi_+ = \rho_+ \circ j_+$, and so we have a quasi-isomorphism of complexes of $\cD_Z$-modules
\[ \DR_{U_0/Z}(\cM') \cong T^\bullet.\]

We would like to make this explicit using the \v Cech complex. Note that  any map of complexes $\psi: \DR_{U_0/Z}(\cM') \to T^\bullet$ cannot land solely in $\cC^0_0(DR_{Z\times \P^r/Z}(\cM'))$. Indeed, the way the differential of the \v Cech complex is defined, whatever the target element is would have to restrict to 0 on $U_{i0}$, which is not true in general.

We can work around this by introducing for any $\mu \in \cM' \otimes \Omega_{\A^r_0}^k$ elements $\mu|_{U_i} \in J_{i*}(\cM'_{z_i}) \otimes \Omega_{\A^r_i}^k$.

\begin{lem} \label{degreeshift} For any $\mu \in \cM' \otimes \Omega^k_{\A^r_0}$, there is a well-defined $\mu_i \in \cM'_{z_i} \otimes \Omega^k_{\A^r_i}$ such that
\[ \mu |_{U_{i0}} = (\mu_i)|_{U_{i0}}.\]
\end{lem}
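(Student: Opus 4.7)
The plan is to construct $\mu_i$ directly by rewriting $\mu$ in the coordinate system of $U_i$ on the overlap $U_{i0}$ and observing that the resulting expression extends naturally to a section on all of $U_i$. Fix a local expansion $\mu = \sum_{|I|=k} m_I\, dz_I$ on $U_0$, with $m_I$ a section of $\cM'$ and $I = \{i_1 < \cdots < i_k\} \subseteq \{1,\dots, r\}$. On $U_{i0}$, the two coordinate systems are related by $w_0 = z_i^{-1}$ and $w_j = z_j z_i^{-1}$ for $j \neq i$, and a direct calculation yields
\[
dz_i = -z_i^2\, dw_0 \quad \text{and} \quad dz_j = z_i\, dw_j - z_i z_j\, dw_0 \quad (j \neq i),
\]
after using $w_0^{-1} = z_i$ and $w_0^{-2} w_j = z_i z_j$ to eliminate explicit negative powers of $w_0$.

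Substituting these identities into each wedge product $dz_I$ and expanding, $\mu$ takes on $U_{i0}$ the form $\sum_{|J|=k} a_J\, dw_J$, where $J$ ranges over size-$k$ subsets of $\{0,1,\dots,r\}\setminus \{i\}$ and each coefficient $a_J$ is a finite $\C$-linear combination of terms $m_I \cdot z_i^{\alpha} z_{j_1}^{\beta_1}\cdots z_{j_s}^{\beta_s}$ with nonnegative integer exponents. The key point is that no negative powers of $z_i$ appear in the $a_J$: each such coefficient is therefore a well-defined section of $\cM'$ on the whole of $U_0$, obtained by letting the polynomial in the $z$-variables act on $m_I$ through the $\cO$-module structure, and its restriction to $U_{i0}$ lies in $\cM'|_{U_{i0}} = \cM'_{z_i}$.

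Now define $\mu_i := \sum_J a_J\, dw_J$, viewed as a section of $J_{i*}(\cM'_{z_i}) \otimes_{\cO_{U_i}} \Omega^k_{\A^r_i}$ over $U_i$ via the identification $(J_{i*}\cF)(U_i) = \cF(U_{i0})$ and the projection formula for the locally free sheaf $\Omega^k_{\A^r_i}$. The required identity $(\mu_i)|_{U_{i0}} = \mu|_{U_{i0}}$ holds by construction, since the substitution above is literally an equality of forms on the overlap, and well-definedness (independence from the local expansion of $\mu$) follows from the fact that the change of coordinates is canonical.

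The only real content of the argument is the combinatorial check in the second step: after expanding the wedge products, no negative powers of $z_i$ survive in the $a_J$. This is a direct consequence of the specific shape of the transition formulas between standard affine charts of $\P^r$, in which any factor of $w_0^{-1}$ always arises paired either with a factor of $z_i$ coming from the $dz_i$-term or with a factor of $w_j$ that simplifies via $w_0^{-1}w_j = z_j$. Beyond this combinatorial verification there is no further obstacle, as the construction is essentially tautological once the coordinate-change identities are in hand.
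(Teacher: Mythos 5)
Your proof is correct and follows essentially the same approach as the paper: a direct coordinate change on differential forms, using $dz_i = -z_i^2\,dw_0$ and $dz_j = z_i\,dw_j - z_iz_j\,dw_0$ to see that the coefficients of $\mu|_{U_{i0}}$ in the $dw_J$ basis are polynomial in the $z$-variables and hence make sense as sections of $\cM'$ on all of $U_0$. The one cosmetic difference worth noting is that the paper packages this more uniformly: for $\mu = m\otimes dz_I$ it observes that $dz_I|_{U_{i0}} = z_i^{k+1}\omega_i|_{U_{i0}}$ with $\omega_i \in \Omega^k_{\A^r_i}$ holomorphic, and then sets $\mu_i = \tfrac{z_i^{k+1}m}{1}\otimes \omega_i$; your term-by-term coefficients $a_J$ are $z_i^{k+1}m_I$ times the $w$-polynomial coefficients of $\omega_i$, so the two expressions agree. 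The paper's factored form is chosen because it feeds directly into the application that follows the lemma, where one needs $z_i^{k+1}m \in F_p\gr_V^{\chi+k+1}\cM$ with $\chi + k + 1 < r+1$ to invoke Formula \eqref{HodgeFilt}; your finer expansion would support the same bound (each $a_J$ has total $z$-degree $k$ or $k+1$ times $m_I$), but the single-factor form makes that invocation one line instead of a case split. Also, the ``combinatorial check'' you flag as the only real content is in fact immediate once the transition formulas are written in the form you give them, since those formulas already contain no negative powers of any $z_j$; there is nothing further to verify.
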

\begin{proof} We prove the claim for a simple tensor $\mu = m\otimes dz_I$, where $|I| = k$ is a subset of $\{1,\dots, r\}$. Then $\omega = dz_I$ is a holomorphic $k$-form on $\A^r_0$, and we can restrict to the subset $\{z_i \neq 0\}$, which has two systems of coordinates: $(z_1,\dots, z_r)$ and $(z_i^{-1},z_1z_i^{-1},\dots, z_r z_i^{-1})$.

The differential forms change as
\[ dz_i = d((z_i^{-1})^{-1}) = -z_i^2 d(z_i^{-1}),\]
\[ dz_j = d((z_jz_i^{-1})(z_i^{-1})^{-1})) = z_i d(z_j z_i^{-1}) - z_jz_i d(z_i^{-1}).\]

Hence, we can write $\omega|_{U_{i0}} = z_i^{k+1} (\omega_i)|_{U_{i0}}$, where $\omega_i$ is a holomorphic $k$-form on $\A^r_i$.

Then set $\mu' : = \frac{z_i^{k+1}m}{1} \otimes \omega_i$. It is clear that this satisfies the desired property.
\end{proof}

Now, using this we can define a map $\psi^k: \cM' \otimes \Omega_{\A^r}^k \to \cC^0(j_+(\cM')\otimes \Omega_{\P^r}^k)\subseteq T^{-(r-k)}$ on simple tensors by $m \otimes \omega \mapsto (m\otimes \omega, (m\otimes \omega)|_{U_i})_{i=1}^r$. By definition of the differential in the \v Cech complex, this gives a morphism of complexes $\psi: \DR_{U_0/Z}(\cM') \to T^\bullet$.

Now, fix $p\in \Z, \lambda \in \Q$ and define $\cB_{p,\lambda} = \bigoplus_{\chi \leq \lambda} F_p \gr_V^{\chi}\cM$. We can consider the Koszul-like complex $\cB_{p,\lambda}^\bullet$, defined as
\[ \cB_{p,\lambda} \xrightarrow[]{\de_t} \cB_{p+1,\lambda-1}\otimes \Omega_{\A^r} \xrightarrow[]{\de_t} \dots \xrightarrow[]{\de_t} \cB_{p+r,\lambda-r}\otimes \omega_{\A^r}.\]

Now, we can consider the commutative square of complexes
\[ \begin{tikzcd} \cB_{p,<r}^\bullet \ar[r] \ar[d,"\psi_p"] & \DR_{U_0/Z}(\cM') \ar[d,"\psi"]\\ T_p^\bullet \ar[r] & T^\bullet
\end{tikzcd}.\]

Note that by the proof of Lemma \ref{degreeshift}, we know $\psi_p$ does actually land in $T_p^\bullet$, by Formula \eqref{HodgeFilt}. Indeed, if we start with $m\otimes \omega$ with $m \in F_p \gr^{\chi}\cM$ with $\chi < r-k$, then $z_i^{k+1} m \in F_p \gr^{\chi+k+1} \cM$ and $\chi +k +1 < r+1$. This tells us that $\frac{z_i^{k+1}m}{1} \in F_p J_{i*}(\cM_{z_i}')$, so $\frac{z_i^{k+1}m}{1} \otimes \omega_i \in T^k_p$, as desired.

The obvious map $T_p^\bullet \to \cB_{p,<r}^\bullet$ mapping to the $\cC_0^k$ part of the \v Cech tuple, and then projecting to the $\cB_{p+k,<r-k}\otimes \Omega_{\A^r}^k$ part, gives a splitting of the left hand map. This splitting is preserved under taking cohomology, so the map $\cH^k(\cB_{p,0}^\bullet) \to \cH^k(T_p^\bullet)$ is injective for all $k$. Also, by strictness, the map $\cH^k(T_p^\bullet) \to \cH^k(T^\bullet)$ is injective.

Hence, the map $\cH^k(\cB_{p,<r}) \to \cH^k(\DR_{U_0/Z}(\cM'))$ is injective. By decomposing along the $\bigoplus_{\chi \in \Q}$ decomposition, this gives that the inclusion of Koszul-like complexes
\[ \begin{tikzcd} F_p \gr_V^{\alpha - \ell +r -1}\cM \ar[r,"\de_t"] \ar[d] & \bigoplus_{i=1}^r F_{p+1}\gr_V^{\alpha - (\ell+1)+r-1}\cM \ar[r,"\de_t"] \ar[d] & \dots \ar[r,"\de_t"]& F_{p+r}\gr_V^{\alpha - (\ell+r) +r -1}\cM \ar[d] \\
\gr_V^{\alpha - \ell + r -1}\cM \ar[r,"\de_t"] & \bigoplus_{i=1}^r \gr_V^{\alpha -(\ell+1)+r-1}\cM \ar[r,"\de_t"] & \dots \ar[r, "\de_t"] & \gr_V^{\alpha - (\ell+r)+r-1}\cM
\end{tikzcd}\]
induces injections on cohomology.

By Theorem \ref{thm:gr}, this completes the proof.

Note that, this also proves the filtered acyclicity of $B^\alpha(\cM)$ for $\alpha >0$ by using the computation of the Hodge filtration on the Fourier transform of $\Sp(\cM)$ (which is a monodromic mixed Hodge module) in the final section of this paper. The result on the Hodge filtration only uses the filtered acyclicity of $C^\alpha(\cM)$ and the formula for $i^*$ in terms of the $V$-filtration along $Z$.

\section{Applications of Theorem~\ref{thm:acyclic}} \label{sec:applic}

Here we collect some applications of the theorem on filtered acyclicity of the Koszul-like complexes coming from Theorem \ref{thm:acyclic}.

The first of these applications is that we can give a Hodge-theoretic proof of Skoda's theorem.
\begin{cor}[Skoda]
Let $\mathfrak a$ be a coherent ideal of $\cO_X$ generated by $r$ many elements and $\cJ(X,\mathfrak a^c)$ be the multiplier ideal of exponent $c$. Then we have 
\[
\cJ(X,\mathfrak a^{c})=\mathfrak a \cJ(X,\mathfrak a^{c-1})
\]
for any $c\geq r$.
\end{cor}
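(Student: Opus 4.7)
The plan is to derive Skoda's theorem from Theorem~\ref{thm:acyclic} combined with the Budur--\Mustata--Saito identification of multiplier ideals in terms of the $V$-filtration of a graph embedding~\cite{BMS}. Since the statement is local on $X$, I would choose generators $f_1,\dots,f_r \in \cO_X(X)$ for $\mathfrak{a}$ and form the graph embedding $\iota \colon X \hookrightarrow X \times \A^r$ given by $x \mapsto (x, f_1(x),\dots,f_r(x))$, with coordinates $t_1,\dots,t_r$ on $\A^r$ and $Z := X \times \{0\}$. Setting $\cM := \iota_+\cO_X$ and $\delta := \iota_+ 1$, the pair $(\cM, F)$ underlies a pure polarizable Hodge module, we have $F_0\cM = \cO_X \cdot \delta$, and the defining equations of the graph produce the crucial identity $t_i \cdot (h\delta) = (f_i h)\delta$; in particular, multiplication by $t_i$ on $F_0\cM$ coincides with multiplication by $f_i$ on $\cO_X$ under the identification $h \leftrightarrow h\delta$.

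The two ingredients are: (i) the BMS formula, which in the conventions used throughout this paper identifies
$$\cJ(X,\mathfrak{a}^c)\cdot\delta \;=\; F_0 V^{>c}\cM \qquad \text{for all } c \geq 0,$$
where $V^\bullet\cM$ is the $V$-filtration along $Z$; and (ii) Theorem~\ref{thm:acyclic}, which says that $A^\alpha(\cM)$ is filtered acyclic for every $\alpha > 0$. Reading off acyclicity at the filtration level $F_0$ in the top cohomological degree (the $F[-r]$ shift in the definition of $A^\alpha(\cM)$ is immaterial for filtered acyclicity, as it merely reindexes filtration levels), the last Koszul differential is surjective, i.e.\ after the change of variables $\beta = \alpha + r$,
$$F_0 V^{\beta}\cM \;=\; \sum_{i=1}^r t_i\cdot F_0 V^{\beta-1}\cM \qquad \text{for every } \beta > r.$$

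To conclude, fix $c \geq r$ and choose $\epsilon > 0$ small enough (using discreteness of $V^\bullet$) that $V^{>c} = V^{c+\epsilon}$ and $V^{>c-1} = V^{c-1+\epsilon}$. Applying the previous display with $\beta = c + \epsilon > r$, and then combining with the BMS formula together with $t_i(h\delta) = (f_ih)\delta$, we obtain
$$\cJ(X,\mathfrak{a}^c)\cdot\delta \;=\; F_0 V^{c+\epsilon}\cM \;=\; \sum_{i=1}^r t_i\, F_0 V^{c+\epsilon-1}\cM \;=\; \mathfrak{a}\cdot\cJ(X,\mathfrak{a}^{c-1})\cdot\delta,$$
which yields the desired equality $\cJ(X,\mathfrak{a}^c) = \mathfrak{a}\cdot\cJ(X,\mathfrak{a}^{c-1})$. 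The main obstacle is not conceptual but purely a matter of bookkeeping: one has to align the threshold $\alpha > 0$ in Theorem~\ref{thm:acyclic} with the Skoda threshold $c \geq r$ through the codimension-$r$ shift appearing in the BMS identification, and confirm that the boundary case $c = r$ is still covered by passing from strict to non-strict inequalities via discreteness of the $V$-filtration.
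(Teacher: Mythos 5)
Your proof follows the paper's argument exactly: apply the Budur--\Mustata--Saito identification of multiplier ideals in terms of the $V$-filtration along $X\times\{0\}\subseteq X\times\A^r$, and read off surjectivity of the last Koszul differential in $A^{c-r+\varepsilon}(\iota_+\cO_X)$ from Theorem~\ref{thm:acyclic} with $c-r+\varepsilon>0$. The one slip is the filtration index in the BMS identification: in this paper's conventions (see Example~\ref{VFiltEg}(b), where $F_p\,\iota_+(\cN,F)=\sum_{\alpha}F_{p-|\alpha|-r}\cN\,\de_t^\alpha$), the Hodge filtration on $\iota_+\cO_X$ begins at level $r$, so $F_0 V^{>c}\iota_+\cO_X=0$ and the correct statement is $\cJ(X,\mathfrak a^c)\cdot\delta = F_r V^{>c}\iota_+\cO_X$, not $F_0$. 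Your own observation that the $F[-r]$ shift in $A^\alpha$ merely reindexes and is immaterial for acyclicity is correct, which is why the rest of your argument still goes through unchanged once the index is fixed to $r$; but as written the displayed BMS formula would give $\cJ=0$.
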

\begin{proof}
Let $f_1,f_2,\dots,f_r$ be the generators of $\mathfrak a$ and let $\iota: X\rightarrow X\times \A^r$ be the graph of $f_1,\dots,f_r$. Then by~\cite[Theorem 1]{BMS}, the $\cO_X$-module $F_{r}V^{c+\varepsilon}\iota_+ \cO_X$ is the multiplier ideal $\mathcal J(X,{\mathfrak a}^c)$ for $\varepsilon>0$ sufficiently small where $V^\bullet\iota_+ \cO_X$ is the $V$-filtration along $X\times \{0\}$. Note that we use a different convention from~\cite{BMS}. The statement follows from the exactness of $A^{c-r+\varepsilon}(\iota_+ \cO_X)$ when $c\geq r$ by Theorem~\ref{thm:acyclic}.
\end{proof}

\begin{rmk}
One can also prove that $\cJ(X,\mathfrak a^{c})=\mathfrak a \cJ(X,\mathfrak a^{c-1})$ for $c\geq \dim X$ by making use of a reduction $\mathfrak c$ of $\mathfrak a$ as in~\cite{Laz}*{Corollary 9.6.17, Example 9.6.19}. More precisely, we can find a subideal $\mathfrak c$ of $\mathfrak a$ locally generated by at most $\dim X$  many elements such that $\mathfrak c$ and $\mathfrak a$ have the same integral closure and therefore $\cJ(X,\mathfrak c^c)=\cJ(X,\mathfrak a^c)$.  
\end{rmk}

Next, we prove an analogue of ~\cite[Prop. 3.2.2, Rem. 3.2.3]{Saito-MHM} which gives information about the Hodge filtration for filtered $\cD$-modules which satisfy the filtered acyclicity of the Koszul-like complexes.

We will make use of the following useful criterion for when an element $m\in V^{>0}\cM$ lies in $F_p \cM$. As $X - Z = \bigcup_{i=1}^r \{z_i \neq 0\}$, this is an analogue of ~\cite[Formula (3.2.2.1)]{Saito-MHP}.

\begin{prop} \label{HodgeTest} Assume $m\in V^{>0}\cM$. Then $m \in F_p \cM$ if and only if for some $\ell \geq 0$, $z^\alpha m \in F_p \cM$ for all $|\alpha| = \ell$. In other words,
\[ 
F_p V^{>0}\cM = V^{>0}\cM \cap (j_*j^*F_p\cM),
\]
where the right hand side are those elements in $V^{>0}\cM$ which, under the canonical map $\cM \to \cH^0(j_*j^*\cM)$ land in $j_*j^* F_p \cM$. 

If, moreover, the map $(\gr^0_V\cM,F)\xrightarrow[]{z_i} \bigoplus_{i=1}^r (\gr_V^1 \cM,F)$ is strictly injective, then 
\[ F_pV^0\cM = V^0\cM \cap (j_*j^*F_p\cM).\]
\end{prop}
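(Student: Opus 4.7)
The forward implication is immediate since each $z_i \in \cO_X$ preserves $F_\bullet\cM$. For the converse, I induct on $\ell$, the base $\ell = 0$ being vacuous. Assuming the statement at $\ell - 1$, if $m \in V^{>0}\cM$ and $z^\alpha m \in F_p\cM$ for every $|\alpha| = \ell \geq 1$, then for each $i$ the element $z_im$ again lies in $V^{>0}\cM$ and satisfies $z^\beta(z_im) = z^{\beta+e_i}m \in F_p\cM$ for every $|\beta| = \ell - 1$, so the inductive hypothesis yields $z_im \in F_p\cM$. The entire induction therefore reduces to the base case $\ell = 1$.

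In the base case, suppose $m \in V^\chi\cM$ with $\chi > 0$ and $z_im \in F_p\cM$ for all $i$. Since $F_pV^{\chi+1}\cM = F_p\cM \cap V^{\chi+1}\cM$, the tuple $(z_1m,\ldots,z_rm)$ is a Koszul $1$-cycle in $\bigoplus_i F_pV^{\chi+1}\cM$. Theorem~\ref{thm:acyclic}, applied at cohomological degree $1$ to $A^\chi(\cM)$ with $\chi > 0$, produces $m' \in F_pV^\chi\cM$ with $z_im' = z_im$ for every $i$. The remaining task is to show that $n := m - m'$ vanishes, knowing that $n \in V^{>0}\cM$ is annihilated by $z_1,\ldots,z_r$. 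The $\cD_X$-submodule $\cD_Xn \subseteq \cM$ is a quotient of $\cD_X/\cD_X(z_1,\ldots,z_r) \cong i_+\cO_Z$, hence is supported on $Z$; Kashiwara's equivalence together with Example~\ref{VFiltEg}(b) gives $V^{>0}(\cD_Xn) = 0$, and the strictness of the inclusion $\cD_Xn \hookrightarrow \cM$ with respect to the $V$-filtration (Section~\ref{subsec:VFilt}) then forces $n = 0$.

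For the ``moreover'' clause, the same induction reduces matters to $\ell = 1$. So suppose $m \in V^0\cM$ with $z_im \in F_p\cM$, hence $z_im \in F_pV^1\cM$, for all $i$. The image $\bar m \in \gr^0_V\cM$ satisfies $z_i\bar m \in F_p\gr^1_V\cM$ for every $i$, and the hypothesized strict injectivity of $(\gr^0_V\cM,F) \to \bigoplus_i(\gr^1_V\cM,F)$ lifts $\bar m$ into $F_p\gr^0_V\cM$: explicitly, there exists $m' \in F_pV^0\cM$ with $m - m' \in V^{>0}\cM$. Since $z_i(m-m') = z_im - z_im' \in F_p\cM$ for every $i$, the first half of the proposition, applied to $m - m'$, gives $m - m' \in F_p\cM$, whence $m \in F_p\cM$.

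The step I expect to be the main obstacle is the uniqueness of the lift $m'$ in the base case: filtered acyclicity of $A^\chi$ only produces \emph{some} preimage, and showing that the difference $m - m'$ must vanish is where one has to leverage the topological content that $V^{>0}$ sees nothing supported on $Z$, via Kashiwara's equivalence. Everything else is a mechanical Koszul chase driven by Theorem~\ref{thm:acyclic}.
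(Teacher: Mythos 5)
Your proof is correct and in fact more complete than the paper's (which omits the ``moreover'' clause). The reduction to $\ell = 1$ by induction matches the paper, but your handling of the crucial $\ell = 1$ case is genuinely different. The paper works at the associated-graded level: it starts from $m \in F_q V^\chi\cM$ for some $q > p$, notes that $z_im \in F_p \subseteq F_{q-1}$, and uses the acyclicity of $\gr^F_q A^\chi(\cM)$ at cohomological degree $0$ --- i.e. the injectivity of $\gr^F_q V^\chi\cM \to \bigoplus_i \gr^F_q V^{\chi+1}\cM$ --- to conclude $m \in F_{q-1}V^\chi\cM$, then descends $q$ down to $p$. You instead use the acyclicity of $F_p A^\chi(\cM)$ at cohomological degree $1$: the tuple $(z_im)_i$ is a Koszul $1$-cycle lying in $\bigoplus F_pV^{\chi+1}\cM$, so exactness there produces a lift $m' \in F_pV^\chi\cM$ with $z_im' = z_im$, and you then argue $m = m'$ by noting that $n = m-m'$ is annihilated by the $z_i$, hence $\cD_X n$ is supported on $Z$, hence $V^{>0}(\cD_X n) = 0$, hence $n = 0$ since $n \in V^{>0}\cM \cap \cD_X n$. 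Both routes are valid uses of Theorem~\ref{thm:acyclic}. The paper's descent through $\gr^F_q$ is shorter and requires only exactness at degree $0$ of the graded complex; your route is slightly longer but trades that for a transparent topological vanishing step, and it has the side benefit of isolating exactly where the ``uniqueness of the Koszul preimage'' comes from, namely from Kashiwara's equivalence. Your treatment of the ``moreover'' case (reducing $\bar m \in \gr^0_V$ to the strict injectivity hypothesis and then feeding $m - m'$ back into the first half) is correct and fills in a detail the published proof leaves implicit.
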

\begin{proof} The ``only if'' part is clear, as $z_i$ preserves the Hodge filtration.

For the converse, induce on $\ell$. The base case $\ell = 0$ is obvious. For $\ell >0$, assume $z^\alpha m \in F_p\cM$ for all $|\alpha| = \ell$, and inductively, that if $m'$ is any element such that $z^{\beta}m' \in F_q \cM$ for all $|\beta| = \ell -1$, then $m' \in F_q \cM$.

Well, fix $\beta$ with $|\beta| = \ell -1$. Then $|\beta+e_i| = \ell$, so we know $z_i (z^\beta m) \in F_p \cM$ for all $1\leq i \leq r$. Assume $z^\beta m \in F_q \cM$ for some $q>p$.

Look at the complex $\gr^F_q A_{>0}(\cM)$, which is acyclic by Theorem \ref{thm:acyclic}. Then $z^\beta m$ defines an element in the leftmost module of this complex which maps to 0, as $p < q$. Hence, by acyclicity, $z^\beta m = 0$ in $\gr_q^F$, so $z^\beta m \in F_{q-1}\cM$. Repeating in this way, we conclude that $z^\beta m \in F_p \cM$. As this is true for all $\beta$ with $|\beta| = \ell -1$, the inductive hypothesis implies $m \in F_p \cM$.
\end{proof}

Now, we give the analogue of ~\cite[Formula (3.2.2.2)]{Saito-MHP}, which gives a description of the Hodge filtration of a mixed Hodge module in terms of the $V$-filtration along $Z$.

\begin{prop} \label{HodgeFormula} Let $(\cM,F)$ be a filtered $\cD_X$-module underlying a mixed Hodge module on $X$. Let $V^\bullet \cM$ be the $V$-filtration along the smooth subvariety $Z$. Then for all $p\in \Z$,
\[ 
F_p \cM = \sum_{\alpha \in \N^r} \de_z^\alpha(F_{p-|\alpha|}V^{0}\cM).
\]
Moreover, if the map
\[ \bigoplus_{i=1}^r (\gr_V^1 \cM,F) \xrightarrow[]{\de_z} (\gr_V^0 \cM,F)\]
is strictly surjective, then
\[ F_p \cM = \sum_{\alpha \in \N^r} \de_z^\alpha(F_{p-|\alpha|} V^{>0}\cM).\]
\end{prop}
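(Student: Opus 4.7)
Both identities assert that every $m \in F_p\cM$ admits a (locally finite) decomposition $m = \sum_\alpha \de_z^\alpha(m_\alpha)$ with $m_\alpha$ in the indicated filtered piece, so the inclusion $\supseteq$ is immediate from iterating $\de_{z_i}(F_{q-1}V^{\beta+1}\cM) \subseteq F_qV^\beta\cM$. For the reverse, the strategy is to iteratively peel off $\de_z$'s from $m$ using the filtered acyclicity of the Koszul-like complexes $C^\chi(\cM)$ for $\chi<0$ furnished by Theorem~\ref{thm:acyclic}, in complete analogy with Saito's argument in codimension one.

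Given $m \in F_p\cM$, exhaustiveness of $V^\bullet\cM$ together with coherence of $F_p\cM$ over $\cO_X$ gives some (possibly very negative) $\gamma$ with $m \in F_pV^\gamma\cM$. I would prove by induction on $\gamma$, well-founded on the discrete set of jumping values of $V^\bullet\cM$, the statement
\[ F_pV^\gamma\cM \subseteq \sum_{\alpha \in \N^r}\de_z^\alpha\bigl(F_{p-|\alpha|}V^0\cM\bigr).\]
For $\gamma\geq 0$ this is the $\alpha=0$ summand. For $\gamma<0$, filtered acyclicity of $C^\gamma(\cM)$ at the rightmost differential (after unwinding the filtration shifts $F[-k]$ built into the definition of $C^\gamma$) yields surjectivity of
\[ \bigoplus_{i=1}^r F_{p-1}\gr_V^{\gamma+1}\cM \xrightarrow{\de_z} F_p\gr_V^\gamma\cM.\]
So write $\bar m = \sum_i \de_{z_i}\bar m_i$ for the class $\bar m$ of $m$ in $\gr_V^\gamma\cM$, and lift each $\bar m_i$ to some $m_i \in F_{p-1}V^{\gamma+1}\cM$; this is possible because the Hodge filtration on $\gr_V^{\gamma+1}\cM$ is by definition the image of $F_\bullet V^{\gamma+1}\cM$. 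Then
\[ m - \sum_i \de_{z_i}(m_i) \in F_p\cM \cap V^{>\gamma}\cM = F_pV^{>\gamma}\cM,\]
and applying the inductive hypothesis both to the $m_i$'s (at Hodge level $p-1$ and $V$-index $\gamma+1$) and to the remainder (at Hodge level $p$ and $V$-index strictly greater than $\gamma$) completes the inductive step. Discreteness of $V^\bullet$ guarantees that only finitely many iterations are required before we reach $V^0$.

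The second formula is proved by exactly the same induction, extended by one additional step at the top. Since Theorem~\ref{thm:acyclic} does \emph{not} provide acyclicity of $C^0(\cM)$, this missing step is precisely what the hypothesis of strict surjectivity of $\bigoplus(\gr_V^1\cM,F)\xrightarrow{\de_z}(\gr_V^0\cM,F)$ supplies: it lets us write any class in $F_p\gr_V^0\cM$ as $\sum_i \de_{z_i}\bar m_i'$ with $\bar m_i' \in F_{p-1}\gr_V^1\cM$, lift to $m_i' \in F_{p-1}V^1\cM \subseteq F_{p-1}V^{>0}\cM$, and absorb the difference into $F_pV^{>0}\cM$. The main potential obstacle is keeping the Hodge filtration under control when lifting from $\gr_V^{\gamma+1}\cM$ back to $V^{\gamma+1}\cM$; this is automatic from the definition of $F_\bullet \gr_V$ as a quotient filtration, and the ambient compatibility of $F$ and $V$ from Lemma~\ref{thm:compatible} ensures that the identification $F_p\cM \cap V^{>\gamma}\cM = F_pV^{>\gamma}\cM$ used in the inductive step behaves correctly.
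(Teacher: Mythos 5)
Your proposal is correct and takes essentially the same route as the paper: both prove the reverse inclusion by descending induction on the $V$-index (well-founded because $V^\bullet$ is discrete), using the filtered acyclicity of $C^\chi(\cM)$ for $\chi<0$ from Theorem~\ref{thm:acyclic} to write $F_pV^\chi\cM = \sum_i\de_{z_i}(F_{p-1}V^{\chi+1}\cM) + F_pV^{>\chi}\cM$ and peel off $\de_z$'s until reaching $V^0$. The paper packages this by defining the auxiliary filtration $F_p'\cM = \sum_\alpha\de_z^\alpha(F_{p-|\alpha|}V^0\cM)$ and showing $F_pV^\chi\cM\subseteq F_p'\cM$, whereas you chase an explicit element, but the inductive step and the invocation of acyclicity are identical.
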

\begin{proof} We argue the first claim, the second one is proved in exactly the same way. Define a second filtration by
\[ F_p' \cM = \sum_{\alpha \in \N^r} \de_z^\alpha(F_{p-|\alpha|} V^0 \cM).\]

The claim is that $F_p'\cM = F_p \cM$. By definition, $F_p' \cM \subseteq F_p \cM$. Note that it is clear that $F_p' V^0 \cM = F_p V^0\cM$. Indeed, the left hand side is contained in the right, and by taking $\alpha = 0$ in the definition of $F_p' \cM$, we get the other containment.

Now, we prove that $F_p V^\chi \cM \subseteq F_p' \cM$ for all $p\in \Z$ and $\chi < 0$ by descending induction (which works because the $V$-filtration is discrete). Note that, by Theorem \ref{thm:acyclic}, we know 
\[ F_p V^\chi \cM = \sum_{i=1}^r \de_{z_i}(F_{p-1} V^{\chi+1} \cM) + F_p V^{>\chi}\cM,\]
because the rightmost cohomology vanishes in $F_p C_\chi(\cM)$.

Now, by definition of $F_p'\cM$, we know $\de_{z_i} F_p' \cM \subseteq F_{p+1}'\cM$.

Hence, if inductively we know $F_p V^{>\chi}\cM \subseteq F_p'\cM$ for all $p\in \Z$, we get $F_p V^\chi \cM \subseteq F_p' \cM$, proving the claim.
\end{proof}

\begin{rmk}
The previous lemmas are not used in the proof of Theorem \ref{thm:main}. As a consequence of this theorem, we can remove the adjective ``strictly" in the second claim of both Proposition \ref{HodgeTest} and Proposition \ref{HodgeFormula}, because the morphisms appearing in a mixed Hodge complex are strict with respect to the Hodge filtration (see the remarks after ~\cite[Def. 2.2]{MHC}).

As a first application, assume $\cN = \cH^0(j_*j^*(\cM))$ for $j: X -Z \to X$ the inclusion of the complement of $Z$. Then by Lemma \ref{subob}, we see that the map in the statement of Proposition \ref{HodgeTest} is automatically injective, and hence we get the formula
\[ F_p \cN = \sum_{\alpha \in \N^r} \de_z^\alpha(V^0\cN \cap j_*(j^*(F_{p-|\alpha|}\cM))).\]

Secondly, we get a formula for the Hodge filtration of any filtered $\cD$-module $(\cM,F)$ underlying a mixed Hodge module with strict support not contained in $Z$, as follows: by definition, the hypotheses of Propositions \ref{HodgeTest} and \ref{HodgeFormula} are satisfied, and so we have
\[ F_p \cM = \sum_{\alpha \in \N^r} \de_z^\alpha(V^{>0}\cM \cap j_*(j^*(F_{p-|\alpha|}\cM))\]
\end{rmk}

Finally, we study the Hodge filtration of monodromic mixed Hodge modules on $E = X\times \A^r$. The result is a generalization to $r>1$ of ~\cite[Thm. 2.2]{monoMHM}. We first record an interesting application of the fact from loc. cit. that $N = \bigoplus_{\chi \in \Q} (z \de_z - \chi +1) = 0$ on a pure monodromic Hodge module, when $r=1$.

\begin{cor} Let $\cM$ be a $\cD_E$-module underlying a pure, monodromic Hodge module on a vector bundle $E = X\times \A^1$ of rank 1. Then
\[ z: \cM^0 \to \cM^1 \quad \text{and} \quad \de_z: \cM^1 \to \cM^0\]
are both 0.
\end{cor}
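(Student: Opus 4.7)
The plan is to reduce to two cases using the strict support decomposition of pure Hodge modules, namely $\cM = \cM' \oplus \cM''$ where $\cM'$ is supported on the zero section $X \subset E$ and $\cM''$ has no subquotient supported on $X$. Since multiplication by $z$ and the action of $\de_z$ respect this decomposition (being $\cD_E$-linear up to the obvious shifts), it suffices to verify the vanishing on each summand separately, and it is enough to treat one direction since the situations for $z$ and $\de_z$ are symmetric modulo which summand contributes.

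For $\cM'$, I would apply Kashiwara's equivalence to write $\cM' \cong i_+ \cN$ for some $\cD_X$-module $\cN$, and then compute directly that the only nonzero generalized $\theta$-eigenspaces $\cM'^\chi$ occur for $\chi \leq 0$. In particular $\cM'^1 = 0$, so both $z\colon \cM'^0 \to \cM'^1$ and $\de_z\colon \cM'^1 \to \cM'^0$ vanish trivially. This step is a routine calculation: on $n \de_z^k \in i_+\cN$ one has $z\de_z(n\de_z^k) = -(k+1) n\de_z^k$, giving $\cM'^\chi = \cN\de_z^{-\chi}$ for $\chi \leq 0$ and $\cM'^\chi = 0$ for $\chi > 0$.

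For $\cM''$, I would combine three ingredients. First, by Property \textbf{M.5} of monodromic $\cD$-modules, $\gr_V^0 \cM'' = \cM''^0$ and $\gr_V^1 \cM'' = \cM''^1$ where $V^\bullet$ is the $V$-filtration along $X$. Second, the strict support hypotheses and the codimension-one analogues of Corollaries \ref{subob} and \ref{quotob} yield that $z\colon \cM''^0 \to \cM''^1$ is injective and $\de_z\colon \cM''^1 \to \cM''^0$ is surjective. Third, I invoke the cited fact (from \cite{monoMHM} Thm.~2.2, noted in the paragraph preceding the corollary) that on a pure monodromic Hodge module with $r=1$ the nilpotent operator $N = \bigoplus_\chi (z\de_z - \chi + 1)$ vanishes identically; in particular $z\de_z = 0$ on $\cM''^1$.

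Putting these together, for any $m' \in \cM''^1$ we have $z(\de_z m') = 0$ in $\cM''^1$, and since $\de_z m' \in \cM''^0$ while $z$ is injective on $\cM''^0$, we conclude $\de_z m' = 0$. Thus $\de_z\colon \cM''^1 \to \cM''^0$ is both surjective and zero, forcing $\cM''^0 = 0$, which then makes $z\colon \cM''^0 \to \cM''^1$ vanish trivially as well. The main (and essentially only) conceptual input is the external fact that $N = 0$ for pure monodromic Hodge modules; every other step is a clean application of Kashiwara's equivalence, the strict support characterization, and the monodromic $V$-filtration identity.
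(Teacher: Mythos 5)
Your proof is correct and rests on the same two key inputs as the paper's: the vanishing $N=0$ from \cite{monoMHM}, and the strict support decomposition of pure Hodge modules. The paper's own argument is a more compact packaging of the same content: instead of splitting $\cM = \cM' \oplus \cM''$, it uses $N=0$ to get $\operatorname{im}(\de_z) \subseteq \ker(z)$ directly on $\cM^0$, then invokes $\cM^0 = \ker(z)\oplus\operatorname{im}(\de_z)$ (the $r=1$ case of Corollary \ref{strictsuppgen}) to force $\operatorname{im}(\de_z)=0$ and $\ker(z)=\cM^0$ in one step.
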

\begin{proof} By the last statement in ~\cite[Prop 2.12]{monoMHM}, we know $N = 0$. In particular, $N \cM^1 = (z\de_z)\cM^1 = 0$. From this, we see that $\text{im}(\de_z: \cM^1 \to \cM^0) \subseteq \ker(z: \cM^0 \to \cM^1)$.

But $\cM$ admits a decomposition by strict support, and so we know
\[\cM^0 = \phi_{z,1}\cM = \ker(var) \oplus \text{im}(can) = \ker(z) \oplus \text{im}(\de_z),\]
and since this sum is direct, this implies $\text{im}(\de_z) = 0$ and $\ker(z) = \cM^0$, proving the claim.
\end{proof}

Now, we prove an analogue of the vanishing $N =0$ for $r>1$.
\begin{prop} \label{puremono} Let $\cM$ be a $\cD_E$-module underlying a monodromic polarizable pure Hodge module $M \in \MHM(E)$. Then
\[ \cM^\chi = \ker(\theta - \chi +r),\]
i.e., $N = \bigoplus_{\chi \in \Q} (\theta - \chi +r)$ is identically 0 on $\cM$.
\end{prop}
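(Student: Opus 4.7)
The plan is to reduce to the case where $\cM$ is the underlying $\cD_E$-module of a simple polarizable pure Hodge module, and then to exploit Schur's lemma together with the fact that the ``diagonal'' nilpotent operator built from $\theta - \chi + r$ on the individual summands $\cM^\chi$ actually assembles into a $\cD_E$-linear endomorphism of $\cM$.

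First, I would invoke the semisimplicity of polarizable pure Hodge modules to write $M = \bigoplus_\alpha M_\alpha$ with each $M_\alpha$ simple, so that, by the structure theorem (Theorem~\ref{decompthm}), each underlying $\cD_E$-module $\cL := \cM_\alpha$ is a simple holonomic $\cD_E$-module. Since the generalized eigenspace decomposition respects direct sums of $\cD_E$-modules, it suffices to prove $\cL^\chi = \ker(\theta - \chi + r)$ for each simple summand. Note that $\cL$ is itself monodromic: because $\theta \in \cD_E$ preserves $\cL$, we have $\cL = \bigoplus_\chi \cL^\chi$ with $\cL^\chi := \cL \cap \cM^\chi$.

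Next I would define the operator $\widetilde N : \cL \to \cL$ by $\widetilde N|_{\cL^\chi} = \theta - \chi + r$, which is locally nilpotent on $\cL$ by monodromicity. The key verification is that $\widetilde N$ is $\cD_E$-linear. For $m \in \cL^\chi$ and a vertical coordinate $z_j$, the identity $[\theta, z_j] = z_j$ yields
\[
\widetilde N(z_j m) = \bigl(\theta - (\chi+1) + r\bigr)(z_j m) = z_j\theta m + z_j m - (\chi+1-r) z_j m = z_j (\theta - \chi + r) m = z_j \widetilde N(m),
\]
and analogously $[\theta,\de_{z_j}] = -\de_{z_j}$ gives $\widetilde N \de_{z_j} = \de_{z_j} \widetilde N$. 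Commutation with $\cO_X$ and with vector fields on $X$ is immediate since these preserve the eigenspace decomposition and commute with $\theta$. Hence $\widetilde N \in \operatorname{End}_{\cD_E}(\cL)$.

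Finally, Schur's lemma implies that $\operatorname{End}_{\cD_E}(\cL)$ is a division ring, so $\widetilde N$ is either zero or invertible. Invertibility combined with local nilpotency would force $\cL = 0$, so for any nonzero simple summand we conclude $\widetilde N \equiv 0$, i.e.\ $\theta - \chi + r$ annihilates $\cL^\chi$ for all $\chi$. Summing over $\alpha$ gives the claim for $\cM$. I do not expect a serious obstacle here; the only delicate point is recognizing that, even though $\theta$ itself does not commute with the $z_j$ and $\de_{z_j}$, the eigenvalue-corrected operator $\widetilde N$ does, which is what allows Schur's lemma to bite.
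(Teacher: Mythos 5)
Your proof follows essentially the same route as the paper's: exploit semisimplicity to reduce to simple $\cD_E$-module summands, observe that $N$ is a $\cD_E$-linear nilpotent endomorphism, and invoke Schur's lemma. Your explicit check that $\widetilde N$ commutes with $z_j$ and $\de_{z_j}$ is a welcome addition that the paper leaves implicit.

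One imprecision worth correcting: you decompose $M$ into simple polarizable Hodge modules and then assert via Theorem~\ref{decompthm} that each underlying $\cD_E$-module $\cM_\alpha$ is simple. That inference does not follow; a simple polarizable Hodge module $M_\alpha = j_{!*}(V)$ is built from an irreducible polarizable VHS $V$, but Deligne's semisimplicity theorem only guarantees that the local system underlying $V$ is \emph{semisimple}, not simple, so $\cM_\alpha$ may decompose further as a $\cD_E$-module. This is exactly what the paper's proof establishes instead: $\cM$ itself is a semisimple $\cD_E$-module. The fix costs nothing — decompose $\cM$ into simple $\cD_E$-modules (each of which is automatically monodromic, being a $\cD_E$-submodule of a monodromic module), and your Schur argument applies verbatim to each simple piece.
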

\begin{proof} As $M$ is a polarizable Hodge module, its underlying $\cD_E$-module $\cM$ is semi-simple. Indeed, on some locally closed subset, it is a polarizable variation of Hodge structure $V$, whose underlying local system is semisimple. By Saito's theory, $M = j_{!*}(V)$, and since $j_{!*}$ preserves simple $\cD$-modules, the proof is complete.

Now, each simple direct summand of $\cM$ is monodromic, with operator $N = \bigoplus_{\chi \in \Q} (\theta - \chi +r)$. As this is nilpotent, it cannot be an isomorphism, and so it must be 0 on each simple summand. In particular, the operator is $0$ on $\cM$.
\end{proof}

Finally, we prove that the Hodge filtration on a monodromic mixed Hodge module decomposes along the monodromic decomposition. This will be important in the study of the Fourier transform for monodromic Hodge modules later on.

\begin{thm} \label{HodgeDecomp} Let $(\cM,F_\bullet)$ be a filtered $\cD_E$-module underlying a monodromic mixed Hodge module on $E = X\times \A^r$, with decomposition $\cM = \bigoplus_{\chi \in \Q} \cM^\chi$. Then
\[ F_p \cM = \bigoplus_{\chi \in \Q} F_p\cM \cap \cM^\chi\]
for all $p \in \Z$.
\end{thm}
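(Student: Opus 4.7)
The plan is to induct on $r$. The base case $r=1$ is \cite[Thm.~2.2]{monoMHM}. For the inductive step, fix $r > 1$ and assume the claim for monodromic mixed Hodge modules on vector bundles of rank less than $r$. Let $\cM$ underlie a monodromic mixed Hodge module on $E = X\times\A^r$, and consider the smooth hypersurface $H = \{z_r = 0\} \subseteq E$, which is itself a vector bundle over $X$ of rank $r-1$.

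First I would verify that the $V$-filtration $V_H^\bullet\cM$ along $H$ is compatible with the monodromic decomposition. Since $V_H^\gamma \cM$ is a $\cD_X$-submodule of $\cM$ and $\theta$ commutes with $\cD_X$, the decomposition restricts to $V_H^\gamma \cM = \bigoplus_\chi V_H^\gamma \cM \cap \cM^\chi$, and consequently $\gr_{V_H}^\gamma \cM = \bigoplus_\chi \gr_{V_H}^\gamma \cM^\chi$. Writing $\theta = z_r\de_{z_r} + \theta'$ with $\theta' = \sum_{i<r} z_i\de_{z_i}$, and using that $z_r\de_{z_r} - (\gamma-1)$ is nilpotent on $\gr_{V_H}^\gamma \cM$, one checks that $\gr_{V_H}^\gamma \cM$ is itself a monodromic mixed Hodge module on $H$ over $X$ of rank $r-1$, with $\theta'$-eigenspace $(\gr_{V_H}^\gamma \cM)^{\chi-\gamma} = \gr_{V_H}^\gamma \cM^\chi$. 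The inductive hypothesis then gives
\[ F_p\gr_{V_H}^\gamma \cM = \bigoplus_\chi \bigl(F_p\gr_{V_H}^\gamma\cM \cap \gr_{V_H}^\gamma\cM^\chi\bigr)\]
for every $\gamma$.

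To transport this decomposition back to $\cM$, I would use the strict compatibility of $F$ and $V_H$ on $\cM$ (Corollary~\ref{bistrict}), which yields the short exact sequence
\[ 0 \to F_p V_H^{>\gamma}\cM \to F_p V_H^\gamma \cM \to F_p\gr_{V_H}^\gamma \cM \to 0,\]
and argue by descending induction on $\gamma$ that $F_p V_H^\gamma \cM = \bigoplus_\chi (F_p V_H^\gamma \cM \cap \cM^\chi)$. The base case is trivial since $V_H^\gamma \cM = 0$ for $\gamma$ sufficiently large; since $V_H^\bullet$ is exhaustive, taking $\gamma$ small enough that $V_H^\gamma \cM = \cM$ delivers the desired decomposition of $F_p \cM$.

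The hard part will be the lifting step. Given $m \in F_pV_H^\gamma\cM$ with monodromic components $m = \sum_\chi m^\chi$, strictness together with the graded-piece decomposition produces lifts $\tilde m^{(\chi)} \in F_pV_H^\gamma \cM$ whose images in $\gr_{V_H}^\gamma \cM$ equal the Hodge-decomposed classes $\bar m^\chi \in F_p\gr_{V_H}^\gamma \cM \cap \gr_{V_H}^\gamma \cM^\chi$. These lifts need not lie in $\cM^\chi$; however, the non-$\chi$ monodromic components $(\tilde m^{(\chi)})_{\chi'}$ with $\chi' \neq \chi$ automatically lie in $V_H^{>\gamma}\cM$, and the residual $m - \sum_\chi \tilde m^{(\chi)}$ lies in $F_pV_H^{>\gamma}\cM$, which by the descending inductive hypothesis decomposes monodromically into $F_p$-pieces. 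A careful iterative correction — using these monodromic pieces from the lower-$V_H$ stratum to cancel the non-$\chi$ contributions of the $\tilde m^{(\chi)}$ — should produce a decomposition $m = \sum_\chi m^\chi$ with each summand in $F_p\cM \cap \cM^\chi$.
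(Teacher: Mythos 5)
Your strategy---induct on the rank $r$ via the $V$-filtration along a coordinate hyperplane $H=\{z_r=0\}$---is a natural one and genuinely different from the paper's, but the termination argument for your descending induction has a real gap. You claim that $V_H^\gamma\cM = 0$ for $\gamma$ sufficiently large and that $V_H^\gamma\cM=\cM$ for $\gamma$ sufficiently small, using these as the two endpoints. Neither is true in general. For the monodromic module $\cM=\cO_E$ one has $V_H^\gamma\cO_E = z_r^{\lceil\gamma\rceil-1}\cO_E$ for $\gamma\geq 1$, which is never zero; and for the monodromic mixed Hodge module $\cM=\cO_E[z_r^{-1}]$ one has $V_H^\gamma\cM = z_r^{\gamma-1}\cO_X[z_1,\dots,z_r]$ for $\gamma\in\Z$, which is a proper submodule of $\cM$ for every $\gamma$. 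Exhaustiveness of $V_H^\bullet$ means $\bigcup_\gamma V_H^\gamma\cM = \cM$, not that $V_H^\gamma\cM$ stabilizes at $\cM$; and the filtration need not terminate at $0$ at the other end. So as written the descending induction has no base case, and even if one reformulates element-by-element (a given $m$ lies in some $V_H^{\gamma_0}\cM$), you still need a $\gamma$ at which the decomposition of $F_p V_H^\gamma\cM$ is already known, which the construction does not supply. The subsequent iterative-correction step in your ``hard part'' also implicitly depends on this unavailable anchor.

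The paper's proof avoids the issue by working with the $V$-filtration along the zero section $X\subseteq E$ rather than along a hyperplane. There, $V^\gamma\cM=\bigoplus_{\chi\geq\gamma}\cM^\chi$, so for a fixed $m$ the set of $\chi$ with $m_\chi\neq 0$ is finite and bounded; the descending induction on $\beta$ is over this finite set, anchored at $\beta=0$. Concretely: the localization exact sequence $0\to K\to\cM\to\bigoplus_i j_{i*}j_i^*\cM$ with $K$ supported on the zero section (so $V^{>0}K=0$), together with the observation that each $j_{i*}j_i^*\cM$ is simultaneously monodromic along all of $z_1,\dots,z_r$ and along the single coordinate $z_i^{-1}$ (so the rank-one result of \cite{monoMHM} applies to it directly), handles the components with $\chi\geq 0$; the filtered acyclicity of $A^\chi(\cM)$ and $C_\beta(\cM)$ from Theorem~\ref{thm:acyclic} then propagates the decomposition to $\chi<0$ one step at a time. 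Your rank-induction via nearby cycles along $\{z_r=0\}$ could conceivably be salvaged, but it would need a genuinely different mechanism to anchor the induction on $\gamma$, and the resulting argument would likely not be simpler than the paper's.
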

\begin{proof} The key observation is that, on $\{z_i \neq 0\}$, there are two sets of coordinates: $(z_1,\dots, z_r)$ and $(z_i^{-1},z_1z_i^{-1},\dots, z_rz_i^{-1})$. The change of coordinates formula says that, if $\theta = \sum_{j=1}^r z_j \de_{z_j}$, then
\[ \de_{z_i^{-1}} = - \theta.\]

Thus, if we look at $\cM_{z_i} = j_i^*(\cM)$, where $j_i: \{z_i\neq 0\} \subseteq E$ is the open immersion, then
\[ \theta\left(\frac{m}{z_i^\ell}\right) = \frac{(\theta - \ell)m}{z_i^{\ell}}, \quad z_i^{-1} \de_{z_i}^{-1}\left(\frac{m}{z_i^\ell}\right) = \frac{(\ell -\theta)m}{z_i^\ell},\]
and so $\cM_{z_i}$ is monodromic along $z_1,\dots, z_r$ \emph{and} along the single coordinate $z_i^{-1}$. We already saw this in the previous section. When we apply $j_{i*}$ to this module, the resulting module is monodromic in both senses, and so $j_{i*}(j_i^*(\cM))$ has Hodge filtration which decomposes by ~\cite[]{monoMHM}.

Now, we have a canonical morphism $\cM \to \bigoplus_{i=1}^r j_{i*}(j_i^*(\cM))$ of mixed Hodge modules, whose kernel $K$ is supported on the zero section. Write this as an exact sequence
\[ 0 \to K \to \cM \to \bigoplus_{i=1}^r j_{i*}(j_i^*(\cM)).\]

Applying the exact (by Lemma \ref{bistrict}) functor $F_p V^{>0}$ to this sequence, and using the fact that $V^{>0}K =0$, we get an injection
\[ F_p V^{>0}\cM \to \bigoplus_{i=1}^r F_p V^{>0}j_{i*}(j_i^*(\cM)).\]

In particular, if $m = \sum_{\chi >0} m_\chi \in F_p V^{>0}\cM$, then we conclude that $m_\chi \in F_p \cM$.

If $m = m_0 + \sum_{\chi >0} m_\chi \in F_p \cM$, then $z_i m =z_i m_0 + \sum_{\chi > 0} z_i m_\chi \in F_p V^{>0}\cM$, and so we know each piece lies in $F_p \cM$. In particular, $z_im_\chi \in F_p \cM$ for all $\chi >0, 1\leq i\leq r$, so again by filtered acylicity of $A^\chi(\cM)$, this implies $m_\chi \in F_p\cM$ for all $\chi>0$. Finally, $m_0 = m - \sum_{\chi >0} m_\chi \in F_p \cM$, too.

Now, we are able to proceed by descending induction on $\beta<0$. Let $m = m_\beta + \sum_{\chi > \beta} m_\chi \in F_p \cM$ for some $\beta <0$. Then $\overline{m} \in F_p \gr_V^\beta \cM$, so by filtered acyclicity of $C_{\beta}(\cM)$, there exists $n_1,\dots, n_r \in F_p V^{\beta+1}\cM$ and $\epsilon \in F_pV^{>\beta} \cM$ such that
\[ m = \sum_{i=1}^r \de_{z_i}(n_i) + \epsilon,\]
by assumption, $\epsilon$ and $n_i$ for all $i$ have their homogeneous pieces lying in $F_p$ (resp. $F_{p-1}$), so $m$ has its homogeneous pieces lying in $F_p$, too.
\end{proof}

\section{The restriction functors} \label{sec:pmhc}

In this section we prove Theorem~\ref{thm:main} and it is more convenient to work with \emph{right} $\cD$-modules. Recall that the convention for right $\cD$-modules is that the $V$-filtration be indexed increasingly. The proof is split into three parts: Theorem~\ref{thm:mhc}, Theorem~\ref{thm:compare} and Theorem~\ref{thm:comp}. For simplicity, we denote by $B_Z(\cM)=B_0(\cM)$ and $C_Z(\cM)=C_0(\cM)$ to emphasize the $V$-filtration is along the smooth subvariety $Z$. If the $V$-filtration is clear from the context, we will simply use the notation $B(\cM)$ or $C(\cM)$. 

\subsection{Mixed Hodge complex} \label{sec:mhc}
We first prove that for $\cM$ underlying a mixed Hodge module the complex $B(\cM)$ together with $W$ induced by the relative monodromy filtration is a mixed Hodge complex. A \emph{mixed Hodge complex}, roughly speaking, is a bifiltered complex of $\cD$-modules $(C,F,W)$, where $F$ is a decreasing ``Hodge" filtration by $\cO$-submodules and $W$ is an increasing ``weight" filtration by $\cD$-submodules with $\Q$-structure $(C_{\Q},W_{\Q})$. These data should satisfy $\DR(C,W)\simeq (C_{\Q},W_{\Q})\otimes_\Q\C$ and that 
\[
\gr^W_k (C,F) \simeq \bigoplus_{\ell\in\Z}\cH^\ell \gr^W_k (C,F)[-\ell]
\] 
in the derived category of filtered $\cD$-modules. Moreover, $(\cH^\ell \gr^W_k C,F)$ together with the induced $\Q$-structure underlies a polarizable Hodge module of weight $k+\ell$ for any $k$ and $\ell$. Theorem~\ref{thm:main}(a) is restated as follows:

\begin{thm}\label{thm:mhc}
Let $M=(\cM,F,L,\cK)$ be a mixed Hodge module on a smooth variety $X$ as in Theorem~\ref{thm:main} and let $Z$ be a smooth subvariety of $X$. Then $B_Z(\cM)$ together with the relative monodromy filtration is a mixed Hodge complex.
\end{thm}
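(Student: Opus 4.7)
The plan is a three-step reduction: first, reduce from arbitrary mixed Hodge modules to the polarizable pure case by invoking Deligne's theorem on the functorial splitting of the associated graded of the relative monodromy filtration; second, in the pure case, decompose $\cM$ by strict support to isolate submodules supported in $Z$; third, for strict support not contained in $Z$, blow up along $Z$ to reduce to the codimension-one hypersurface situation where the mixed Hodge complex structure is already part of Saito's theory. Throughout, the $\Q$-structure on $B_Z(\cM)$ is inherited from the $\Q$-structure on $\Sp(M)$, which is a monodromic mixed Hodge module on $T_ZX$, so each $\gr_V^\alpha\cM$ carries a $\Q$-structure and the Koszul differentials $t_i$ are morphisms of mixed Hodge modules.

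For the pure case with $\Supp(\cM)\subseteq Z$, Kashiwara's equivalence writes $\cM=i_+\cN$ for a polarizable Hodge module $\cN$ on $Z$. As computed in the opening of Theorem~\ref{thm:acyc}, $\gr^V_\alpha\cM$ vanishes for $\alpha<0$ and is identified with $\cN\otimes_\C\C[\partial_1,\dots,\partial_r]$ in weight $-\alpha$; the complex $B_Z(\cM)$ becomes the Koszul complex on $\gr_V^0\cM$ along the regular sequence $t_1,\dots,t_r$, and the relative monodromy filtration $W(\theta+r,L)$ is induced from the weight filtration on $\cN$. The three axioms of a mixed Hodge complex then follow directly from the fact that $\cN$ already underlies a polarizable Hodge module.

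For the transverse pure case, let $\pi:\widehat X\to X$ be the blow-up along $Z$ with exceptional divisor $E$, and let $\widehat\cM$ be the minimal extension of $\cM|_{X\setminus Z}$ across $E$; by the decomposition theorem (Theorem~\ref{decompthm} together with Theorem~\ref{dirimthm}), $(\cM,F,L)$ is a direct summand with compatible weights of $\cH^0\pi_+(\widehat\cM,F,L)$. Factoring $\pi=p\circ i_\pi$ through the graph embedding and the second projection $p:\widehat X\times X\to X$, the local coordinate computation from the proof of Theorem~\ref{thm:acyc} shows that $V_\bullet$ on $i_{\pi,+}\widehat\cM$ along $\widehat X\times Z$ is entirely controlled by the $V$-filtration along the single smooth hypersurface $\widehat X\times Z\subseteq H$, for which $B$ is known to form a mixed Hodge complex by Saito's codimension-one theory. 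Applying Lemma~\ref{lem:e2} then pushes this structure forward under $p$: bistrictness together with the Hard Lefschetz decomposition yield
\[
\gr^W_k B_Z(\cM)\simeq\bigoplus_{\ell\in\Z}\cH^\ell\gr^W_k B_Z(\cM)[-\ell]
\]
in the filtered derived category of $\cD_Z$-modules, and each $\cH^\ell\gr^W_k B_Z(\cM)$ underlies a polarizable Hodge module of weight $k+\ell$.

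The main obstacle is expected to be tracking the relative monodromy filtration through the pushforward. The weight filtration on $B_Z(\cM)$ is defined intrinsically on $X$ via $W(\theta-\alpha+r,L)$ on each $\gr_V^\alpha\cM$, while after blow-up it is produced by pushing forward the codimension-one weight filtration on $\widehat X$; these must be identified. This is precisely the content of Lemma~\ref{lem:e2}(c), whose proof uses the deformation to the normal cone and the $\mathbf{G}_m$-equivariance of the weight filtration on $\gr_V^\alpha\widetilde\cM$ to transport the identification from the codimension-one setting on $\widetilde X$ to the higher codimension setting on $X$. The other delicate point is the mixed-to-pure reduction via Deligne's theorem, for which we must check that the splitting of $\gr^W L_\bullet\cM$ is compatible with $V$-filtration and the Koszul differentials; this follows from the bifiltered exactness of $\gr_V^\alpha$ on short exact sequences of mixed Hodge modules, established in Lemma~\ref{lem:relmonexact}.
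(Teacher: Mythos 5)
Your proposal follows the paper's argument closely: the $\Q$-structure via $\Sp$, the pure/mixed reduction, the strict-support dichotomy, the blow-up factored through the graph embedding, and the push-forward via Lemma~\ref{lem:e2} are all exactly as in the text. One correction on attribution: the compatibility of Deligne's canonical splitting of $\gr^W\gr^V_\alpha\cM$ with the Koszul differentials $t_i$ (and with $\de_{t_i}$, and with morphisms) is the content of Lemma~\ref{lem:relsplitfun}, proved from the uniqueness and functoriality of Deligne systems (Theorem~\ref{thm:delsy} and Corollary~\ref{cor:fun}); it is not a consequence of Lemma~\ref{lem:relmonexact}, which only asserts bifiltered exactness of $\gr^V_\alpha$ on short exact sequences and would not by itself give you that the unique splitting operator $Y'$ commutes with $t_i$. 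Since you invoke the functorial splitting correctly at the outset, this reads as a mis-citation rather than a missing idea, but the distinction matters: the splitting compatibility is where the real work in the mixed case happens.
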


\begin{proof}
We first remark that $B(\cM)$ carries a $\Q$-structure. Indeed, by Theorem~\ref{thm:gr}
\[
\DR_Z \big(B(\cM)\big) \simeq \DR_Z(i^!\cM) \simeq i^!\cK \otimes_\Q \C.
\]
In fact, if $W$ is the filtration on $B(\cM)$ induced by the monodromy filtration on each $\gr^V_\alpha\cM$ relative to $\gr^V_\alpha L_\bullet \cM$ then $W_kB(\cM)$ also carries a $\Q$-structure. This is because 
\[
\DR_Z\big(i^!_Z W_k \Sp(\cM)\big)\simeq i^!_Z W_k\Sp(\cK)\otimes_\Q \C, \quad i_Z:Z \to T_ZX
\]
and $i^!_Z W_k\Sp(\cM) \simeq W_kB(\cM)$ by the fact that the retraction constructed in the proof of Theorem~\ref{thm:gr} also preserves the filtration $WB(\cM)$. Recall that $\Sp(\cM)$ is the specialization of $\cM$ introduced in~\ref{subsec:dfn}.

\textit{Pure case.} We first prove the case when $(\cM,F,\cK)$ is a polarizable Hodge module of weight $w$. If $\cM$ is supported on $Z$ then $B(\cM)\simeq i_+\gr^V_0\cM$ in the $(F,W)$-bifiltered category and therefore, the theorem follows easily. Now assume that the support of $\cM$ is not contained in $Z$. Let $\pi: \widehat X\rightarrow X$ be the blow up along $Z$ and $\widehat \cM$ be the minimal extension of $\cM$ to $\widehat{X}$ from $\widehat{X} - E \cong X - Z$. Then we can factor the blow up into the graph embedding followed by the smooth projection
\[
\begin{tikzcd}
\widehat X \arrow{r}{{i_{\pi}}} & \widehat X\times X \arrow{r}{p} & X
\end{tikzcd}
\]
The proof consists of two steps:

\textbf{Step 1.} We show that $B_{p^{-1}Z}\left({i_{\pi}}_+\widehat \cM\right)$ is a mixed Hodge complex. 

In fact, the complex $B_{p^{-1}Z}\left({i_{\pi}}_+\widehat \cM\right)$ together with the monodromy filtration is quasi-isomorphic to $B_{E}\left(\widehat\cM\right)$ locally, where $E$ is the exceptional divisor of $\pi$. Note that, although $E$ is not defined by a global function, we can make the complex $B_{E}\left(\widehat\cM\right)$ well-defined by 
\[
\gr^V_0\widehat\cM\otimes \cO(-E)|_E \rightarrow \gr^V_{-1}\widehat\cM.
\]
As we can see in the proof of Theorem~\ref{thm:acyc}: the formula~\eqref{eq:fvgra} is compatible with the monodromy filtration, i.e.
\[
F_\ell \gr^W \gr^V_\alpha {i_\pi}_+\widehat\cM=\sum_{k\geq 0}\sum_{a_2+a_3+\cdots a_r=k} F_{\ell-k} \gr^W\gr^V_{\alpha-k} g_+  \widehat\cM  \partial^{a_2}_2\partial^{a_3}_3\cdots \partial^{a_r}_r
\] 
But since $B_E\left(\widehat\cM\right)$ is a mixed Hodge complex, and this property (like the property of being a Hodge module) is local, it follows that $B_{p^{-1}Z}\left({i_{\pi}}_+\widehat \cM\right)$ is also a mixed Hodge complex. Due to the decomposition theorem of polarizable Hodge modules, the module $\cM$ is a summand of $\cH^0 p_+{i_{\pi}}_+\cM$. Therefore, we reduce the proof to the following.

\textbf{Step 2.} We prove that if $B_{p^{-1}Z}\left(\cM\right)$ is a mixed Hodge complex for a polarizable Hodge module $\cM$ of weight $w$ on $Y\times X$, where $p:Y\times X\rightarrow X$ is the second projection proper over the support of $\cM$, then $B_{Z}\left(\cH^\ell p_+\cM\right)$ is a mixed Hodge complex of weight $w+\ell$ for any $\ell\in\Z$. 

In fact, we have 
\[
\begin{aligned}
p_+\left( \gr^W_k B_{p^{-1}Z}(\cM)\right) &\simeq \bigoplus_{i\in\Z}p_+ \left(\cH^i\gr^W_k B_{p^{-1}Z}(\cM)\right) [-i] \\
& \simeq \bigoplus_{i,j\in\Z} \cH^jp_+\left(\cH^i\gr^W_k B_{p^{-1}Z}(\cM)\right)[-i-j]
\end{aligned}
\]
in the derived category of filtered $\cD$-modules. On the other hand, we also have the decomposition in the derived category of filtered $\cD$-modules by Lemma~\ref{lem:e2}(d):
\[
p_+\left( \gr^W_k B_{p^{-1}Z}(\cM)\right) \simeq \bigoplus_{\ell\in\Z} \cF_{k,\ell}[-\ell],
\]
where $\cF^i_{k,\ell}=\cH^\ell p_+ \gr^W_k B^i_{p^{-1}Z}(\cM)$. This implies 
\begin{equation}\label{eq:decompf}
\cF_{k,\ell}\simeq \bigoplus_{i\in\Z}\cH^i \cF_{k,\ell}[-i]    
\end{equation}
and $\cH^i \cF_{k,\ell}$ is a polarizable Hodge module of weight $w+k+i+\ell$. For each $k$ we have a weight spectral sequence 
\[
E^{i,j}_1(k)=\cH^{i+j}p_+\gr^W_{-i} B^k_{p^{-1}Z}(\cM) \Rightarrow E^{i,j}_\infty(k)= \gr^W_{-i}\cH^{i+j}p_+ B^k_{p^{-1}Z}(\cM) 
\]
so that $E^{i,j}_1(k)=\cF^k_{-i,i+j}$. Note that by the bistrictness proved in~\cite{BMS}, we have
\[
E^{i,j}_\infty(k)=\gr^W_{-i}B^k_Z(\cH^{i+j}p_+\cM).
\] 
We gather some facts deduced from the deformation to the normal bundle argument (Lemma~\ref{lem:e2}): 
\begin{enumerate}
    \item the spectral sequence degenerates at the second page;
    \item the induced filtration $W\cH^{i+j}p_+ B^k_{p^{-1}Z}(\cM)$ is the monodromy filtration on 
    \[
    \cH^{i+j}p_+ B^k_{p^{-1}Z}(\cM)=\left(\gr^V_{-k}\cH^{i+j}p_+\cM \right)^{\binom{r}{k}};
    \] 
    \item lastly, $E^{i,j}_2(k)$ is a summand of $E^{i,j}_1(k)$ in the category of filtered $\cD$-modules.
\end{enumerate} 
Therefore, the differential $d_1$ on the first page induces a double complex 
\[
\cdots \xrightarrow[]{d_1} \cF_{k+1,\ell-1} \xrightarrow[]{d_1} \cF_{k,\ell} \xrightarrow[]{d_1} \cF_{k-1,\ell+1} \xrightarrow[]{d_1} \cdots.
\]
Let $T$ be the total complex of this double complex. Then by~\eqref{eq:decompf} and semisimplicity, $T$ decomposes into
\begin{equation}\label{eq:decompt}
\begin{aligned}
     &\bigoplus_{i}\left\{ \cdots \xrightarrow[]{d_1} \cH^i \cF_{k+1,\ell-1} \xrightarrow[]{d_1} \cH^i\cF_{k,\ell} \xrightarrow[]{d_1} \cH^i \cF_{k-1,\ell+1} \xrightarrow[]{d_1} \cdots
\right\}[-i]  \\
     \simeq &\bigoplus_{i,j} \cH^j_{d_1}\cH^i\cF_{k-\bullet,\ell+\bullet}[-i-j]
\end{aligned}
\end{equation}
in the derived category of filtered $\cD$-modules. On the other hand, by the claim $(c)$ above, we also have another decomposition in the derived category:
\[
\begin{aligned}
T & \simeq \bigoplus_{j} \cH^j_{d_1}\cF_{k-\bullet,\ell+\bullet}[-j]. 
\end{aligned}
\]
Since $\cH^j_{d_1}\cF_{k-\bullet,\ell+\bullet}=\gr^W_{k-j}B_Z(\cH^{\ell+j} p_+\cM)$, the decomposition~\eqref{eq:decompt} implies $\gr^W_{k-j}B_Z(\cH^{\ell+j} p_+\cM)$ decomposes into the direct sum of its cohomology in the derived category of filtered $\cD$-modules and the cohomology $\cH^i\gr^W_{k}B_Z(\cH^{\ell} p_+\cM)$ is of weight $w+\ell+k+i$. It is easy to see that the decomposition is compatible with $\Q$-structures and therefore, we conclude the proof.

\textit{Mixed case.} By Lemma~\ref{lem:relsplitfun} below, there exists a functorial splitting
\[
\gr^W \gr^V_\alpha \cM \simeq \gr^W \gr^L \gr^V_\alpha \cM,
\]
with respect to $t_1,t_2,\dots,t_r$ which implies $\gr^WB(\cM)\simeq \gr^WB(\gr^L\cM)$. Therefore, we reduce the proof to the case where $\cM$ underlies a pure Hodge module.
\end{proof}

We collect some corollaries of Deligne's Theorem which we have already applied in the previous theorem and will apply these results in the proof of Theorem~\ref{thm:comp}. The proof is based on~\cite[1.5]{Saito-MHM} and a theorem of Deligne (Theorem~\ref{thm:delsy}). For the purpose of the exposition, we postpone the proof to the end of this section.
\begin{lem}\label{lem:relsplitfun}
Let $\cM,\cM'$ be mixed Hodge modules on a smooth variety $X$ and $V$ be the $V$-filtration along a smooth subvariety $Z$. Let $L$ be the filtration on $\gr^V_\alpha$ induced by the weight filtration and $W=W(\theta-\alpha,L)$ be the relative monodromy filtration on $\gr^V_\alpha$. Then we have:
\begin{enumerate}
 \item For any local defining equation $f$ of $Z$, the induced filtered morphism
     \[
     f: (\gr^W\gr^V_\alpha\cM,F) \rightarrow (\gr^W\gr^V_{\alpha-1}\cM,F) 
     \]
    splits into $f:\gr^W\gr^L\gr^V_\alpha\cM\rightarrow \gr^W\gr^L\gr^V_{\alpha-1}\cM$.
 \item For any local vector fields $\xi$ normal to $Z$, the induced filtered morphism
     \[
     \xi: (\gr^W\gr^V_\alpha\cM,F)\rightarrow (\gr^W\gr^V_{\alpha+1}\cM,F[-1]) 
     \]
     splits into $\xi:(\gr^W\gr^L\gr^V_\alpha\cM,F)\rightarrow (\gr^W\gr^L\gr^V_{\alpha+1}\cM,F[-1])$.
      
 \item If $T:\cM\rightarrow \cM'$ is a morphism of mixed Hodge modules, then the filtered morphism
     \[
     \gr^WT: (\gr^W\gr^V_\alpha\cM,F) \rightarrow (\gr^W\gr^V_\alpha\cM',F) 
     \] 
    splits into $\gr^WT: (\gr^W\gr^L\gr^V_\alpha\cM,F) \rightarrow (\gr^W\gr^L\gr^V_\alpha\cM',F)$.
\end{enumerate}
\end{lem}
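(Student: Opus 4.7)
The plan is to reduce all three statements to Deligne's theorem (Theorem~\ref{thm:delsy}) on the existence and functoriality of a canonical splitting of the associated graded of a relative monodromy filtration. Recall that, for any object $(M;L,N)$ equipped with a finite filtration $L$ and a nilpotent endomorphism $N$ preserving $L$ for which $W=W(N,L)$ exists, Deligne's theorem produces a canonical isomorphism $\gr^W M \cong \gr^W\gr^L M$, functorial with respect to morphisms $\varphi$ that preserve $L$ and satisfy $\varphi\circ N = N'\circ\varphi$. Each claim of the lemma will then follow once the relevant morphism is exhibited as a morphism in this category.

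For claim (c), the morphism $T\colon\cM\to\cM'$ is $\cD_X$-linear, hence commutes with $\theta$ and with $N_\alpha=\theta-\alpha$ on $\gr^V_\alpha$. Being a morphism of mixed Hodge modules, $T$ also preserves the weight filtration, so the induced map preserves $L_\bullet\gr^V_\alpha=\gr^V_\alpha W_\bullet$. Deligne's functoriality then gives the claimed splitting of $\gr^W T$ directly.

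For claims (a) and (b), the subtlety is that $f$ and $\xi$ act between different $V$-graded pieces, and the nilpotent operators on source and target differ by a constant. Using $[\theta,t_i]=t_i$ and $[\theta,\partial_i]=-\partial_i$ in $\cD_X$, a short computation in the right $\cD_X$-module convention adopted in this section shows
\[
f\circ N_\alpha = N_{\alpha-1}\circ f, \qquad \xi\circ N_\alpha = N_{\alpha+1}\circ\xi,
\]
so both $f$ and $\xi$ are morphisms of nilpotent pairs in the required sense. Since multiplication by $f\in\cO_X$ and the action of $\xi\in\cD_X^{\leq 1}$ preserve the weight filtration of $\cM$, they induce $L$-filtered maps on the $V$-graded pieces. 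Applying Deligne's splitting to each pair $\gr^V_\alpha\cM\to\gr^V_{\alpha\pm 1}\cM$ yields the decomposition; the shift $F[-1]$ for $\xi$ is the standard shift of the Hodge filtration under the action of a first-order differential operator.

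The main obstacle is ensuring that Deligne's splitting is compatible with the Hodge filtration $F$, so that the resulting decomposition is genuinely a decomposition of filtered objects. This is implicit in the formulation of Theorem~\ref{thm:delsy} as used within Saito's framework, where $F$ enters as auxiliary data compatible with the triple $(L,N,W)$. Alternatively, one can reduce to the codimension-one case via the deformation to the normal cone, exactly as in Lemma~\ref{thm:compatible} and Lemma~\ref{lem:relmonext}, and invoke Saito's admissibility hypothesis for mixed Hodge modules along a single hypersurface, which already encodes the required compatibility.
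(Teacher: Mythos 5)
Your approach matches the paper's: both establish that $f$, $\xi$, and $\gr^W T$ are morphisms of Deligne-systems (you check the commutation relations $f\circ N_\alpha=N_{\alpha-1}\circ f$ and $\xi\circ N_\alpha=N_{\alpha+1}\circ\xi$ explicitly, which the paper leaves implicit) and then invoke Theorem~\ref{thm:delsy} and Corollary~\ref{cor:fun} to conclude that these morphisms commute with the unique splitting operator $Y'$. Your final worry about $F$-compatibility is handled in the paper simply by citing \cite[1.5]{Saito-MHM} for the canonical splitting of $\gr^W_k\gr^V_\alpha\cM$ in the category of mixed Hodge modules (where the three filtrations are already compatible by Lemma~\ref{thm:compatible}), so the alternative route through the deformation to the normal cone is not needed.
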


Now we turn to the complex $C(\cM)$. The filtration $W_kC(\cM)$ also carries a $\Q$-structure. In fact, it follows from Proposition~\ref{prop:cvvc} and the fact that the retraction constructed in Theorem~\ref{thm:gr} respects the filtration $W$ that 
\[
\DR_Z \big(W_kC(\cM)\big) \simeq \DR_Z \left(p_+ W_k\Sp(\cM)\right) \simeq p_*W_k \Sp\cK\otimes_\Q \C
\]
where $p:T_ZX\rightarrow Z$ is the projection. Therefore, we can simply modify the proof of Theorem~\ref{thm:mhc} to prove the following. 

\begin{thm}\label{thm:cmhc}
Let $(\cM,F,L,\cK)$ be a mixed Hodge module on a smooth variety $X$ and $Z$ is a smooth subvariety. Then $C_Z(\cM)$ together with the relative monodromy filtration is also a mixed Hodge complex.
\end{thm}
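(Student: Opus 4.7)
The plan is to mirror the structure of the proof of Theorem~\ref{thm:mhc} line by line, substituting the multiplication maps $t_i$ by the vector fields $\de_{t_i}$ and the complex $B$ by $C$ throughout, with the index and Hodge-filtration shifts of $C_\alpha$ tracked carefully. First I would establish the $\Q$-structure on $C_Z(\cM)$ together with $W$: by Proposition~\ref{prop:cvvc} and Theorem~\ref{thm:topop} we have $C(\cM)\simeq i_Z^*\cM\simeq p_+\Sp(\cM)$ for $p\colon T_ZX\to Z$ the projection, and since the retraction built in Theorem~\ref{thm:gr} preserves each $W_k$, one obtains $\DR_Z(W_kC(\cM))\simeq p_*W_k\Sp(\cK)\otimes_\Q\C$, which is the analogue of what opens the proof of Theorem~\ref{thm:mhc}.

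Next I would treat the pure case, where $(\cM,F,\cK)$ underlies a polarizable Hodge module of weight $w$. If $\cM$ is supported on $Z$, then by Kashiwara's equivalence $\cM\simeq i_+\cN$, and the formula from Example~\ref{VFiltEg}(b) makes $\gr^V_\alpha\cM$ vanish outside a range where $C_0(\cM)$ degenerates to $\cN$ concentrated in a single degree, giving the mixed Hodge complex structure directly. If $\cM$ is not supported on $Z$, I would run exactly the same blow-up argument as in Theorem~\ref{thm:mhc}: let $\pi\colon\widehat X\to X$ be the blow-up of $Z$, factor $\pi=p\circ i_\pi$, and carry out the two steps. For Step~1, the local computation in the proof of Theorem~\ref{thm:acyc} (in particular the formula~\eqref{eq:fvgra}) is bistrictly compatible with the monodromy filtration $W$, so one reduces showing that $C_{p^{-1}Z}({i_\pi}_+\widehat\cM)$ is a mixed Hodge complex to the codimension-one assertion that $C_E(\widehat\cM)$ is a mixed Hodge complex, which is part of Saito's theory in the hypersurface case.

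For Step~2, I would apply Lemma~\ref{lem:e2} to the complex $C$ in place of $B$ (the lemma only concerns the $V$-filtration and the relative monodromy filtration, so it is insensitive to whether one takes $t_i$ or $\de_{t_i}$ maps in the Koszul direction). The Hard Lefschetz decomposition yields $p_+\gr^W_kC_{p^{-1}Z}(\cM)\simeq\bigoplus_\ell\cG_{k,\ell}[-\ell]$ with $\cG_{k,\ell}=\cH^\ell p_+\gr^W_kC_{p^{-1}Z}(\cM)$. The weight spectral sequence argument then runs verbatim: using the bistrictness from~\cite{BMS} one identifies $E^{i,j}_\infty(k)=\gr^W_{-i}C_Z(\cH^{i+j}p_+\cM)$, degeneration at $E_2$ and semisimplicity give the required decomposition of $\gr^W_{k-j}C_Z(\cH^{\ell+j}p_+\cM)$ into its cohomology sheaves, and the weights come out as $w+\ell+k+i$, each underlying a polarizable Hodge module.

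Finally, for the mixed case I would invoke Lemma~\ref{lem:relsplitfun}(2), which provides a functorial splitting of each induced map $\de_{t_i}\colon (\gr^W\gr^V_\alpha\cM,F)\to(\gr^W\gr^V_{\alpha+1}\cM,F[-1])$ compatible with the grading along $L$, to obtain $\gr^WC(\cM)\simeq\gr^WC(\gr^L\cM)$ and reduce to the pure case just handled. The main bookkeeping obstacle I anticipate is tracking the Hodge and weight shifts introduced by the definition $F[k]_j=F_{j-k}$ in $C_\alpha$: each $\gr^V_{\alpha}$ term sits at a different filtration shift, so one must verify that the polarizable Hodge module weights produced by Hard Lefschetz in Step~2 match the shifts and that the splitting of Lemma~\ref{lem:relsplitfun}(2) respects these shifts. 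This is purely a bookkeeping point and not a conceptual difficulty, since the filtration shifts in $C_\alpha$ are defined precisely so that $\de_t$ has the bidegree required for the splitting to apply.
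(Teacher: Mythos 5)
Your proposal matches the paper's own proof, which is literally stated as "we can simply modify the proof of Theorem~\ref{thm:mhc}" after establishing the $\Q$-structure via the retraction from Theorem~\ref{thm:gr} and Proposition~\ref{prop:cvvc}; you have correctly filled in what that modification consists of (Kashiwara's equivalence for the $Z$-supported case, the blow-up factorization with Steps 1 and 2 for the remaining pure case, Lemma~\ref{lem:e2} applied to $C$, and Lemma~\ref{lem:relsplitfun}(b) for the reduction from mixed to pure). The bookkeeping concern you flag about the shifts $F[k]$ in $C_\alpha$ is indeed the only content in the "modification," and it does work out for exactly the reason you give — the shifts are built into $C_\alpha$ to make $\de_t$ have the right bidegree.
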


By a formal argument Lemma~\ref{lem:formal} of Deligne in~\cite{Hodge2} on the mixed Hodge complexes (see also~\cite{MHC}*{Proposition 2.3}), we deduce the following by noting that a mixed Hodge complex satisfies all three conditions in Lemma~\ref{lem:formal}:

\begin{cor}\label{thm:degsep}
The Hodge spectral sequences of $B(\cM)$ and $C(\cM)$ degenerate at the first page while the weight spectral sequences degenerate at the second page.
\end{cor}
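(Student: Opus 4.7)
The plan is to reduce the corollary directly to the formal machinery of mixed Hodge complexes, since all of the substantive content has already been established in Theorems~\ref{thm:mhc} and~\ref{thm:cmhc}. With $B(\cM)$ and $C(\cM)$ known to be mixed Hodge complexes (bifiltered by $F$ and $W$, with $\Q$-structure), one verifies the three bullet-point hypotheses of Deligne's formal Lemma~\ref{lem:formal}: (i) for each $k$, the complex $\gr^W_k$ decomposes in the filtered derived category of $\cD$-modules as $\bigoplus_\ell \cH^\ell \gr^W_k [-\ell]$; (ii) each $(\cH^\ell \gr^W_k , F)$ underlies a polarizable pure Hodge module of weight $k+\ell$; and (iii) the $\Q$-structures are compatible. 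All three are built into the definition of a mixed Hodge complex and so are already supplied.

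For the Hodge (``F") spectral sequence, I would argue that the $E_1$-page is computed from $\gr^p_F$ of the complex, and strictness of all differentials with respect to $F$ — which follows since polarizable Hodge modules form an abelian category in which every morphism is strictly compatible with the Hodge filtration, combined with the decomposition in (i) — implies that $d_1$ computes the desired cohomology and all higher differentials vanish. Thus $E_1 = E_\infty$.

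For the weight (``W") spectral sequence $E_1^{-k,n+k} = \cH^n \gr^W_k$ converging to $\gr^W \cH^n$, the argument is the standard one of Deligne: each term of the $E_1$-page carries a pure Hodge structure of a definite weight determined by $k$ and $n$, and the differential $d_1$ is a morphism of pure Hodge modules. The differential $d_2$ is then a morphism between subquotients of pure Hodge modules of different weights, hence vanishes; similarly for all higher differentials. This forces degeneration at $E_2$.

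The only possible obstacle is verifying that the $\Q$-structure and $\cD$-module filtrations are truly compatible in the strong sense demanded by Deligne's lemma — but this is precisely the content of having $B(\cM)$ and $C(\cM)$ be mixed Hodge complexes, which has already been proved. So the corollary follows with no additional work beyond citing Lemma~\ref{lem:formal}.
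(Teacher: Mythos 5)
Your proposal is correct and takes essentially the same route as the paper: Corollary~\ref{thm:degsep} is proved by invoking Lemma~\ref{lem:formal}, whose hypotheses are exactly the defining properties of a mixed Hodge complex established in Theorems~\ref{thm:mhc} and~\ref{thm:cmhc}. One small imprecision in your side sketch: strictness of the differentials of the whole complex with respect to $F$ is not an immediate consequence of strictness of morphisms of polarizable Hodge modules (those morphisms live only at the level of $\gr^W$); in the proof of Lemma~\ref{lem:formal} this strictness is instead obtained by induction on the length of $W$, using the $E_2$-degeneration of the weight spectral sequence and the strict surjectivity of the relevant connecting map.
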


\begin{lem}\label{lem:formal}
For a complex $(C,F)$ of filtered $\cD_X$-modules with a finite increasing weight filtration $W$ and a constructible complex $(C_\Q,W_\Q)$ over $\Q$ such that 
\begin{enumerate}
    \item $\DR_X(C,W)\cong (C_\Q,W_\Q)\otimes_\Q \C$;
    \item each cohomology module $\big(\cH^\ell(\gr^W_k C), {}^{\mathfrak p}\cH^\ell(\gr^W_kC_\Q),F\big)$ underlies a polarizable Hodge module of weight $k+\ell$ and
    \item the Hodge filtration $F$ is strict on $\gr^W_kC$ for all $k\in \Z$.
\end{enumerate}
then the Hodge spectral sequence degenerates at the first page and the weight spectral sequence degenerates at the second page.
\end{lem}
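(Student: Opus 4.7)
The plan is to reduce everything to two classical facts from Hodge theory: (i) the abelian category of polarizable Hodge modules of a fixed weight $w$ on $X$ is semisimple, and in particular any morphism in it is strict with respect to the Hodge filtration $F$; and (ii) there are no nonzero morphisms between polarizable Hodge modules of distinct weights. Combined with standard spectral sequence formalism for a bifiltered complex, both degenerations follow essentially formally.

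First I would treat the weight spectral sequence, whose $E_1$-page is $E_1^{p,q}=\cH^{p+q}(\gr^W_{-p}C)$. By hypothesis (2) this is a polarizable Hodge module of weight $(-p)+(p+q)=q$, so the $E_1$-page is concentrated in one weight per row. The differential $d_1\colon E_1^{p,q}\to E_1^{p+1,q}$, arising from the connecting homomorphism of the short exact sequence
\[
0\to \gr^W_{-p-1}C\to W_{-p}C/W_{-p-2}C\to \gr^W_{-p}C\to 0,
\]
is therefore a morphism of polarizable Hodge modules of the common weight $q$. By (i) its kernel and cokernel are again polarizable of weight $q$, so $E_2^{p,q}$ is polarizable of weight $q$. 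The higher differential $d_r\colon E_r^{p,q}\to E_r^{p+r,q-r+1}$ for $r\geq 2$ would then be a morphism between polarizable Hodge modules of weights $q$ and $q-r+1$; by (ii) it must vanish, giving $E_2$-degeneration of the weight spectral sequence.

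Next I would address the Hodge spectral sequence, which amounts to proving that the differentials of $C$ are strict for $F$, equivalently that $\cH^\ell(F_pC)\to \cH^\ell(C)$ is injective for all $p,\ell$. I would argue by induction on the length of the finite weight filtration $W$. The base case $C=\gr^W_kC$ is hypothesis (3). For the inductive step, I apply the $F$-filtered long exact sequence attached to $0\to W_{k-1}C\to W_kC\to \gr^W_kC\to 0$: the outer terms are strict for $F$ by the inductive hypothesis and by (3), while the connecting maps, being essentially the weight-SS differentials, are strict for $F$ because they are morphisms of pure polarizable Hodge modules (by (i)). A five-lemma argument on the subcomplexes $F_pW_kC$ then yields strictness on $W_kC$, and so eventually on $C$, producing $E_1$-degeneration of the Hodge spectral sequence.

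The main obstacle is the clean bookkeeping in the inductive step for Hodge degeneration: to propagate strictness through the weight filtration one must know not only that the cohomologies on the graded pieces are Hodge modules, but that the differentials $d_1$ of the weight spectral sequence are strict for $F$. This strictness is exactly the payoff of (i) and is the real content linking the two degenerations. Once invoked, the rest of the argument is standard diagram chasing.
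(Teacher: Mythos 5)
Your proposal follows essentially the same two-part strategy as the paper: degeneration of the weight spectral sequence via the fact that there are no nonzero morphisms between polarizable Hodge modules of distinct weights, and degeneration of the Hodge spectral sequence via induction on the length of the weight filtration together with a diagram chase on the filtered long exact sequence of $0\to W_{k-1}C\to W_kC\to \gr^W_kC\to 0$.

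One point to sharpen in the Hodge-degeneration step: you assert that the connecting map $\alpha\colon \cH^{\ell-1}(\gr^W_kC)\to \cH^{\ell}(W_{k-1}C)$ is strict for $F$ ``because it is a morphism of pure polarizable Hodge modules.'' As stated this is not accurate, since $\cH^{\ell}(W_{k-1}C)$ is not pure in general, only a graded polarizable \emph{weakly mixed} Hodge module (its weight filtration is the limiting filtration of the already-degenerate weight spectral sequence for the subcomplex $W_{k-1}C$). The correct reason for strictness, as the paper records, is a weight comparison: the source $\cH^{\ell-1}(\gr^W_kC)$ is pure of weight $k+\ell-1$, and this is precisely the top weight of $\cH^{\ell}(W_{k-1}C)$ (its $\gr^W_j\cH^\ell$ has weight $j+\ell\leq k+\ell-1$), so $\alpha$ automatically respects the weight filtration and is therefore a morphism of weakly mixed Hodge modules, hence strict for $F$. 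With this refinement your diagram chase goes through exactly as in the paper.
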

\begin{proof}
For all $p,q \in Z$, the term $E_1^{p,q}=\cH^{p+q}\gr^W_{-p}C$ of the first page of the weight spectral sequence associated to $W$ is a polarizable Hodge module of weight $q$. The differential $d_1$ of the first page of the spectral sequence is a morphism of polarizable Hodge modules, since it is compatible with $F$ and the $\Q$-structure. It follows that $E_r^{p,q}$ as the cohomology of $d_{r-1}$ is also a polarizable Hodge module of weight $q$ for all $r\geq 1$. But then $d_r$ must vanish if $r\geq 2$ because $d_r: E_r^{p,q}\to E_r^{p+r,q-r+1}$ is a morphism of polarizable Hodge modules of different weights when $r\geq 2$. This proves that the weight spectral sequence degenerates at the second page. In particular, $(\cH^\ell(C),F)$ underlies a weakly mixed Hodge module.

As for the Hodge spectral sequence, the degeneration at the first page is equivalent to the Hodge filtration on $C$ being strict, i.e. the canonical maps $\cH^\ell(F_i C) \to \cH^\ell(C)$ are injective for all $i$ and $\ell$. We prove this by induction on the length of the weight filtration $W$. If there is some $k$ that $\gr^W_kC=C$ then by the condition $(c)$ we obtain the base case. Assume that $W_kC=C$ and the Hodge filtration is strict on $W_{k-1}C$. By the short exact sequence 
\[
0 \to W_{k-1}C \to W_kC \to \gr^W_{k}C \to 0
\]
we get a commutative diagram between two exact sequences:
\[
\begin{tikzcd}[sep=scriptsize]
\cH^{\ell-1}(F_i\gr^W_k C) \arrow{r}\arrow[d,hook] & \cH^\ell(F_iW_{k-1}C) \arrow{r}\arrow[d,hook] & \cH^\ell(F_iW_{k}C) \arrow{r}\arrow{d} & \cH^\ell(F_i\gr^W_k C) \arrow{r}\arrow[d,hook] & \cH^{\ell+1}(F_iW_{k-1}C) \arrow[d,hook] \\
  \cH^{\ell-1}(\gr^W_k C) \arrow{r}{\alpha} & \cH^\ell(W_{k-1}C) \arrow{r} & \cH^\ell(W_{k}C) \arrow{r} & \cH^\ell(\gr^W_k C) \arrow{r} & \cH^{\ell+1}(W_{k-1}C)
\end{tikzcd}
\]
We finish the proof a diagram chase. Assume $m\in \cH^\ell(F_iW_{k}C)$ is sent to $0$ in $\cH^\ell(W_kC)$. Then the image of $m$ in $\cH^\ell(F_i\gr^W_kC)$ must vanish since, by the condition $(c)$, $\cH^\ell(F_i\gr^W_kC) \to \cH^\ell(\gr^W_kC)$ is injective. By exactness, there is an element $m'\in \cH^\ell(F_iW_{k-1}C)$ whose image is $m$. Let $m''$ be the image of $m'$ in $\cH^\ell(W_{k-1}C)$. Then there is $\tilde m\in \cH^{\ell-1}(\gr^W_k C)$ whose image is $m''$, as $m''$ is sent to zero by $\cH^\ell(W_{k-1}C)\to\cH^\ell(W_{k}C)$. We will conclude the proof if we can find $\bar m \in \cH^{\ell-1}(F_i\gr^W_k C)$ such that $\bar m \mapsto m'$ because this will imple $m=0$. This is done by noticing that $\alpha$ is a morphism of graded polarizable weakly mixed Hodge modules since $\alpha$ preserves the Hodge filtration and the $\Q$-structure and also $\cH^{\ell-1}(\gr^W_k C)$ is of weight $k+\ell-1$ which is the top weight of $\cH^\ell(W_{k-1}C)$. It follows that $\alpha$ is strict and there is $\bar m \in \cH^{\ell-1}(F_i\gr^W_k C)$ whose image is $m'$.
\end{proof}

\subsection{Comparison to the restriction functors} The goal of this part is to prove Theorem~\ref{thm:main}(b):

\begin{thm} \label{thm:compare}
If $(\cM,F)$ is a graded polarizable mixed Hodge module then the complex $B(\cM)$ (resp. $C(\cM)$) is isomorphic to $(i^!\cM,F)$ (resp. $(i^*\cM,F)$) in the derived category of filtered $\cD$-modules with $\Q$-structures.
\end{thm}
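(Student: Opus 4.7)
The plan is to promote the unfiltered quasi-isomorphism $B(\cM)\simeq i^!\cM$ of Theorem~\ref{thm:topop} to one in the filtered derived category with $\Q$-structure, following the pure-to-mixed strategy already used for Theorem~\ref{thm:mhc}. The parallel statement $C(\cM)\simeq i^*\cM$ would be handled by the same argument after passing through the specialization $\Sp(\cM)$ and Proposition~\ref{prop:cvvc}.

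First I would realize the comparison as the filtered zig-zag
\[
B(\cM) \xleftarrow{q} (A_0(\cM),F) \xrightarrow{\iota} (A_\infty(\cM),F) = (i^!\cM,F),
\]
where $A_\infty(\cM)$ is the Koszul complex of $\cM$ along $t_1,\dots,t_r$ with its naive filtration, representing $(i^!\cM,F)$ by the definition of the filtered $\cO$-module pullback. The kernel of $q$ is $A_{<0}(\cM)$, which by discreteness of $V_\bullet\cM$ coincides locally with $A_\alpha(\cM)$ for some $\alpha<0$ and is therefore filtered acyclic by Theorem~\ref{thm:acyc}(a). Thus $q$ is already a filtered quasi-isomorphism, and the content of the theorem is reduced to showing that $\iota$ is also a filtered quasi-isomorphism.

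In the pure case I would imitate the blow-up argument used in Theorem~\ref{thm:acyc}. If $\Supp(\cM)\subseteq Z$, the claim follows directly from Kashiwara's equivalence and Example~\ref{VFiltEg}(b), where the filtration on the pushforward is explicit. Otherwise, let $\pi:\widehat X\to X$ be the blow-up of $Z$ with exceptional divisor $E$ and let $\widehat\cM$ be the minimal extension of $\cM|_{X\setminus Z}$; by the decomposition theorem, $(\cM,F)$ is a filtered direct summand of $\cH^0\pi_+(\widehat\cM,F)$. On $\widehat X$ the subvariety $E$ is a smooth hypersurface, for which the filtered identification $B_E(\widehat\cM)\simeq (i_E^!\widehat\cM,F)$ is the classical codimension-one case of Saito. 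Factoring $\pi=p\circ i_\pi$ as in Theorem~\ref{thm:acyc} and using the bistrictness results of~\cite{BMS} together with Theorem~\ref{thm:bistrict}, the filtered quasi-isomorphism transports from $\widehat X$ down to $X$.

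The mixed case would then follow from the pure case by the Deligne-type splitting $\gr^W\gr^V_\alpha\cM\simeq \gr^W\gr^L\gr^V_\alpha\cM$ of Lemma~\ref{lem:relsplitfun}, applied slot-by-slot to the Koszul complex. Compatibility with $\Q$-structure is built into the construction: $B(\cM)$ acquires its $\Q$-structure from the identification $\DR_Z B(\cM)\simeq i_Z^!\cK\otimes_\Q\C$ recorded at the start of the proof of Theorem~\ref{thm:mhc}, and the zig-zag above consists of morphisms of $\cD$-modules whose underlying constructible maps coincide with the standard definition of $i^!$. The main obstacle will be the pure case reduction via blow-up — specifically, matching the iterated codimension-one $V$-filtration data on $\widehat X$ with the one-step higher-codimension Koszul-with-$V$-filtration data on $X$ at the \emph{filtered} level, which is precisely where the Hard Lefschetz decomposition and the bistrictness invoked in Theorem~\ref{thm:bistrict} do the essential work.
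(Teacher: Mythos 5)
Your proposal has a genuine gap at the very first step. You assert that $(A_\infty(\cM),F)$, the Koszul complex of $\cM$ along $t_1,\dots,t_r$ with its naive filtration, ``represents $(i^!\cM,F)$ by the definition of the filtered $\cO$-module pullback.'' This is not correct: Saito's Hodge filtration on $i^!M$ for a mixed Hodge module $M$ is \emph{not} defined by the $\cO$-module pullback formula; it is defined via iterated codimension-one $V$-filtration restrictions (equivalently, via the localization \v{C}ech complex $K(\cM)$ used in the paper's proof). Identifying the naive filtration on the Koszul complex with Saito's filtration on $i^!M$ is part of the content of Theorem~\ref{thm:compare}, not a definition. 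As a concrete failure, take $\cM=\cO_{\A^1}(*\{0\})$ with $F_p=\cO((p+1)\{0\})$ for $p\geq 0$. Then $i^!\cM=0$, so $(i^!\cM,F)$ is filtered acyclic, but the Koszul complex $(\cO_X(*Z),F[-1])\xrightarrow{t}(\cO_X(*Z),F[-1])$ with the naive filtration is not: on the $F_{p+1}$-pieces the map $t:t^{-p-2}\cO_X\to t^{-p-2}\cO_X$ is not surjective.

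The second step of your reduction — showing that $\iota:A_0(\cM)\hookrightarrow A_\infty(\cM)$ is a filtered quasi-isomorphism — is also out of reach from the stated results. The cokernel of $\iota$ is filtered by the complexes $B_\beta(\cM)$ for $\beta>0$ (right-module convention). Theorem~\ref{thm:acyc} gives filtered acyclicity of $A_\alpha$ (hence $B_\alpha$) for $\alpha<0$ and of $C_\alpha$ for $\alpha>0$; it says nothing about $B_\beta$ for $\beta>0$, and indeed this fails with the naive filtration (the example above exhibits $B^{-1}$ not filtered acyclic in left notation). The authors do observe in the introduction that filtered acyclicity of $B^\bullet$ on the ``other side'' eventually holds, but only as a consequence of Theorem~\ref{thm:main} together with the Fourier transform computation — so invoking it here would be circular. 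The paper's actual argument avoids both problems by using the bifiltered \v{C}ech complex $K(\cM)$ of localizations, which is a complex of mixed Hodge modules and therefore already carries the correct $(F,W)$-filtrations and $\Q$-structure by Saito's theory, and then identifying $\gr_0^V K(\cM)$ with $B(\cM)$ via the double complex $BK(\cM)$, using the filtered acyclicity from Theorem~\ref{thm:acyclic} and Corollary~\ref{thm:degsep}. Your blow-up idea for the pure case echoes Remark~\ref{rmk:bypass}, but even there the key reduction is to the filtered acyclicity of $B(\cM(*Z_i))$ applied inside the \v{C}ech complex, not to a direct comparison of $A_0$ with the full Koszul complex.
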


Before starting the proof, we give a lemma comparing \v{C}ech complexes. Let $Z_i$ be the hypersurface defined by $t_i=0$. Then the complex ${i}_+i^! M$ can be expressed by the \v{C}ech complex
\begin{equation}\label{eq:11}
K(M,Z_1,Z_2,\dots,Z_r)=\left\{M\rightarrow \bigoplus^r_{i=1}M(* Z_i) \rightarrow \cdots \rightarrow M\left(* \sum^r_{i=1} Z_i\right) \right\}
\end{equation}
placed in degrees $0,1,\dots,r$ where the morphism is induced by natural morphisms $N\rightarrow N(*Z_i)$ for any mixed Hodge module $N$ on $X$. Similarly, the complex ${i}_+i^* M$ can be expressed by the \v{C}ech complex (see the proof of ~\cite[Prop. 2.19]{Saito-MHM})
\begin{equation}\label{eq:12}
K_!(M,Z_1,Z_2,\dots,Z_r)=\left\{M\left(! \sum^r_{i=1}Z_i\right)\rightarrow  \cdots \rightarrow \bigoplus^r_{i=1}M(! Z_i) \rightarrow M \right\}
\end{equation}
placed in degree $-r,-r+1,\dots,0$, where the morphism is induced by the natural morphisms $N(! Z_i)\rightarrow N$ for any mixed Hodge module $N$ on $X$. Recall that $N(!D)=\bD\big(\bD(N)(*D)\big)$ for a hypersurface $D$, where $\bD$ is duality for mixed Hodge modules. By definition, this is $j_! j^*(N)$, where $j: X - Z \to X$ is the inclusion of the complement of $Z$.

\begin{lem}\label{lem:graph}
Let $\gamma: X\rightarrow X\times \A^r$ be the graph embedding of $f$ and $i_H:H=X\times \{0\}\rightarrow X\times \A^r$ be the closed embedding of the central fiber. Then we have natural isomorphisms
\begin{enumerate}
    \item $\gamma_+ K(M, Z_1,Z_2,\dots,Z_r)\simeq {i_{H}}_+ K(M, Z_1,Z_2,\dots,Z_r)$ and
    \item $\gamma_+ K_!(M, Z_1,Z_2,\dots,Z_r)\simeq {i_{H}}_+ K_!(M, Z_1,Z_2,\dots,Z_r)$
\end{enumerate}
in  $D^b(\MHM(X\times \A^r))$.
\end{lem}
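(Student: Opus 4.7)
The plan is to reduce both parts of the lemma to the geometric identity $\gamma \circ i_Z = i_H \circ i_Z$, where $i_Z : Z \hookrightarrow X$ is the inclusion; this holds because $t_1, \ldots, t_r$ all vanish on $Z = \bigcap_i Z_i$, so both composites equal the immersion $z \mapsto (z, 0) \in X \times \A^r$.

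The main step is to upgrade the familiar fact that $K(M, Z_1, \ldots, Z_r)$ computes $i_{Z+} i_Z^! M$ at the level of $\cD$-modules to a quasi-isomorphism in $D^b(\MHM(X))$. This comes from the standard distinguished triangle
\[
i_{Z+} i_Z^! M \to M \to Rj_* j^* M \xrightarrow{+1}
\]
in $D^b(\MHM(X))$, where $j : X - Z \hookrightarrow X$, together with the \v{C}ech resolution of $Rj_* j^* M$ relative to the affine open cover $\{X - Z_i\}_{i=1}^r$ of $X - Z$. Each term of this resolution has the form $M(*(Z_{i_1} + \cdots + Z_{i_k})) = j_{i_1 \cdots i_k *} j_{i_1 \cdots i_k}^* M$, which lies in $\MHM(X)$, and the \v{C}ech differentials are morphisms there, so the resolution lives naturally in the category of complexes of mixed Hodge modules.

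With this in hand, the rest is formal. Since $\gamma$ and $i_H$ are closed immersions, $\gamma_+$ and $(i_H)_+$ are exact functors on $\MHM$, and applying each to the above quasi-isomorphism produces
\[
\gamma_+ K(M, Z_1, \ldots, Z_r) \simeq \gamma_+ i_{Z+} i_Z^! M, \qquad (i_H)_+ K(M, Z_1, \ldots, Z_r) \simeq (i_H)_+ i_{Z+} i_Z^! M
\]
in $D^b(\MHM(X \times \A^r))$. The identity $\gamma \circ i_Z = i_H \circ i_Z$ then gives $\gamma_+ i_{Z+} = (i_H)_+ i_{Z+}$ as functors, yielding the desired natural isomorphism in part (a). Part (b) follows by the same recipe applied to the dual triangle $j_! j^* M \to M \to i_{Z+} i_Z^* M \xrightarrow{+1}$, which exhibits $K_!(M, Z_1, \ldots, Z_r)$ as a model of $i_{Z+} i_Z^* M$ in $D^b(\MHM(X))$. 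I do not expect a serious obstacle; the only mildly subtle point is checking the functoriality of the \v{C}ech resolution in $\MHM$, but this is immediate from the fact that the localization units $M \to M(*D)$ and the corresponding \v{C}ech differentials are morphisms of mixed Hodge modules.
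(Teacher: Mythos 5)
Your proof is correct, and it takes a genuinely different — and more economical — route than the paper. The paper proves this by pulling $M$ back to $\widetilde{M} = M \boxtimes \Q^H_{\A^r}[r]$ on $X\times\A^r$, introducing the two families of divisors $D_j = \{f_j - t_j = 0\}$ and $H_j = \{t_j = 0\}$, identifying $K(\widetilde{M}, D_1,\dots,D_r) \simeq \gamma_+ M(-r)$ and $K(\widetilde{M},H_1,\dots,H_r)\simeq {i_H}_+M(-r)$, and then computing the iterated \v{C}ech complex $K(\widetilde{M}, D_1,\dots,D_r,H_1,\dots,H_r)$ in the two possible orders to deduce the isomorphism. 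You instead identify $K(M,Z_1,\dots,Z_r) \simeq i_{Z+}i_Z^!M$ once, directly on $X$, and then exploit the elementary geometric coincidence $\gamma\circ i_Z = i_H\circ i_Z$ (both send $z\mapsto(z,0)$ because $f$ vanishes along $Z$) together with functoriality of closed-immersion pushforward to conclude. Your key identification is exactly the fact the paper itself invokes just before stating the lemma, so no extra input is needed. What the paper's double-\v{C}ech computation buys is a concrete, term-by-term model of the quasi-isomorphism, which is convenient for the explicit bistrictness manipulations that follow; what your argument buys is brevity and transparency, at the cost of giving only an abstract isomorphism in $D^b(\MHM(X\times\A^r))$ — which is, however, all the lemma statement asks for, and suffices for the way it is applied in the proof of Theorem~\ref{thm:compare}. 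Your claim that the localization units and \v{C}ech differentials are morphisms of mixed Hodge modules, making the \v{C}ech resolution of $Rj_*j^*M$ live in $D^b(\MHM(X))$, is indeed what underlies both your proof and the paper's.
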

\begin{proof}[Proof of the lemma]
Let $\widetilde M= M \boxtimes \Q_{\A^r}^H[r]$ be the pullback of $M$ to $X\times \A^r$. Denote by $D_j$ be the divisor on $X\times \A^r$ defined by $f_j-t_j=0$ for $j=1,2,\dots,r$ and denote by $H_j$ be the divisor on $X\times \A^r$ defined by $t_j=0$. Then we have 
\[
\begin{aligned}
K\left(\widetilde M, D_1,D_2,\dots,D_r\right) \simeq \gamma_+M(-r)  \quad \text{and}\quad K\left(\widetilde M,H_1,H_2,..,H_r\right) \simeq {i_{H}}_+M(-r),
\end{aligned}
\]
recalling that $(-r)$ is the Tate twist. It follows that
\[
\begin{aligned}
K(\widetilde M, D_1,D_2,\dots,D_r,H_1,H_2,\dots,H_r) & =K\left(K\left(\widetilde M, D_1,D_2,\dots,D_r\right),H_1,H_2,\dots,H_r\right) \\
    & \simeq K(\gamma_+ M(-r), H_1,H_2,\dots,H_r) \\
    & \simeq \gamma_+ K(M(-r), Z_1,Z_2,\dots,Z_r).
\end{aligned}
\]
On the other hand, 
\[
\begin{aligned}
K(\widetilde M, D_1,D_2,\dots,D_r,H_1,H_2,\dots,H_r) & =K\left(K\left(\widetilde M, H_1,H_2,\dots,H_r \right),D_1,D_2,\dots,D_r\right) \\
    & \simeq K({i_{H}}_+ M(-r), D_1,D_2,\dots,D_r) \\
    & \simeq {i_{H}}_+ K(M(-r), Z_1,Z_2,\dots,Z_r).
\end{aligned}
\]
We conclude the first statement of the lemma by undoing the Tate twist. The second statement is similar and, we leave it to the reader.
\end{proof}

\begin{proof}[Proof of the Theorem]
Note that the $\Q$-structure has already been handled in Theorem~\ref{thm:mhc}.

\textit{1.} We first deal with the complex $B(\cM)$. As above, the functor $i_+i^!\cM$ can be defined by the \v{C}ech complex in the derived category of mixed Hodge modules:
\begin{equation}\label{eq:K}
K(\cM)= K(\cM,Z_1,\cdots, Z_r) = \left\{\cM\rightarrow \bigoplus \cM(*Z_i)\rightarrow \cdots \rightarrow \cM(*\sum^r_{i=1}Z_i)\right\}
\end{equation}
placed in degrees $0,1,\dots,r$. Moreover, the complex $K(\cM)$ is isomorphic to ${i}_+ \gr^V_0 K(\cM)$ in the derived category of $(F,W)$-bifiltered $\cD$-modules by Lemma~\ref{lem:graph}. Consider the double complex $BK(\cM)$:
\begin{equation}\label{eq:BK}
\begin{tikzcd}
\gr^V_0\cM \arrow{r}{\delta_0}\arrow{d} & (\gr^V_{-1}\cM)^r \arrow{r}{\delta_1}\arrow{d} & \cdots \arrow{r}{\delta_{r-1}} & \gr^V_{-r}\cM \arrow{d} \\
\bigoplus^r_{i=1}\gr^V_0\cM(*Z_i) \arrow{r}{\delta_0} \arrow{d} & \bigoplus^r_{i=1}(\gr^V_{-1}\cM(*Z_i))^r \arrow{r}{\delta_1}\arrow{d} & \cdots \arrow{r}{\delta_{r-1}} & \bigoplus^r_{i=1}\gr^V_{-r}\cM(*Z_i) \arrow{d} \\
\cdots\arrow{d} & \cdots \arrow{d} &\cdots & \cdots \arrow{d} \\
\gr^V_0\cM (*\sum^r_{i=1}Z_i) \arrow{r}{\delta_0} & (\gr^V_{-1}\cM (*\sum^r_{i=1}Z_i))^r \arrow{r}{\delta_1} & \cdots \arrow{r}{\delta_{r-1}} & \gr^V_{-r}\cM(*\sum^r_{i=1}Z_i)  
\end{tikzcd}
\end{equation}
whose uppermost row is $BK^0(\cM)=B(\cM)$ and leftmost column is $B^0K(\cM)=\gr^V_0K(\cM)$. The total complex of $BK(\cM)$ is $(F,W)$-bifiltered quasi-isomorphic to $\gr^V_0K(\cM)$ because $\gr^V_\alpha K(\cM)$ is $(F,W)$-bifiltered acyclic for $\alpha<0$ and Lemma~\ref{lem:relmonexact}. On the other hand, the total complex of $BK(\cM)$ is also  $F$-filtered quasi-isomorphic to $B(\cM)$ because each row $BK^i(\cM)$ is $F$-filtered acyclic when $i\neq 0$ by Corollary~\ref{thm:degsep} and Theorem~\ref{thm:topop}. We conclude that $\gr^V_0K(\cM)$ and $B(\cM)$ are isomorphic in the derived category of $F$-filtered $\cD_Z$-modules. But $\gr^V_0K(\cM)$ is $(F,W)$-bifiltered quasi-isomorphic to $i^!(\cM,F,W)$. We conclude the proof of this part.

\textit{2.} Next, we deal with the complex $C(\cM)$. The functor $i_+i^*\cM$ can be computed by the the \v{C}ech complex 
\begin{equation}\label{eq:K!}
K_!(\cM)=\left\{\cM\left(! \sum^r_{i=1}Z_i\right)\rightarrow  \cdots \rightarrow \bigoplus^r_{i=1}\cM(! Z_i) \rightarrow \cM \right\}
\end{equation}
placed in degrees $-r,-r+1,\dots,0$. Moreover, the complex $K_!(\cM)$ is isomorphic to ${i}_+ \gr^V_0 K_!(\cM)$ in the derived category of $(F,W)$-bifiltered $\cD$-modules by Lemma~\ref{lem:graph}. Consider the double complex $CK_!(\cM)$
\begin{equation}\label{eq:CK}
\begin{tikzcd}
\gr^V_{-r}\cM \arrow{r}{\delta_{-r}} & \cdots \arrow{r}{\delta_{-r+1}} & (\gr^V_{-r+1}\cM)^r \arrow{r}{\delta_{-1}} & \gr^V_{0}\cM  \\
\bigoplus^r_{i=1}\gr^V_{-r}\cM(!Z_i) \arrow{r}{\delta_{-r}} \arrow{u} &  \cdots \arrow{r}{\delta_{-r+1}}\arrow{u} &  \bigoplus^r_{i=1}(\gr^V_{-r+1}\cM(!Z_i))^r\arrow{r}{\delta_{-1}}\arrow{u} & \bigoplus^r_{i=1}\gr^V_{0}\cM(!Z_i) \arrow{u} \\
\cdots\arrow{u} & \cdots \arrow{u} &\cdots \arrow{u} & \cdots \arrow{u} \\
\gr^V_{-r}\cM (!\sum^r_{i=1}Z_i) \arrow{r}{\delta_{-r}}\arrow{u} & \cdots \arrow{r}{\delta_{-r+1}}\arrow{u} &  (\gr^V_{-r+1}\cM (!\sum^r_{i=1}Z_i))^r\arrow{r}{\delta_{-1}}\arrow{u} & \gr^V_{0}\cM(!\sum^r_{i=1}Z_i)\arrow{u}  
\end{tikzcd}
\end{equation}
whose uppermost row is $CK^0_!(\cM)=C(\cM)$ and leftmost column is $C^0K(\cM)=\gr^V_0K_!(\cM)$. The total complex of $CK_!(\cM)$ is $(F,W)$-bifiltered quasi-isomorphic to $\gr^V_0K_!(\cM)$ because $\gr^V_\alpha K_!(\cM)$ is $(F,W)$-bifiltered acyclic for $\alpha<0$. On the other hand, the total complex of $CK_!(\cM)$ is also $F$-filtered quasi-isomorphic to $C(K)$ because each row $CK^i_!(\cM)$ is $F$-filtered acyclic when $i\neq 0$ because of Corollary~\ref{thm:degsep} and Theorem~\ref{thm:topop}. We conclude that $\gr^V_0K_!(\cM)$ and $C(\cM)$ are isomorphic in the derived category of $F$-filtered $\cD_Z$-modules. Finally, $\gr^V_0K_!(\cM)$ is bifiltered quasi-isomorphic to $i^*(\cM,F,W)$. We conclude the proof of this part.
\end{proof}

\begin{rmk}\label{rmk:bypass}
If one is just interested in the isomorphisms 
\[
(B(\cM),F)\simeq (i^!\cM,F) \quad \text{and}  \quad (C(\cM),F)\simeq (i^*\cM,F)
\] 
in the derived category of filtered $\cD$-modules, there is a way to bypass mixed Hodge complexes as are used in Theorem~\ref{thm:mhc} and Theorem~\ref{thm:cmhc}. To prove $(B(\cM),F)\simeq (i^!\cM,F)$, we just need to show that $(B(\cM(*Z_i)),F)$ is filtered acyclic for any $Z_i$ as in the proof Theorem~\ref{thm:compare}. For this we consider $\widehat{\cM}(*\widehat Z_i +E)$ on the blow-up $\pi:\widehat X\to X$ along $Z$ where $\widehat{\cM}$ is the minimal extension of $\cM|_{X-Z}$, $\widehat Z_i$ is the strict transform of $Z_i$ and $E$ is the exceptional divisor. Note that $\pi_+\widehat{\cM}(*\widehat Z_i+E)=\cM(*Z_i)$. It follows from the computation in the proof of Theorem~\ref{thm:acyc} that $B(i_{\pi_+}\cM(*\widehat Z_i+E))$ is filtered acyclic where $i_\pi:\widehat X\to \widehat X\times X$ is the graph embedding because of the fact that one of the Koszul differentials is filtered bijective. We can conclude by applying $p_+$ to $B(i_{\pi_+}\cM(*\widehat Z_i+E))$ and the bistrictness result for smooth, projective morphisms. The same idea works for the filtered acyclicity of $(C(\cM(*Z_i)),F)$.
\end{rmk}

\subsection{Finishing the proof} We now prove the last part of Theorem~\ref{thm:main}:

\begin{thm}\label{thm:comp}
If $\cM$ is a graded polarizable mixed Hodge module and $W$ is the filtration on $B(\cM)$ and $C(\cM)$ induced by the relative monodromy filtration on $\gr^V_\alpha\cM$, then the quasi-isomorphisms in Theorem~\ref{thm:compare} induce isomorphisms on the cohomologies:
\[
\gr^W_k\cH^\ell B(\cM)\simeq \gr^W_{k+\ell}\cH^\ell i^!_Z\cM \quad \text{and} \quad \gr^W_{k}\cH^{-\ell} C(\cM)\simeq \gr^W_{k-\ell}\cH^{-\ell} i^*_Z\cM
\]
as polarizable Hodge modules for $\ell\geq0$.
\end{thm}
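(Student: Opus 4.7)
My plan is to upgrade the $F$-filtered quasi-isomorphisms $B(\cM) \simeq i^!\cM$ and $C(\cM) \simeq i^*\cM$ from Theorem~\ref{thm:compare} to weight-filtered isomorphisms on cohomology, using the mixed Hodge complex structure of Theorem~\ref{thm:mhc}. I focus on $B(\cM)$; the argument for $C(\cM)$ is entirely parallel. Since $(B(\cM), F, W)$ is a mixed Hodge complex with weight spectral sequence degenerating at $E_2$ by Corollary~\ref{thm:degsep}, each $\gr^W_k \cH^\ell B(\cM)$ is automatically a polarizable Hodge module of weight $k+\ell$---precisely the Deligne shift baked into the MHC formalism. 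The task is then to match this with $\gr^W_{k+\ell}\cH^\ell i^!\cM$ under the underlying filtered $\cD$-module isomorphism from Theorem~\ref{thm:compare}. Applying Lemma~\ref{lem:relsplitfun}, the splitting $\gr^W\gr^V_\alpha \cM \simeq \gr^W\gr^L\gr^V_\alpha\cM$ is functorial and compatible with the Koszul differentials, and transports through the double complex $BK(\cM)$ of~\eqref{eq:BK} to the $i^!$ side, so we may assume $(\cM, F)$ underlies a polarizable pure Hodge module.

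In the pure case, if the support of $\cM$ is contained in $Z$ the claim is immediate from Kashiwara's equivalence. Otherwise, let $\pi: \widehat X \to X$ be the blowup along $Z$ with exceptional divisor $E$, let $\widehat\cM$ be the minimal extension of $\cM|_{X\setminus Z}$ across $E$, and use the decomposition theorem for polarizable Hodge modules to realize $\cM$ as a filtered direct summand of $\cH^0\pi_+\widehat\cM$. On $\widehat X$ the divisor $E$ has codimension one, so the desired compatibility $\gr^W_k\cH^\ell B_E(\widehat\cM) \simeq \gr^W_{k+\ell}\cH^\ell i^!_E\widehat\cM$ reduces to the classical fact that the MHM weight filtration on $\cH^\ell i^!_E\widehat\cM$ agrees with the relative monodromy filtration, with the Deligne shift by $\ell$ encoded in the nearby and vanishing cycle conventions of Saito's theory (see~\cite{Saito-MHP}*{Section 3} and~\cite{Schnell-MHM}*{Section 20}).

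To propagate this back to $X$, factor $\pi = p\circ i_\pi$ into the graph embedding and the smooth projection $p: \widehat X\times X \to X$, and apply Lemma~\ref{lem:e2}. Part $(c)$ identifies the $W$-filtration on $\gr^V_\alpha \cH^\ell p_+(\cdot)$ induced from the relative monodromy filtration on the source with the intrinsic relative monodromy filtration on the target, while part $(d)$ provides the strict filtered decomposition $p_+\gr^W_k B(\cdot) \simeq \bigoplus_\ell \cH^\ell p_+\gr^W_k B(\cdot)[-\ell]$. Combined with Saito's direct image theorem (Theorem~\ref{dirimthm}), whose effect on the MHM weight filtration is the shift $W_m\cH^\ell p_+ \leftrightarrow p_+ W_{m-\ell}$ by $\ell$, these assemble into $\gr^W_k\cH^\ell B(\cM) \simeq \gr^W_{k+\ell}\cH^\ell i^!\cM$ after selecting the summand corresponding to $\cM$. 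The main obstacle is the bookkeeping of the two independent Deligne shifts by $\ell$---one from the MHC structure on $B(\cM)$, the other from Saito's direct image for MHMs---and verifying via Lemma~\ref{lem:e2} (whose proof relies on polarizability and Hard Lefschetz) that they combine consistently to produce the single shift appearing in the final statement.
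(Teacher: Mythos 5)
Your approach diverges from the paper's at a fundamental level, and in a way that leaves a genuine gap. The paper proves this theorem by a spectral-sequence comparison on the \v{C}ech double complex $BK(\cM)$ from~\eqref{eq:BK}: the auxiliary Lemma~\ref{lem:grwgrl} shows $\cH^\ell_\delta \gr^W_k BK(\cM)$ vanishes for $\ell\neq 0$ and identifies the $\ell=0$ column kernel with $\gr^W_k\gr^V_0 K(\cM)$; then one runs the weight spectral sequences of each row $BK^j(\cM)$, observes that the rows with $j>0$ (which involve localizations $\cM(*Z_i)$) have trivial second page, and concludes that the $d_1$-complexes $S_{k,\ell}$ simultaneously compute the first page of the weight spectral sequence of $B(\cM)$ and $\gr^W_{k+\ell}\gr^V_0 K(\cM)$. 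The shift by $\ell$ is produced precisely by the interplay of the bigrading of the double complex. You instead try to reduce to the pure case and then run the blow-up argument — but that is the strategy used for Theorem~\ref{thm:mhc} (the MHC property of $B(\cM)$ alone), not for the \emph{comparison} with $i^!\cM$.

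The specific gap is in your reduction step: ``transports through the double complex $BK(\cM)$ to the $i^!$ side, so we may assume $(\cM,F)$ underlies a polarizable pure Hodge module.'' Lemma~\ref{lem:relsplitfun} gives a functorial splitting $\gr^W\gr^V_\alpha\cM \simeq \gr^W\gr^L\gr^V_\alpha\cM$ and hence $\gr^W B(\cM) \simeq \gr^W B(\gr^L\cM)$ — this decomposes the \emph{left-hand side} of the desired isomorphism. It does nothing for the right-hand side, $\gr^W_{k+\ell}\cH^\ell i^!\cM$, which carries the intrinsic MHM weight filtration of $\cH^\ell i^!\cM$. While the weight spectral sequence of $i^!\cM$ with respect to $W_\bullet\cM$ degenerates at $E_2$ (so $\gr^W\cH^\ell i^!\cM$ is a subquotient, hence by semisimplicity a direct summand, of $\bigoplus_m\cH^\ell i^!(\gr^W_m\cM)$), matching this decomposition with the one on the $B(\cM)$ side under the quasi-isomorphism of Theorem~\ref{thm:compare} is exactly the content that requires proof — it is what Lemma~\ref{lem:grwgrl} and the $S_{k,\ell}$-complex argument establish. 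Invoking the double complex merely to justify passing to the pure case, and then abandoning it, skips this essential compatibility. Your subsequent propagation through $p_+$ via Lemma~\ref{lem:e2} has a related problem: that lemma controls the relative monodromy filtration on $\gr^V_\alpha \cH^i p_+(\cdot)$, whereas you need to control the MHM weight filtration on $\cH^\ell i^!_Z \cH^0 p_+(\cdot)$ and to account simultaneously for the cohomological shift from $p_+$ and the cohomological shift of $B(\cM)$; ``these assemble'' is not a proof of the required bookkeeping, and I am not convinced it works without re-deriving the spectral-sequence comparison in disguise.
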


\begin{proof}
\textit{1.} We first focus on the complex $B(\cM)$. We shall prove the following as a preparation:

\begin{lem}\label{lem:grwgrl}
The complex $\cH^\ell_{\delta}\gr^W_k BK(\cM)$ is exact for $\ell\neq0$ and any $k\in\Z$ and the natural inclusion 
\[
\cH^0_{\delta}\gr^W_k BK(\cM)=\ker \gr^W_k\delta_0\rightarrow \gr^W_k\gr^V_0K(\cM)
\] 
is a filtered quasi-isomorphism, where $BK(\cM)$ is defined in~\eqref{eq:BK}.
\end{lem} 
\begin{proof}[Proof of the lemma]
We first prove that the inclusion 
\[
\ker \gr^W \delta_0\rightarrow \gr^W\gr^V_0K(\cM)
\]
is a bifiltered quasi-isomorphism. By Lemma~\ref{lem:relsplitfun}, the double complex $\gr^WBK(\cM)$ decomposes into 
\[
\begin{tikzcd}[column sep=tiny]
\gr^W\gr^L\gr^V_0\cM \arrow{r}{}\arrow{d} & (\gr^W\gr^L\gr^V_{-1}\cM)^r \arrow{r}{}\arrow{d} & \cdots \arrow{r}{} &  \gr^W\gr^L\gr^V_{-r}\cM \arrow{d} \\
\bigoplus^r_{i=1} \gr^W\gr^L\gr^V_0\cM(*Z_i) \arrow{r}{} \arrow{d} & \bigoplus^r_{i=1}(\gr^W\gr^L\gr^V_{-1}\cM(*Z_i))^r \arrow{r}{}\arrow{d} & \cdots \arrow{r}{} & \bigoplus^r_{i=1} \gr^W\gr^L\gr^V_{-r}\cM(*Z_i) \arrow{d} \\
\cdots \arrow{d} & \cdots  \arrow{d} &\cdots  & \cdots \arrow{d} \\
\gr^W\gr^L\gr^V_0\cM (*\sum^r_{i=1}Z_i) \arrow{r}{} & (\gr^W\gr^L\gr^V_{-1}\cM (*\sum^r_{i=1}Z_i))^r \arrow{r}{} & \cdots \arrow{r}{} & \gr^W\gr^L\gr^V_{-r}\cM(*\sum^r_{i=1}Z_i)  
\end{tikzcd}
\]
where $L$ is the filtration induced by the weight filtration on $K(\cM)$. Since the category of polarizable Hodge modules on an algebraic variety is semisimple, the cohomology $\cH^\ell \gr^L K(\cM)$ is a summand  of $\gr^LK^\ell(\cM)$. It follows that $\gr^W\gr^V_0\cH^\ell \gr^L K(\cM)$ is contained in $\cH^\ell \ker \gr^W\delta_0$ because the support of $\gr^W\gr^V_0\cH^\ell \gr^L K(\cM)$ is contained in $Z$. Then due to the fact that
\[
\gr^W\gr^V_0\cH^\ell \gr^L K(\cM)\rightarrow \cH^\ell \ker \gr^W\delta_0
\] 
is injective, we conclude that $\ker \gr^W\delta_0\rightarrow \gr^W \gr^V_0K(\cM)$ is an isomorphism.

Next, we prove that the complex $\cH^\ell_{\delta}\gr^W_k BK(\cM)$ is exact for $\ell>0$. By Theorem~\ref{thm:mhc}, the total complex of $\gr^WBK(\cM)$ decomposes into 
\[
\bigoplus_{\ell\in\Z} \cH^\ell_{\delta}\gr^WBK(\cM)[-\ell].
\]
On the other hand, we know $B^i K(\cM)$ is acyclic for $i>0$, as $K(\cM)$ has cohomology supported on $\{t_1 = \cdots = t_r =0\}$. Hence, $\gr^WB^iK(\cM)$ is $F$-filtered exact for all $i>0$, the total complex of $\gr^WBK(\cM)$ is filtered quasi-isomorphic to $\gr^W\gr^V_0K(\cM)$ which is also filtered quasi-isomorphic to $\cH^0_{\delta}\gr^W BK(\cM)$ as we just proved. This completes the proof of the lemma.
\end{proof}

Returning to the proof of the theorem, we have a weight spectral sequence on $BK^j(\cM)$
\[
E^{p,q}_1=\cH^{p+q}_\delta \gr^W_{-p} BK^j(\cM) \Rightarrow E^{p,q}_\infty=\gr^W_{-p} \cH^{p+q}_\delta BK^j(\cM).
\]
which degenerates at $E^{p,q}_2$ by Theorem~\ref{thm:mhc}. The differential of the first page of the spectral sequence induces morphisms of complexes
\[
S_{k,\ell}=\{\cH^0_\delta\gr^W_{k+\ell} BK(\cM) \rightarrow \cH^1_\delta \gr^W_{k+\ell-1} BK(\cM) \rightarrow \cdots \rightarrow \cH^r_\delta \gr^W_{k+\ell-r} BK(\cM)\}
\]
for any $\ell\in\Z$. By the above lemma, the total complex of $S_{k,\ell}$ is filtered isomorphic to $\cH^0_\delta\gr^W_{k+\ell} BK(\cM)$ and thus, $\gr^W_{k+\ell} \gr^V_0K(\cM)$. On the other hand, because of Theorem~\ref{thm:mhc}, the second page of the weight spectral sequence on $B(\cN)$ is zero if one of the $t_i$ acts bijectively on a graded polarizable mixed Hodge module $\cN$. This means $S_{k,\ell}$ is also filtered isomorphic to the first page of the weight spectral sequence of $B(\cM)$:
\[
\cH^0_\delta\gr^W_{k+\ell} B(\cM) \rightarrow \cH^1_\delta \gr^W_{k+\ell-1} B(\cM) \rightarrow \cdots \rightarrow \cH^r_\delta \gr^W_{k+\ell-r} B(\cM),
\]
which is filtered isomorphic to $\gr^W_{k+\ell} \gr^V_0K(\cM)$. If we take cohomology at degree $\ell$, we conclude that 
\[
\gr^W_k\cH^\ell B(\cM)\simeq \gr^W_{k+\ell}\cH^\ell K(\cM)
\]
as polarizable Hodge modules. 

\textit{2. }We deal with the complex $C(\cM)$. The proof of the following lemma is parallel to the one of Lemma~\ref{lem:grwgrl} and therefore, we leave it to the readers.

\begin{lem}
The complex $\cH^\ell_{\delta}\gr^W_k CK_!(\cM)$ is exact for $\ell\neq0$ and any $k\in\Z$ and the natural quotient 
\[
\gr^W_k\gr^V_0K_!(\cM) \rightarrow \cH^0_{\delta}\gr^W_k CK_!(\cM)=\coker \gr^W_k\delta_{-1}
\] 
is a filtered quasi-isomorphism.
\end{lem}

We also have a weight spectral sequence 
\[
E^{p,q}_1=\cH^{p+q}_\delta \gr^W_{-p} CK^j_!(\cM) \Rightarrow E^{p,q}_\infty=\gr^W_{-p} \cH^{p+q}_\delta CK^j_!(\cM).
\]
which degenerates at the second page by Theorem~\ref{thm:cmhc}. The differential of the first page of the spectral sequence induces morphisms of complexes
\[
T_{k,\ell}=\{\cH^{-r}_\delta\gr^W_{k-\ell+r} CK_!(\cM) \rightarrow \cH^{-r+1}_\delta \gr^W_{k-\ell+r-1} CK_!(\cM) \rightarrow \cdots \rightarrow \cH^0_\delta \gr^W_{k-\ell} CK_!(\cM)\}
\]
for any $\ell\in\Z$. By the above lemma, the total complex of $T_{k,\ell}$ is filtered isomorphic to $\cH^0_\delta\gr^W_{k-\ell} CK_!(\cM)$ and thus, $\gr^W_{k-\ell} \gr^V_0K_!(\cM)$. On the other hand, because of Theorem~\ref{thm:cmhc}, the second page of the weight spectral sequence on $B(\cN)$ is zero if $\cN=\cN(!Z)$. This means $T_{k,\ell}$ is also filtered isomorphic to the first page of the weight spectral sequence of $C(\cM)$:
\[
\cH^{-r}_\delta\gr^W_{k-\ell+r} C(\cM) \rightarrow \cH^{-r+1}_\delta \gr^W_{k-\ell+r-1} C(\cM) \rightarrow \cdots \rightarrow \cH^0_\delta \gr^W_{k-\ell} C(\cM)
\]
which is filtered isomorphic to $\gr^W_{k-\ell} \gr^V_0K_!(\cM)$. If we take cohomology at degree $-\ell$, we conclude that 
\[
\gr^W_{k}\cH^{-\ell} C(\cM)\simeq \gr^W_{k-\ell}\cH^{-\ell} K_!(\cM)
\]
as polarizable Hodge modules. 
\end{proof}

\subsection{Deligne's theorem}
The aim of this part is to prove Lemma~\ref{lem:relsplitfun}. For this purpose, we generalize, with little effort, the theorem on relative monodromy filtrations to the abstract setting, proved by Deligne in his personal letter to Cattani and Kaplan. Then Lemma~\ref{lem:relsplitfun} will be an immediate corollary.

Let $\cA$ be an abelian category and $V$ be an object in $\cA$. Let $L$ be a finite increasing filtration of $V$ and $N$ be a nilpotent endomorphism preserving the filtration $L$. We will now assume that the relative monodromy filtration $W=W(N,L)$ exists and that there is a splitting operator $Y$ for $W$, i.e. $Y$ is a semisimple operator on $V$ with eigenvalues in $\Z$ such that $W_k=\bigoplus_{i\leq k} E_i(Y)$ where $E_i(Y)$ is the $i$-eigenspace of $Y$. We say the splitting operator $Y$ satisfies the \textit{admissibility conditions} if
\begin{equation}\label{eq:adm}
 [Y,N]=-2N,\quad \text{and} \quad YL_i \subset L_i, \quad \text{for all } i.
\end{equation}

Suppose that $Y'$ is a splitting operator for $L$ that commutes with $Y$. Then the pair $(N_0, Y-Y')$ determines an $\fsl_2$-representation on $V$. We will denote the standard $\fsl_2$-triple by $(e^+,e^-,H)$: 
\[
[e^+,e^-]=H, \quad [H,e^{-}]=-2e^{-},\quad [H,e^+]=2e^+.
\]
Then $e^-=N_0$ and $H=Y-Y'$. We call the collection $(V,L,N,Y,Y')$ a \textit{Deligne-system}, a notion introduced in~\cite{Schwarz01}, if in addition
\[
[e^+, N_{j}]=0, \quad \text{for all } j\neq 0
\]
where $N_{j}$ is the $j$-th $\ad Y'$-homogeneous component of $N$. In other words, $N_j$ is $\ad e^-$-primitive in the adjoint representation for $j\neq 0$.

\begin{thm}\label{thm:delsy}
Let $(V,N,L,Y)$ be as above and assume $Y$ satisfies the admissibility condition~\eqref{eq:adm}. If the set of splitting operators of $L$ commuting with $Y$ is not empty then there exists a unique splitting operator $Y'$ of $L$ such that $(V,L,N,Y,Y')$ is a Deligne-system.
\end{thm}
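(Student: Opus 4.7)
The strategy is to start from any splitting $\tilde{Y}'$ of $L$ commuting with $Y$ (which exists by hypothesis) and modify it iteratively so as to make the components $N_j$ with $j\neq 0$ primitive with respect to a suitable $\fsl_2$-triple. Set $H := Y - \tilde Y'$ and $e^- := N_0$; the admissibility $[Y,N]=-2N$ together with $[\tilde Y', N_0]=0$ gives $[H, e^-] = -2 e^-$, and by assumption this extends to an $\fsl_2$-triple $(\tilde e^+, e^-, H)$ on $V$. Since $Y$ commutes with both $H$ and $e^-$, decomposing $\tilde e^+$ into $\ad Y$-eigenspaces and keeping only the $0$-eigenspace component produces an $e^+$ still satisfying the $\fsl_2$-relations and commuting with $Y$. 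The $\fsl_2$-representation theory on $\End(V)$ then supplies a unique primitive decomposition $X = X^{\mathrm{prim}} + \ad(e^-)(\eta)$ on each $\ad H$-weight space, with $\eta$ determined modulo $\ker \ad(e^-)$.

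The splittings of $L$ commuting with $Y$ form a torsor under $\exp(\mathfrak n^Y)$, where $\mathfrak n^Y \subset \End(V)$ consists of endomorphisms which strictly decrease $L$ and commute with $Y$. Because $\tilde Y'$ commutes with $Y$, this algebra decomposes into $\ad \tilde Y'$-homogeneous pieces $\mathfrak n^Y = \bigoplus_{j \leq -1} \mathfrak n^Y_j$. I construct $\xi = \sum_{j\leq -1} \xi_j \in \mathfrak n^Y$ by descending induction on $|j|$: having chosen $\xi_{-1}, \dots, \xi_{-k+1}$, the candidate $Y^{(k-1)} := \exp(\ad(\xi_{-1}+\dots+\xi_{-k+1}))\, \tilde Y'$ induces a new $\fsl_2$-triple and a new decomposition $N = \sum N_j^{(k-1)}$; applying the primitive decomposition to the degree $-k$ component yields a unique $\xi_{-k} \in \mathfrak n^Y_{-k}$ such that conjugation by $\exp(\ad \xi_{-k})$ makes the degree $-k$ component primitive. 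Since $\mathfrak n$ is nilpotent, the iteration terminates, and $Y' := \exp(\ad \xi)\, \tilde Y'$ is the desired splitting.

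For uniqueness, suppose two splittings $Y'_1, Y'_2$ each yield Deligne-systems; write $Y'_2 = \exp(\ad \eta) Y'_1$ with $\eta \in \mathfrak n^{Y}$, decompose $\eta = \sum \eta_j$ into $\ad Y'_1$-homogeneous pieces, and compare the primitivity conditions in both gradings. The injectivity of $\ad(e^-)$ on the non-primitive subspace at each $\ad H$-weight forces each $\eta_j$ to vanish inductively in $j$.

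\textbf{Main obstacle.} The substantive difficulty is that the $\fsl_2$-triple itself evolves along the induction: as $\tilde Y'$ is corrected, both $N_0$ (hence $e^-$) and $H = Y - Y'$ shift, so the very notion of ``primitive'' changes at each step. The saving feature is that the correction $\xi_{-k}$ has strictly negative $\ad Y^{(k-1)}$-degree, so its infinitesimal action on $N$ modifies the degree $-k$ component exactly by $-\ad(e^-)(\xi_{-k})$—the freedom needed to kill the non-primitive part—while leaving the already-primitivized components of higher degree undisturbed; the Baker--Campbell--Hausdorff contributions only propagate into strictly more negative degrees, which are still to be addressed. Verifying carefully that each correction preserves the primitivity of previously treated components, and that the primitive-decomposition equation $\ad(e^-)(\xi_{-k}) = N_{-k} - N_{-k}^{\mathrm{prim}}$ is always solvable within $\mathfrak n^Y_{-k}$, is the heart of the proof.
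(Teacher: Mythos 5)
Your proposal is essentially the same successive-approximation argument as the paper's: modify an initial $Y$-commuting splitting degree by degree, using the primitive/Lefschetz decomposition of each homogeneous piece $N_{-k}$ to pin down the unique correction $\xi_{-k}=-N''$, and check that lower-degree corrections do not disturb the degrees already treated. The one step you flag but do not resolve — that the correction $\xi_{-k}$ can be chosen to commute with $Y$, i.e.\ lies in $\mathfrak n^Y_{-k}$ — is exactly where the admissibility hypothesis $(\ad Y)N=-2N$ is used, and it is the substantive content of the proof: from $N_{-k}=N'+(\ad e^-)N''$ with $N'$ primitive, apply $\ad Y$, use $(\ad Y)N_{-k}=-2N_{-k}$ and primitivity of $N'$ to get $(\ad Y+2)N'+(\ad e^-)(\ad Y)N''=0$, then apply $(\ad e^-)^{k-1}$ and invoke injectivity of $(\ad e^-)^{k}$ on $\ad H$-weight $k$ (here $N''$ has weight $k\geq 1$) to conclude $(\ad Y)N''=0$. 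If this is left unproved, the iteration may leave the torsor of $Y$-commuting splittings, and the resulting $Y'$ need not satisfy $[Y,Y']=0$; so this is a genuine gap rather than a routine verification. On a smaller point, the ``evolving $\fsl_2$-triple'' you single out as the main obstacle largely vanishes if you adopt the paper's framing of keeping the triple $(e^+,e^-,H)$ fixed and instead solving $[g^{-1}Ng-e^-,e^+]=0$; after undoing the conjugation, the degree-$(-k)$ equation in your setup also reduces to $(\ad e^+)\bigl((\ad e^-)\gamma_{-k}+N_{-k}\bigr)=0$ expressed in the \emph{original} triple, so the two formulations coincide and the extra bookkeeping is unnecessary.
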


\begin{proof}
Fix a splitting operator of $L$ commuting with $Y$. We can modify the splitting of $L$ by conjugating by an automorphism $g$ such that $g$ respects $W$ and $(g-1)L_i\subset L_{i-1}$, and consequently, $g$ induces an automorphism on $\gr^L$. We want to achieve that 
\[
\left[N-ge^- g^{-1},ge^+g^{-1}\right]=0,
\]
or equivalently,
\begin{equation}\label{eq:conj}
\left[g^{-1}Ng-e^{-},e^+\right]=0.
\end{equation}
We find $g$ by successive approximations: if $[N_i,e^+]=0$ for $0>i>-k$, we take $g=1+\gamma_{-k}$ for $\gamma_{-k}$ of degree $-k$ with respect to the $L$-grading for $k\geq 1$. Then to make the $k$-th $L$-degree in~\eqref{eq:conj} valid, we need 
\[
\left[-\left[\gamma_{-k},e^-\right]+N_{-k},e^+\right]=0,
\]
which is equivalent to
\begin{equation}\label{eq:adr}
\left( \ad e^+ \right)\left(\left( \ad e^{-}\right) \left(\gamma_{-k}\right)+N_{-k}\right)=0.
\end{equation}
As $k-2\geq -1$, we can write uniquely $N_{-k}=N'+ (\ad e^-) N''$, by the Lefschetz decomposition, such that $N'$ is in the kernel of $ \ad e^+$ and the $\ad H$-degree of $N''$ is $k$ because $N_{-k}$ is of $\ad H$-degree $k-2$. Then~\eqref{eq:adr} becomes
\[
\left( \ad e^+ \right)\left( \ad e^{-}\right) \left(\gamma_{-k}+N''\right)=0.
\]
It follows from the fact that the $H$-degree of $\gamma_{-k}+N''$ is $k$ that $\gamma_{-k}$ has to equal $-N''$. It remains to show that $[\gamma_{-k},Y]=0$, i.e $[N'',Y]=0$. By the admissible condition,
\[
\begin{aligned}
(\ad Y) N_{-k}  =-2N_{-k}.
\end{aligned}
\]
Substituting $N_{-k}$ by $N'+ (\ad e^-) N''$,
\[
(\ad Y) N'+ (\ad Y) (\ad e^-) N'' = (\ad Y)N'+  (\ad e^-)(\ad Y) N''-2 (\ad e^{-})N''=-2N' -2(\ad e^-) N'.
\]
Then we get
\[
(\ad Y+2)N'+(\ad e^{-})(\ad Y )N''=0.
\]
Applying $(\ad e^{-})^{k-1}$ yields 
\[
(\ad e^{-})^{k}(\ad Y )N''=0,
\]
which forces $(\ad Y) N''=0$. This completes proof.
\end{proof}

The \textit{morphisms} of a pair of Deligne-systems $(V,L,N,Y,Y')$ and $(\widehat V,\widehat L,\widehat N,\widehat Y,\widehat Y')$ are the operators $T\in \Hom(V,\widehat V)$ such that $\widehat Y T=T Y$, $\widehat N T=T N$ and $T L \subset \widehat L$ for all $i$. In fact, the morphisms of Deligne-systems are functoral:

\begin{cor}\label{cor:fun}
If $T$ is a morphism of a pair of Deligne-systems 
\[
(V,L,N,Y,Y') \quad \text{and} \quad (\widehat V,\widehat L,\widehat N,\widehat Y,\widehat Y'),
\] 
then $\widehat Y'T=T\widehat Y'$.
\end{cor}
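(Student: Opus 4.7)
The plan is to deduce the corollary from the uniqueness statement in Theorem~\ref{thm:delsy} applied to the direct sum $V\oplus\widehat{V}$. First, equip $V\oplus\widehat{V}$ with the direct sum data: the filtration $L\oplus\widehat{L}$, the nilpotent operator $N_\oplus := N\oplus\widehat{N}$, the splitting $Y_\oplus := Y\oplus\widehat{Y}$ of the relative monodromy filtration, and the splitting $Y'_\oplus := Y'\oplus\widehat{Y}'$ of $L\oplus\widehat{L}$. It is immediate from the definitions that $(V\oplus\widehat{V},L\oplus\widehat{L},N_\oplus,Y_\oplus,Y'_\oplus)$ is a Deligne-system, and that $Y_\oplus$ satisfies the admissibility conditions~\eqref{eq:adm}.

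Next, exploit the morphism $T$ to produce an automorphism $g \in \Aut(V\oplus\widehat{V})$ by $g(v,w) = (v,w+Tv)$, with inverse $g^{-1}(v,w) = (v,w-Tv)$. The defining properties of a morphism of Deligne-systems translate into:
\begin{itemize}
    \item $g$ preserves $L\oplus\widehat{L}$, because $TL_i\subseteq \widehat{L}_i$;
    \item $g$ commutes with $N_\oplus$, because $\widehat{N}T = TN$;
    \item $g$ commutes with $Y_\oplus$, because $\widehat{Y}T = TY$.
\end{itemize}
Thus conjugation by $g$ fixes the triple $(L\oplus\widehat{L},N_\oplus,Y_\oplus)$ but may transport $Y'_\oplus$ to a different splitting $gY'_\oplus g^{-1}$ of $L\oplus\widehat{L}$.

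The key step is to check that $gY'_\oplus g^{-1}$ still defines a Deligne-system on the same underlying data $(L\oplus\widehat{L},N_\oplus,Y_\oplus)$. It is immediate that $gY'_\oplus g^{-1}$ is a splitting of $L\oplus\widehat{L}$ commuting with $Y_\oplus$. For the Deligne-system condition, the associated $\fsl_2$-triple transforms by conjugation: the new $H^{\rm new} = Y_\oplus - gY'_\oplus g^{-1} = g(Y_\oplus - Y'_\oplus)g^{-1}$ since $g$ commutes with $Y_\oplus$; the new lowering operator $e^{-,\,{\rm new}}$ equals the $\mathrm{ad}(gY'_\oplus g^{-1})$-degree-$0$ component of $N_\oplus = gN_\oplus g^{-1}$, namely $g N_0 g^{-1}$; consequently $e^{+,\,{\rm new}} = g e^+ g^{-1}$. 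The condition $[e^{+,\,{\rm new}},(N_\oplus)_j^{\rm new}] = [ge^+g^{-1},gN_jg^{-1}] = g[e^+,N_j]g^{-1}$ vanishes for $j\neq 0$ by hypothesis on the original Deligne-system. Hence $gY'_\oplus g^{-1}$ is another valid Deligne-system splitting, and by the uniqueness part of Theorem~\ref{thm:delsy} we must have $gY'_\oplus g^{-1} = Y'_\oplus$.

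Finally, evaluating this identity on $(v,w)$ and comparing the second components gives $\widehat{Y}'(w - Tv) + TY'v = \widehat{Y}'w$, i.e.\ $\widehat{Y}'T = TY'$, which is the claim. The main subtle point to handle carefully is step concerning the $\fsl_2$-triple: one must verify that conjugation by $g$ transports the primitivity condition $[e^+,N_j]=0$ for $j\neq 0$ faithfully, which is where the commutation relations between $g$ and the data $(N_\oplus,Y_\oplus)$ are essential.
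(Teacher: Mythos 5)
Your proof is correct, but it takes a genuinely different route from the paper's. The paper proves the corollary directly: it decomposes $T = \sum_{i\leq 0}T_i$ into its $\ad Y'$-homogeneous pieces, observes that $T_i$ has $\ad H$-weight $-i$, and shows by induction that all pieces with $i<0$ vanish by a standard $\fsl_2$ argument — at each step it extracts $[e^+,e^-]T_{-k}=0$ from the degree-$(-k)$ component of $[N,T]=0$, using $[e^+,T_0]=0$ and $[e^+,N_{-k}]=0$, and then kills $T_{-k}$ because it has positive $\ad H$-weight. You instead reduce the statement to the uniqueness clause of Theorem~\ref{thm:delsy} applied to the direct sum $V\oplus\widehat V$: the unipotent automorphism $g(v,w)=(v,w+Tv)$ commutes with $N_\oplus$ and $Y_\oplus$ and preserves $L_\oplus$, so conjugation by $g$ transports the Deligne-system splitting $Y'_\oplus$ to another Deligne-system splitting, forcing $gY'_\oplus g^{-1}=Y'_\oplus$. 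Your argument is cleaner in that it delegates all the $\fsl_2$ bookkeeping to the already-proved uniqueness, but it relies crucially on that uniqueness, whereas the paper's direct computation is self-contained and in fact gives a second, independent verification that no two distinct Deligne splittings can exist (since the identity morphism is in particular a morphism of Deligne-systems). Both are valid; you should flag explicitly in the write-up that $W(N_\oplus,L_\oplus)=W(N,L)\oplus W(\widehat N,\widehat L)$ so that the hypotheses of Theorem~\ref{thm:delsy} are indeed met on the direct sum.
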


\begin{proof}
Let $T=\sum_{i\leq 0} T_i$ be the $\ad Y'$-homogeneous decomposition of $T$. Then the $H$ degree of $T_i$ is $-i$ because $\widehat Y T=T Y$. Suppose that $T_i$ vanishes for $i=-1,2,\dots,-k+1$. Then $(\ad N)T=0$ gives 
\[
[N_0,T_{-k}]+[N_{-k},T_0]=0.
\]
It follows that $( \ad e^+)( \ad e^-)T_{-k}$ vanishes since
\[
( \ad e^+)( \ad e^-)T_{-k}=[e^+,[e^{-},T_{-k}]]=[e^{+},[T_0,N_{-k}]]=[[e^{+},T_0],N_{-k}]+[T_0,[e^+,N_{-k}]]
\]
and $[e^+,T_0]=[e^+,N_{-k}]=0$. Then $T_{-k}$ must vanish because the $H$-degree of $T_{-k}$ is $k>0$.
\end{proof}

Finally we can give 
\begin{proof}[Proof of Lemma~\ref{lem:relsplitfun}]
By~\cite[1.5]{Saito-MHM}, we have a canonical splitting 
\[
\gr^W_k\gr^V_\alpha\cM\simeq \bigoplus_{i\in\Z}\gr^W_k\gr^L_i \gr^V_\alpha\cM.
\]
If we set $(V,L,N)=(\gr^W\gr^V_\alpha\cM,L\gr^W\gr^V_\alpha\cM,\theta-\alpha)$ and $Y=i$ on $\gr^W_i\cM$, then we can apply Theorem~\ref{thm:delsy} to this situation: there exists a unique splitting operator $Y'$ for $L$ such that $(V,L,N,Y,Y')$ is a Deligne-system. As a consequence, for any local defining equation $f$ of $Z$, it follows from Corollary~\ref{cor:fun} the induced morphism
\[
f: \gr^W\gr^V_\alpha\cM\rightarrow \gr^W\gr^V_{\alpha-1}\cM
\]
commute the splitting operator $Y'$ which concludes $(a)$. 

For part $(b)$, it is easy to see that the morphism $\gr^WT$ is a morphism of Deligne's systems $(\gr^W\gr^V_\alpha\cM,L\gr^W\gr^V_\alpha\cM,\theta-\alpha)$ and $(\gr^W\gr^V_\alpha\cM',L\gr^W\gr^V_\alpha\cM',\theta-\alpha)$. Then by Corollary~\ref{cor:fun}, $\gr^WT$ commutes with the splitting operator $Y'$ which concludes $(b)$.
\end{proof}

\section{Fourier Transform for Monodromic Mixed Hodge Modules}\label{sec:fourier}

\subsection{Notation}
Let $M \in \MHM(E)$ be a mixed Hodge module on the trivial vector bundle $E = X\times \A^r$ over $X$. Let $z_1,\dots, z_r$ be coordinates on the $\A^r$ term of the vector bundle. Assume $M$ is \emph{monodromic} along the $z_1,\dots, z_r$, i.e., the underlying $\cD_E$-module $\cM$ is monodromic. Then
\[ \cM = \bigoplus_{\chi \in \Q} \cM^\chi,\]
where $\cM^\chi$ is the subspace on which $\theta - \chi +r$ acts nilpotently.

Let $E^\vee = X\times \A^r$ with coordinates $w_1,\dots, w_r$ be another trivial bundle, which we think of as the dual of $E$. We consider $\cE = E\times_X E^\vee = X\times \A^{2r}$, with projections $p:\cE \to E$ and $q: \cE \to E^\vee$.

Consider the mixed Hodge module on $\cE$ given by $p^!(M)[-r]$. The underlying $\cD_{\cE}$-module is isomorphic to $\cM[w]$. Let $g = \sum_{i=1}^r z_i w_i$, a function on $\cE$, which is the natural pairing between $E$ and its dual $E^\vee$. Let $\Gamma: \cE \to \cE\times \A^1$ be the graph embedding along $g$, with coordinate $\xi$ on $\A^1$. We consider $\Gamma_+(\cM[w]) = \bigoplus_{j\geq 0} \cM[w]\otimes \de^j$, as our goal is to compute the vanishing cycles of $\cM[w]$ along $g$.

Recall the action on the graph embedding is as follows:
\[
\begin{aligned}
 P(mw ^\alpha \otimes \de^j) & = P(mw^\alpha)\otimes \de^j \text{ for all } P \in \cD_X + \cO_{\cE} \\
 \de_{z_i}(m w^\alpha \otimes \de^j) &= \de_{z_i}(m)w^\alpha \otimes \de^j - m w^{\alpha+e_i} \otimes \de^j, \\
 \de_{w_i} (mw ^\alpha \otimes \de^j) &= \alpha_i m w^{\alpha-e_i} \otimes \de^j - z_i m w^\alpha \otimes \de^j, \\
 \xi(mw ^\alpha \otimes \de^j) &= gm w^\alpha \otimes \de^j - jmw ^\alpha \otimes \de^{j-1}, \\
 \de(mw ^\alpha \otimes \de^j) &= mw ^\alpha \otimes \de^{j+1}.
\end{aligned}
\]

In particular, if $\widetilde{\theta_z}$ is defined as
\[ \widetilde{\theta_z}(m w^\alpha \otimes \de^j) = \theta_z(m) w^\alpha \otimes \de^j,\]
then
\[ \theta_z (mw^\alpha \otimes \de^j) = (\widetilde{\theta_z} - (j+1) - \xi \de)(mw^\alpha \otimes \de^j).\]

Similarly,
\[ \theta_w(mw^\alpha \otimes \de^j) = (|\alpha| - (j+1) - \xi \de)(mw^\alpha \otimes \de^j).\]

In particular, if we look at the commuting operators
\[ T : = \theta_z + \xi \de +1, \quad S : = \theta_w + \xi \de +1,\]
we see that $\Gamma_+(\cM[w])$ breaks into simultaneous eigenspaces for these operators. Indeed, the equality $(T-\lambda)^a (m w^\alpha \otimes \de^j) = 0$ for $m\in \cM^{\chi}$ holds iff $(\theta_z- j - \lambda)^a(m) = 0$, which itself is true iff $\chi = j+ \lambda + r$. Similarly, $(S-\ell)(mw^\alpha \otimes \de^j) =0$ iff $|\alpha| = j +\ell$. For any element $m w^\alpha \otimes \de^j$ (where $m$ is homogeneous in $\cM$, i.e., $m\in \cM^\chi$ for some $\chi\in \Q$), there always exists some $\lambda$ and $\ell$ for which these conditions are true, hence $\Gamma_+(\cM[w])$ breaks into these simultaneous eigenspaces. We shift these eigenspaces for convenience of notation later, and denote
\[ E_{\beta,\ell} = \sum_{\alpha \in \N^r, |\alpha|\geq -\ell} \cM^{\beta+|\alpha|+\ell} w^\alpha \otimes \de^{|\alpha|+\ell},\]
and it will be useful to pull out specifically the term involving $\de^j$, so we denote this as
\[ F_{\beta,\ell}^j = \sum_{|\alpha| = j-\ell} \cM^{\beta+j} w^\alpha \otimes \de^j \subseteq E_{\beta,\ell},\]
so that
\[ 
E_{\beta,\ell} = \bigoplus_{j\geq \ell} F_{\beta,\ell}^j.
\]
These eigenspaces are mapped to one another via the elements of $\cD_{\cE\times \A^1}$ in the following way
\[
\begin{aligned}
 z_i E_{\beta,\ell} &\subseteq E_{\beta+1,\ell} \\
 \de_{z_i} E_{\beta,\ell} &\subseteq E_{\beta-1,\ell} 
\end{aligned}
\quad 
\begin{aligned}
 w_i E_{\beta,\ell} &\subseteq E_{\beta,\ell-1} \\
 \de_{w_i} E_{\beta,\ell} &\subseteq E_{\beta,\ell+1} 
\end{aligned}
\quad 
\begin{aligned}
 \xi E_{\beta,\ell} &\subseteq E_{\beta+1,\ell-1} \\
 \de E_{\beta,\ell} &\subseteq E_{\beta-1,\ell+1}.
\end{aligned}
\]

Let $V^\bullet \Gamma_+(\cM[w])$ be the $V$-filtration along $g$. Then, for any $\lambda \in \Q$, $V^\lambda \Gamma_+(\cM[w])$ is invariant under both $T$ and $S$, hence also decomposes into its eigenspace decomposition, so we write
\[ V^\lambda \Gamma_+(\cM[w]) = \bigoplus_{\beta\in \Q,\ell \in \Z} E^\lambda_{\beta,\ell}, \quad \text{where} E^\lambda_{\beta,\ell} = E_{\beta,\ell}\cap V^\lambda.\]

\subsection{Fourier-Laplace Transform}
The \emph{Fourier-Laplace} transform of a $\cD_E$-module $\cM$ is a $\cD_{E^\vee}$-module $\mathsf{FL}(\cM)$ which, as a $\cD_X$-module, is the same as $\cM$, and such that
\[ w_i m = -\de_{z_i} m, \quad \de_{w_i} m = z_i m.\]

If $\cM$ is monodromic along the $z_1,\dots, z_r$, then $\mathsf{FL}(\cM)$ is monodromic along the $w_1,\dots, w_r$. Indeed,
\[ (\theta_w - \chi +r)m = (\sum w_i \de_{w_i} - \chi +r)m = (-\sum \de_{z_i}z_i -\chi +r)m = -(\theta -(r-\chi)+r)m,\]
and so we see that, in terms of their monodromic decomposition,
\[ \mathsf{FL}(\cM)^{r-\chi} = \cM^\chi.\]

The main goal of this section is to prove Theorem \ref{thm:fourier}:
\fourier*

We want to compute $\sigma^* \phi_g(\cM[w])$, which involves understanding the $V$-filtration along the zero section $\sigma: E^\vee \to \cE$ by Theorem \ref{thm:main}. The first step will be to show that $\phi_{g,\lambda}(\cM[w])$ is monodromic along $z_1,\dots, z_r$ for all $\lambda \in [0,1)$. This will allow us (by Property \ref{monoVFilt}) to compute the associated graded pieces of the $V$-filtration along the zero section.

\begin{lem} \label{lem:bimono} Using the notation in the previous section, we have
\[ \gr^\lambda_V \Gamma_+(\cM[w]) = \bigoplus_{\beta,\ell} E^\lambda_{\beta,\ell}/E^{>\lambda}_{\beta,\ell}.\]

For any $\lambda \in \Q$, $\gr^\lambda_V \Gamma_+(\cM[w])$ is monodromic along $z_1,\dots,z_r$ and along $w_1,\dots, w_r$, and its $\chi$th homogeneous piece in the decomposition is
\[(\gr^\lambda_V \Gamma_+(\cM[w]))^\chi_z = \bigoplus_{\ell \in \Z} E^\lambda_{\chi+\lambda,\ell}/E^{>\lambda}_{\chi+\lambda,\ell}\]
\[(\gr^\lambda_V \Gamma_+(\cM[w]))^\chi_w = \bigoplus_{\beta \in \Q} E^\lambda_{\beta,r-\lambda-\chi}/E^{>\lambda}_{\beta,r-\lambda-\chi}\]
for the $z_i$'s and $w_i$'s respectively.
\end{lem}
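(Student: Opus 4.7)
The plan is to take advantage of the two commuting operators $T = \theta_z + \xi\partial + 1$ and $S = \theta_w + \xi\partial + 1$ introduced just before the lemma. Each of $z_i, \partial_{z_i}, w_i, \partial_{w_i}, \xi\partial$ preserves $\xi$-degree, so $T$ and $S$ both lie in $V^0\cD_{\cE\times\A^1}$ with respect to the $V$-filtration along $\{\xi = 0\}$. Consequently each $V^\lambda\Gamma_+(\cM[w])$ is stable under $T$ and $S$, and decomposes into joint generalized eigenspaces of this commuting pair. By the computation preceding the lemma, those joint eigenspaces are precisely the subspaces $E_{\beta,\ell}$: on $E_{\beta,\ell}$ the operator $T$ acts as $\beta - r$ plus a nilpotent term and $S$ as $-\ell$ plus a nilpotent term. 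Therefore $V^\lambda\Gamma_+(\cM[w]) = \bigoplus_{\beta,\ell} E^\lambda_{\beta,\ell}$, and dividing by $V^{>\lambda}$ yields part~(1).

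For part~(2) I would solve the defining identities for $\theta_z$ and $\theta_w$ to obtain $\theta_z = T - 1 - \xi\partial$ and $\theta_w = S - 1 - \xi\partial$, and then read off the action modulo nilpotents on each summand $\gr^\lambda_V E_{\beta,\ell}$. Since $\{\xi = 0\}$ is a smooth hypersurface, the defining property of the $V$-filtration gives that $\xi\partial - \lambda + 1$ is nilpotent on $\gr^\lambda_V$; combined with the eigenvalue behaviour of $T$ on $E_{\beta,\ell}$, this shows $\theta_z - (\beta - \lambda - r)$ is nilpotent on $\gr^\lambda_V E_{\beta,\ell}$. Matching against the convention $(\gr^\lambda_V)^\chi_z = \ker(\theta_z - \chi + r)^\infty$ places the summand in the $z$-monodromic piece indexed by $\chi = \beta - \lambda$, and reindexing by $\chi$ produces the first formula. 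The identical calculation with $S$ in place of $T$ shows $\theta_w + \ell + \lambda$ is nilpotent on $\gr^\lambda_V E_{\beta,\ell}$, which places it in the $w$-monodromic piece with $\chi = r - \ell - \lambda$ and gives the second formula.

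The only thing to watch is that the shift conventions in the defining properties of the $V$-filtration depend on the codimension: the codimension-one $V$-filtration along $\{\xi = 0\}$ contributes $-\lambda + 1$ to the constant part of $\xi\partial$, while the monodromic decomposition along the rank-$r$ bundles uses a shift by $r$. Once these are aligned the arithmetic matches the stated formulas; beyond this bookkeeping I do not anticipate any substantive obstacle, since the proof reduces to the fact that $T$ and $S$ lie in $V^0\cD$ and commute with each other.
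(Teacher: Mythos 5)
Your argument is correct and coincides with the paper's proof: both exploit the commuting operators $T=\theta_z+\xi\partial+1$ and $S=\theta_w+\xi\partial+1$ lying in $V^0\cD$, the fact that $N=\xi\partial-\lambda+1$ is nilpotent on $\gr_V^\lambda$, and the rewriting $\theta_z=T-N-\lambda$, $\theta_w=S-N-\lambda$ to read off the monodromic eigenvalues. The shift bookkeeping you flag ($-\lambda+1$ from the codimension-one $V$-filtration versus $+r$ in the monodromic convention) is exactly what the paper uses, and you have resolved it correctly.
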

\begin{proof} This is easy to see, using the fact that $N = \xi \de - \lambda +1$ is nilpotent on $\gr_V^\lambda \Gamma_+(\cM[w])$. Then we can write
\[ T = \theta_z + \xi\de +1 = \theta_z +N + \lambda\]
\[ S = \theta_w + \xi\de+1 = \theta_w +N + \lambda,\]
and $N$ commutes with $\theta_z$ and $\theta_w$. Hence, $(\theta_z - \chi +r)$ is nilpotent iff $T - \lambda - \chi +r$ is nilpotent. Similarly, $(\theta_w - \chi +r)$ is nilpotent iff $S - \lambda - \chi +r$ is nilpotent.

Then use the fact that $E_{\beta,\ell}$ is the simultaneous eigenspace for $T$ with eigenvalue $\beta-r$ and $S$ with eigenvalue $-\ell$.
\end{proof}

In terms of computing $\cH^0$ of $\sigma^*$, we are interested in the $z$-monodromic pieces corresponding to $\chi = 0$ and $1$. Using Theorem \ref{thm:main}, we have
\[
\begin{aligned} 
    \cH^0\sigma^*(\phi_g \Gamma_+\cM[w]) &= \bigoplus_{\lambda \in [0,1)} \cH^0\sigma^*(\phi_{g,\lambda} \Gamma_+ \cM[w])  \\
    &= \bigoplus_{\lambda \in [0,1)} \text{coker}\left(\bigoplus_{\ell \in \Z, 1\leq i \leq r} E^\lambda_{\lambda+1,\ell}/E^{>\lambda}_{\lambda+1,\ell} \xrightarrow[]{\de_{z_i}} E^\lambda_{\lambda,\ell}/E^{>\lambda}_{\lambda,\ell}\right).
\end{aligned}
\]

Note that this cokernel is monodromic along the $w_i$'s. Using the lemma, the $r-\chi$th monodromic piece is
\[\text{coker}\left(\bigoplus_{i=1}^r E^\lambda_{\lambda+1,\chi-\lambda}/E^{>\lambda}_{\lambda+1,\chi-\lambda} \xrightarrow[]{\de_{z_i}} E^\lambda_{\lambda,\chi-\lambda}/E^{>\lambda}_{\lambda,\chi-\lambda}\right)\]
where $\lambda \in [0,1)$ is the fractional part of $\chi$ (because $\chi -\lambda$ must be an integer for it to be an eigenvalue of $S$).

We are interested in the $r-\chi$th monodromic piece because we have $\mathsf{FL}(\cM)^{r-\chi} = \cM^\chi$ as sets. Thus, we would like to construct an isomorphism
\begin{equation} \label{desirediso} \text{coker}\left(\bigoplus_{i=1}^r E^{\lambda}_{\lambda+1,\chi - \lambda}/E^{>\lambda}_{\lambda+1,\chi-\lambda} \xrightarrow[]{\de_{z_i}} E^{\lambda}_{\lambda,\chi-\lambda}/E^{>\lambda}_{\lambda,\chi-\lambda}\right) \to \cM^\chi \end{equation}

The eigenspaces $E_{\beta,\ell}$ have natural morphisms defined as follows: let $\varphi_{\beta,\ell}: E_{\beta,\ell} \to \cM^{\beta+\ell}$ be such that
\[ \sum m_\alpha w^\alpha \otimes \de^{|\alpha|+\ell} \mapsto (-1)^{\ell} \sum_{\alpha \in \N^r} \de_z^{\alpha}(m_\alpha).\]

Note that, by definition of $E_{\beta,\ell}$, the coefficient $m_\alpha$ lies in $\cM^{\beta+\ell + |\alpha|}$, so $\de_z^{\alpha}(m_\alpha) \in \cM^{\beta+\ell}$ and so the map lies in the correct eigenspace of $\cM$. This map should be thought of as evaluating $w_i$ at $-\de_{z_i}$ (which is how the Fourier transform behaves), and also $\de$ at $-1$. Then for any $\beta, \ell$, the map $\varphi_{\beta,\ell}$ is $\cD_X$-linear, and satisfies the following easy to check relations
\[
\begin{aligned} 
 \varphi_{\beta-1,\ell}\circ \de_{z_i} &= 0  \\
 \varphi_{\beta,\ell-1} \circ w_i &= -\de_{z_i} \circ \varphi_{\beta,\ell} \\
 \varphi_{\beta,\ell+1} \circ \de_{w_i} &= z_i \circ \varphi_{\beta,\ell} \\
 \varphi_{\beta+1,\ell-1} \circ \xi &= -(\theta - \ell + r)\circ \varphi_{\beta,\ell} \\
 \varphi_{\beta-1,\ell+1} \circ \de &= - \varphi_{\beta,\ell}.
\end{aligned}
 \]

In fact, the first relation can be strengthened:
\begin{lem} \label{kernel} If $\ell \geq 0$, then
\[ \ker(\varphi_{\beta,\ell}) = \sum_{i=1}^r \de_{z_i}E_{\beta+1,\ell}.\]
\end{lem}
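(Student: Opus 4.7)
The plan is to establish the easy inclusion $\sum_i \de_{z_i}(E_{\beta+1,\ell}) \subseteq \ker(\varphi_{\beta,\ell})$ directly from the first of the five commutation identities listed above, namely $\varphi_{\beta-1,\ell}\circ\de_{z_i}=0$ (applied with $\beta+1$ in place of $\beta$), and then devote the substantive work to the reverse inclusion.

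For the reverse inclusion, my strategy is to show that modulo $\sum_i \de_{z_i}(E_{\beta+1,\ell})$, an arbitrary element $\eta=\sum_\alpha m_\alpha w^\alpha\otimes\de^{|\alpha|+\ell}\in E_{\beta,\ell}$ reduces to its ``zero-$w$-degree'' part in $\cM^{\beta+\ell}\otimes \de^\ell$, on which $\varphi_{\beta,\ell}$ acts as $(-1)^\ell\cdot\mathrm{id}$. The reduction tool is elementary: for $\alpha$ with $\alpha_i\geq 1$, the auxiliary element $\eta':= m_\alpha w^{\alpha-e_i}\otimes\de^{|\alpha-e_i|+\ell}$ genuinely lies in $E_{\beta+1,\ell}$ (the monodromic weight $\cM^{\beta+|\alpha|+\ell}=\cM^{(\beta+1)+|\alpha-e_i|+\ell}$ matches), and applying the definition of $\de_{z_i}$ to $\eta'$ and rearranging gives
\[ m_\alpha w^\alpha \otimes \de^{|\alpha|+\ell} \;\equiv\; \de_{z_i}(m_\alpha)\, w^{\alpha-e_i}\otimes \de^{|\alpha-e_i|+\ell} \pmod{\de_{z_i}(E_{\beta+1,\ell})}.\]
This replaces a $w$-monomial of degree $|\alpha|$ by one of degree $|\alpha|-1$, at the cost of differentiating the coefficient. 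Iterating by descending induction on $|\alpha|$ and using that the $\de_{z_i}$ commute on $\cM$, I expect
\[ \eta \;\equiv\; \Bigl(\sum_\alpha \de_z^\alpha(m_\alpha)\Bigr)\otimes \de^\ell \pmod{\sum_{i=1}^r \de_{z_i}(E_{\beta+1,\ell})}.\]
Applying $\varphi_{\beta,\ell}$ to this congruence then turns the hypothesis $\varphi_{\beta,\ell}(\eta)=0$ into $\sum_\alpha \de_z^\alpha(m_\alpha)=0$, and the congruence itself yields $\eta\in\sum_i\de_{z_i}(E_{\beta+1,\ell})$.

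The hypothesis $\ell\geq 0$ enters precisely at the endpoint of the reduction: it is exactly what guarantees that the limit term $N\otimes\de^\ell$ (with $N\in\cM^{\beta+\ell}$) lies in $E_{\beta,\ell}$, since the defining constraint $|\alpha|\geq -\ell$ of $E_{\beta,\ell}$ must admit $\alpha=0$; for $\ell<0$ the induction would stall at $|\alpha|=-\ell$, consistent with the fact that the statement is only asserted in the range $\ell\geq 0$. Apart from this one substantive use of the hypothesis, the main difficulty I anticipate is bookkeeping---checking carefully at each step of the reduction that the auxiliary element sits in $E_{\beta+1,\ell}$ and that each new summand remains in $E_{\beta,\ell}$. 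The key subtle choice is the $\de$-exponent $|\alpha-e_i|+\ell$ (rather than $|\alpha|+\ell$) in the auxiliary element $\eta'$, which is exactly what makes the $S$-eigenvalue work out and the rewriting land in the intended eigenspaces.
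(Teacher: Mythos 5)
Your proof is correct and uses the same essential reduction as the paper: in each case one rewrites a top-degree monomial $m_\alpha w^\alpha\otimes\de^{|\alpha|+\ell}$ modulo $\de_{z_i}(E_{\beta+1,\ell})$ in terms of the lower-degree $\de_{z_i}(m_\alpha)w^{\alpha-e_i}\otimes\de^{|\alpha|+\ell-1}$, and observes that $\ell\geq 0$ is what lets the descent terminate at $w$-degree zero. The paper packages this as an explicit descending induction on $\max|\alpha|$ that modifies the coefficients and re-invokes the kernel hypothesis at each level, whereas you first establish the congruence $\eta\equiv(\sum_\alpha\de_z^\alpha(m_\alpha))\otimes\de^\ell$ for arbitrary $\eta$ and only then feed in $\varphi(\eta)=0$; this is a mild streamlining of presentation, not a different method.
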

\begin{proof} The containment $\supseteq$ is clear from the first relation above.

For the other containment, let $\eta = \sum_{|\alpha| =0}^{a} m_\alpha w^\alpha \otimes \de^{|\alpha|+\ell}$ lie in the kernel. We induce on $a$. If $a=0$, then $\varphi(m_0) = (-1)^\ell m_0 =0$ implies $m_0 =0$, so this is obvious.

For $a>0$, assume we know it for any sum of elements with $|\alpha| \leq a-1$. By definition, we know $(-1)^\ell \sum_{0\leq |\alpha|\leq a} \de_z^{\alpha}(m_\alpha) = 0$. Order the $\alpha \in \N^r$ with $|\alpha| = a$ in some way, write them as $\alpha^{(1)},\dots,\alpha^{(d)}$. As $a>0$, there exists some $i_1$ such that $\alpha^{(1)}_{i_1} > 0$. Define $m_{\alpha^{(1)} - e_{i_1}}^{(1)}:= m_{\alpha^{(1)} - e_{i_1}} + \de_{z_{i_1}}(m_{\alpha^{(1)}})$ and let $m_{\beta}^{(1)} = m_\beta$ for all other $|\beta| = a-1$. Repeat with $\alpha^{(2)}$, finding some $i_2$ with $\alpha^{(2)}_{i_2}>0$ and adjusting the $m_{\beta}^{(1)}$ using $\de_{z_{i_2}}(m_{\alpha^{(2)}})$. Eventually, this terminates, as there are only finitely many $\alpha \in \N^r$ with $|\alpha| = a$. Let $m'_\beta :=m_\beta^{(d)}$ for all $\beta \in \N^r$ with $|\beta| = a-1$.

Now, define $\eta' = \sum_{0\leq |\alpha| \leq a-1} m'_\beta w^\beta \otimes \de^{|\beta|+\ell}$. Then this still lies in the kernel of $\varphi$ (it has the same value as that for $m$, which we assumed is 0). By induction, it lies in $\sum \de_{z_i}(E_{\beta+1,\ell})$.

Finally, use $\eta = \eta' + \sum_{|\alpha| = a} \de_{z_{i_\alpha}}(m_\alpha w^{\alpha -e_{i_\alpha}} \otimes \de^{|\alpha|+\ell -1})$ to conclude.
\end{proof}

Let us now restrict to looking at $E^{\lambda}_{\lambda,\ell}$, for $\lambda \in [0,1)$. Restricting $\varphi_{\lambda,\ell}$ to this subspace, we get a map
\[ E^{\lambda}_{\lambda,\ell} \to \cM^{\lambda+\ell}.\]

To show that this map $\varphi_{\lambda,\ell}$ is the desired isomorphism \ref{desirediso}, it suffices to show that this map is surjective and its kernel is precisely
\[ \sum_{i=1}^r \de_{z_i}(E_{\lambda+1,\ell}^{\lambda}) + E^{>\lambda}_{\lambda,\ell}.\]

For this, we must have an understanding of the $V$-filtration along $g$ of $\Gamma_+ \cM[w]$. However, we can restrict to proving it is an isomorphism for $\ell \geq 0$, using the fact that the Koszul-like complexes for monodromic $\cD$-modules are acyclic. Indeed, for $k<0$, look at the morphism of complexes
\[ \begin{tikzcd} \cN^{\lambda +k+r} \ar[r,"w"] \ar[d,"\varphi"] & \bigoplus_{i=1}^r \cN^{\lambda+k+r-1} \ar[r, "w"] \ar[d,"\varphi"] & \dots \ar[r,"w"] & \cN^{\lambda +k} \ar[d,"\varphi"]\\ 
\cM^{\lambda +k+r} \ar[r,"\de_z"]  & \bigoplus_{i=1}^r \cM^{\lambda+k+r-1} \ar[r, "\de_z"]  & \dots \ar[r,"\de_z"]& \cM^{\lambda + k}. \end{tikzcd}\]

Hence, if by induction all but the rightmost vertical map is an isomorphism, then so is the rightmost map. Here the terms in the top row are the corresponding domains of the maps in Formula \ref{desirediso}.

\subsection{Computation of the $V$-filtration along $\xi$}
To compute the $V$-filtration along $\xi$, it suffices to break up $\cM = \bigoplus_{\lambda \in [0,1)} \bigoplus_{\ell \in \Z} \cM^{\lambda+\ell}$, and handle each $\lambda \in [0,1)$ separately.

Recall the notation
\[ F^j_{\beta,\ell} = \bigoplus_{|\alpha| = j-\ell} \cM^{\beta+j} w^\alpha \otimes \de^j.\]

We use the following easy to prove facts
\begin{align}
\sum_{i=1}^r w_i F_{\chi,\ell}^j &= F_{\chi,\ell-1}^j \label{waction}\\
\chi+j \neq r-1 &\implies \sum_{i=1}^r z_i F_{\chi,\ell}^j = F_{\chi+1,\ell}^j  \label{zaction}\\
\chi+j \neq r-1 &\implies \sum_{i=1}^r \de_{w_i} F_{\chi,j}^j = F_{\chi,j+1}^{j+1} \label{dwaction}
\end{align}
The first is trivial from the definition, and the latter two use the fact that $\sum z_i \cM^{\chi} = \cM^{\chi+1}$ whenever $\chi +1\neq r$, by the acyclicity of the Koszul-like complex for monodromic modules.

Throughout, we will define filtrations $U^\bullet$ only for $\bullet \in [0,1]$. They are extended as
\[ U^{\lambda} = \xi^j U^{\lambda-j} \text{ for } \lambda >1, \lambda -j \in (0,1],\]
and inductively,
\[ U^{\lambda} = \de^k U^{\lambda+k} + U^{>\lambda} \text{ for } \lambda<0, \lambda +k \in [0,1).\]

In this way, the only conditions we must check for the $V$-filtration are that the filtration defined in this way is exhaustive and 

\begin{enumerate}[label=\textbf{V.\arabic*}] 
\item \label{v1}  $U^\lambda$ is coherent over $V^0\cD_{\cE\times \A^1}$ for $\lambda \in[0,1]$,
\item \label{v2}$U^0\supseteq U^\lambda \supseteq U^{\lambda'} \supseteq U^1$ for all $\lambda' >\lambda$,
\item\label{v3} $\xi U^0 \subseteq U^1$,
\item\label{v4}  $\de U^1 \subseteq U^0$,
\item\label{v5} $(\de \xi - \lambda)^a U^\lambda \subseteq U^{>\lambda}$ for all $\lambda \in [0,1)$.
\end{enumerate}

Indeed, note that we do not have to check $(\de\xi -1)^a U^1 \subseteq U^{>1}$ if we know the facts preceding it. This is because $U^{>1} = \xi U^{>0}$, and so we would have to show $(\de \xi-1)^a U^1 = (\xi \de)^a U^1 \subseteq \xi U^{>0}$, which is true if $(\de \xi)^{a-1}(\de U^1)\subseteq U^{>0}$, but $\de U^1\subseteq U^0$, and so we are done.

\textit{Case 1}: $\lambda = 0$.

Define \[U^0 : = V^0\cD_{\cE\times \A^1} \cdot F_{0,0}^0 + V^0\cD_{\cE\times \A^1} \cdot F_{0,r}^r\] and
\[ U^1 : = V^0 \cD_{\cE\times \A^1} \cdot F_{1,0}^0 + V^0\cD_{\cE\times \A^1} \cdot F^{r-1}_{1,r-1}.\]

Exhaustive: let $\cU = \bigcup_{k \in \Q} U^k$. It suffices (as $\cU$ is closed under $\de$ and multiplication by $w_1,\dots, w_r$) to show that $\cM^\ell \otimes 1 \subseteq \cU$ for all $\ell \in \Z$. By definition, $F_{0,0}^0 = \cM^0 \otimes 1 \subseteq U^0 \subseteq \cU$. Hence, by multiplying by $z_1,\dots, z_r$, we conclude that $\cM^1\otimes 1,\dots, \cM^{r-1}\otimes 1 \subseteq U^0 \subseteq \cU$.

Also, inductively, we conclude that $\cM^{\ell} \otimes 1 \subseteq \cU$ for all $\ell \leq 0$. Indeed, if $\cM^{\ell+1}\otimes 1 \subseteq \cU$, then $\cM^{\ell+1}[w] \otimes \de \subseteq \cU$. Also, $\de_{z_i}(\cM^{\ell+1}\otimes 1) \subseteq \cU$, so we get
\[ (\sum \de_{z_i} \cM^{\ell+1}) \otimes 1 \subseteq \cU,\]
and we use acyclicity of the Koszul-like complex in the $\de_{z_i}$'s to conclude that $\cM^{\ell} \otimes 1 \in \cU$.

Finally, $F_{0,r}^r = \cM^r \otimes \de^r \subseteq \cU$. By applying $(\xi -g)^r$, we see that $\cM^r \otimes 1 \subseteq \cU$, and then by applying the $z_i$'s and using acyclicity of the Koszul like complex, we get $\cM^{r+\ell}\otimes 1 \subseteq \cU$ for all $\ell \geq 0$, proving  that the filtration is exhaustive.

\ref{v1}: To see $U^0$ is finitely generated over $V^0\cD_{\cE\times \A^1}$, let $m_1,\dots, m_N$ be finitely many $\gr^0_V\cD_{E}$ generators of $\cM^0$ and let $\eta_1,\dots, \eta_M$ be generators for $\cM^r$ over $\gr^0_V\cD_{E}$. Then these elements generate $U^0$, by the following fact: given $m\otimes \de^k \in U^0$, we obtain $(\gr_V^0\cD_{E}\cdot m)\otimes \de^k \subseteq U^0$. Indeed, we easily get $\cD_X \cdot m \otimes \de^k$, and to get $z_i \de_{z_j}(m) \otimes 1$, we use
\[ z_i \de_{z_j}(m) \otimes 1 = z_i \de_{z_j}(m \otimes 1) + w_j \de_{w_i}(m\otimes 1),\]
which lies in $U^0$. The same proof works for $U^1$.

For the remaining conditions, we use the following lemma
\begin{lem} \label{VFiltContain} We have containment $F_{\chi,\ell}^j \subseteq U^0$ for any triple satisfying either of the two conditions
\begin{itemize}
    \item  $\chi \geq 0, j \geq r, \ell \leq j$.
    \item  $0 \leq \chi \leq r-1, 0 \leq j < r-\chi, \ell \leq j.$
\end{itemize}
Also, we have $F_{\chi,\ell}^j \subseteq U^1$ in either of the following cases:
\begin{itemize}
    \item $\chi \geq 1, j\geq r-1, \ell \leq j$
    \item $1 \leq \chi \leq r-1, 0 \leq j \leq r-\chi, \ell \leq j.$
\end{itemize}
In particular, we have
\begin{itemize}
    \item  $E_{\chi,\ell} \subseteq U^0$ for all $\chi\geq 0,\ell \geq r$,
    \item $E_{\chi,\ell} \subseteq U^1$ for all $\chi \geq 1, \ell \geq r-1$,
    \item  $E_{1,\ell}\subseteq U^1$ for all $\ell\leq 0$.
\end{itemize}
\end{lem}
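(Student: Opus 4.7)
The plan is to build up both containments from the two generating pieces of each filtration level by repeatedly applying operators of $V^0\cD_{\cE\times\A^1}$. All of $\cD_X$, $z_i$, $w_i$, $\de_{z_i}$, $\de_{w_i}$, $\xi$, and $\xi\de$ lie in $V^0\cD_{\cE\times\A^1}$, and by direct computation on homogeneous pieces one checks that $z_i$ sends $F^j_{\chi,\ell}$ into $F^j_{\chi+1,\ell}$, $w_i$ sends it into $F^j_{\chi,\ell-1}$, $\de_{w_i}$ lands in $F^j_{\chi,\ell+1}+F^j_{\chi+1,\ell}$, and $\xi\de$ lands in $F^{j+1}_{\chi,\ell}+F^j_{\chi,\ell}$. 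Combining these with the Koszul identities \eqref{waction}, \eqref{zaction}, \eqref{dwaction} promotes the shifts into \emph{surjections} onto the target cell (modulo lower cells), provided the intermediate weight $\chi+j+1$ is not equal to $r$, which is the only obstruction coming from the Koszul acyclicity for the monodromic module $\cM$.

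For (a2) the plan is a double induction on $\chi+j$ in the range $\chi+j<r,\chi\geq 0$: starting from $F^0_{0,0}\subseteq U^0$, the operators $z_i$ and $w_i$ produce every $F^0_{\chi,\ell}$ with $0\leq\chi\leq r-1$ and $\ell\leq 0$, while a further application of $\xi\de$ (whose projection to $F^{j+1}_{\chi,\ell}$ is full precisely because $\chi+j+1<r$) climbs us to higher $j$ while maintaining $\chi+j<r$. For (a1) the same operators applied to the second generator $F^r_{0,r}\subseteq U^0$ work unconditionally, since the relevant weight $\chi+j+1\geq r+1$ is never equal to $r$; thus $z_i$ sweeps all $\chi\geq 0$, $w_i$ and $\de_{w_i}$ sweep all $\ell\leq j$, and $\xi\de$ climbs through all $j\geq r$. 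Statements (b1) and (b2) are established in exactly the same way from the generators $F^0_{1,0}$ and $F^{r-1}_{1,r-1}$ of $U^1$, the same Koszul obstruction at weight $r$ dictating precisely the ranges in which each generator is effective. The three ``in particular'' statements about $E_{\chi,\ell}=\bigoplus_{j\geq\max(\ell,0)}F^j_{\chi,\ell}$ are immediate once one checks that for each $j$ in the summation either the first or second bullet of the corresponding case applies.

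The principal technical step, and the main obstacle, is verifying the surjectivity claim for $\xi\de$. The image of $\xi\de$ in $F^{j+1}_{\chi,\ell}$ equals $g\cdot F^j_{\chi,\ell}\otimes\de=\sum_i w_i(z_i\cdot F^j_{\chi,\ell})\otimes\de$; after decomposing by $w^\beta$-monomials and applying $\sum_i z_i\cM^{\chi+j}=\cM^{\chi+j+1}$, which holds exactly when $\chi+j+1\neq r$, this image fills all of $F^{j+1}_{\chi,\ell}$. Tracking the Koszul obstruction at weight $r$ — which is exactly what forces us to introduce a second generator for each of $U^0$ and $U^1$ — is the only nontrivial input; the remaining content of the proof is careful bookkeeping over the $(\chi,j,\ell)$ lattice.
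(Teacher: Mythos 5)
Your proposal has a genuine gap in its key technical step, the ``surjectivity claim for $\xi\de$.'' You claim that the top-order part of $\xi\de$ restricted to $F^j_{\chi,\ell}$ surjects onto $F^{j+1}_{\chi,\ell}$ whenever $\chi+j+1\neq r$, but this is false. Writing a general element of $F^j_{\chi,\ell}$ as $\sum_\alpha m_\alpha w^\alpha\otimes\de^j$ with $m_\alpha\in\cM^{\chi+j}$ and $|\alpha|=j-\ell$, the top part of its $\xi\de$-image is $\sum_\alpha\sum_i z_i m_\alpha\,w^{\alpha+e_i}\otimes\de^{j+1}$. The coefficients $m_\alpha$ are shared across distinct $w^\beta$-components of the target: the induced map is
$\bigoplus_{|\alpha|=j-\ell}\cM^{\chi+j}\longrightarrow\bigoplus_{|\beta|=j+1-\ell}\cM^{\chi+j+1}$, $(m_\alpha)_\alpha\mapsto\bigl(\sum_{\beta_i>0}z_i m_{\beta-e_i}\bigr)_\beta$, and this is not surjective. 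The simplest failure is exactly your starting situation $\ell=j$ (so $|\alpha|=0$): there is a single coefficient $m=m_0$, and the top-order image is the ``diagonal'' $\{(z_1m,\dots,z_rm):m\in\cM^{\chi+j}\}\subsetneq(\cM^{\chi+j+1})^{\oplus r}=F^{j+1}_{\chi,j}$, regardless of whether $\chi+j+1=r$. The Koszul acyclicity $\sum_i z_i\cM^{\chi+j}=\cM^{\chi+j+1}$ asserts surjectivity of the \emph{sum} map $(\cM^{\chi+j})^{\oplus r}\to\cM^{\chi+j+1}$, which is essentially the transpose of what you would need here, not the same thing.

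The correct way to raise $j$ --- and what the paper does --- is to apply $\de_{w_i}$ to the \emph{diagonal} cells $F^j_{\chi,j}$, where $\alpha=0$: then $\de_{w_i}(m\otimes\de^j)=-z_im\otimes\de^{j+1}$ lands cleanly in $F^{j+1}_{\chi,j+1}$ with no lower-order correction, and summing over $i$ gives exactly \eqref{dwaction} when $\chi+j\neq r-1$, because now the index $i$ labels the generators $-z_im$ rather than the $w^\beta$-components, so Koszul acyclicity applies directly. (You were likely steered away from $\de_{w_i}$ by a typo in the paper's displayed $\de_{z_i}$- and $\de_{w_i}$-actions on $\Gamma_+\cM[w]$: the second terms should carry $\de^{j+1}$, not $\de^j$, as the standard graph-embedding direct-image formula shows and as consistency with \eqref{dwaction} requires.) The working order of operations is therefore: use \eqref{zaction} to sweep $\chi$ along the diagonal starting from $F^0_{0,0}$ (resp.\ $F^r_{0,r}$, and the analogous generators for $U^1$), use \eqref{dwaction} to climb the diagonal $j=\ell$, and only then use \eqref{waction} to sweep down to $\ell\leq j$. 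Your opening $z_i,w_i$ step and your derivation of the three ``in particular'' statements are fine; only the $\xi\de$ climb has to be replaced by the diagonal $\de_{w_i}$ climb.
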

\begin{proof} We make use of the fact that $U^0$ is closed under $z_i,\de_{w_i}$ and $w_i$ for all $i$.

Starting from $F_{0,r}^r \subseteq U^0$, by Formula \ref{zaction} we get $F_{\chi,r}^r\subseteq U^0$ for all $\chi \geq 0$. Then by Formula \ref{dwaction} we get $F_{\chi,j}^j\subseteq U^0$ for all $j\geq r$. Finally, by Formula \ref{waction} we get $F_{\chi,\ell}^j\subseteq U^0$ for all $\chi\geq 0, j\geq r, \ell \leq j$.

Starting from $F_{0,0}^0\subseteq U^0$, we get by Formula \ref{zaction} $F_{\chi,0}^0\subseteq U^0$ for all $0\leq \chi \leq r-1$. By applying Formula \ref{dwaction} we get $F_{\chi,j}^j\subseteq U^0$ for all $0\leq \chi \leq r-1$ and $\chi+j < r$. Finally, applying Formula \ref{waction}
we get $F_{\chi,\ell}^j\subseteq U^0$ for all $0 \leq \chi \leq r-1, \chi +j < r$ and $\ell \leq j$.

Similarly, we argue for the containment of the other subsets in $U^1$.

The last statements follow easily from these containments. For example, let $\ell \leq 0$, then $F_{1,\ell}^j \subseteq U^1$  for all $j \geq 0$. Indeed, if $j\geq r-1$, then this comes from the fact that $F_{1,r-1}^{r-1}$ is contained as argued above. If $0 \leq j < r-1$, then in particular, $1+j = \chi + j < r$, so this follows from the fact that $F_{1,0}^0$ is contained as argued above.
\end{proof}

\ref{v2}: Obvious, from the lemma. Also, $F_{1,r-1}^{r-1} = (\xi - g) F_{0,r}^r$.

\ref{v3} Indeed, $\xi F_{0,0}^0 \subseteq E_{1,-1}$ and $\xi F_{0,r}^r \subseteq E_{1,r-1}$, so this follows from the lemma.

\ref{v4} Indeed, $\de F_{1,0}^0 = F_{0,1}^1 \subseteq E_{0,1}$, which is in $U^0$ by the lemma, and $\de F_{1,r-1}^{r-1} = F_{0,r}^r$, which is in $U^0$ by definition.

\ref{v5} Note that $\varphi_{0,0} \circ (\de \xi)^a = (\theta +r)^a \circ \varphi_{0,0}$, so since $\varphi_{0,0}$ has image in $\cM^0$, $(\theta+r)^a$ kills this for $a>>0$. Similarly, $\varphi_{0,r} \circ (\de\xi)^a = (\theta-r +r)^a \circ \varphi_{0,r}$, and $(\theta-r+r)^a$ kills $\cM^r$ for $a>>0$. Thus, we see that $(\de \xi)^a$ multiplies $F_{0,0}^0$ and $F_{0,r}^r$ into $\ker(\varphi_{0,0})$ and $\ker(\varphi_{0,r})$, respectively. Well, by Lemma \ref{kernel}, these are
\[ \sum_{i=1}^r \de_{z_i}(E_{1,0}) \quad \text{and} \quad \sum_{i=1}^r \de_{z_i}(E_{1,r})\]
respectively, and both of these are contained in $U^1$ by the lemma and the fact that $U^1$ is closed under $\de_{z_i}$ action.

This finishes the proof and shows that $U^\bullet = V^\bullet$ is the $V$-filtration along $\xi$.

\textit{Case 2}: $\lambda \in (0,1)$.

Define $U^0 = U^\lambda := V^0\cD_{\cE\times \A^1}\cdot F_{\lambda,0}^0$ and $U^1 : = V^0\cD_{\cE\times \A^1}\cdot F_{\lambda+1,0}^0$.

Exhaustive: As $F_{\lambda,0}^0 = \cM^\lambda \otimes 1$, the fact that the filtration is exhaustive is shown in exactly the same way as above (using the acyclicity of the Koszul-like complex).

\ref{v1}: By taking finitely many $\gr_V^0\cD_{E}$ generators of $\cM^\lambda$ and $\cM^{\lambda+1}$, we see that $U^\bullet$ are $V^0\cD_{\cE\times \A^1}$-coherent.

\ref{v2}: Obvious,, using the relation \ref{zaction} above.

In a similar way to the lemma above, we see that $F^j_{\lambda+b,\ell} \subseteq U^0$ and $F^j_{\lambda+1+b,\ell}\subseteq U^1$ for all $b\geq 0, j\geq 0$ and $\ell \leq j$.

\ref{v3}, \ref{v4}: Note that $\xi F_{\lambda,0}^0 \subseteq E_{\lambda+1,-1}$, which is contained in $U^1$ by the previous observation. Similarly, $\de F_{\lambda+1,0} \subseteq E_{\lambda,1}$ which is contained in $U^0$ by the previous observation.

\ref{v5}: Finally, we need only check $(\de \xi -\lambda)^a U^\lambda \subseteq U^1$ for some $a>>0$. Just as before, $(\de \xi -\lambda)^a$ multiplies $F_{\lambda,0}^0$ into $\ker(\varphi_{\lambda,0}) = \sum \de_{z_i}(E_{\lambda+1,0})$. By the above, this is contained in $U^1$, as desired.

This completes the proof that this is indeed the $V$-filtration along $\xi$.

\subsection{Showing the isomorphism}
Fix $\lambda \in [0,1)$ and $\ell \in \Z_{\geq 0}$. We show that
\[ \varphi: E^\lambda_{\lambda,\ell} \to \cM^{\lambda+\ell}\]
is surjective, with kernel equal to $\sum \de_{z_i}(E^\lambda_{\lambda+1,\ell}) + E^{>\lambda}_{\lambda,\ell}$.

Surjectivity is easy: we showed above that, either when $\lambda =0$ or when $\lambda \in (0,1)$, $F_{\lambda,\ell}^\ell = \cM^{\lambda+\ell} \otimes \de^\ell \subseteq V^\lambda$. Under $\varphi_{\lambda,\ell}$, this hits all of $\cM^{\lambda+\ell}$.

Using Lemma \ref{kernel}, we know that, before restricting to $E^\lambda_{\lambda,\ell}$, the kernel of $\varphi_{\lambda,\ell}$ (for $\ell \geq 0$, which we have reduced the problem to) is precisely
\[ \sum \de_{z_i}(E_{\lambda+1,\ell}).\]

Note that we have $E_{\lambda+1,\ell} \subseteq V^\lambda$ for all $\ell \geq 0$. Indeed, for any fixed $\ell \geq 0$, it suffices to show $F_{\lambda+1,\ell}^j \subseteq V^\lambda$ for all $j\geq \ell$. This was already noted in the computation of the $V$-filtration.

Hence, $E^{\lambda}_{\lambda+1,\ell} = E_{\lambda+1,\ell}$, and so all that remains is to show $E^{>\lambda}_{\lambda,\ell}\subseteq \ker(\varphi)$. To see this, note that, in either case, we have $E_{\lambda,\ell}^{>\lambda} = E_{\lambda,\ell}^1$.

Write an arbitrary element $P$ of $V^0 \cD_{\cE \times \A^1}$ as $P = \sum \de_z^\beta \de_w^\gamma z^\rho w^\alpha (\xi \de)^j \xi^k$. We see easily that
\[ P E_{\lambda + 1,\ell} \subseteq E_{\lambda + 1 + |\rho| + k - |\beta|, \ell +|\gamma|-|\alpha|-k}.\]

For our fixed $\lambda \in [0,1)$, recall the explicit generators of $V^1$ given above, they are all of the form $F_{\lambda+1,p}^j$ for some $p \geq 0, j\geq p$. We are interested in $E^1_{\lambda,\ell}$. The only way to get into this eigenspace from one of the form $E_{\lambda+1,p}$ is to involve $\de_z^\beta$ for some $|\beta| \geq 1$ (as $\lambda < \lambda+1$). As we know that elements of this form lie in the kernel, this completes the proof.

\begin{rmk} \label{rmk:comp} In \cite[Formula (10.3.31)]{KS}, the Fourier-Sato transform is defined in the following way: let $q: \cE \to E^\vee$ be the projection, and consider $(q\times id): \cE \times \A^1 \to E^\vee \times \A^1$ with coordinate $t$ on $\A^1$. Then the Fourier-Sato transformation in loc. cit. is defined by
\[ \phi_t \circ (q\times id)_* \circ \Gamma_* \circ p^!,\]
where $\Gamma: \cE \to \cE\times \A^1$ is the graph embedding along $g$. 

We trivially have $\phi_t \circ (q\times id)_* = q_* \circ \phi_t$, as $q$ does not see the $t$ coordinate. Hence, this definition is equivalent to
\[ q_* \circ \phi_t \circ \Gamma_* \circ p^!  = q_* \circ \phi_g  \circ p^!,\]
where, as $g$ does not define a smooth hypersurface, in order to define $\phi_g$ we first apply the graph embedding and then take $\phi_t$, i.e., $\phi_g := \phi_t \circ \Gamma_*$.

Finally, by Remark \ref{rmk:cvvcMHM} and the fact that $\phi_t \Gamma_+ p^!(\cM)$ is monodromic (see Lemma \ref{lem:bimono}), we know that $q_+ \phi_t \Gamma_+ p^!(\cM) = \sigma^* \phi_t \Gamma_+ p^!(\cM)$. Hence, our definition of Fourier-Sato transform is the same as that in \cite{KS}, up to a cohomological shift so that it sends modules to modules in degree $0$.
\end{rmk}

\subsection{Results Concerning the Hodge and Weight Filtrations}
We have just finished showing that the $\cD_{E^\vee}$-module $\cH^0 \sigma^*(\phi_g \cM[w])$ is isomorphic to $\mathsf{FL}(\cM)$. We will now prove the main result concerning the weight filtration and Hodge filtration. For this, we will make use of the results shown at the end of Section \ref{sec:applic}.

We begin by proving that the operator $N = \bigoplus_{\chi \in \Q} \theta - \chi + r$ decreases the weight filtration of any monodromic mixed Hodge module by two. This generalizes the result in the pure case saying that $N =0$.

\begin{thm} Let $(\cM,F,W)$ be a bifiltered $\cD_E$-module underlying a monodromic mixed Hodge module on $E$. Then the weight filtration $W$ on $\cM$ is its own relative monodromy filtration with respect to the nilpotent operator $N$, i.e., $N W_\bullet \cM \subseteq W_{\bullet -2}\cM$.
\end{thm}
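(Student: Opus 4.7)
The claim that $W$ equals its own relative monodromy filtration along $N$ is equivalent, as stated, to the single containment $NW_k\cM \subseteq W_{k-2}\cM$ for all $k$: when we set $L=W$ in the framework of Section~\ref{subsec:admiss}, the induced filtration on $\gr^W_k\cM$ is concentrated in the single degree $k$, so the Lefschetz-type isomorphisms $N^i:\gr^W_{k+i}\gr^W_k \to \gr^W_{k-i}\gr^W_k$ are trivially satisfied for $i>0$. This also makes the ``in particular'' part a direct consequence of Proposition~\ref{puremono}: if $\cM$ is pure of weight $k$, then $N\cM = NW_k\cM \subseteq W_{k-2}\cM = 0$, so $N=0$ and $\cM^\chi = \ker(\theta - \chi + r)$.

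My plan is to proceed by induction on the rank $r$. The base case $r=1$ is~\cite[Prop. 2.12]{monoMHM}; the idea there is to identify $\cM^\chi$ with $\gr^V_\chi\cM$ (Property~\ref{monoVFilt}) and invoke the codimension-one admissibility built into the definition of a mixed Hodge module: the weight filtration on $\gr^V_\chi\cM$ is the relative monodromy filtration $W(z\partial_z - \chi + 1, \gr^V_\chi W_\bullet\cM)$. For monodromic $\cM$, the induced filtration $\gr^V_\chi W_\bullet\cM$ equals $W_\bullet\cM \cap \cM^\chi$, since the projector $P_\chi$ onto the $\chi$-eigenspace is a polynomial in $\theta \in V^0\cD_E$ and hence preserves each $W_\bullet\cM$.

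For the inductive step, I would pick a coordinate hyperplane $H = \{z_r = 0\}$ and take the $V$-filtration $V^\bullet$ of $\cM$ along $H$. Each graded piece $\gr^V_\lambda\cM$ is a mixed Hodge module on $H \cong X \times \A^{r-1}$ which is monodromic with respect to $\theta' = \sum_{i=1}^{r-1}z_i\partial_{z_i}$, and the decomposition $\theta = \theta' + z_r\partial_{z_r}$ matches the $\theta'$-eigenvalues on $\gr^V_\lambda\cM$ to the $\theta$-eigenvalues of $\cM$ (shifted by $\lambda - 1$, modulo nilpotents). The inductive hypothesis applied in rank $r-1$ gives that $N' = \bigoplus(\theta' - \chi' + (r-1))$ shifts the weight filtration on $\gr^V_\lambda\cM$ by $-2$, while the codimension-one admissibility along $H$ gives that $z_r\partial_{z_r} - \lambda + 1$ also shifts weight by $-2$ on the same module. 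Adding these yields the weight shift for $N$ itself on each $\gr^V_\lambda\cM$.

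The final step, which I expect to be the main obstacle, is to lift the weight shift from the graded pieces $\gr^V_\lambda\cM$ to $\cM$. Since $\theta \in V^0\cD_E$, the operator $N$ preserves $V^\bullet\cM$, and Lemma~\ref{thm:compatible} provides the bifiltered compatibility of $V$ and $W$; however, a naïve descending induction on the $V$-filtration index does not close, because the error term produced at each stage lies in $V^{>\alpha}\cM \cap W_k\cM$ but is not itself of the form $Nm'$ for $m' \in W_k\cM$, so we cannot simply reapply the graded hypothesis to it. A cleaner route that bypasses this difficulty would be to realize $N$ (up to a factor of $2\pi i$) as the logarithm of the unipotent part of the monodromy automorphism of $\cM$ induced by the $\mathbf G_m$-action of fiberwise scaling on $E$; once one knows that this monodromy automorphism lifts to a morphism of mixed Hodge modules $\cM \to \cM(-1)$, the weight compatibility $NW_k\cM \subseteq W_k\cM(-1) = W_{k-2}\cM$ is automatic. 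Verifying the $\Q$-structure compatibility of this monodromy is the main technical content of that alternative approach, and I would address it using the specialization construction of Section~\ref{subsec:dfn} and the known behavior of the monodromy operator on nearby cycles.
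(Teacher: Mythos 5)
Your main inductive approach (along the $V$-filtration of a coordinate hyperplane) has a genuine gap, and you correctly diagnose it: the weight-shift statement on each $\gr^V_\lambda\cM$ does not lift to $\cM$ by a descending induction, because the error terms in $V^{>\lambda}\cM\cap W_k\cM$ are not of the form $Nm'$ and cannot be fed back into the graded hypothesis. The alternative route you sketch is closer to the paper's actual proof, but as stated it is circular. Saying ``once one knows that the monodromy lifts to a morphism of mixed Hodge modules $\cM\to\cM(-1)$, the weight compatibility is automatic'' just restates the goal: for the monodromic MHM $\cM$ itself (as opposed to a nearby-cycle object, where this is built into the definitions), the assertion that the $\cD_E$-linear operator $N$ underlies a morphism $M\to M(-1)$ of MHM \emph{is} the assertion that $N$ shifts $F$ by $1$ and $W$ by $-2$ and respects the $\Q$-structure. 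There is no independent source for that lift.

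The paper's proof closes exactly this gap. Using the specialization construction, one has $\Sp(\cM)\cong\cM$ as $\cD$-modules, and $\Sp(M)$ is a nearby-cycle mixed Hodge module whose weight filtration $W_\bullet\Sp(\cM)$ is \emph{by construction} the relative monodromy filtration of $N$ with respect to the induced filtration $L_\bullet\Sp(\cM)=\bigoplus_\chi(W_\bullet\cM)^\chi$ (Lemma~\ref{lem:relmonext}). The key remaining content is to identify this a priori different filtration $W_\bullet\Sp(\cM)$ with $L_\bullet\Sp(\cM)$ itself. The paper does this by noting $\gr^L_i\Sp(\cM)=\Sp(\gr^W_i\cM)$ is the specialization of a pure monodromic Hodge module, on which $N=0$ by Proposition~\ref{puremono}, so the induced monodromy filtration on each $L$-graded piece is trivial; Deligne's canonical splitting of the relative monodromy filtration then forces $W_\bullet\Sp(\cM)=L_\bullet\Sp(\cM)$, and the weight shift $NW_k\cM\subseteq W_{k-2}\cM$ follows from the defining property of the relative monodromy filtration. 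That identification of the two filtrations is precisely the step your alternative route leaves untreated.
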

\begin{proof} For this, we consider the specialization of $\cM$ along the zero section of the vector bundle $E$. It is easy to see that, as $\cD$-modules, $\Sp(\cM) \cong \cM$ (using the isomorphism $E\cong T_X E$ sending $t_i$ to $z_i$).

Recall that by Lemma \ref{lem:relmonext} the weight filtration on $\Sp(\cM)$ is the relative monodromy filtration with respect to the filtration $\bigoplus_{\chi \in \Q} \gr_V^\chi(W_\bullet \cM)$ and the nilpotent operator $N$. Note that $W_k \cM \subseteq \cM$ is a $\cD$-submodule, hence it is monodromic. Thus, $\gr_V^\chi(W_k \cM) = (W_k \cM)^{\chi}$ in the monodromic decomposition.

So we are looking at the relative monodromy filtration for $N$ and 
\[ L_i \Sp(\cM) : = \bigoplus_{\chi \in \Q} (W_i \cM)^\chi.\]

Note that $\gr_i^L \Sp(\cM) = \bigoplus_{\chi \in \Q} (\gr^W_i \cM)^\chi = \Sp(\gr^W_i \cM)$ is the specialization of the pure monodromic Hodge module $\gr^W_i \cM$, so by Proposition \ref{puremono}, we know that the monodromy filtration is trivial on $\gr_i^L \Sp(\cM)$.

Recall that the relative monodromy filtration induces the monodromy filtration on each graded piece $\gr_i^L \Sp(\cM)$, and so this implies that its restriction to each graded piece is trivial. Finally, we conclude that $W_\bullet \Sp(\cM) = L_\bullet \Sp(\cM)$ using Deligne's canonical splitting of the relative monodromy filtration.
\end{proof}

Using this and knowing that $\FL(\cM)$ is a monodromic mixed Hodge module, we can compute the weight filtration on $\FL(\cM)$.

To begin, we make an easy observation using the fact that the relative monodromy filtration is functorial. Specifically, if $\eta: (A,W,N) \to (A',W',N')$ is a morphism between filtered objects in an abelian category with nilpotent operators $N,N'$, and if the relative monodromy filtrations $M_\bullet A$ and $M'_\bullet A'$ exist, then
\begin{equation} \label{relmonfunct} \eta(M_\bullet A) \subseteq M'_\bullet A'\end{equation}

Recall the notation from the introduction that if $\cM$ is a monodromic $\cD$-module, then $\cM^{\lambda +\Z} = \bigoplus_{\ell \in \Z} \cM^{\lambda + \ell}$ for any $\lambda \in [0,1)$.

\begin{lem} \label{weight} Let $(\cM,F,W)$ be a bifiltered $\cD_E$-module underlying a monodromic mixed Hodge module on $E$. Then
\[ W_k \mathsf{FL}(\cM)^{\lambda+\Z} \subseteq \mathsf{FL}(W_{k+r + \lceil \lambda \rceil} \cM)^{\lambda +\Z}.\]
\end{lem}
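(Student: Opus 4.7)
The plan is to reduce the inclusion to a weight-shift statement about $\mathsf{FL}$, and then to track the weight filtration through the composition $\FL = \cH^0 \sigma^* \phi_g p^!(-)[-r]$. Since $\FL$ is exact on $\MHM_{mon}$ (as remarked after Theorem \ref{thm:fourier}), the short exact sequence $0 \to W_{k+r+\lceil\lambda\rceil} \cM \to \cM \to \cM/W_{k+r+\lceil\lambda\rceil}\cM \to 0$ maps to a short exact sequence under $\FL$, and projection onto the $(\lambda+\Z)$-monodromic summand is exact. Thus the lemma will reduce to the assertion that, for any monodromic mixed Hodge module $\cN$ with $W_j\cN = 0$, one has $W_{j-r-\lceil\lambda\rceil} \mathsf{FL}(\cN)^{\lambda+\Z} = 0$.

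To verify this weight shift I would trace $W_\bullet$ through each stage of the composition using the conventions of Section \ref{subsec:conv}. Smooth pullback contributes $W_\bullet p^!(\cN)[-r] = p^*(W_{\bullet+r}\cN)$, shifting weights by $-r$; the graph embedding $\Gamma_+$ preserves weights; the weight filtration on $\phi_{g,\lambda}$ is the relative monodromy filtration along $N_\xi = \de\xi - \lambda$ of $L_\bullet = \gr^\lambda_{V_\xi}(W_{\bullet + \lceil\lambda\rceil}\cdot)$, contributing a further shift of $-\lceil\lambda\rceil$; and Theorem \ref{thm:main}(c) applied at cohomological degree $\ell = 0$ gives that $\cH^0\sigma^*$ introduces no further shift, since its weight coincides with that on $\cH^0 C(-)$, itself induced from the relative monodromy filtration on $\gr^{V_z}_0$ along $\theta_z$. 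Altogether, $W_k \mathsf{FL}(\cN)^{\lambda+\Z}$ should be captured by an iterated relative monodromy filtration of $\gr^{V_z}_0 \gr^{V_\xi}_\lambda \Gamma_+ W_{k+r+\lceil\lambda\rceil}\cN[w]$.

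The hard part will be showing that this iterated construction genuinely computes the weight filtration on $\mathsf{FL}(\cN)^{\lambda+\Z}$, and in particular that the lower-weight bound propagates through both the $N_\xi$-step and the $\theta_z$-step. For this I would invoke the preceding theorem, which characterizes the weight filtration on a monodromic mixed Hodge module as its own relative monodromy filtration along $N = \bigoplus_\chi(\theta - \chi + r)$. Under the Fourier identification $\theta_w = -\theta_z - r$ on the eigenspaces $\mathsf{FL}(\cN)^{r-\chi} = \cN^\chi$, the nilpotents $N_{\mathsf{FL}}$, $N_\xi$ and $\theta_z$ induce compatible monodromy filtrations, so the iterated construction is well-defined. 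Functoriality of the relative monodromy filtration (Equation \eqref{relmonfunct}), applied to the inclusion $W_{k+r+\lceil\lambda\rceil}\cM \hookrightarrow \cM$, will then convert the vanishing $W_{k+r+\lceil\lambda\rceil}\cN = 0$ into $W_k \mathsf{FL}(\cN)^{\lambda+\Z} = 0$, yielding the desired containment.
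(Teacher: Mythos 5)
The overall shape of your argument matches the paper's: trace $W_\bullet$ through the composition of functors, invoke the preceding theorem (that the weight filtration on a monodromic mixed Hodge module is its own relative monodromy filtration along $N = \bigoplus_\chi (\theta - \chi + r)$), and finish with functoriality of relative monodromy filtrations as in Equation~\eqref{relmonfunct}. Your preliminary reduction to the vanishing statement (``$W_j\cN = 0 \Rightarrow W_{j-r-\lceil\lambda\rceil}\mathsf{FL}(\cN)^{\lambda+\Z}=0$'') is a harmless repackaging, though the paper avoids it and works directly with the filtration.

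The gap is in the sentence ``the nilpotents $N_{\mathsf{FL}}$, $N_\xi$ and $\theta_z$ induce compatible monodromy filtrations, so the iterated construction is well-defined.'' This is exactly the point that requires work, and it is not a consequence of the Fourier identification alone. Two relative monodromy steps occur in the composition: first, the weight on $\phi_{g,\lambda}$ is the relative monodromy filtration $W^1$ of $W^0_\bullet := \phi_{g,\lambda}p^!(W_{\bullet + r + \lceil\lambda\rceil}\cM)$ along $N_1 = \de\xi - \lambda$; second, to extract the weight on $\cH^0\sigma^*$, one must deal with the nilpotent $N_2 = \theta_z + r$ on $\gr_V^0$. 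A priori the iterated filtration $W^2 := W(N_2, W^1)$ has no reason to coincide with anything useful. The paper's proof hinges on the concrete identity $N_1 + N_2 = \widetilde{N}$, where $\widetilde{N}$ corresponds under $\Gamma_+$ to $N = \bigoplus_\chi(\theta - \chi + r)$ acting on $\cM[w]$, together with two observations: (i) since $N W_\bullet\cM \subseteq W_{\bullet - 2}\cM$, functoriality forces $\widetilde{N}W^1_\bullet \subseteq W^1_{\bullet-2}$ and then $\widetilde{N}W^2_\bullet \subseteq W^2_{\bullet-2}$; and (ii) $N_1 = 0$ on $\gr^{W^1}$ (by definition of the relative monodromy filtration), so $\widetilde{N} = N_2$ on $\gr^{W^1}$, and therefore $W^2 = W(\widetilde{N}, W^1)$. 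Since $\widetilde{N}$ already decreases $W^1$ by two, $W^1$ is its own relative monodromy filtration with respect to $\widetilde{N}$, giving $W^2 = W^1$. Without this collapse of the iterated filtration, the passage from $W^0$ to the weight on $\cH^0\sigma^*\phi_{g,\lambda}$ does not go through, and your appeal to Theorem~\ref{thm:main}(c) (which the paper does not use here — it uses the quotient map and functoriality directly) does not supply a replacement. You should supply the $N_1 + N_2 = \widetilde{N}$ argument explicitly.
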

\begin{proof} The difficult part is that, in the composition of functors defining $\mathsf{FL}(-)$, we must understand iterated relative monodromy filtrations.

Specifically, we consider $\phi_{g,\lambda} p^!(\cM)[-r]$, with $W^0_\bullet \phi_{g,\lambda} p^!(\cM)[-r] : = \phi_{g,\lambda} p^!(W_{\bullet+r+\lceil \lambda \rceil}\cM)$. Here the $\lceil \lambda \rceil$ term is explained in Formulas \ref{nearbyWeight} and \ref{vanWeight}. Then we take the relative monodromy filtration of this along $N_1: =\de \xi - \lambda$. This gives a filtration $W^1$, which is preserved by the operator $N_2 : = \theta_z +r = \sum_{i=1}^r \de_{z_i} z_i$. By definition, this is the weight filtration on $\phi_{g,\lambda} p^!(\cM)[-r]$, see ~\cite[Sect. 20]{Schnell-MHM}. Then, we take the relative monodromy filtration of $W^1$ and $N_2 = \theta_z +r$ on $\gr_V^0(\phi_{g,\lambda}\cM[w])$, which we call $W^2$.

By definition of the $\cD$-action on the graph embedding, we know that $N_1 + N_2 = \widetilde{N}$, where $\widetilde{N}$ acts on the $\cM$ part of $\Gamma_* \cM[w]$ by $N = \bigoplus_{\chi \in \Q} \theta - \chi +r$. We know that $(\cM,W)$ is its own relative monodromy filtration with respect to the action of $N$, or equivalently, that $N W_{\bullet} \cM \subseteq W_{\bullet -2} \cM$.

Hence, by Formula \ref{relmonfunct}, we know $\widetilde{N} W^1_\bullet \subseteq W^1_{\bullet -2}$, and by applying the formula again, this implies $\widetilde{N} W^2_\bullet \subseteq W^2_{\bullet -2}$.

But also $\widetilde{N} = N_2$ on $\gr^{W^1}_{k}\gr_V^0(\phi_{g,\lambda}\cM[w])$, because $N_1$ kills this associated graded. In particular, we see that $W^2$ is the relative monodromy filtration for $(\widetilde{N},W^1)$. But $\widetilde{N}$ decreases $W^1$ by 2, and so $W^1$ is its own relative monodromy filtration with respect to $\widetilde{N}$. In particular, we get that $W^2 = W^1$, the relative monodromy along $N_1$ and $W^0$.

Now, we have the quotient map $\gr_V^0(\phi_{g,\lambda} \cM[w]) \to \cH^0\sigma^*(\phi_{g,\lambda}\cM[w])$, which induces a filtration $W^1$ and $N_1$ induces a nilpotent operator on the quotient. By functoriality of the relative monodromy filtration in this setting, we get that the weight filtration on the mixed Hodge module $\cH^0 \sigma^*(\phi_{g,\lambda}\cM[w])$ (which is the filtration induced by this quotient) is contained in the relative monodromy filtration of $\cH^0 \sigma^*(\phi_{g,\lambda}\cM[w])$ with respect to the induced filtration by $W^0$ and the induced nilpotent operator from $N_1$. Under the isomorphism $\varphi$ to $\cM$, these map to $W_{\bullet+r} \cM$ and $\bigoplus_{\chi \in \Q} \theta - \chi +r$, and so by the previous theorem, we get the desired containment.
\end{proof}

We now give a computation of the Hodge filtration on the Fourier transform of a monodromic mixed Hodge module. The only difficulty is that there are shifts of the Hodge filtration for certain functors along the way. We refer back to formulas from Section \ref{subsec:conv} for the correct shifts of the Hodge filtration.

We have that $p^!M[-r]$ has underlying $\cD_{\cE}$-module given by $\cM[w]$. In any case, the Hodge filtration is given by Formula \ref{smooth!Hodge}
\[ F_p(\cM[w]) = (F_{p-r} \cM)[w].\]

Now, for the closed embedding $\Gamma_+$, the filtration is given by Formula \ref{closedHodge}:
\[ F_p \Gamma_+(\cM[w]) = \sum_{j\geq 0} F_{p-j-1}(\cM[w]) \otimes \de^j = \sum_{j\geq 0} (F_{p-j-1}\cM)[w] \otimes \de^j.\]

By Formula \ref{nearbyHodge}, there is no shift of the filtration for $\phi_{g,\neq 1}$, and by Formula \ref{vanHodge}, there is a shift of $+1$ for $\phi_{g,1}$.

Finally, the Hodge filtration on $\cH^0 q_+ = \cH^0\sigma^*$ is shifted by $[-r]$ as dictated by Theorem \ref{thm:main}. In summary, an arbitrary element of $F_p E^{\lambda}_{\lambda,\ell}$ is of the form
\[ \sum_{|\alpha|\geq -\ell} m_\alpha w^\alpha \otimes \de^{|\alpha|+\ell},\]
where \begin{equation} \label{Hodgefilt} m_\alpha \in \begin{cases} F_{p-\ell-|\alpha|} \cM^{\lambda+|\alpha|+\ell} & \lambda = 0\\ F_{p -1 - \ell -|\alpha|} \cM^{\lambda+|\alpha|+\ell-1} & \lambda \in (0,1)  \end{cases}\end{equation}

Recall the claim:
\fourier*

So fix $\ell \geq 0$, and we consider $\varphi(F_p E_{\lambda,\ell}^\lambda)$. Note that we have containment $\varphi(F_p E_{\lambda,\ell}^{\lambda}) \subseteq F_{p-\ell} \cM^{\lambda+\ell}$ if $\lambda = 0$ and $\subseteq F_{p-1-\ell}\cM^{\lambda+\ell}$ if $\lambda \in (0,1)$. Indeed, by applying $\varphi$ to an element of the form in Equation \ref{Hodgefilt}, we get
\[ (-1)^{\ell} \sum_\alpha \de_z^\alpha(m_\alpha) \in F_{p-\ell} \cM^{\lambda +\ell}.\]

For the other containment, recall that $\cM^{\lambda+\ell}\otimes \de^\ell \subseteq E_{\lambda,\ell}^\lambda$ in either case $\lambda = 0$ or $\lambda \in (0,1)$. Hence, $F_{p - \ell - \lceil\lambda \rceil} \cM^{\lambda+\ell} \otimes \de^\ell \subseteq F_p E_{\lambda,\ell}^{\lambda}$, and by applying $\varphi$, we get all of $F_{p-\ell -\lceil \lambda \rceil}\cM^{\lambda+\ell}$.

For $\ell <0$, use descending induction on $\ell$ and Theorem  \ref{thm:acyclic}, which tells us that for $\ell <0$,
\[
\sum w_i F_p \FL(\cM)^{r-(\lambda+\ell+1)} = F_p \FL(\cM)^{r-(\lambda+\ell)}.
\]

Now, we introduce the inverse Fourier-Laplace transform, which will help us prove that the containment in Lemma \ref{weight} is an equality. Note, that if we apply the Fourier transform twice, we get the actions
\[ z_i m = -z_i m, \quad \de_{z_i} m = -\de_{z_i} m,\]
and so if $a: E \to E$ is the antipodal map $(x,v)\mapsto (x,-v)$, we see that, in terms of underlying $\cD$-modules,
\[ a^*\FL_{E^\vee} \FL_E(\cM) \xrightarrow[]{\cong} \cM.\]

We recall the definition of the \emph{inverse Fourier transform} from the introduction: recall that $\FL$ is defined to be $\bigoplus_{\lambda \in (0,1]} \cH^0\sigma^* \phi_{g,\lambda} \Gamma_* p^![-r]$. Breaking this up into the case $\lambda = 1$ and $\lambda \in (0,1)$, we define 
\[ \overline{\FL}_E(M) : = \cH^0 \sigma^* \phi_{g,1} \Gamma_+ p^!(M(-r))[-r] \oplus \cH^0 \sigma^* \phi_{g,\neq 1} \Gamma_+ p^!(M(1-r))[-r]\]

Using this, we see that $a^* \overline{\FL}_{E^\vee}\FL_E(\cM) \to \cM$ preserves the Hodge filtration. Indeed, let us check it on monodromic pieces. By definition, for $\lambda = 0$, we have
\[ \overline{\FL} _{E^\vee}\FL_E(\cM)^{r- \ell} = \FL_{E^\vee} \FL_E(\cM(-r))^{r-\ell},\]
so by using our formula for the Hodge filtration, taking $F_p$ of this, we get
\[ F_p \FL_{E^\vee} \FL_E(\cM(-r))^{r-\ell} = F_{p+r-\ell} \FL_E(\cM)^{\ell} \] 
\[= F_{p+r-\ell} \FL_E(\cM)^{r-(r-\ell)} = F_{p+r - \ell -(r-\ell)} \cM^{r-\ell} = F_{p}\cM^{r-\ell}.\]

Now, for $\lambda \in (0,1)$, we have
\[ F_p a^* \overline{\FL}_{E^\vee}\FL_E(\cM)^{r-(\lambda+\ell)} = F_{p+r-1} \FL_{E^\vee} \FL_E(\cM)^{r-(\lambda+\ell)},\]
because of the Tate twist in the definition of $\overline{\FL}$. Hence, by the theorem, this is equal to
\[ F_{p+r-1-\ell-1} \FL_E(\cM)^{\lambda+\ell} = F_{p+r-1-\ell-1}\FL_E(\cM)^{r - (r-(\lambda+\ell)}.\]

If we want to apply the theorem again, we need to write $r-(\lambda+\ell)$ as a number in $(0,1)$ plus some integer. Well, this can be written as $(1-\lambda) + (r-1-\ell)$. Hence, applying the theorem once more, we get
\[ F_{p+r-1-\ell-1-(r-1-\ell)-1} \cM^{r-(\lambda+\ell)} = F_{p}\cM^{r-(\lambda+\ell)}.\]

Moreover, by Lemma \ref{weight}, this map is compatible with the weight filtration (again, using the Tate twist by $(1)$ on the $\lambda \neq 0$ part). Finally, in terms of the $\Q$-structure, it is shown in ~\cite[Theorem 3.7.12(i)]{KS} that this morphism is an isomorphism of $\Q$-structure (see also ~\cite[Prop. 6.13]{Bry}). Hence, it is an isomorphism of mixed Hodge modules. This gives the equality in Lemma \ref{weight} and in the statement of Theorem \ref{thm:fourier}.

Finally, we describe how the functor $\FL$ behaves when taking duals of mixed Hodge modules. Using the fact that
\[ \bD \circ \cH^0\sigma^* = \cH^0 \sigma^! \circ \bD,\]
\[ \bD \circ\phi_{g,1} = \phi_{g,1}\circ \bD\]
\[ \bD \circ \phi_{g,\neq 1} = \phi_{g,\neq 1}(1) \circ \bD\]
and
\[ \bD \circ p^![-r] = p^*[r]\circ \bD,\]
we have
\[ \bD_{E^{\vee}} \circ \FL_E = \bD_{E^{\vee}} \circ \cH^0 \sigma^* \phi_g \Gamma_+ p^! [-r] = \cH^0 \sigma^! \phi_{g,1}\oplus \phi_{g,\neq 1}(1) \Gamma_+ p^*[-r] \circ \bD_{E}.\]

Ignoring Tate twists, the functors $\phi_g \Gamma_+ p^![-r]$ and $\phi_g \Gamma_+ p^* [-r]$ agree. In particular, the computation of the $V$-filtration which we gave above can still be used for this composition of functors. The difficult part is that now we are considering $\cH^0 \sigma^!$. For this, we are interested in the $z$-monodromic pieces corresponding to $\chi = 0$ and $1$ as before. Using Theorem \ref{thm:main}, we have
\[
\begin{aligned} 
    \cH^0\sigma^!(\phi_g \Gamma_+\cM[w]) &= \bigoplus_{\lambda \in [0,1)} \cH^0\sigma^!(\phi_{g,\lambda} \Gamma_+ \cM[w])  \\
    &= \bigoplus_{\lambda \in [0,1)} \bigoplus_{\ell \in \Z}\text{ker}\left( E^\lambda_{\lambda,\ell}/E^{>\lambda}_{\lambda,\ell}\xrightarrow[]{z_i} \bigoplus_{ 1\leq i \leq r} E^\lambda_{\lambda-1,\ell}/E^{>\lambda}_{\lambda-1,\ell}\right).
\end{aligned}
\]

The $r-\chi$th monodromic piece of this is 
\[
\begin{aligned}
    \text{ker}\left( E^\lambda_{\lambda,\chi-\lambda}/E^{>\lambda}_{\lambda,\chi-\lambda}\xrightarrow[]{z_i} \bigoplus_{ 1\leq i \leq r} E^\lambda_{\lambda-1,\chi-\lambda}/E^{>\lambda}_{\lambda-1,\chi-\lambda}\right).
\end{aligned}
\]

As before, we want to give an isomorphism from this kernel to $\cM^{\chi}$. Similarly, using acyclicity of the Koszul-like complex, we can assume $\chi- \lambda \geq 0$ and show we have an isomorphism for such terms. We will use the same map $\varphi: E_{\lambda,\chi-\lambda}^\lambda \to \cM^{\chi}$ defined above.

\begin{lem} For $\chi - \lambda \geq 0$, the map $\varphi_{\lambda,\chi-\lambda}$ restricted to this kernel is an isomorphism.
\end{lem}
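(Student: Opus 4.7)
The plan is to mirror the proof for the $\cH^0\sigma^*$ case. First, I would verify that $\varphi_{\lambda,\chi-\lambda}$ descends to a well-defined map on the quotient $E^\lambda_{\lambda,\chi-\lambda}/E^{>\lambda}_{\lambda,\chi-\lambda}$. Using the explicit computation of the $V$-filtration just above, $V^{>\lambda}=V^1$ is generated over $V^0\cD_{\cE\times\A^1}$ by $F^0_{\lambda+1,0}$ (and, when $\lambda=0$, also by $F^{r-1}_{1,r-1}$); to reach $E_{\lambda,\chi-\lambda}$ (with $\beta=\lambda$) from a generator with $\beta=\lambda+1$, any $V^0\cD$-word must apply at least one $\partial_{z_i}$, and the identity $\varphi_{\lambda-1,\ell}\circ \partial_{z_i}=0$ then forces $\varphi$ to kill $V^{>\lambda}\cap E_{\lambda,\chi-\lambda}$.

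Next I would define a candidate section $s\colon \cM^\chi \to E^\lambda_{\lambda,\chi-\lambda}/E^{>\lambda}_{\lambda,\chi-\lambda}$ by $m\mapsto (-1)^{\chi-\lambda}\,m\otimes \partial^{\chi-\lambda}$. Because $\chi-\lambda\geq 0$, the element $m\otimes \partial^{\chi-\lambda}$ sits in $F^{\chi-\lambda}_{\lambda,\chi-\lambda}\subseteq V^\lambda$ by Lemma \ref{VFiltContain} (and its analogue for $\lambda\in(0,1)$), while $z_i(m\otimes \partial^{\chi-\lambda})\in F^{\chi-\lambda}_{\lambda+1,\chi-\lambda}\subseteq V^{>\lambda}$, so the image of $s$ lands in the required kernel. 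The equality $\varphi_{\lambda,\chi-\lambda}\circ s=\mathrm{id}_{\cM^\chi}$ is then immediate from the definition of $\varphi$, which already gives surjectivity of $\varphi|_{\ker}$.

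The main obstacle is showing $s$ is also surjective onto the kernel. Given $\eta\in E^\lambda_{\lambda,\chi-\lambda}$ representing a class in the kernel with $\varphi(\eta)=0$, Lemma \ref{kernel} (applicable because $\chi-\lambda\geq 0$) lets us write $\eta=\sum_i \partial_{z_i}\eta_i$ for some $\eta_i\in E_{\lambda+1,\chi-\lambda}$. Combining this with $z_j\eta\in V^{>\lambda}$ via the commutator $[z_j,\partial_{z_i}]=-\delta_{ij}$ yields $\eta_j\equiv \sum_i \partial_{z_i}(z_j\eta_i)\pmod{V^{>\lambda}}$. I would then either iterate this relation, using the explicit generators of $V^\bullet$ to descend to a proof that $\eta\in V^{>\lambda}$, or more elegantly invoke Proposition \ref{prop:cvvc} together with Remark \ref{rmk:cvvcMHM}: since $\phi_g\Gamma_+ p^!\cM$ is monodromic in $z$ by Lemma \ref{lem:bimono}, the functors $\cH^0\sigma^!$ and $\cH^0\sigma^*$ applied to this module are canonically isomorphic, so the desired injectivity of $\varphi|_{\ker}$ reduces to the surjectivity of $\varphi$ on the cokernel side already proved in the $\cH^0\sigma^*$ computation.
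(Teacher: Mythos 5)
Your surjectivity argument matches the paper's: exhibit the section $m\mapsto m\otimes\partial^{\chi-\lambda}$ and check via Lemma~\ref{VFiltContain} that it lands in the kernel. The problem is injectivity, where your proposal does not close.

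The shortcut you describe---invoking Proposition~\ref{prop:cvvc} and Remark~\ref{rmk:cvvcMHM} to conclude that $\cH^0\sigma^!$ and $\cH^0\sigma^*$ agree on $\phi_g\Gamma_+p^!\cM$---is not valid. Proposition~\ref{prop:cvvc} identifies $p_+\cM$ with $i^*\cM$ and $p_\dagger\cM$ with $i^!\cM$; it does \emph{not} assert $i^*\cM\cong i^!\cM$. Indeed the paper's Theorem~\ref{FLDuality} is precisely the statement that $\cH^0\sigma^*$ and $\cH^0\sigma^!$ applied to $\phi_g\Gamma_+p^\bullet(-)$ are exchanged by duality (with Tate twists), not that they coincide; that they yield the same underlying $\cD$-module is exactly what this lemma has to prove, and cannot be assumed.

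Your other route---iterating the commutator relation $\eta_j\equiv\sum_i\partial_{z_i}(z_j\eta_i)\pmod{V^{>\lambda}}$---is left as a sketch, and as written it does not terminate: the relation expresses each $\eta_j$ again in terms of the same $\eta_i$'s, so nothing forces the class of $\eta=\sum_i\partial_{z_i}\eta_i$ into $V^{>\lambda}$. The paper instead writes a representative $m$ of the class explicitly as a sum of $V^0\cD_{\cE\times\A^1}$-monomials $Q(\xi\partial)^\ell\partial_z^\alpha w^\beta\partial_w^\gamma z^\eta$ applied to the listed generators of $V^\lambda$, shows one may assume $\eta=k=0$ and $|\alpha|>0$ constant, and then uses $z_im\in V^1$ together with $[z_i,\partial_z^\alpha]$ to deduce $\alpha_i m\in V^1$ modulo terms already in $V^1$; summing over $i$ gives $|\alpha|m\in V^1$, hence $m\in V^1$. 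That bookkeeping with the explicit $V$-filtration generators, which you only gesture at, is the essential content of the injectivity step.
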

\begin{proof} Both surjectivity and injectivity make heavy use of the computation of the $V$-filtration given above. 

For surjectivity, when $\lambda = 0$, by Lemma \ref{VFiltContain}, we have that $F_{0,\ell}^{\ell} = \cM^{\ell} \otimes \de^{\ell} \subseteq E^0_{0,\ell}$ for any $\ell \geq 0$. Also, $\cM^{\ell+1}\otimes \de^{\ell} = F_{1,\ell}^{\ell} \subseteq E^1_{1,\ell}$, so $z_i \cM^{\ell}\otimes \de^{\ell} \subseteq \cM^{\ell+1}\otimes \de^{\ell} \subseteq E^1_{1,\ell}$ implies that this lies in the kernel. Hence, under $\varphi$, we get all of $\cM^{\ell}$, as desired. A similar proof works for $\lambda \in (0,1)$.

For injectivity, we use the fact that $\chi - \lambda \geq 0$ to apply Lemma \ref{kernel}. To see that our map $\varphi$ is injective, it suffices to show that if $m \in E^\lambda_{\lambda,\chi-\lambda}$ is such that $m \in \ker(\varphi) = \sum \text{im}(\de_{z_i})$ and $z_i m \in V^1$ for all $1\leq i \leq r$, then $m \in V^{1}$.

When $\lambda = 0$, using our explicit description of the $V$-filtration, we know that we can write
\[ m = \sum Q_{\ell,k,\alpha,\beta,\gamma,\eta} (\xi \de)^\ell \xi^k \de_z^\alpha w^\beta \de_w^\gamma z^{\eta} m_{\ell,k,\alpha,\beta,\gamma,\eta},\]
where $Q\in \cD_X$ and $m_{\ell,k,\alpha,\beta,\gamma,\eta} \in \cM^0\otimes 1$ or $\cM^r \otimes \de^r$. Moreover, since $z_i \cdot \cM^0 \otimes 1 \subseteq \cM^1 \otimes 1 \subseteq V^1$ and $z_i \cdot \cM^r \otimes \de^r \subseteq \cM^{r+1}\otimes \de^r = F_{1,r}^r \subseteq V^1$ (by Lemma \ref{VFiltContain}), we can assume $\eta =0$. Similarly, of course we can assume $k = 0$. So we really have
\[ m = \sum Q_{\ell,\alpha,\beta,\gamma} (\xi \de)^\ell \de_z^\alpha w^\beta \de_w^\gamma m_{\ell,\alpha,\beta,\gamma}.\]

As this is supposed to lie in $E_{\lambda,\chi-\lambda}$

By Lemma \ref{kernel}, we can assume $|\alpha| > 0$ for all $\alpha$, as $m$ is supposed to lie in the kernel of $\varphi$. Also, by taking homogeneous pieces with respect to the $E_{\beta,\ell}$ decomposition, we can assume $|\alpha|$ does not depend on $\alpha$. 

Now, the assumption is that $z_im \in V^1$ for all $1\leq i\leq r$. Writing this out, we have
\[ z_i m = \left(\sum Q_{\ell,\alpha,\beta,\gamma} (\xi\de)^\ell \alpha_i \de_z^{\alpha-e_i} w^\beta \de_w^\gamma m_{\ell,\alpha,\beta,\gamma} \right) + \left( \sum Q_{\ell,\alpha,\beta,\gamma} (\xi\de)^\ell \de_z^{\alpha} w^\beta \de_w^\gamma z_im_{\ell,\alpha,\beta,\gamma}\right),\]
and by the explicit description of the $V$-filtration (specifically, the fact that $\cM^1 \otimes 1, \cM^{r+1}\otimes \de^r \subseteq V^1$), we know $z_i m_{\ell,\alpha,\beta,\gamma} \in V^1$ already. Hence, the entire second sum lies in $V^1$, and so we see that actually the first sum lies in $V^1$, too. As $V^1$ is closed by the $\de_{z_i}$-action, we get that
\[\sum Q_{\ell,\alpha,\beta,\gamma} (\xi\de)^\ell \alpha_i \de_z^{\alpha} w^\beta \de_w^\gamma m_{\ell,\alpha,\beta,\gamma} \in V^1.\]

Finally, taking the sum over all $i$, we get $|\alpha| m \in V^1$, and so since $|\alpha|>0$, we conclude that $m \in V^1$, as desired.

The proof for $\lambda \in (0,1)$ is similar.
\end{proof}

This lemma shows that the $\cD$-module underlying $\cH^0 \sigma^! \phi_{g,1} \oplus \phi_{g,\neq 1}(1) \Gamma_+ p^*[-r]\cM$ is isomorphic to $\FL \cM$. Now, using the relation $p^!\cM[-r] \cong p^*(\cM(-r))[r]$, we get

\[ \cH^0 \sigma^! \phi_{g,1}\Gamma_+ p^*[r] \oplus \cH^0 \sigma^!\phi_{g,\neq 1}(1) \Gamma_+ p^*[r] = \overline{\FL},\]
and hence we have given a proof of Theorem \ref{FLDuality}.

\section*{References}
\begin{biblist}

\bib{Bry}{article}{
	author = {Brylinski, J.-L.},
     title = {Transformations canoniques, dualit\'e projective, th\'eorie de Lefschetz, transformations de Fourier et sommes trigonom\'etriques},
     booktitle = {G\'eom\'etrie et analyse microlocales},
     series = {Ast\'erisque},
     publisher = {Soci\'et\'e math\'ematique de France},
     number = {140-141},
     year = {1986},
     language = {fr},
     url = {http://www.numdam.org/item/AST_1986__140-141__3_0/}
}

\bib{BMS}{article}{
	title={Bernstein–Sato polynomials of arbitrary varieties},
	volume={142}, DOI={10.1112/S0010437X06002193}, number={3},
	journal={Compositio Mathematica}, publisher={London Mathematical Society},
	 author={Budur, N.},
	 author={Mus\-ta\-\c{t}\u{a}, M.},
	 author={Saito, M.},
	 year={2006},
	 pages={779–797}
}

\bib{Del}{article}{
author = {Deligne, P.},
     title = {Th\'eor\`eme de Lefschetz et crit\`eres de d\'eg\'en\'erescence de suites spectrales},
     journal = {Publications Math\'ematiques de l'IH\'ES},
     pages = {107--126},
     publisher = {Institut des Hautes \'Etudes Scientifiques},
     volume = {35},
     year = {1968},
     language = {fr},
     url = {http://www.numdam.org/item/PMIHES_1968__35__107_0/}
}

\bib{Hodge2}{article}{
    AUTHOR = {Deligne, Pierre},
     TITLE = {Th\'{e}orie de {H}odge. {II}},
   JOURNAL = {Inst. Hautes \'{E}tudes Sci. Publ. Math.},
    NUMBER = {40},
      YEAR = {1971},
     PAGES = {5--57},
      ISSN = {0073-8301},
       URL = {http://www.numdam.org/item?id=PMIHES_1971__40__5_0},
}

\bib{cvvc}{article}{,
    AUTHOR = {Ginsburg, V.},
     TITLE = {Characteristic varieties and vanishing cycles},
   JOURNAL = {Invent. Math.},
    VOLUME = {84},
      YEAR = {1986},
    NUMBER = {2},
     PAGES = {327--402},
      ISSN = {0020-9910},
       DOI = {10.1007/BF01388811},
       URL = {https://doi.org/10.1007/BF01388811},
}

\bib{HTT}{book}{
   AUTHOR = {Hotta, Ryoshi},
   AUTHOR ={Takeuchi, Kiyoshi},
   AUTHOR = {Tanisaki, Toshiyuki},
     TITLE = {{$D$}-modules, perverse sheaves, and representation theory},
    SERIES = {Progress in Mathematics},
    VOLUME = {236},
      NOTE = {Translated from the 1995 Japanese edition by Takeuchi},
 PUBLISHER = {Birkh\"{a}user Boston, Inc., Boston, MA},
      YEAR = {2008},
     PAGES = {xii+407},
      ISBN = {978-0-8176-4363-8},
       DOI = {10.1007/978-0-8176-4523-6},
       URL = {https://doi.org/10.1007/978-0-8176-4523-6},
}

\bib{Kas83}{incollection}{
    AUTHOR = {Kashiwara, M.},
     TITLE = {Vanishing cycle sheaves and holonomic systems of differential
              equations},
 BOOKTITLE = {Algebraic geometry ({T}okyo/{K}yoto, 1982)},
    SERIES = {Lecture Notes in Math.},
    VOLUME = {1016},
     PAGES = {134--142},
 PUBLISHER = {Springer, Berlin},
      YEAR = {1983},
       DOI = {10.1007/BFb0099962},
       URL = {https://doi.org/10.1007/BFb0099962},
}

\bib{KS}{book}{
    AUTHOR = {Kashiwara, Masaki}, 
    AUTHOR = {Schapira, Pierre},
     TITLE = {Sheaves on manifolds},
    SERIES = {Grundlehren der Mathematischen Wissenschaften [Fundamental
              Principles of Mathematical Sciences]},
    VOLUME = {292},
      NOTE = {With a chapter in French by Christian Houzel,
              Corrected reprint of the 1990 original},
 PUBLISHER = {Springer-Verlag, Berlin},
      YEAR = {1994},
     PAGES = {x+512},
      ISBN = {3-540-51861-4},
   MRCLASS = {58G07 (18F20 32C38 35A27)},
  MRNUMBER = {1299726},
}

\bib{Laz}{book}{
       AUTHOR = {Lazarsfeld, Robert},
     TITLE = {Positivity in algebraic geometry. {II}},
    SERIES = {Ergebnisse der Mathematik und ihrer Grenzgebiete. 3. Folge. A
              Series of Modern Surveys in Mathematics [Results in
              Mathematics and Related Areas. 3rd Series. A Series of Modern
              Surveys in Mathematics]},
    VOLUME = {49},
      NOTE = {Positivity for vector bundles, and multiplier ideals},
 PUBLISHER = {Springer-Verlag, Berlin},
      YEAR = {2004},
     PAGES = {xviii+385},
      ISBN = {3-540-22534-X},
  MRNUMBER = {2095472},
MRREVIEWER = {Mihnea Popa},
       DOI = {10.1007/978-3-642-18808-4},
       URL = {https://doi.org/10.1007/978-3-642-18808-4},
}

\bib{Mal83}{incollection}{
    AUTHOR = {Malgrange, B.},
     TITLE = {Polyn\v{o}mes de {B}ernstein-{S}ato et cohomologie \'{e}vanescente},
 BOOKTITLE = {Analysis and topology on singular spaces, {II}, {III}
              ({L}uminy, 1981)},
    SERIES = {Ast\'{e}risque},
    VOLUME = {101},
     PAGES = {243--267},
 PUBLISHER = {Soc. Math. France, Paris},
      YEAR = {1983},
  MRNUMBER = {737934},
MRREVIEWER = {M. Sebastiani},
 }

\bib{GKZ}{article}{
      title={Weight filtrations on GKZ-systems}, 
      author={Reichelt, T.},
      author={Walther, U.},
      year={2022},
      journal={Amer. J. Math.},
      volume={144},
      number={5}
      pages={1437--1484}
}

\bib{Saito-HFilt}{article}{
	AUTHOR = {Saito, Morihiko},
     TITLE = {On the {H}odge filtration of {H}odge modules},
   JOURNAL = {Mosc. Math. J.},
  FJOURNAL = {Moscow Mathematical Journal},
    VOLUME = {9},
      YEAR = {2009},
    NUMBER = {1},
     PAGES = {161--191, back matter},
      ISSN = {1609-3321},
   MRCLASS = {32S40 (14C30 32C38 32S35)},
  MRNUMBER = {2567401},
MRREVIEWER = {C. A. M. Peters},
       DOI = {10.17323/1609-4514-2009-9-1-151-181},
       URL = {https://doi.org/10.17323/1609-4514-2009-9-1-151-181},
}

\bib{Saito-MHP}{article}{
   AUTHOR = {Saito, Morihiko},
     TITLE = {Modules de {H}odge polarisables},
   JOURNAL = {Publ. Res. Inst. Math. Sci.},
  FJOURNAL = {Kyoto University. Research Institute for Mathematical
              Sciences. Publications},
    VOLUME = {24},
      YEAR = {1988},
    NUMBER = {6},
     PAGES = {849--995 (1989)},
      ISSN = {0034-5318},
   MRCLASS = {32C35 (14C30 32C38 32C42 32G99)},
  MRNUMBER = {1000123},
MRREVIEWER = {J. H. M. Steenbrink},
       DOI = {10.2977/prims/1195173930},
       URL = {https://doi.org/10.2977/prims/1195173930},
}

\bib{Saito-Duality}{article}{,
    AUTHOR = {Saito, Morihiko},
     TITLE = {Duality for vanishing cycle functors},
   JOURNAL = {Publ. Res. Inst. Math. Sci.},
  FJOURNAL = {Kyoto University. Research Institute for Mathematical
              Sciences. Publications},
    VOLUME = {25},
      YEAR = {1989},
    NUMBER = {6},
     PAGES = {889--921},
      ISSN = {0034-5318},
   MRCLASS = {32C38 (32C37)},
  MRNUMBER = {1045997},
MRREVIEWER = {J.-E. Bj\"{o}rk},
       DOI = {10.2977/prims/1195172510},
       URL = {https://doi.org/10.2977/prims/1195172510},
}

\bib{Saito-MHM}{article}{
   AUTHOR = {Saito, Morihiko},
     TITLE = {Mixed {H}odge modules},
   JOURNAL = {Publ. Res. Inst. Math. Sci.},
  FJOURNAL = {Kyoto University. Research Institute for Mathematical
              Sciences. Publications},
    VOLUME = {26},
      YEAR = {1990},
    NUMBER = {2},
     PAGES = {221--333},
      ISSN = {0034-5318},
   MRCLASS = {14D07 (14C30 32J25)},
  MRNUMBER = {1047415},
MRREVIEWER = {Min Ho Lee},
       DOI = {10.2977/prims/1195171082},
       URL = {https://doi.org/10.2977/prims/1195171082},
}

\bib{MHC}{article}{
    AUTHOR = {Saito, M.},
     TITLE = {Mixed {H}odge complexes on algebraic varieties},
   JOURNAL = {Math. Ann.},
  FJOURNAL = {Mathematische Annalen},
    VOLUME = {316},
      YEAR = {2000},
    NUMBER = {2},
     PAGES = {283--331},
      ISSN = {0025-5831},
   MRCLASS = {14C30 (18E30)},
  MRNUMBER = {1741272},
       DOI = {10.1007/s002080050014},
       URL = {https://doi.org/10.1007/s002080050014},
}

\bib{monoMHM}{article}{
      title={A description of monodromic mixed Hodge modules}, 
      author={Saito, T.},
      year={2022},
      journal={to appear in Journal f\"{u}r die reine und angewandte Mathematik},
}

\bib{Schnell-MHM}{article}{
   author={Schnell, C.},
   title={An overview of Morihiko Saito's theory of mixed Hodge modules},
   conference={
      title={Representation theory, automorphic forms \& complex geometry},
   },
   book={
      publisher={Int. Press, Somerville, MA},
   },
   date={2019},
   pages={27--80},
}

\bib{Schwarz01}{article}{
    AUTHOR = {Schwarz, Christine},
     TITLE = {Relative monodromy weight filtrations},
   JOURNAL = {Math. Z.},
  FJOURNAL = {Mathematische Zeitschrift},
    VOLUME = {236},
      YEAR = {2001},
    NUMBER = {1},
     PAGES = {11--21},
      ISSN = {0025-5874},
   MRCLASS = {14D07 (32G20)},
  MRNUMBER = {1812447},
MRREVIEWER = {Gregory J. Pearlstein},
       DOI = {10.1007/PL00004819},
       URL = {https://doi.org/10.1007/PL00004819},
}

\bib{Mingyi}{article}{
	  title={{H}odge filtration and {H}odge ideals for $\mathbb{Q}$-divisors with weighted homogeneous isolated singularities}, 
      author={Mingyi Zhang},
      year={2019},
      eprint={1810.06656},
      archivePrefix={arXiv},
      primaryClass={math.AG}

}

\end{biblist}

\end{document}